\newtheorem{thm}{Theorem}[section]
\newtheorem{prop}[thm]{Proposition}
\newtheorem{lem}[thm]{Lemma}
\newtheorem{cor}[thm]{Corollary}
\newtheorem{problem}[thm]{Problem}
\theoremstyle{definition}
\newtheorem{definition}[thm]{Definition}
\newtheorem{example}[thm]{Example}
\theoremstyle{remark}
\newtheorem{remark}[thm]{Remark}
\newtheorem*{notation}{Notation}
\numberwithin{equation}{section}
\newcommand{\RR}{\mathbb{R}}
\newcommand{\ZZ}{\mathbb{Z}}
\newcommand{\NN}{\mathbb{N}}
\DeclareMathOperator{\cl}{\mathrm{cl}}
\DeclareMathOperator{\scl}{\mathrm{scl}}
\newcommand{\hg}{g}
\newcommand{\bG}{N}
\newcommand{\hG}{G}
\newcommand{\cM}{V}
\newcommand{\genus}{l}
\newcommand{\II}{\mathcal{I}}
\newcommand{\HH}{\mathcal{H}}
\newcommand{\Aut}{\mathrm{Aut}}
\newcommand{\IA}{\mathrm{IA}}
\newcommand{\Ker}{\mathrm{Ker}}
\renewcommand{\Im}{\mathrm{Im}}
\newcommand{\Symp}{\mathrm{Symp}}
\newcommand{\Ham}{\mathrm{Ham}}
\newcommand{\Homeo}{\mathrm{Homeo}}
\newcommand{\flux}{\mathrm{Flux}}
\newcommand{\tflux}{\widetilde{\mathrm{Flux}}}
\newcommand{\cal}{\mathrm{Cal}}
\newcommand{\diff}{\mathrm{Diff}}
\newcommand{\tdiff}{\widetilde{\mathrm{Diff}}}
\newcommand{\IAA}{\mathrm{IA}}
\newcommand{\QQQ}{\mathrm{Q}}
\newcommand{\hQQQ}{\overline{\mathrm{Q}}}
\newcommand{\rQQQ}{\widehat{\mathrm{Q}}}
\newcommand{\HHH}{\mathrm{H}}
\newcommand{\EH}{\mathrm{EH}}
\newcommand{\GL}{\mathrm{GL}}
\newcommand{\SL}{\mathrm{SL}}
\newcommand{\Sp}{\mathrm{Sp}}
\newcommand{\Mod}{\mathrm{Mod}}
\newcommand{\ppi}{\Gamma}
\newcommand{\tae}{\xi}
\newcommand{\fis}{\Lambda}
\newcommand{\qm}{f}
\newcommand{\Hom}{\mathrm{Hom}}
\newsavebox{\@brx}
\newcommand{\llangle}[1][]{\savebox{\@brx}{\(\m@th{#1\langle}\)}%
  \mathopen{\copy\@brx\kern-0.5\wd\@brx\usebox{\@brx}}}
\newcommand{\rrangle}[1][]{\savebox{\@brx}{\(\m@th{#1\rangle}\)}%
  \mathclose{\copy\@brx\kern-0.5\wd\@brx\usebox{\@brx}}}
\keywords{bounded cohomology, invariant quasimorphisms, group cohomology, bounded acyclicity}
\subjclass[2020]{20J06, 20J05, 20F65, 57M07}
\begin{document}

\title{The space of non-extendable quasimorphisms}

\author[M. Kawasaki]{Morimichi Kawasaki}
\address[Morimichi Kawasaki]{Department of Mathematical Sciences, Aoyama Gakuin University, 5-10-1 Fuchinobe, Chuo-ku, Sagamihara-shi, Kanagawa, 252-5258, Japan}
\email{michikawa318@gmail.com}

\author[M. Kimura]{Mitsuaki Kimura}
\address[Mitsuaki Kimura]{Department of Mathematics, Kyoto University, Kitashirakawa Oiwake-cho, Sakyo-ku, Kyoto 606-8502, Japan}
\email{mkimura@math.kyoto-u.ac.jp}

\author[S. Maruyama]{Shuhei Maruyama}
\address[Shuhei Maruyama]{Department of Mathematics, Chuo University, 1-13-27 Kasuga, Bunkyo-ku, Tokyo, 112-8551, Japan}
\email{m17037h@math.nagoya-u.ac.jp}

\author[T. Matsushita]{Takahiro Matsushita}
\address[Takahiro Matsushita]{Department of Mathematical Sciences, University of the Ryukyus, Nishihara-cho, Okinawa 903-0213, Japan}
\email{mtst@sci.u-ryukyu.ac.jp}

\author[M. Mimura]{Masato Mimura}
\address[Masato Mimura]{Mathematical Institute, Tohoku University, 6-3, Aramaki Aza-Aoba, Aoba-ku, Sendai 9808578, Japan}
\email{m.masato.mimura.m@tohoku.ac.jp}

\begin{abstract}
For a pair $(G,N)$ of a group $G$ and its normal subgroup $N$, we consider the space of quasimorphisms and quasi-cocycles on $N$ non-extendable to $G$.
  To treat this space, we establish the five-term exact sequence of cohomology relative to the bounded subcomplex.
  As its application, we study the spaces associated with  the kernel of the (volume) flux homomorphism, the IA-automorphism group of a free group, and certain normal subgroups of  Gromov-hyperbolic  groups.

  Furthermore, we employ this space to prove that the stable commutator length is equivalent to the stable mixed commutator length for certain pairs of a group and its normal subgroup.
\end{abstract}

\maketitle

\tableofcontents








\section{Introduction}
\subsection{Motivations}\label{subsec:motive}
A {\it quasimorphism} on a group $G$ is a real-valued function $f \colon G \to \RR$ on $G$ satisfying
\[D(f) := \sup \{ |f(xy) - f(x) - f(y)| \; | \; x,y \in G\} < \infty.\]
We call $D(f)$ the {\it defect} of the quasimorphism $f$. A quasimorphism $f$ on $G$ is said to be \textit{homogeneous} if $f(x^n) = n \cdot f(x)$ for every $x \in G$ and for every integer $n$. Let $\QQQ(G)$ denote the real vector space consisting of homogeneous quasimorphisms on $G$. The (homogeneous) quasimorphisms are closely related to the second bounded cohomology group $\HHH^2_b(G)=\HHH^2_b(G;\RR)$, and have been extensively studied in geometric group theory and symplectic geometry (see \cite{Ca}, \cite{Fr}, and \cite{PR}).

In this paper, we treat a pair $(G,N)$ of a group $G$ and its normal subgroup $N$. Let $i\colon N\to G$ be the inclusion map. In this setting, we can construct the following two real vector spaces:
\begin{itemize}
  \item the space $\QQQ(N)^G$ of all \emph{$G$-invariant homogeneous quasimorphisms on $N$}, where $f\colon N\to \RR$ is said to be \emph{$G$-invariant} if $f(gxg^{-1})=f(x)$ for every $g\in G$ and every $x\in N$,
  \item the space $\HHH^1(N)^G+ i^{\ast}\QQQ(G)$, where $\HHH^1(N)^G$ is the space of all $G$-invariant homomorphisms from $N$ to $\RR$, and $i^{\ast}$ is the linear map from $\QQQ(G)$ to $\QQQ(N)$ induced by $i \colon N\hookrightarrow G$.
\end{itemize}
An element $f\in \QQQ(N)$ belongs to $i^{\ast}\QQQ(G)$ if and only if there exists $\hat{f}\in \QQQ(G)$ such that $\hat{f}|_{N}\equiv f$; in this case, we say that $f$ is \emph{extendable} to $G$. Since a homogeneous quasimorphism is conjugation invariant (see Lemma \ref{lem homogeneous}), the space $i^\ast \QQQ(G)$ is contained in $\QQQ(N)^G$.
The \emph{extendability problem} asks whether there exists $f\in \QQQ(N)^G$ that is not extendable to $G$, equivalently, whether the quotient vector space
\[
\QQQ(N)^G/i^{\ast}\QQQ(G)
\]
is non-zero.
A stronger version of this problem asks whether the quotient space
\[
\QQQ(N)^G/(\HHH^1(N)^G+i^{\ast}\QQQ(G))
\]
is non-zero.
 We have some reasons to take the quotient vector space over $\HHH^1(N)^G+i^{\ast}\QQQ(G)$, instead of one over $i^{\ast}\QQQ(G)$.
Elements in $\HHH^1(N)^G$ seem `trivial' as quasimorphisms in $\QQQ(N)^G$; also, when we apply the  Bavard duality theorem for stable mixed commutator lengths (see Theorem~\ref{thm Bavard}), exactly elements in $\HHH^1(N)^G$ behave trivially.
An example of a pair  $(G,N)$  such that $\QQQ(N)^G / i^* \QQQ(G)$ is non-zero is provided by Shtern \cite{Sh}, and an example of a pair such that $\QQQ(N)^G / (\HHH^1(N)^G + i^* \QQQ(G))$ is non-zero is provided by the first and second authors \cite{KK}.
 Some of the authors generalized the result of \cite{KK} and provide its extrinsic application in \cite{KKMM2} (see Theorem \ref{thm flux}).

In the present paper, we reveal that under a certain condition on $\Gamma=G/N$  and a mild condition on $G$, the quotient real vector space $\QQQ(N)^G/(\HHH^1(N)^G+i^{\ast}\QQQ(G))$ is \emph{finite dimensional}. For example, commutativity (or amenability) of $\Gamma$ and finite presentability of  $G$ suffice.
We exhibit here two such examples: one corresponds to a surface group  (Theorem~\ref{thm surface group}), and the other corresponds to  the fundamental group of a hyperbolic mapping torus  (Theorem~\ref{mapping torus thm}). The main novel point of these theorems is that we obtain \emph{non-zero finite dimensionality} of vector spaces associated with quasimorphisms: the (quotient) spaces of homogeneous quasimorphisms modulo genuine homomorphisms tend to be either zero or infinite dimensional for groups naturally appearing in geometric group theory. We discuss this point in more detail in the latter part of this subsection. 
 We remark that in Theorem~\ref{mapping torus thm} the group quotient $\Gamma=G/N$ is \emph{non-abelian solvable} in general.

\begin{thm}[Non-zero finite dimensionality in surface groups]\label{thm surface group}
Let $\genus$ be an integer greater than $1$, $\hG = \pi_1(\Sigma_\genus)$ the surface group with genus $\genus$, and $\bG$ the commutator subgroup $[\pi_1(\Sigma_\genus),\pi_1(\Sigma_\genus)]$ of $\pi_1(\Sigma_\genus)$. Then
\[ \dim \big( \QQQ(\bG)^\hG / i^* \QQQ(\hG)\big) = \genus (2\genus-1) \; \textrm{and} \; \dim \big( \QQQ(\bG)^{\hG} / (\HHH^1(\bG)^{\hG} + i^* \QQQ(\hG)) \big) = 1.\]
\end{thm}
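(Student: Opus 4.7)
The plan is to apply the five-term exact sequence of cohomology relative to the bounded subcomplex (the main technical tool of this paper) to the group extension $1\to \bG\to \hG\to \ppi\to 1$, where $\hG=\pi_1(\Sigma_\genus)$, $\bG=[\hG,\hG]$, and $\ppi=\hG/\bG\cong \ZZ^{2\genus}$. Since $\ppi$ is abelian, hence amenable, its positive-degree bounded cohomology vanishes, and the bounded-relative cohomology of $\ppi$ collapses to ordinary cohomology. The relevant portion of the five-term sequence becomes
\[
\HHH^1(\ppi;\RR)\to \QQQ(\hG)\xrightarrow{i^*}\QQQ(\bG)^{\hG}\xrightarrow{\tau}\HHH^2(\ppi;\RR)\to \HHH^2_{/b}(\hG;\RR),
\]
where $\HHH^*_{/b}$ denotes the cohomology of the quotient complex $C^*/C^*_b$.

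For the first equality, I would show that the map $\HHH^2(\ppi;\RR)\to \HHH^2_{/b}(\hG;\RR)$ is zero. By naturality (and amenability of $\ppi$), this map factors through $\HHH^2(\hG;\RR)\to \HHH^2_{/b}(\hG;\RR)$, whose kernel equals the image of the comparison map $\HHH^2_b(\hG;\RR)\to \HHH^2(\hG;\RR)$. Since $\hG=\pi_1(\Sigma_\genus)$ is Gromov hyperbolic for $\genus\geq 2$, this comparison map is surjective in degree two (e.g., by the boundedness of the Euler class, or by Mineyev's theorem), so $\tau$ is surjective. Hence
\[
\QQQ(\bG)^{\hG}/i^*\QQQ(\hG)\cong \HHH^2(\ppi;\RR)\cong \Lambda^2\RR^{2\genus},
\]
whose dimension is $\binom{2\genus}{2}=\genus(2\genus-1)$. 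This is essentially a direct application of the general principle from the introduction: hyperbolicity of $\hG$ and amenability of $\ppi$ force $\dim \QQQ(\bG)^{\hG}/i^*\QQQ(\hG)=\dim\HHH^2(\ppi;\RR)$.

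For the second equality, I would compare the sequence above with the classical Lyndon--Hochschild--Serre five-term exact sequence
\[
\HHH^1(\hG;\RR)\to \HHH^1(\bG;\RR)^{\hG}\xrightarrow{\tau_{\mathrm{cl}}}\HHH^2(\ppi;\RR)\to \HHH^2(\hG;\RR).
\]
By naturality with respect to the comparison from ordinary cohomology to bounded-relative cohomology, the restriction of $\tau$ to the subspace $\HHH^1(\bG)^{\hG}\subseteq \QQQ(\bG)^{\hG}$ coincides with the classical transgression $\tau_{\mathrm{cl}}$. Under the isomorphism $\tau\colon \QQQ(\bG)^{\hG}/i^*\QQQ(\hG)\xrightarrow{\cong}\HHH^2(\ppi;\RR)$ from the first step, $\HHH^1(\bG)^{\hG}$ therefore maps onto $\Im(\tau_{\mathrm{cl}})=\ker\bigl(\HHH^2(\ppi;\RR)\to \HHH^2(\hG;\RR)\bigr)$. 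Under the identification $\HHH^2(\ppi;\RR)=\Lambda^2\HHH^1(\Sigma_\genus;\RR)$, the inflation $\HHH^2(\ppi;\RR)\to \HHH^2(\Sigma_\genus;\RR)\cong \RR$ is the cup product, i.e., the symplectic intersection form on $\Sigma_\genus$, which is surjective of rank one. Its kernel thus has codimension one in $\HHH^2(\ppi;\RR)$, yielding
\[
\dim\bigl(\QQQ(\bG)^{\hG}/(\HHH^1(\bG)^{\hG}+i^*\QQQ(\hG))\bigr)=\genus(2\genus-1)-(\genus(2\genus-1)-1)=1.
\]

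The main obstacle is the naturality input used in the second step: one must verify that the composite $\HHH^1(\bG)^{\hG}\hookrightarrow \QQQ(\bG)^{\hG}\xrightarrow{\tau}\HHH^2(\ppi;\RR)$ agrees with the classical transgression $\tau_{\mathrm{cl}}$. This should follow from the functoriality of the bounded-relative five-term sequence applied to the short exact sequence of cochain complexes $0\to C^*_b\to C^*\to C^*/C^*_b\to 0$ and the corresponding compatibility of the LHS constructions, but it requires care in tracking all identifications and is presumably handled systematically in the paper's foundational section.
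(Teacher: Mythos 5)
Your proposal is correct. For the first equality it is the same argument as the paper's: bounded $3$-acyclicity of $\ppi\cong\ZZ^{2\genus}$ makes $\tae_4$ an isomorphism, surjectivity of the comparison map for the hyperbolic group $\hG$ kills $p^*\colon\HHH^2_{/b}(\ppi)\to\HHH^2_{/b}(\hG)$, and exactness of sequence \eqref{seq:5-term} gives $\QQQ(\bG)^{\hG}/i^*\QQQ(\hG)\cong\HHH^2(\ppi)$ of dimension $\genus(2\genus-1)$; this is exactly Theorem~\ref{thm A} (Theorem~\ref{main thm 2}). For the second equality your route differs mildly but genuinely from the paper's. The paper applies Corollary~\ref{cor B}, which reduces the problem to the equality $\dim\HHH^1(\bG)^{\hG}=\genus(2\genus-1)-1$, and proves that by an explicit analysis of $F_{2\genus}$-invariant homomorphisms on $[F_{2\genus},F_{2\genus}]$ (Lemma~\ref{lem F_n} and Proposition~\ref{prop:inv_hom_surface_group}), using the surface relator to cut out one linear condition. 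You instead use the commutativity of the third square in diagram \eqref{diagram:main} to identify the image of $\HHH^1(\bG)^{\hG}$ in $\HHH^2(\ppi)$ with $\Im(\tau_{\mathrm{cl}})=\Ker\big(p^*\colon\HHH^2(\ppi)\to\HHH^2(\hG)\big)$, so the quotient becomes $\Im(p^*)$, and you compute $\dim\Im(p^*)=1$ topologically: $p^*$ is identified, via $\HHH^2(\ZZ^{2\genus})\cong\bigwedge^2\HHH^1(\Sigma_\genus;\RR)$ and the ring structure, with the cup-product (intersection-form) map onto $\HHH^2(\Sigma_\genus;\RR)\cong\RR$, which is surjective by nondegeneracy. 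This bypasses the free-group computation entirely, replacing it with Poincar\'e duality on the surface; in exchange, the paper's explicit description of $\HHH^1(\bG)^{\hG}$ yields extra information (e.g., an explicit basis used elsewhere). The "obstacle" you flag -- compatibility of $\tau_{/b}$ restricted to $\HHH^1(\bG)^{\hG}$ with the classical transgression -- is not a gap: it is precisely the commutativity of the third square established in Theorem~\ref{main thm} via Proposition~\ref{prop:transgression_group_coh}, so you may quote it directly; in effect your second step is the specialization of Theorem~\ref{easy cor} to this pair, proved by the same diagram chase as Lemma~\ref{lemma:general_nonsense}.
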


For $\genus\in \NN$, let $\Mod(\Sigma_\genus)$ be the mapping class group of the surface $\Sigma_\genus$ and $s_\genus\colon \Mod(\Sigma_\genus)\to \Sp(2\genus,\ZZ)$ the symplectic representation. 
For a mapping class $\psi \in \Mod(\Sigma_\genus)$, we take a diffeomorphism $f$ representing $\psi$ and let $T_f$ denote the mapping torus of $f$.
The fundamental group of $T_f$ is isomorphic to the semidirect product $\pi_1(\Sigma_\genus) \rtimes_{f_*} \ZZ$ and surjects onto $\ZZ^{2\genus} \rtimes_{s_{\genus}(\psi)} \ZZ$ via the abelianization map $\pi_1(\Sigma_\genus) \to \HHH_1(\Sigma_\genus;\ZZ)$.
Note that the kernel of the surjection is equal to the commutator subgroup of $\pi_1(\Sigma_{\genus})$.

\begin{thm}[Non-zero finite dimensionality in  hyperbolic mapping tori]\label{mapping torus thm}
Let $\genus$ be an integer greater than $1$, $\psi \in \Mod(\Sigma_\genus)$ a pseudo-Anosov element and $f$ a diffeomorphism representing $\psi$.
Let $\hG$ be the fundamental group of the mapping torus $T_f$ and $\bG$ the kernel of the surjection $\hG \to \ZZ^{2\genus} \rtimes_{s_{\genus}(\psi)} \ZZ$. Then we have
\[
  \dim \big( \QQQ(\bG)^\hG / i^* \QQQ(\hG)\big) = \dim \Ker(I_{2\genus} - s_\genus(\psi)) + \dim \Ker \left(I_{2\genus \choose 2} - \bigwedge\nolimits^2 s_\genus(\psi)\right)
\]
and
\[
  \dim \big( \QQQ(\bG)^{\hG} / (\HHH^1(\bG)^{\hG} + i^* \QQQ(\hG)) \big) = \dim \Ker(I_{2\genus} - s_\genus(\psi)) + 1.
\]
Here, for $n \in \NN$, $I_n$ denotes the identity matrix of size $n$ and $\bigwedge\nolimits^2 s_\genus(\psi) \colon \bigwedge\nolimits^2 \RR^{2\genus} \to \bigwedge\nolimits^2 \RR^{2\genus}$ is the map induced by $s_\genus(\psi)$.

In particular, if $\psi$ is in the Torelli group $($that is, $s_\genus(\psi) = I_{2\genus}$$)$, then
\[
  \dim \big( \QQQ(\bG)^\hG / i^* \QQQ(\hG)\big) = 2\genus + {2\genus \choose 2} \; \textrm{and} \; \dim \big( \QQQ(\bG)^{\hG} / (\HHH^1(\bG)^{\hG} + i^* \QQQ(\hG)) \big) =2\genus + 1.
\]
\end{thm}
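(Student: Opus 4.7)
The strategy is to reduce both identities to cohomological computations for $\hG$ and $\ppi = \hG/\bG = \ZZ^{2\genus}\rtimes_{s_\genus(\psi)}\ZZ$. Since $\psi$ is pseudo-Anosov, Thurston's hyperbolization theorem for fibered $3$-manifolds endows $T_f$ with a complete hyperbolic structure, so $\hG=\pi_1(T_f)$ is Gromov hyperbolic; in particular the comparison map $\HHH^2_b(\hG;\RR)\to\HHH^2(\hG;\RR)$ is surjective (Mineyev), while $\ppi$ is solvable, hence amenable with $\HHH^*_b(\ppi;\RR)=0$. Combining the paper's five-term exact sequence relative to the bounded subcomplex with the classical Lyndon--Hochschild--Serre five-term sequence yields a commuting diagram, and a chase through it gives
\[
\QQQ(\bG)^{\hG}/i^{*}\QQQ(\hG) \cong \HHH^2(\ppi;\RR)
\]
(which is the general theorem cited in the introduction for the hyperbolic/amenable setting) together with
\[
\QQQ(\bG)^{\hG}/(\HHH^1(\bG)^{\hG}+i^{*}\QQQ(\hG)) \cong \mathrm{Image}\bigl(\HHH^2(\ppi;\RR)\to\HHH^2(\hG;\RR)\bigr),
\]
the second identification resting on matching the image of $\HHH^1(\bG)^{\hG}$ under the connecting map $\QQQ(\bG)^{\hG}\to \HHH^2(\ppi;\RR)$ with the classical transgression image $\Ker(\HHH^2(\ppi;\RR)\to\HHH^2(\hG;\RR))$.

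Next I would compute $\HHH^2(\ppi;\RR)$ via the Hochschild--Serre spectral sequence for $1\to \ZZ^{2\genus}\to \ppi\to \ZZ\to 1$. With $\HHH^q(\ZZ^{2\genus};\RR)=\bigwedge^q\RR^{2\genus}$ carrying the $\ZZ$-action by $\bigwedge^q s_\genus(\psi)$, and with $\HHH^0(\ZZ;V)=\Ker(I-\alpha)$, $\HHH^1(\ZZ;V)=\mathrm{coker}(I-\alpha)$, $\HHH^p(\ZZ;V)=0$ for $p\ge 2$, the $E_2$ page collapses, giving
\[
\dim\HHH^2(\ppi;\RR)=\dim\Ker(I_{2\genus}-s_\genus(\psi))+\dim\Ker\bigl(I_{\binom{2\genus}{2}}-\textstyle\bigwedge^2 s_\genus(\psi)\bigr),
\]
which is the first formula.

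For the second formula I compute $\HHH^2(\hG;\RR)$ and prove surjectivity of the inflation. By Poincar\'e duality on the closed orientable $3$-manifold $T_f$, $\HHH^2(\hG;\RR)\cong\HHH_1(T_f;\RR)$; the Wang sequence for $\Sigma_\genus\to T_f\to S^1$ yields $\HHH_1(T_f;\RR)\cong\mathrm{coker}(s_\genus(\psi)-I)\oplus\RR$, so $\dim\HHH^2(\hG;\RR)=1+\dim\Ker(I_{2\genus}-s_\genus(\psi))$. Surjectivity of the inflation $\HHH^2(\ppi;\RR)\to\HHH^2(\hG;\RR)$ follows because the covering of $T_f$ corresponding to $\bG\trianglelefteq \hG$ is homotopy equivalent to the maximal abelian cover of $\Sigma_\genus$, which is a non-compact surface; hence $\HHH^2(\bG;\RR)=0$, and extending the classical LHS sequence by one further term makes $\HHH^2(\hG;\RR)\to\HHH^2(\bG;\RR)^{\hG}=0$ zero, forcing the inflation to be onto. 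Combining with the identification above proves the second formula.

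The main technical work lies in the diagram chase yielding the identification of $\QQQ(\bG)^{\hG}/(\HHH^1(\bG)^{\hG}+i^{*}\QQQ(\hG))$ with $\mathrm{Image}(\HHH^2(\ppi;\RR)\to\HHH^2(\hG;\RR))$; it depends on the compatibility between the paper's connecting map restricted to $\HHH^1(\bG)^{\hG}\subseteq \QQQ(\bG)^{\hG}$ and the classical LHS transgression. Once this compatibility is established, the spectral sequence computation and the Poincar\'e duality / maximal-abelian-cover arguments are routine.
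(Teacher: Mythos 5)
Your treatment of the first formula, and your reduction of the second to $\dim\Im\bigl(p^*\colon \HHH^2(\ppi)\to\HHH^2(\hG)\bigr)$, are sound and essentially follow the paper's route: hyperbolicity of $\hG$ (Thurston) plus amenability of $\ppi$ gives $\QQQ(\bG)^{\hG}/i^*\QQQ(\hG)\cong\HHH^2(\ppi)$, and since the comparison map $c_{\hG}$ is surjective, Theorem~\ref{easy cor} identifies $\QQQ(\bG)^{\hG}/(\HHH^1(\bG)^{\hG}+i^*\QQQ(\hG))$ with $\Im(p^*)\cap\Im(c_{\hG})=\Im(p^*)$; your Wang/Hochschild--Serre computation of $\HHH^2(\ppi)$ and your Poincar\'e-duality computation of $\HHH^2(\hG)$ agree with Lemma~\ref{lem:dim_G_GG}.

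The gap is in your claim that $p^*$ is surjective. You argue that since $\bG\cong[\pi_1(\Sigma_\genus),\pi_1(\Sigma_\genus)]$ is the fundamental group of the noncompact universal abelian cover, $\HHH^2(\bG)=0$, and that ``extending the five-term sequence by one further term'' with third term $\HHH^2(\bG)^{\hG}$ forces $p^*$ to be onto. That extension is not exact: the correct continuation is the seven-term exact sequence (Theorem~\ref{thm seven-term}), in which the term after $\Ker\bigl(i^*\colon\HHH^2(\hG)\to\HHH^2(\bG)\bigr)$ is $\HHH^1(\ppi;\HHH^1(\bG))$, corresponding to the possibly nonzero graded piece $E_\infty^{1,1}$ of the Lyndon--Hochschild--Serre filtration of $\HHH^2(\hG)$. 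Vanishing of $\HHH^2(\bG)$ only gives $\Ker(i^*)=\HHH^2(\hG)$; to conclude surjectivity of $p^*$ you would additionally need the map $\rho\colon\HHH^2(\hG)\to\HHH^1(\ppi;\HHH^1(\bG))$ to vanish, and here $\HHH^1(\bG)$ is an infinite-dimensional $\ppi$-module for which this is not automatic. The conclusion is true, but it requires an extra input: the paper obtains the lower bound $\dim\Im(p^*)\geq\dim\Ker(I_{2\genus}-s_\genus(\psi))+1$ from Corollary~\ref{cor B} together with the key estimate $\dim\HHH^1(\bG)^{\hG}\leq\dim\Ker\bigl(I_{\binom{2\genus}{2}}-\bigwedge^2 s_\genus(\psi)\bigr)-1$ (Lemma~\ref{lem:ineq_H1NG}), proved by embedding $\HHH^1(\bG)^{\hG}$ into the $\bigwedge^2 s_\genus(\psi)$-invariant functionals on $\bigwedge^2\HHH_1(\Sigma_\genus)$ via $h\mapsto\bigl(q(x)\wedge q(y)\mapsto h([x,y])\bigr)$ and using the surface relation $[a_1,a_2]\cdots[a_{2\genus-1},a_{2\genus}]=1$ to show the dual of the symplectic class is not in the image; combined with the upper bound $\dim\leq\dim\HHH^2(\hG)$ from Theorem~\ref{easy cor}, this sandwiches the dimension. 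Your proposal needs either this lemma or a direct proof that $\rho=0$ (equivalently $E_\infty^{1,1}=0$) to close the argument.
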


 In Theorem~\ref{mapping torus thm}, the pseudo-Anosov property for $\psi$ is assumed to ensure hyperbolicity of $G$; see Theorem~\ref{thm:hyperbolic_mapping_torus}. In Theorems~\ref{thm free group} and \ref{thm:F_n_torus}, we also obtain analogous results to Theorems~\ref{thm surface group} and \ref{mapping torus thm} in the free group setting.

In study of quasimorphisms, it is often quite hard to obtain \emph{non-zero finite dimensionality}. For instance, if a group $G$ can act non-elementarily in a certain good manner on a  Gromov-hyperbolic  geodesic space, then the dimension of $\QQQ(G)$ is of the cardinal of the continuum (\cite{BF}); contrastingly, a higher rank lattice $G$ has zero $\QQQ(G)$ (\cite{BurMon}). For a group $G$  such that the dimension of $\QQQ(G)$ is of the cardinal of the continuum, understanding \emph{all} quasimorphisms  on $G$ might have been considered as an impossible subject. Our study of \emph{the space of non-extendable quasimorphisms} might have some possibility of shedding light on this problem modulo `trivial or extendable' quasimorphisms.

Theorems~\ref{thm surface group} and \ref{mapping torus thm} treat the case where  $G$ is a non-elementary  Gromov-hyperbolic  group and $N$ a subgroup with solvable quotient. In this case, the result of Epstein and Fujiwara \cite{EF} implies that the dimension of $i^{\ast}\QQQ(G)$ is the cardinal of the \emph{continuum}; this implies that the dimension of $\QQQ(N)^G$ is also the cardinal of the \emph{continuum}. Nevertheless, our results (Theorems~\ref{main thm 2} and \ref{easy cor}) imply that the spaces $\QQQ(N)^G/i^{\ast}\QQQ(G)$ and $\QQQ(N)^G/(\HHH^1(N)^G+i^{\ast}\QQQ(G))$ are always both \emph{finite dimensional}. Theorems~\ref{thm surface group} and \ref{mapping torus thm} provide \emph{non-vanishing} examples, and it might be an interesting problem to understand \emph{all} quasimorphism \emph{classes} in these examples.

We outline how we deduce finite dimensionality of $\QQQ(N)^G/i^{\ast}\QQQ(G)$ and $\QQQ(N)^G/(\HHH^1(N)^G+i^{\ast}\QQQ(G))$ under certain conditions in our results. Our main theorem, Theorem~\ref{main thm} (stated in Subsection~\ref{subsec:mainthm}), establishes the \emph{five-term exact sequence} of the \emph{cohomology} $\HHH_{/b}^{\bullet}$ associated with a short exact sequence of groups
\[
1\longrightarrow N\longrightarrow G\longrightarrow \Gamma \longrightarrow 1.
\]
Here, $\HHH_{/b}^{\bullet}$ relates the \emph{bounded} cohomology $\HHH_{b}^{\bullet}$ with the \emph{ordinary} cohomology $\HHH^{\bullet}$; see Subsection~\ref{subsec:mainthm} fot the precise definition of $\HHH_{/b}^{\bullet}$. In our theorems (Theorems~\ref{main thm 2} and \ref{easy cor}), we assume that $\Gamma$ is \emph{boundedly $3$-acyclic}, meaning that $\HHH_b^2(\Gamma;\RR)=0$ and $\HHH_b^3(\Gamma;\RR)=0$ (Definition~\ref{def:bdd_acyc}). Then, the five-term exact sequence enables us to relate $\QQQ(N)^G/i^{\ast}\QQQ(G)$ and $\QQQ(N)^G/(\HHH^1(N)^G+i^{\ast}\QQQ(G))$, respectively to the \textrm{ordinary} second cohomology $\HHH^2(\Gamma)=\HHH^2(\Gamma;\RR)$ and $\HHH^2(G)=\HHH^2(G;\RR)$. Since second ordinary cohomology is finite dimensional under certain mild conditions, we obtain the desired finite dimensionality results. In this point of view, our main theorem (Theorem~\ref{main thm}) might be regarded as filling in a missing piece between the bounded cohomology theory and the ordinary cohomology theory.

We also note that the extendability and non-extendability of invariant quasimorphisms themselves have applications. See Subsection~\ref{equiv subsection} on application to the stable (mixed) commutator lengths, and Subsection~\ref{intro flux} on one to symplectic geometry. As a notable extrinsic application, we state the following theorem in \cite{KKMM2} by the authors.

\begin{thm}[{\cite[Theorem~1.1]{KKMM2}}] \label{thm flux}
Let $\Sigma_{\genus}$  be a closed orientable surface whose genus $l$ is at least two and  $\Omega$ an area form on $S$. Let $\diff_0(\Sigma_{\genus},\Omega)$ denote the identity component of the group of diffeomorphisms of $\Sigma_{\genus}$ that preserve $\Omega$.
Assume that a pair $f,g \in  \diff_0(\Sigma_{\genus},\Omega)$ satisfies $fg = gf$. Then
\[ \flux_\Omega(f) \smile \flux_\Omega(g) = 0\]
holds true. Here, $\flux_{\Omega}\colon \diff_0(\Sigma_{\genus},\Omega)\to \HHH^1(\Sigma_{\genus};\RR)$ is the volume flux homomorphism, and $\smile \colon \HHH^1(\Sigma_{\genus};\RR) \times \HHH^1(\Sigma_{\genus};\RR) \to \HHH^2(\Sigma_{\genus};\RR) \cong \RR$ denotes the cup product.
\end{thm}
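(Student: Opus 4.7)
The plan is to deduce Theorem~\ref{thm flux} from the extendability machinery of this paper, along the lines of the proof in \cite{KKMM2}. Set $\hG = \diff_0(\Sigma_\genus, \Omega)$ and $\bG = \Ham(\Sigma_\genus, \Omega)$; by the flux theorem, $\ppi := \hG/\bG$ is an abelian group, hence amenable and in particular boundedly $3$-acyclic. The flux homomorphism composed with the cup product on $\HHH^1(\Sigma_\genus; \RR)$ gives the real-valued $2$-cocycle $F(x, y) := \flux_\Omega(x) \smile \flux_\Omega(y)$ on $\hG$; a direct check (using that $\flux_\Omega$ is a homomorphism to an abelian target) shows $\delta F = 0$, and $[F] \in \HHH^2(\hG; \RR)$ is the inflation of the cup-product class $[\alpha \smile \alpha] \in \HHH^2(\ppi; \RR)$, where $\alpha \in \HHH^1(\ppi; \RR)$ is the tautological class induced by $\flux_\Omega$.

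The central reduction is the following. For any commuting pair $f, g \in \hG$, sending the standard generators of $\ZZ^2$ to $f, g$ yields a homomorphism $\varphi \colon \ZZ^2 \to \hG$, and pullback gives $\varphi^*[F] = \flux_\Omega(f) \smile \flux_\Omega(g)$ in $\HHH^2(\ZZ^2; \RR) \cong \RR$. Since $\ZZ^2$ is amenable, $\HHH^2_b(\ZZ^2; \RR) = 0$. Consequently, if $[F]$ lies in the image of $\HHH^2_b(\hG; \RR) \to \HHH^2(\hG; \RR)$, then $\varphi^*[F] = 0$, which is precisely the desired vanishing.

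Thus it suffices to prove that $[F]$ is bounded, for which I would invoke the five-term exact sequence of Theorem~\ref{main thm} applied to $1 \to \bG \to \hG \to \ppi \to 1$. Amenability of $\ppi$ identifies $\HHH^2_{/b}(\ppi; \RR)$ with $\HHH^2(\ppi; \RR)$, and by the five-term sequence, the inflation of $[\alpha \smile \alpha]$ is trivial in $\HHH^2_{/b}(\hG; \RR)$ (equivalently, $[F]$ is bounded in $\HHH^2(\hG; \RR)$) precisely when $[\alpha \smile \alpha]$ lies in the image of the transgression
\[
\tau \colon \QQQ(\bG)^{\hG}/(\HHH^1(\bG)^{\hG} + i^*\QQQ(\hG)) \to \HHH^2(\ppi; \RR).
\]

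The main obstacle is to exhibit an explicit $\hG$-invariant homogeneous quasimorphism on $\bG = \Ham(\Sigma_\genus, \Omega)$ whose transgression under $\tau$ realizes $[\alpha \smile \alpha]$. This step is not formal and requires input specific to the symplectic topology of higher-genus surfaces: one would build upon Py-type or Gambaudo--Ghys-type constructions of invariant quasimorphisms on the Hamiltonian group of $\Sigma_\genus$ with $\genus \geq 2$, together with a direct cocycle-level computation identifying the transgression with the intersection-form class on $\ppi$. Once this non-extendability input is secured, the boundedness of $[F]$ follows by a diagram chase, and Theorem~\ref{thm flux} follows from the $\ZZ^2$-reduction above.
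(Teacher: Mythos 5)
Your closing reduction is fine: for a commuting pair the homomorphism $\varphi\colon\ZZ^2\to\hG$ pulls $[F]$ back to (a nonzero multiple of) $\flux_\Omega(f)\smile\flux_\Omega(g)$ in $\HHH^2(\ZZ^2;\RR)$, and amenability of $\ZZ^2$ kills any class admitting a bounded representative; likewise, via Theorem~\ref{main thm} and amenability of $\ppi$, boundedness of $[F]=p^*[\alpha\smile\alpha]$ is indeed equivalent to $[\alpha\smile\alpha]$ being a transgression of some class in $\QQQ(\bG)^{\hG}$. The problem is that the step you label the ``main obstacle'' is not a technical verification but precisely an open question, so the argument collapses there. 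You would need that the intersection-form class on $\ppi\cong\HHH^1(\Sigma_\genus;\RR)$ lies in the image of $\tau_{/b}$, equivalently that $\flux_\Omega^*(\alpha\smile\alpha)$ is bounded on all of $\diff_0(\Sigma_\genus,\Omega)$. What is actually known (Theorem~\ref{thm diff}~(1), via Py's quasimorphism) is only that \emph{some} nontrivial element of $\Im(\flux_\Omega^*)$, the Py class $c_P=\flux_\Omega^*(\bar c_P)$, is bounded; identifying $\bar c_P$ with (a multiple of) the cup-product class is exactly what is left open in Subsection~\ref{Py class subsec} (cf.\ Problem~\ref{uniueness cp}), and Theorem~\ref{Property of Py class} only gives non-vanishing of the restriction of $\bar c_P$ to subgroups $\langle v,w/k\rangle$ for \emph{sufficiently large} $k$, plus vanishing on Lagrangian-type subspaces. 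So the proposed ``direct cocycle-level computation identifying the transgression with the intersection-form class'' is not available, and no Py-- or Gambaudo--Ghys--type construction in the literature is known to produce it.

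The proof that the theorem actually rests on (in \cite{KKMM2}, as indicated after Theorem~\ref{thm flux} and in Example~\ref{ex:KKMM}) sidesteps this global identification by restricting to a rank-two lattice. Assume $v\smile w\ne 0$ with $v=\flux_\Omega(f)$, $w=\flux_\Omega(g)$, set $\Lambda_k=\langle v,w/k\rangle$, $G=\flux_\Omega^{-1}(\Lambda_k)$ and $N=\Ker(\flux_\Omega)$. Commutativity of $f$ and $g$ gives a section of $G\to\Lambda_k$ over the finite-index subgroup $\langle v,w\rangle$, namely $(m,n)\mapsto f^mg^n$, so the projection virtually splits and $\QQQ(N)^G=i^*\QQQ(G)$ by Proposition~\ref{virtual split KKMM1}; on the other hand, for $k$ at least the threshold $k_0$, the quantitative results of \cite{KKMM2} (cf.\ Theorem~\ref{KKMM flux thm}~(1)) show that Py's Calabi quasimorphism is \emph{not} extendable to this $G$, a contradiction. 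The $1/k$-scaling and the ``sufficiently large $k$'' hypothesis are essential to that non-extendability input, which is why the comparison is made on these specific subgroups $\flux_\Omega^{-1}(\Lambda_k)$ rather than on all of $\diff_0(\Sigma_\genus,\Omega)$: the boundedness statement your proposal requires is strictly stronger than anything proved in this paper or in \cite{KKMM2}.
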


 The statement of Theorem~\ref{thm flux} might not seem to have any relation to quasimorphisms. Nevertheless, the key to the proof is comparison between vanishing and non-vanishing of $\QQQ(N)^G/i^{\ast}\QQQ(G)$, where $G=\flux_{\Omega}^{-1}(\langle \flux_\Omega(f), \flux_\Omega(g)/k\rangle)$ for a sufficiently large integer $k$ and $N=\Ker(\flux_{\Omega})$; see Subsection~\ref{intro flux} for basic concepts around volume flux homomorphisms. (We discuss a related example in Example~\ref{ex:KKMM}.)

\subsection{Main theorem}\label{subsec:mainthm}

To treat the spaces of non-extendable quasimorphisms, we establish the five-term exact sequence of group cohomology relative to bounded cochain complexes. Throughout the paper, the coefficient module of the cohomology groups is the field $\RR$ of real numbers unless otherwise specified.

Let $V$ be a left normed $G$-module, and $C^n(G ; V)$ the space of functions from the $n$-fold direct product $G^{\times n}$ of $G$ to $V$. The group cohomology is defined by the cohomology group of $C^n(G ; V)$ with a certain differential (see Section \ref{sec:preliminaries} for the precise definition). Recall that the spaces $C^n_b(G; V)$ of the bounded functions form a subcomplex of $C^\bullet(G ; V)$, and its cohomology group is the bounded cohomology group of $G$. We write $C^\bullet_{/b}(G ; V)$ to indicate the quotient complex $C^\bullet (G; V) / C^\bullet_b(G ; V)$, and write $\HHH^\bullet_{/b}(G ; V)$ to mean its cohomology group.

Our main result is the five-term exact sequence of the cohomology $\HHH^\bullet_{/ b}$. Before stating our main theorem, we first recall the five-term exact sequence of ordinary group cohomology.

\begin{thm}[Five-term exact sequence of group cohomology] \label{thm five-term}
Let $1 \to \bG \xrightarrow{i} \hG \xrightarrow{p} \ppi \to 1$ be an exact sequence of groups and $\cM$ a left $\RR[\ppi]$-module.
Then there exists an exact sequence
\[0 \to \HHH^1(\ppi;V) \xrightarrow{p^*} \HHH^1(\hG;V) \xrightarrow{i^*} \HHH^1(\bG;V)^{\hG} \xrightarrow{\tau} \HHH^2(\ppi;V) \xrightarrow{p^*} \HHH^2(\hG;V).\]
\end{thm}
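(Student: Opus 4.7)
The plan is to derive this classical five-term exact sequence from the Lyndon-Hochschild-Serre spectral sequence $E_2^{p,q} = \HHH^p(\Gamma; \HHH^q(N; V)) \Rightarrow \HHH^{p+q}(G; V)$, where the desired sequence is precisely the low-degree edge exact sequence. Equivalently, one can give a direct cochain-level construction that mirrors this; since the paper subsequently generalizes the result to the quotient complex $C^\bullet_{/b}$, the latter route is preferable because it makes each of the five maps explicit at the level of cocycles.

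First I would define the five maps. The inflation $p^\ast \colon \HHH^1(\Gamma; V) \to \HHH^1(G; V)$ sends a crossed homomorphism $\bar f \colon \Gamma \to V$ to $\bar f \circ p$. The restriction $i^\ast \colon \HHH^1(G; V) \to \HHH^1(N; V)^G$ sends $f$ to $f|_N$; the image lands in the $G$-invariants because the 1-cocycle identity for $f$ on $G$ forces $f|_N$ to be $G$-conjugation invariant up to a coboundary. The transgression $\tau \colon \HHH^1(N; V)^G \to \HHH^2(\Gamma; V)$ is defined by taking a cocycle representative $\varphi \colon N \to V$ whose class is $G$-invariant, choosing a set-theoretic extension $\tilde\varphi \colon G \to V$ with $\tilde\varphi|_N = \varphi$, and checking that after adjusting $\tilde\varphi$ by a suitable coboundary, the 2-coboundary $\delta\tilde\varphi$ factors through $\Gamma \times \Gamma$ and so descends to a 2-cocycle on $\Gamma$. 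The final $p^\ast \colon \HHH^2(\Gamma; V) \to \HHH^2(G; V)$ is pullback along $p$.

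Next I would verify exactness at each of the four positions. Injectivity of the first $p^\ast$ follows from surjectivity of $p$. Exactness at $\HHH^1(G; V)$ says that a 1-cocycle vanishing on $N$ factors through $\Gamma$, which is immediate from the cocycle identity. Exactness at $\HHH^1(N; V)^G$ amounts to the statement that $\tau([\varphi]) = 0$ precisely when the extension $\tilde\varphi$ can be chosen to be a 1-cocycle on $G$, which then restricts to give $[\varphi]$. Exactness at $\HHH^2(\Gamma; V)$ is similar: a 2-cocycle $c$ on $\Gamma$ becomes a coboundary on $G$ if and only if $c \circ (p \times p) = \delta\tilde\varphi$ for some function $\tilde\varphi \colon G \to V$, and then $\tilde\varphi|_N$ is a $G$-invariant 1-cocycle class mapping to $[c]$ under $\tau$.

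The main technical obstacle is the well-definedness of the transgression $\tau$: one must verify that $[\delta\tilde\varphi]$ in $\HHH^2(\Gamma; V)$ is independent of the cocycle representative $\varphi$ within its $G$-invariant class and of the chosen set-theoretic extension $\tilde\varphi$, and that the adjustment making $\delta\tilde\varphi$ descend to $\Gamma$ actually exists; this last step is where $G$-invariance of the cohomology class $[\varphi]$ is used essentially. The verification is a somewhat delicate but routine diagram chase in the cochain complex, and phrasing it via the short exact sequence $0 \to C^\bullet_b(G;V) \to C^\bullet(G;V) \to C^\bullet_{/b}(G;V) \to 0$ together with its long exact sequence naturally sets up the generalization to the paper's main theorem.
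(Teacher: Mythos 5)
Your proposal is correct: this is the classical five-term (inflation--restriction) exact sequence, which the paper itself states without proof, and your route --- the low-degree edge sequence of the Lyndon--Hochschild--Serre spectral sequence, or equivalently the explicit cochain-level transgression obtained by extending a representative of an invariant class to $G$ so that its coboundary descends to $\Gamma$ --- is the standard argument and exactly the description of $\tau$ that the paper quotes (Proposition \ref{prop:transgression_group_coh}) and then adapts in Section \ref{sec:pf_of_five-term} to the relative complex $C^{\bullet}_{/b}$. One wording caveat: the adjustment making $\delta\tilde\varphi$ descend is not by a coboundary of a $0$-cochain but by a $1$-cochain vanishing on $N$ (equivalently, one simply defines $\tilde\varphi(n s(\gamma))=\varphi(n)$ for a set-theoretic section $s$ of $p$, using the $G$-equivariance of $\varphi$), which is the point your outline flags and which goes through routinely.
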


The following theorem is the main result in this paper:

\begin{thm}[Main Theorem] \label{main thm}
  Let $1 \to \bG \xrightarrow{i} \hG \xrightarrow{p} \ppi \to 1$ be an exact sequence of groups and $\cM$ a left Banach $\RR[\ppi]$-module equipped with a $\ppi$-invariant norm $\| \cdot \|$.
  Then there exists an exact sequence
  \begin{align}\label{seq:5-term}
    0 \to \HHH_{/b}^1(\ppi;\cM) \xrightarrow{p^*} \HHH_{/b}^1(\hG;\cM) \xrightarrow{i^*} \HHH_{/b}^1(\bG;\cM)^{\hG} \xrightarrow{\tau_{/b}} \HHH_{/b}^2(\ppi;\cM) \xrightarrow{p^*} \HHH_{/b}^2(\hG;\cM).
  \end{align}
  Moreover, the exact sequence above is compatible with the five-term exact sequence of group cohomology, that is, the following diagram commutes:
  \begin{align}\label{diagram:main}
  \xymatrix{
  0 \ar[r] & \HHH^1(\ppi;\cM) \ar[r]^-{p^*} \ar[d]^-{\tae_1} & \HHH^1(\hG;\cM) \ar[r]^-{i^*} \ar[d]^-{\tae_2} & \HHH^1(\bG;\cM)^{\hG} \ar[r]^-{\tau} \ar[d]^-{\tae_3} & \HHH^2(\ppi;\cM) \ar[r]^-{p^*} \ar[d]^-{\tae_4} & \HHH^2(\hG;\cM) \ar[d]^-{\tae_5} \\
  0 \ar[r] & \HHH_{/b}^1(\ppi;\cM) \ar[r]^-{p^*} & \HHH_{/b}^1(\hG;\cM) \ar[r]^-{i^*} & \HHH_{/b}^1(\bG;\cM)^{\hG} \ar[r]^-{\tau_{/b}} & \HHH_{/b}^2(\ppi;\cM) \ar[r]^-{p^*} & \HHH_{/b}^2(\hG;\cM).
  }
  \end{align}
   Here $\tae_i$'s are the maps induced from the quotient map $C^{\bullet} \to C_{/b}^{\bullet}$.
\end{thm}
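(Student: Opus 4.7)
My plan is to mimic the classical cochain-level proof of the five-term exact sequence (Theorem \ref{thm five-term}), carefully tracking which parts of each construction remain bounded. Three enabling observations make this work: (i) pullback along $p$ and $i$ preserves bounded cochains, so it descends to $C^\bullet_{/b}$ and the $p^*$- and $i^*$-columns of \eqref{diagram:main} commute tautologically; (ii) since $\cM$ is a $\ppi$-module, $\bG$ acts trivially on $\cM$, and by the $\ppi$-invariance of the norm every $g \in \hG$ acts on $\cM$ by an isometry; (iii) the standard chain homotopy realizing conjugation as the identity on cohomology is bounded, so $i^*$ lands in the $\hG$-invariants.

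To define $\tau_{/b}$ I fix a set-theoretic section $s \colon \ppi \to \hG$ of $p$ with $s(1) = 1$ and form the associated $\bG$-valued $2$-cochain $f(\gamma_1,\gamma_2) := s(\gamma_1) s(\gamma_2) s(\gamma_1\gamma_2)^{-1}$; every $g \in \hG$ decomposes uniquely as $g = n s(\gamma)$ with $\gamma = p(g)$, $n \in \bG$. Given a representative $\phi \in C^1(\bG;\cM)$ of a class in $\HHH^1_{/b}(\bG;\cM)^\hG$, I extend to $\tilde\phi \in C^1(\hG;\cM)$ by $\tilde\phi(n s(\gamma)) := \phi(n)$. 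A direct computation using the boundedness of $d\phi$, the trivial $\bG$-action on $\cM$, and the $\hG$-invariance of $[\phi]$ (which says that $n \mapsto \phi(g^{-1} n g) - p(g)^{-1}\phi(n)$ is bounded in $n$ for each fixed $g$) yields
\[
d\tilde\phi(g_1,g_2) = -\phi\bigl(f(p(g_1), p(g_2))\bigr) + \beta(g_1,g_2)
\]
for some bounded $\beta$. Setting $\sigma(\gamma_1,\gamma_2) := -\phi(f(\gamma_1,\gamma_2))$, the identity $p^*\sigma = d\tilde\phi + \beta$ forces $d\sigma$ to be bounded on $\ppi$ (by injectivity of $p^*$ on cochains), so $\sigma$ is a $/b$-cocycle and I set $\tau_{/b}[\phi] := [\sigma]$. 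Well-definedness follows by checking that changes of section, extension, or representative only alter $\sigma$ by a $/b$-coboundary on $\ppi$.

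Next I verify exactness spot by spot. Injectivity at $\HHH^1_{/b}(\ppi;\cM)$: if $p^*c = d\psi + b$ on $\hG$, the trivial $\bG$-action forces $b|_\bG = 0$, and one checks $b$ descends to a bounded function on $\ppi$. Exactness at $\HHH^1_{/b}(\hG;\cM)$: given a $/b$-cocycle $c$ on $\hG$ with $c|_\bG$ bounded (after a $0$-cochain reduction), define $c'(\gamma) := c(s(\gamma))$; the isometric $\ppi$-action on $\cM$ makes $dc'$ bounded, and $c - p^*c'$ is bounded directly. Exactness at $\HHH^1_{/b}(\bG;\cM)^\hG$: $\Im(i^*) \subset \ker(\tau_{/b})$ is immediate by taking $\tilde\phi$ to be the given $\hG$-extension, and the reverse uses a $/b$-trivializing witness for $\sigma$ to build a $/b$-cocycle on $\hG$ restricting to $\phi$. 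Exactness at $\HHH^2_{/b}(\ppi;\cM)$: $p^*\tau_{/b} = 0$ is the defining identity, and the reverse runs the transgression in reverse. Commutativity of the $\tau$-column of \eqref{diagram:main} then follows from the fact that the ordinary transgression admits the same section-based formula $[\phi] \mapsto [-\phi \circ f]$, so $\tae_4 \circ \tau = \tau_{/b} \circ \tae_3$ already at the cochain level.

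The main obstacle I anticipate is the well-definedness of $\tau_{/b}$: since $\hG$-invariance in the $/b$-sense is a statement up to bounded error rather than pointwise equality, one must verify that the bounded terms produced during the transgression construction remain uniformly bounded over $\ppi^2$, not merely bounded for each individual pair. This forces essential use of the isometric $\hG$-action on $\cM$ to convert the local quasi-cocycle identities for $\phi$ into uniform bounds over $\hG$ and $\ppi$.
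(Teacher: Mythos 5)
Your construction of $\tau_{/b}$ is, at the cochain level, the same as the paper's (your extension $\tilde\phi(ns(\gamma))=\phi(n)$ is exactly $F_{\phi,s}(g)=\phi(g\cdot sp(g)^{-1})$ from Lemma \ref{lemma:surjectivity}, and your $\sigma=-\phi\circ f$ is exactly $\alpha_{F,s}=s^*\delta F$), but there is a genuine gap at the decisive step, which you flag at the end without resolving. Expanding $\delta\tilde\phi\bigl(n_1s(\gamma_1),n_2s(\gamma_2)\bigr)$ produces, besides terms controlled by the defect of $\phi|_{\bG}$, the error term $\gamma_1\cdot\phi(n_2)-\phi\bigl(s(\gamma_1)n_2s(\gamma_1)^{-1}\bigr)$, and the same kind of term (with $n_2$ replaced by factor-set values) appears when you check that $\delta\sigma$ is bounded. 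The $\hG$-invariance of the class $[\phi]\in \HHH_{/b}^1(\bG;\cM)^{\hG}$ only says that for each \emph{fixed} $g$ the function $n\mapsto {}^{g}\phi(n)-\phi(n)$ is bounded, with a bound that may depend on $g$; it gives nothing uniform in $\gamma_1$, and the isometry of the $\hG$-action on $\cM$ --- your proposed remedy --- cannot convert these $g$-dependent bounds into a uniform one. Consequently, for an arbitrary representative $\phi$ your $\beta$ need not be bounded, $\sigma$ need not be a cocycle of $C_{/b}^{2}(\ppi;\cM)$, and the claimed independence of section, extension and representative does not go through as stated.

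The missing ingredient is the homogenization step: one first identifies $\HHH_{/b}^1(\bG;\cM)^{\hG}$ with the space $\QQQ(\bG;\cM)^{\hG}$ of homogeneous, \emph{genuinely} $\hG$-equivariant quasimorphisms (Remark \ref{remark:restriction_homogenization}, Lemma \ref{lemma:restriction_homogenization}); the point is that a bounded difference of homogeneous quasimorphisms vanishes, so class-level quasi-invariance upgrades to exact equivariance of the canonical representative. With that representative the problematic conjugation term vanishes identically, the extension $F_{\phi,s}$ is an $\bG$-quasi-cocycle whose defect is controlled by $D(\phi)$ alone, and Lemma \ref{lemma:dF_lift_indep_bdd_error} then supplies exactly the uniform bound (by $4D''(F)$) over all pairs in $\hG\times\hG$ that your computation needs; the cocycle property of $\sigma$, its independence of all choices (Lemma \ref{lemma:tau_well-def}), and the exactness arguments at $\HHH_{/b}^1(\bG;\cM)^{\hG}$ and $\HHH_{/b}^2(\ppi;\cM)$ all rest on this uniform estimate. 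Your remaining steps (injectivity at $\HHH_{/b}^1(\ppi;\cM)$, exactness at $\HHH_{/b}^1(\hG;\cM)$, and commutativity of the third square via the section-based formula for the ordinary transgression, cf.\ Proposition \ref{prop:transgression_group_coh}) match the paper and are fine once the equivariant representative is in place.
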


\begin{remark}
  Since the first relative cohomology group $\HHH_{/b}^1(-) = H_{/b}^1(-;\RR)$ is isomorphic to the space $\QQQ(-)$ of homogeneous quasimorphisms, diagram (\ref{diagram:main}) gives rise to the following:
  \begin{align}\label{diagram_coh_qm_rel}
    \xymatrix{
    0 \ar[r] & \HHH^1(\ppi) \ar[r]^-{p^*} \ar[d]^-{\tae_1} & \HHH^1(\hG) \ar[r]^-{i^*} \ar[d]^-{\tae_2} & \HHH^1(\bG)^{\hG} \ar[r]^-{\tau} \ar[d]^-{\tae_3} & \HHH^2(\ppi) \ar[r]^-{p^*} \ar[d]^-{\tae_4} & \HHH^2(\hG) \ar[d]^-{\tae_5} \\
    0 \ar[r] & \QQQ(\ppi) \ar[r]^-{p^*} & \QQQ(\hG) \ar[r]^-{i^*} & \QQQ(\bG)^{\hG} \ar[r]^-{\tau_{/b}} & \HHH_{/b}^2(\ppi) \ar[r]^-{p^*} & \HHH_{/b}^2(\hG).
    }
  \end{align}
Note that the exactness of the sequence
\[0 \to \QQQ(\Gamma) \xrightarrow{p^*} \QQQ(G) \xrightarrow{i^*} \QQQ(N)^G\]
is well known (see Remark 2.90 of \cite{Ca}).
\end{remark}

\begin{remark}\label{remark:isomQZ}
  It is straightforward to show  that the quotient space $\HHH_{/b}^1(\bG;\cM)^{\hG}/i^*\HHH_{/b}^1(\hG;\cM)$ is isomorphic to $\rQQQ(\bG;\cM)^{\QQQ \hG}/i^* \rQQQ Z(\hG;\cM)$,
  where $\rQQQ Z(\hG;\cM)$ and $\rQQQ(\bG;\cM)^{\QQQ \hG}$ are the spaces of quasi-cocycles on $\hG$ and $\hG$-quasi-equivariant $V$-valued quasimorphisms on $\bG$, respectively
  (see Definition \ref{def:quasicocycle} and Section \ref{subsec:quasicocycle_extension}; see also Remark \ref{rem:V-valued_quasimorphisms}). 
  In Section \ref{subsec:quasicocycle_extension}, we will apply Theorem \ref{main thm} to the extension problem of $\hG$-quasi-equivariant quasimorphisms on $\bG$ to quasi-cocycles on $\hG$.
\end{remark}

This theorem provides several arguments to estimate the dimensions of the spaces $\QQQ(N)^G / i^* \QQQ(G)$ and $\QQQ(N)^G / (\HHH^1(N)^G + i^* \QQQ(G))$ as follows.
 Here we recall the definition of \emph{bounded $k$-acyclicity} of groups from \cite{Ivanov12} and \cite{MR21}.
\begin{definition}(bounded $k$-acyclicity)\label{def:bdd_acyc}
Let $k$ be a positive integer. A group $G$ is said to be \emph{boundedly $k$-acyclic} if $\HHH^i_b(G) = 0$ holds for every positive integer $i$ with $i \le k$.
\end{definition}
We note that $\HHH^1_b(G)=0$ for every  group $G$. We recall properties and examples of boundedly $k$-acyclic groups in Theorem~\ref{thm:bdd_acyc}. In particular, we recall that amenable groups, such as abelian groups, are boundedly $k$-acyclic for all $k$ (Theorem~\ref{amenable base}~(5)).

\begin{thm} \label{main thm 2}
If the quotient group $\Gamma=\hG/\bG$ is boundedly $3$-acyclic, then
\[\dim \big( \QQQ(\bG)^{\hG} / i^* \QQQ(\hG) \big) \leq \dim \HHH^2(\Gamma).\]
Moreover, if $G$ is  Gromov-hyperbolic,  then
\[\dim \big( \QQQ(\bG)^{\hG} / i^* \QQQ(\hG) \big) = \dim \HHH^2(\Gamma).\]
\end{thm}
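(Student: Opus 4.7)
The plan is to read off $\QQQ(\bG)^{\hG}/i^*\QQQ(\hG)$ from the bottom row of the commutative diagram \eqref{diagram_coh_qm_rel} supplied by Theorem~\ref{main thm}, and then to control the target of $\tau_{/b}$ via the long exact sequence coming from $0\to C^\bullet_b\to C^\bullet\to C^\bullet_{/b}\to 0$. Concretely, exactness of
\[
\QQQ(\hG) \xrightarrow{i^*} \QQQ(\bG)^{\hG} \xrightarrow{\tau_{/b}} \HHH_{/b}^2(\ppi) \xrightarrow{p^*} \HHH_{/b}^2(\hG)
\]
yields the canonical identification
\[
\QQQ(\bG)^{\hG}/i^*\QQQ(\hG) \;\cong\; \ker\!\bigl(p^*\colon \HHH_{/b}^2(\ppi) \to \HHH_{/b}^2(\hG)\bigr).
\]

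For the inequality, I would apply the coefficient long exact sequence
\[
\HHH^2_b(\ppi)\longrightarrow \HHH^2(\ppi)\xrightarrow{\tae_4}\HHH_{/b}^2(\ppi)\longrightarrow \HHH^3_b(\ppi)
\]
to $\ppi$. Bounded $3$-acyclicity forces $\HHH^2_b(\ppi)=\HHH^3_b(\ppi)=0$, so $\tae_4$ is an isomorphism. Combined with the previous display, this gives
\[
\QQQ(\bG)^{\hG}/i^*\QQQ(\hG) \;\hookrightarrow\; \HHH_{/b}^2(\ppi) \;\cong\; \HHH^2(\ppi),
\]
which is the desired bound.

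For the equality when $\hG$ is Gromov hyperbolic, I would invoke Mineyev's theorem, which asserts that for a Gromov hyperbolic group the comparison map $\HHH^n_b(\hG;\RR)\to \HHH^n(\hG;\RR)$ is surjective for every $n\ge 2$. The degree-two portion of the corresponding coefficient long exact sequence for $\hG$,
\[
\HHH^2_b(\hG)\longrightarrow \HHH^2(\hG)\xrightarrow{\tae_5}\HHH_{/b}^2(\hG),
\]
then forces $\tae_5=0$ by exactness. The commutativity of the right-hand square of \eqref{diagram:main} expresses $p^*\colon \HHH_{/b}^2(\ppi)\to \HHH_{/b}^2(\hG)$ as $\tae_5\circ p^*\circ \tae_4^{-1}$, so $p^*=0$. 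Thus its kernel is all of $\HHH_{/b}^2(\ppi)\cong \HHH^2(\ppi)$, giving the claimed equality of dimensions.

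The only non-formal input is Mineyev's surjectivity of the comparison map in the hyperbolic case; everything else is a routine diagram chase in \eqref{diagram:main} combined with the $\HHH^\bullet_b$--$\HHH^\bullet$--$\HHH^\bullet_{/b}$ long exact sequence, so I expect no serious obstacle beyond correctly citing that surjectivity.
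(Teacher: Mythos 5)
Your argument is correct and is essentially the paper's own proof: the five-term exact sequence of $\HHH^\bullet_{/b}$ identifies $\QQQ(\bG)^{\hG}/i^*\QQQ(\hG)$ inside $\HHH^2_{/b}(\ppi)$, bounded $3$-acyclicity makes $\tae_4$ an isomorphism, and surjectivity of the degree-two comparison map for hyperbolic $\hG$ forces $\tae_5=0$ and hence $p^*=0$ on $\HHH^2_{/b}(\ppi)$. The only cosmetic difference is the reference for that surjectivity (the paper cites Gromov's result in degree two, while Mineyev's theorem in all degrees $\ge 2$ works equally well), so there is nothing to fix.
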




On the space $\QQQ(\bG)^{\hG} / \left(\HHH^1(\bG)^{\hG} + i^* \QQQ(\hG)\right)$, we also obtain the following:

\begin{thm} \label{easy cor}
If $\Gamma=G/N$ is boundedly $3$-acyclic, then  the map $p^\ast \circ (\tae_4)^{-1} \circ \tau_{/b}$ induces an isomorphism
  \[
    \QQQ(N)^G/ \left(\HHH^1(N)^G + i^* \QQQ(G)\right) \cong \Im (p^\ast) \cap \Im (c_G),
  \]
  where $c_G\colon \HHH_b^2(G) \to \HHH^2(G)$ is the comparison map.
In particular, if $\Gamma$ is boundedly $3$-acyclic, then
\[\dim \big(\QQQ(\bG)^{\hG} / (\HHH^1(\bG)^{\hG} + i^* \QQQ(\hG)) \big) \le \dim \HHH^2(G).\]
\end{thm}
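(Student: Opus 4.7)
The plan is to deduce the theorem from the commutative diagram \eqref{diagram_coh_qm_rel} via a diagram chase, after first analyzing the vertical maps $\tae_i$ by means of the long exact sequence $\cdots \to \HHH^n_b(-) \to \HHH^n(-) \xrightarrow{\tae_n} \HHH^n_{/b}(-) \to \HHH^{n+1}_b(-) \to \cdots$ coming from the short exact sequence of cochain complexes $0 \to C_b^\bullet \to C^\bullet \to C_{/b}^\bullet \to 0$. Bounded $3$-acyclicity of $\Gamma$ yields $\HHH^2_b(\Gamma) = \HHH^3_b(\Gamma) = 0$, so $\tae_4 \colon \HHH^2(\Gamma) \to \HHH^2_{/b}(\Gamma)$ is an isomorphism. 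Applying the same sequence to $G$ in degree $2$ identifies $\Ker(\tae_5)$ with $\Im(c_G)$. Hence $\Phi := p^* \circ \tae_4^{-1} \circ \tau_{/b} \colon \QQQ(N)^G \to \HHH^2(G)$ is well-defined, and the task reduces to computing its image and kernel.

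Next I would compute the image. For any $f \in \QQQ(N)^G$, the commutativity $\tae_5 \circ p^* = p^* \circ \tae_4$ together with exactness of the bottom row of \eqref{diagram_coh_qm_rel} yields $\tae_5(\Phi(f)) = p^*(\tau_{/b}(f)) = 0$, giving $\Im(\Phi) \subseteq \Im(p^*) \cap \Ker(\tae_5) = \Im(p^*) \cap \Im(c_G)$. Conversely, for $\beta = p^*(\alpha) \in \Im(p^*) \cap \Im(c_G)$ the identity $p^*(\tae_4(\alpha)) = \tae_5(\beta) = 0$ shows $\tae_4(\alpha) \in \Ker(p^* \colon \HHH^2_{/b}(\Gamma) \to \HHH^2_{/b}(G)) = \Im(\tau_{/b})$, and any $f \in \QQQ(N)^G$ with $\tau_{/b}(f) = \tae_4(\alpha)$ satisfies $\Phi(f) = \beta$.

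For the kernel, $\Phi(f) = 0$ forces $\tae_4^{-1}(\tau_{/b}(f)) \in \Ker(p^* \colon \HHH^2(\Gamma) \to \HHH^2(G)) = \Im(\tau)$ by exactness of the top row, so $\tae_4^{-1}(\tau_{/b}(f)) = \tau(\alpha')$ for some $\alpha' \in \HHH^1(N)^G$. The commutativity $\tau_{/b} \circ \tae_3 = \tae_4 \circ \tau$ then gives $\tau_{/b}(f - \tae_3(\alpha')) = 0$, whence exactness of the bottom row provides $f - \tae_3(\alpha') \in i^* \QQQ(G)$; the reverse inclusion $\HHH^1(N)^G + i^* \QQQ(G) \subseteq \Ker(\Phi)$ is immediate from the same commutativities combined with $p^* \circ \tau = 0$ and $\tau_{/b} \circ i^* = 0$. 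The stated dimension bound then follows directly from $\Im(p^*) \cap \Im(c_G) \subseteq \HHH^2(G)$. I expect the most delicate point to be verifying that $\tae_3$ realizes the canonical inclusion $\HHH^1(N)^G \hookrightarrow \QQQ(N)^G$ — so that the element $\tae_3(\alpha')$ produced in the kernel argument genuinely sits in $\HHH^1(N)^G$ as a subspace of $\QQQ(N)^G$ — which ultimately reduces to the observation that a bounded homomorphism into $\RR$ must vanish.
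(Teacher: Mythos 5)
Your argument is correct and is essentially the paper's proof: the paper packages exactly this diagram chase into an abstract lemma (Lemma \ref{lemma:general_nonsense}) and applies it to diagram \eqref{diagram_coh_qm_rel}, using bounded $3$-acyclicity to make $\tae_4$ an isomorphism and the long exact sequence \eqref{seq:long_seq} to identify $\Ker(\tae_5)$ with $\Im(c_G)$, just as you do inline. Your closing concern about $\tae_3$ is resolved as you expect: under the identification $\HHH_{/b}^1(N)\cong\QQQ(N)$, $\tae_3$ is the canonical inclusion of $\HHH^1(N)^G$ into $\QQQ(N)^G$, injective because a bounded homomorphism to $\RR$ vanishes.
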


 When $N=[G,G]$, we have a more precise calculation of $\dim \big(\QQQ(\bG)^{\hG} / (\HHH^1(\bG)^{\hG} + i^* \QQQ(\hG)) \big)$  (see Corollary \ref{cor B}).
 As we mentioned in the previous subsection, there are many examples of finitely presented groups such that the space of its homogeneous quasimorphisms is infinite dimensional:  for instance, all non-elementary  Gromov-hyperbolic  groups (\cite{EF}).
Nevertheless, if we assume that $\ppi = \hG/\bG$ is  boundedly $3$-acyclic, then we have the following two statements. The space $\QQQ(\bG)^{\hG} / (\HHH^1(\bG)^{\hG} + i^* \QQQ(\hG))$ is finite dimensional if $\hG$ is finitely presented (following from Theorem \ref{easy cor}); the space $\QQQ(\bG)^{\hG} / i^* \QQQ(\hG)$ is finite dimensional if $\ppi$ is finitely presented (following from Theorem \ref{main thm 2}).


There are several known conditions that guarantee $\QQQ(\bG)^{\hG} = i^* \QQQ(\hG)$, {\it i.e.,} every $\hG$-invariant quasimorphism is extendable (see  \cite{Mal}, \cite{IshidaThesis},   \cite{Sh}, \cite{Ish}, and \cite{KKMM1}).
We say that a group homomorphism $p \colon \hG \to \ppi$ {\it virtually splits} if there exist a subgroup $\fis$ of finite index of $\ppi$ and a group homomorphism $s \colon \fis \to G$ such that $f \circ s(x) = x$ for every $x \in \fis$.
The first, second, fourth, and fifth authors showed that if the group homomorphism $p \colon \hG \to \ppi$ virtually splits, then $\QQQ(\bG)^{\hG} = i^* \QQQ(\hG)$ (see \cite{KKMM1}).
Thus the space $\QQQ(\bG)^{\hG}/i^*\QQQ(\hG)$, which we consider in Theorem \ref{main thm 2}, can be seen as a space of obstructions to the existence of virtual splittings.


\section{Other applications of the main theorem}
 In this section, we provide several other applications of our main theroem (Theorem~\ref{main thm}); we also use its corollaries, Theorems~\ref{main thm 2} and \ref{easy cor}. In the last part of this section, we briefly describe the organization of the present paper.
\subsection{On equivalences of $\scl_{\hG}$ and $\scl_{\hG,\bG}$}\label{equiv subsection}
As an application of the spaces of non-extendable quasimorphisms, we treat the equivalence problems of the stabilizations of usual and mixed commutator lengths.
For two non-negative-valued functions $\mu$ and $\nu$ on a group $G$,
we say that $\mu$ and $\nu$ are \textit{bi-Lipschitzly equivalent} (or \textit{equivalent} in short)
if there exist positive constants $C_1$ and $C_2$ such that $C_1 \nu \leq \mu \leq C_2 \nu$.
By Theorem \ref{easy cor}, $\HHH^2(\hG) = 0$ implies that $\QQQ(\bG)^{\hG} / (\HHH^1(\bG)^{\hG} + i^* \QQQ(\hG)) = 0$ if $\ppi = \hG/\bG$ is  boundedly $3$-acyclic.
We show that the condition $\QQQ(\bG)^{\hG} / (\HHH^1(\bG)^{\hG} + i^* \QQQ(\hG)) = 0$ implies that certain two stable word lengths related to commutators are bi-Lipschitzly equivalent.

Let $G$ be a group and $N$ a normal subgroup. A {\it $(G,N)$-commutator} is an element of $G$ of the form $[g,x] = gxg^{-1}x^{-1}$ for some $g \in G$ and $x \in N$.
Let $[G,N]$ be the group generated by the set of $(G,N)$-commutators.
Then $[G,N]$ is a normal subgroup of $G$. For an element $x$ in $[G,N]$, the {\it $(G,N)$-commutator length or the {\it mixed commutator length} of $x$ is defined to be the minimum number $n$ such that there exist $n$ $(G,N)$-commutators $c_1, \cdots, c_n$ such that $x = c_1 \cdots c_n$, and is denoted by $\cl_{G,N}(x)$.}
Then there exists a limit
\[\scl_{G,N} (x) := \lim_{n \to \infty} \frac{\cl_{G,N}(x^n)}{n}\]
and call $\scl_{G,N}(x)$ the {\it stable $(G,N)$-commutator length of} $x$.

When $\bG = \hG$, then $\cl_{G,G}(x)$ and $\scl_{G,G}(x)$ are called the commutator length and stable commutator length of $x$, respectively; and we write $\cl_{\hG}(x)$ and $\scl_{\hG}(x)$ instead of $\cl_{\hG,\hG}(x)$ and $\scl_{\hG, \hG}(x)$. The commutator lengths and stable commutator lengths have a long history of study, for instance, in the study of theory of mapping class groups (see \cite{EK}, \cite{CMS}, and \cite{MR3494163}) and diffeomorphism groups (see \cite{BIP}, \cite{TsuboiM}, \cite{Tsuboi12}, \cite{Tsuboi17} and \cite{BHW}).
The celebrated Bavard duality theorem \cite{Bav} describes the relationship between homogeneous quasimorphisms and the stable commutator length. In particular, for an element $x \in [G,G]$, $\scl_G(x)$ is non-zero if and only if there exists a homogeneous quasimorphism $f$ on $G$ with $f(x) \ne 0$.

In  \cite{KK} and \cite{KKMM1}, we  construct a pair $(G,N)$ such that $\scl_N$ and $\scl_{G,N}$ are not bi-Lipschitzly equivalent on $[N,N]$.  Contrastingly, in several cases it is known that $\scl_G$ and $\scl_{G,N}$ are bi-Lipschitzly equivalent on $[G,N]$. For example, if the map $p \colon G \to \Gamma = G/N$  virtually splits , then $\scl_G$ and $\scl_{G,N}$ are bi-Lipschitzly equivalent on $[G,N]$. In this paper, the vanishing of $\QQQ(N)^G / (\HHH^1(N)^G + i^* \QQQ(G))$ implies the equivalence of $\scl_G$ and $\scl_{G,N}$ as follows. We note that $\HHH^2(G) = 0$ implies $\QQQ(N)^G / (\HHH^1(N)^G + i^* \QQQ(G)) = 0$ by Theorem \ref{easy cor}.



\begin{thm} \label{main thm 3.1}
Assume that $\QQQ(\bG)^{\hG} = \HHH^1(\bG)^{\hG} + i^* \QQQ(\hG)$. Then
\begin{itemize}
  \item[$(1)$] $\scl_{\hG}$ and $\scl_{\hG,\bG}$ are bi-Lipschitzly equivalent on $[\hG,\bG]$.
  \item[$(2)$] If $\Gamma = G/N$ is  amenable, then $\scl_\hG(x) \le \scl_{\hG, \bG}(x) \le 2 \cdot \scl_{\hG}(x)$ for all $x \in [\hG, \bG]$.
  \item[$(3)$] If $\Gamma = G/N$ is solvable, then $\scl_{\hG}(x) = \scl_{\hG,\bG}(x)$ for all $x \in [\hG,\bG]$.
\end{itemize}
\end{thm}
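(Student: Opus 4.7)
The main tool is the Bavard-type duality for stable (mixed) commutator lengths (cited later as Theorem~\ref{thm Bavard}): for $x \in [\hG,\bG]$,
\[
\scl_{\hG}(x) = \sup_{F \in \QQQ(\hG) \setminus \HHH^1(\hG)} \frac{|F(x)|}{2 D(F)}
\qquad\text{and}\qquad
\scl_{\hG,\bG}(x) = \sup_{f \in \QQQ(\bG)^{\hG} \setminus \HHH^1(\bG)^{\hG}} \frac{|f(x)|}{2 D(f)}.
\]
The inequality $\scl_{\hG}(x) \le \scl_{\hG,\bG}(x)$ on $[\hG,\bG]$ is immediate since every $(\hG,\bG)$-commutator is also a commutator in $\hG$; the task is to prove the reverse with the appropriate constants.

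For part (1), I use the hypothesis $\QQQ(\bG)^{\hG} = \HHH^1(\bG)^{\hG} + i^*\QQQ(\hG)$ to decompose each $f \in \QQQ(\bG)^{\hG}$ as $f = h + i^*F$ with $h \in \HHH^1(\bG)^{\hG}$ and $F \in \QQQ(\hG)$. The crucial observation is that any $\hG$-invariant homomorphism $h \colon \bG \to \RR$ vanishes on $[\hG,\bG]$, since
\[
h([g,x]) = h(gxg^{-1}) - h(x) = h(x) - h(x) = 0
\]
by $\hG$-invariance. Hence $f(x) = F(x)$ for $x \in [\hG,\bG]$ and $D(f) = D(i^*F) \le D(F)$ because $h$ has zero defect. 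The restriction map $i^* \colon \QQQ(\hG)/\HHH^1(\hG) \to \QQQ(\bG)^{\hG}/\HHH^1(\bG)^{\hG}$ is a bounded surjection of Banach spaces under the defect norm, so the open mapping theorem supplies a constant $C$ such that every class $[f]$ admits a lift $[F]$ with $D(F) \le C \cdot D(f)$. Inserting this lift into the two Bavard formulas above yields $\scl_{\hG,\bG}(x) \le C \cdot \scl_{\hG}(x)$, which together with the easy direction proves (1).

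For parts (2) and (3) I replace the abstract open mapping bound by an explicit construction of extensions with sharp defect control. When $\ppi = \hG/\bG$ is amenable, an invariant mean on $\ppi$ lets one symmetrize a given extension of $f$ into a quasimorphism $\tilde{F} \in \QQQ(\hG)$ with $D(\tilde{F}) \le 2\,D(f)$; the factor $2$ enters through the $2$-cocycle correction underlying the averaging. Via Bavard duality this gives $\scl_{\hG,\bG}(x) \le 2\, \scl_{\hG}(x)$. For solvable $\ppi$ the constant sharpens to $1$: I induct on the derived length, with the abelian base case admitting a finer construction producing an extension $\tilde{F}$ with $D(\tilde{F}) = D(f)$ exactly, and the inductive step passing through $\hG_0 = p^{-1}([\ppi,\ppi])$ and applying the inductive hypothesis to the shorter exact sequence $1 \to \bG \to \hG_0 \to [\ppi,\ppi] \to 1$. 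The principal technical difficulty throughout lies precisely in this explicit defect control: the open mapping argument cheaply furnishes the abstract constant for (1), but squeezing out the sharp values $2$ and $1$ in (2) and (3) demands a careful analysis of how the defect of an invariant quasimorphism on $\bG$ grows when extended along $1 \to \bG \to \hG \to \ppi \to 1$.
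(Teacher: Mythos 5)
Your part (1) follows the paper's strategy (decompose $f=h+i^*F$, note that $h\in\HHH^1(\bG)^{\hG}$ vanishes on $[\hG,\bG]$ and does not change the defect, produce a lift with controlled defect via the open mapping theorem, and feed it into the Bavard duality of Kawasaki--Kimura--Matsushita--Mimura for $\scl_{\hG,\bG}$), but it silently assumes the one point that is actually the main difficulty: that $(\QQQ(\bG)^{\hG}/\HHH^1(\bG)^{\hG},D)$ (and likewise $(\QQQ(\hG)/\HHH^1(\hG),D)$) is \emph{complete}. The defect is a seminorm that does not control values, and completeness is not formal; the paper proves it by realizing $\rQQQ(\bG)^{\QQQ\hG}/\HHH^1(\bG)^{\hG}$ as a retract, hence a closed subspace, of the Banach space of $\bG$-quasimorphisms modulo $\bG$-homomorphisms (a result imported from the earlier Bavard-duality paper), and then cutting out $\QQQ(\bG)^{\hG}/\HHH^1(\bG)^{\hG}$ as the intersection of kernels of the bounded functionals $f\mapsto f(x^n)-n\cdot f(x)$. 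Without this, the phrase ``bounded surjection of Banach spaces'' has no foundation and the open mapping theorem cannot be invoked.

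In (2) and (3) the quantitative claims are asserted rather than proved, and your proposed mechanisms are weaker than what is needed. For (2), ``symmetrize via an invariant mean, the factor $2$ enters through the $2$-cocycle correction'' is not an argument; the paper instead bounds the defect of an \emph{arbitrary} extension: for amenable $\Gamma$ the restriction $\HHH^2_b(\hG)\to\HHH^2_b(\bG)^{\hG}$ is isometric, and combining this with $\|[\delta\varphi]\|\le D(\varphi)\le 2\|[\delta\varphi]\|$ gives $D(\varphi)\le 2D(\varphi|_{\bG})$ for every $\varphi\in\QQQ(\hG)$, which is exactly the hypothesis of the equivalence criterion with $C=2$. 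For (3), your ``finer construction producing an extension with $D(\tilde F)=D(f)$ exactly'' in the abelian case is precisely what has to be proved; the paper gets it from $D(\varphi)=\sup_{a,b\in\hG}|\varphi([a,b])|$ together with the lemma that an extendable homogeneous quasimorphism $f$ on $\bG$ satisfies $|f([a,b])|\le D(f)$ whenever $a,b\in\hG$ and $[a,b]\in\bG$ (proved via the identity $[a^n,b]=a^{n-1}[a,b]a^{-(n-1)}\cdots[a,b]$, $\hG$-invariance, and boundedness of $f$ on the powers $[a^n,b]$ coming from extendability). Moreover, your induction through $\hG_0=p^{-1}([\Gamma,\Gamma])$ does not work as stated: the hypothesis $\QQQ(\bG)^{\hG}=\HHH^1(\bG)^{\hG}+i^*\QQQ(\hG)$ is only given for the pair $(\hG,\bG)$, so there is no extendability statement for the intermediate pair $(\hG_0,\bG)$, nor any guarantee that a staged extension to $\hG_0$ extends further to $\hG$ with controlled defect. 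The paper avoids this entirely by fixing one extension $F\in\QQQ(\hG)$ supplied by the hypothesis and running the induction on the derived series of $\hG$ itself, showing $D(F)=D(F|_{\hG^{(1)}})=\cdots=D(F|_{\hG^{(n)}})\le D(F|_{\bG})\le D(F)$, which needs no auxiliary extendability assumptions.
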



\begin{remark}\label{remark=boundedlyacyclic}
Recently,  several examples of  non-amenable boundedly acyclic groups have been constructed (see  \cite{FLM1}, \cite{FLM2}, \cite{MN21} and \cite{Monod2021}: we also recall some ofhem in Theorem~\ref{thm:bdd_acyc}).
 However, our proof of (2) of Theorem~\ref{main thm 3.1} does \emph{not} remain working if the assumption of amenability of $\Gamma$ in (2) is replaced with bounded $3$-acyclicity. Indeed, in our proof, we use the fact that $\HHH^2_b(G) \to \HHH^2_b(N)^G$ is \emph{isometric}, which is deduced from amenability of $\Gamma$.
\end{remark}


By Theorem \ref{easy cor}, when $G/N$ is  boundedly $3$-acyclic, then $\HHH^2(G) = 0$ implies that $\QQQ(\bG)^{\hG} = \HHH^1(\bG)^{\hG} + i^* \QQQ(\hG)$, and hence $\scl_{G,N}$ and $\scl_{G}$ are equivalent on $[\hG, \bG]$. There are plenty of examples of groups whose second cohomology groups vanish as follows: 

\begin{itemize}
\item Free groups $F_n$.

\item Let $\genus$ be a positive integer.
Let $N_\genus$ be the non-orientable closed surface with genus $\genus$, and set $G = \pi_1(N_\genus)$. Then, $G = \langle a_1, \cdots, a_\genus \; | \; a_1^2 \cdots a_\genus^2 \rangle$ and $\HHH^2(G) = \HHH^2(N_\genus) = 0$.

\item Let $K$ be a knot in $S^3$. Then the knot group $G$ of $K$ is defined to be the fundamental group of the complement $S^3 \setminus K$.
Since $S^3 \setminus K$ is an Eilenberg-MacLane space, we have $\HHH^2(G) = \HHH^2(S^3 \setminus K) = \widetilde{\HHH}_0(K) = 0$.

\item The braid group $B_n$. Akita and Liu \cite{AL} gave sufficient conditions on a labelled graph $\Gamma$ such that the real second cohomology group of the Artin group $A(\Gamma)$ vanishes (see Corollary 3.21 of \cite{AL}).

\item Free products of the above groups.
\end{itemize}

 For other examples satisfying that $\QQQ(\bG)^{\hG} = \HHH^1(\bG)^{\hG} + i^* \QQQ(\hG)$,  see Example \ref{eg:free_prod} and Corollaries \ref{FujiwaraSoma}, \ref{cor:circle_bundle}, and \ref{cor FFF}.

 Finally, we discuss pairs $(G, N)$ with
non-equivalent $\scl_G$ and $\scl_{G,N}$. 
 In \cite{KK}, the first and second authors provided the first example of  such  $(G,N)$
 (Example~\ref{ex:KK}); we obtain another example with smaller $G$ in Example~\ref{ex:KKMM}, which follows from the work \cite{KKMM2}. These two examples may be seen as one example, coming from symplectic geometry. Unfortunately,
in the present paper, we are unable to provide any new example from a different background.
 We remark that some of the authors \cite{MMM} provided new examples after our work; see the discussion below Problem~\ref{scl not equiv}. 
By Theorem \ref{main thm 3.1}, the vanishing of $\QQQ(N)^G / (\HHH^1(N)^G + i^* \QQQ(G))$ implies the equivalence of $\scl_G$ and $\scl_{G,N}$.
 After this work, the authors \cite{coarse_group} proved that its converse holds if $N = [G, G]$. 
We discuss problems on the equivalence/non-equivalence in more detail in Subsection~\ref{subsec:equivalence}.


\subsection{The case of IA-automorphism groups of free groups}
Here we provide an example that  $\QQQ(\bG)^{\hG} = \HHH^1(\bG)^{\hG} + i^* \QQQ(\hG)$  but $\ppi$ is not  amenable.  Our example comes from the automorphism group of a free group and the IA-automorphisim group.
The group of automorphisms of a group $G$ is denoted by $\Aut(G)$. Let $\IAA_n$ be the IA-automorphism group of the free group $F_n$, {\it i.e.,} the kernel of the natural homomorphism $\Aut(F_n) \to \GL(n,\ZZ)$.
Let $\Aut(F_n)_+$ denote the preimage of $\SL(n,\ZZ)$ in $\Aut(F_n)$.
The following theorem will be proved in Section~\ref{section=proofAut}; see Theorem \ref{thm 7.1.2} for a more general statement.

\begin{thm} \label{thm 3.2}
\begin{enumerate}[$(1)$]
 \item For every $n\geq 2$, $\QQQ(\IAA_n)^{\Aut(F_n)} = i^* \QQQ(\Aut(F_n))$ and $\QQQ(\IA_n)^{\Aut_+(F_n)} = i^* \QQQ(\Aut_+(F_n))$ hold.
 \item For every $n\geq 6$ and for every subgroup $\hG$  of $\Aut(F_n)$ of finite index,  $\QQQ(\bG)^{\hG} = i^* \QQQ(\hG)$ holds. Here, $\bG = \IA_n\cap \hG$.
\end{enumerate}
\end{thm}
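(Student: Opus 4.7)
The plan is to derive both parts from Theorem~\ref{main thm 2} applied to the short exact sequences
\[
1\to \IAA_n\to \Aut(F_n)\to \GL(n,\ZZ)\to 1 \quad\text{and}\quad 1\to \IAA_n\to \Aut_+(F_n)\to \SL(n,\ZZ)\to 1,
\]
together with, for part~(2), the induced sequence $1\to \bG\to \hG\to \Gamma\to 1$, where $\Gamma\le \GL(n,\ZZ)$ is the finite-index image of $\hG\le \Aut(F_n)$. Theorem~\ref{main thm 2} reduces the statement to verifying, on the relevant $\Gamma$, the two conditions $(\mathrm{i})$ $\Gamma$ is boundedly $3$-acyclic, and $(\mathrm{ii})$ $\HHH^{2}(\Gamma;\RR)=0$; together they force $\dim\bigl(\QQQ(\bG)^{\hG}/i^{*}\QQQ(\hG)\bigr)=0$, which is the desired equality.

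For condition $(\mathrm{ii})$ I would invoke Borel's classical stable cohomology computation, which yields $\HHH^{2}(\SL(n,\ZZ);\RR)=\HHH^{2}(\GL(n,\ZZ);\RR)=0$ for every $n\ge 3$, a vanishing that descends via transfer to every arithmetic subgroup of finite index. For condition $(\mathrm{i})$ I would quote the bounded-acyclicity results for $\SL(n,\ZZ)$ recalled in Theorem~\ref{thm:bdd_acyc} (due to Monod, Monod--Nariman, and others): for $n$ sufficiently large, $\SL(n,\ZZ)$ is boundedly $3$-acyclic, a property which then propagates to $\GL(n,\ZZ)$ (whose quotient by $\SL(n,\ZZ)$ is amenable, hence harmless for bounded cohomology) and, via the transfer for bounded cohomology with $\RR$ coefficients, to every finite-index subgroup of $\GL(n,\ZZ)$. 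The cut-off $n\ge 6$ in part~(2) reflects exactly what this bounded-acyclicity input currently permits; part~(1) for $n\ge 3$ needs only the easier assertion for the full group.

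The case $n=2$ of part~(1) is not reached by Theorem~\ref{main thm 2}, because $\GL(2,\ZZ)$ is virtually free and thus not boundedly $2$-acyclic. Here the plan is to argue by hand, exploiting $\IAA_{2}=\mathrm{Inn}(F_{2})\cong F_{2}$: any element of $\QQQ(\IAA_{2})^{\Aut(F_{2})}$ corresponds to a homogeneous quasimorphism on $F_{2}$ invariant under the full $\Aut(F_{2})$-action, and a direct analysis of $\Aut(F_{2})$-orbits on conjugacy classes in $F_{2}$ (using, in particular, that involutions such as $a\mapsto a^{-1}$ lie in $\Aut(F_{2})$ and force $f$ to vanish on many cyclically reduced words) should show that any such $f$ already lies in the image of $\QQQ(\Aut(F_{2}))$. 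The principal obstacle throughout is condition $(\mathrm{i})$: bounded $3$-acyclicity of arithmetic groups is the only genuinely deep input required, and it is what pins down the threshold $n\ge 6$ in part~(2); the $n=2$ subcase of part~(1) is a secondary, more modest obstacle demanding an ad hoc argument outside the general framework.
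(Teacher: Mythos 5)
Your overall skeleton for $n\geq 3$ (kill $\HHH^2_{/b}$ of the arithmetic quotient by combining vanishing of ordinary $\HHH^2$ with vanishing of bounded cohomology in the relevant degrees, then apply the five-term sequence / Theorem~\ref{main thm 2}) is indeed the paper's strategy, but two of your key steps fail as stated. First, for part (2) you claim that $\HHH^2(\SL(n,\ZZ);\RR)=\HHH^2(\GL(n,\ZZ);\RR)=0$ ``descends via transfer'' to every finite-index subgroup: the transfer argument goes in the opposite direction. It shows that the restriction $\HHH^2(\Gamma;\RR)\to\HHH^2(\Lambda;\RR)$ is injective for $\Lambda\leq\Gamma$ of finite index, so vanishing passes \emph{up} from the subgroup to the ambient group, never down. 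Vanishing of $\HHH^2$ for finite-index subgroups of $\GL(n,\ZZ)$ is a genuinely deeper fact: it is exactly Borel's stable-range theorem (Theorem~\ref{thm 7.9 new}~(1)), and the effective stable range $\min\{\lfloor\frac{n-2}{2}\rfloor,\,n-2\}\geq 2$ is precisely what produces the threshold $n\geq 6$ in part (2) --- not the bounded-acyclicity input, as you assert. Indeed the bounded-cohomology vanishing you need ($\HHH^3_b$, and even bounded $3$-acyclicity) is available for \emph{all} finite-index subgroups of $\SL(n,\ZZ)$ already for $n\geq 3$ by Monod's theorems (Theorem~\ref{thm:3bdd_coh_vanish_SL}; Remark~\ref{rem:bdd_3}), not by Theorem~\ref{thm:bdd_acyc}, which does not treat $\SL(n,\ZZ)$; and your proposed propagation of bounded acyclicity to finite-index subgroups ``via transfer'' has the same directional error (Theorem~\ref{thm:transfer} only gives injectivity of restriction, i.e.\ the passage from subgroup to overgroup). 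The contrast with the $\Sp$ case, where $\HHH^2(\ppi_0)\cong\RR\neq 0$ for finite-index subgroups, shows that the step you skipped is where the real content lies.

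Second, your plan for $n=2$ is not a proof and is aimed at the wrong mechanism. The space $\QQQ(\IA_2)^{\Aut(F_2)}$ is far from zero (via $\IA_2=\mathrm{Inn}(F_2)\cong F_2$ it contains the infinite-dimensional space of $\Aut(F_2)$-invariant homogeneous quasimorphisms on $F_2$), so no amount of forcing $f$ to vanish on special words can settle the question; what must be shown is that every such $f$ \emph{extends}. The paper's argument is short and uses a different idea: $\GL(2,\ZZ)$ and $\SL(2,\ZZ)$ are virtually free, hence the projections $\Aut(F_2)\to\GL(2,\ZZ)$ and $\Aut_+(F_2)\to\SL(2,\ZZ)$ virtually split (lift a free generating set of a finite-index free subgroup), and Proposition~\ref{virtual split KKMM1} then gives surjectivity of $i^*$. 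You correctly identified that the cohomological machinery does not reach $n=2$, but the replacement you sketch would not close the case.
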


\begin{remark}\label{rem=Aut}
\begin{enumerate}[(1)]
  \item The bound `$n\geq 6$' in (2) of Theorem~\ref{thm 3.2} comes from (1) of Theorem \ref{thm 7.9 new}, which treats an effective bound of the \emph{Borel stable range} for second ordinary cohomology with the trivial real coefficient of $\SL_n$.
  \item Corollary 3.8 of \cite{Gersten} implies that $\HHH^2(\Aut(F_n)) = 0$ for $n \ge 5$. However, $\HHH^2(\fis)$ of a subgroup $\fis$ of finite index of $\Aut(F_n)$ is mysterious in general. Even on $\HHH^1$, quite recently it has been proved that $\HHH^1(\fis)=0$ if $n\geq 4$; the proof is based on Kazhdan's property (T) for $\Aut(F_n)$ for $n\geq 4$. See \cite{KNO}, \cite{KKN}, and \cite{Nitsche}. We refer to \cite{BHV} for a comprehensive treatise on property (T).
  Contrastingly, by \cite{McCool}, there exists a subgroup $\fis$ of finite index of $\Aut(F_3)$ such that $\HHH^1(\Lambda)\ne 0$.
  \item The same conclusions as ones in Theorem~\ref{thm 3.2} hold if we replace $\Aut(F_n)$ and $\IAA_n$ with $\mathrm{Out}(F_n)$ and $\overline{\IAA}_n$, respectively. Here, $\overline{\IAA}_n$ denotes the kernel of the natural map $\mathrm{Out}(F_n)\to \GL(n,\ZZ)$. Indeed, the proofs which will be presented in Section~\ref{section=proofAut} remain to work without any essential change.
  \item If $n\geq 3$ and if $\hG$ is a  subgroup of $\Aut(F_n)$ of finite index, then the real vector space $i^* \QQQ(\hG)$ is infinite dimensional. Indeed, we can employ \cite{BBF} to the acylindrically hyperbolic group $\mathrm{Out}(F_n)$, whose  amenable radical is trivial. Thus we may construct an infinitely collection of homogeneous quasimorphisms on $\mathrm{Out}(F_n)$ which is linearly independent even when these quasimorphisms are restricted on $[\overline{\IAA}_n\cap \overline{\hG},\overline{\IAA}_n\cap \overline{\hG}]$.
  Here $\overline{\hG}$ is the image of $\hG$ under the natural projection $\Aut(F_n)\to \mathrm{Out}(F_n)$. Then, consider the restriction of this collection on $\overline{\hG}$, and take the pull-back of it under the projection $\hG\to \overline{\hG}$.

  In fact, Corollary~1.2 of \cite{BBF} treats quasi-cocycles into unitary representations.
  Then the following may be deduced in a similar manner to one above: let $\hG$ be a subgroup of $\Aut (F_n)$ of finite index with $n\geq 3$, and $\ppi := \hG/(\IA_n \cap \hG)$. Let $(\pi,\HH)$ be a unitary $\ppi$-representation, and $(\overline{\pi},\HH)$ the pull-back of it under the projection $\hG \to \ppi$.
  Then the vector space $i^{\ast}\rQQQ Z (\hG,\overline{\pi},\HH)$ of the quasi-cocycles is infinite dimensional. Furthermore, Corollary~1.2 of \cite{BBF} and its proof can be employed to obtain the corresponding result to the setting where $\hG$ is a subgroup of $\Mod (\Sigma_l)$ of finite index with $l\geq 3$, and $(\pi,\HH)$ is a unitary representation of $\hG/(\II(\Sigma_{\genus}) \cap \hG)$.
  Here, $\II(\Sigma_{\genus})$ denotes the Torelli group.
%
\end{enumerate}
\end{remark}

If $(\hG,\bG)$ equals $(\Mod(\Sigma_\genus),\II(\Sigma_\genus))$ or its analog for the setting of subgroups of finite index, then the situation is subtle. See Theorem \ref{thm 7.4} for our result. We remark that the question on the extendability of quasimorphisms might be open; see Problem \ref{prob:mcg}.

\subsection{Applications to volume flux homomorphisms}\label{intro flux}

In Section \ref{flux section}, we will provide applications of Theorem \ref{easy cor} to diffeomorphism groups.

We study the problem to determine which cohomology class admits a bounded representative.
 Notably, the problem on (subgroups of) diffeomorphism groups is interesting and  has been studied in view of characteristic classes of fiber bundles.
However, the problem is often quite difficult, and in fact, there are only a few cohomology classes that are known to be bounded or not.
Here we restrict our attention to the case of degree two cohomology classes.
The best-known example is the Euler class of $\diff_+(S^1)$, which has a bounded representative.
The Godbillon--Vey class integrated along the fiber defines a cohomology class of $\diff_+(S^1)$, which has no bounded representatives \cite{MR298692}.
It was shown in \cite{Cal04} that the Euler class of $\diff_0(\RR^2)$ is unbounded.
In the case of three-dimensional manifolds,  the identity components of the diffeomorphism groups of several closed Seifert-fibered three-manifolds  admit cohomology classes of degree two which do not have bounded representatives \cite{Mann20}.


Let $M$ be an $m$-dimensional manifold and $\Omega$ a volume form.
Then, we can define the flux homomorphism (on the universal covering)
$\tflux_\Omega \colon \tdiff_0(M, \Omega) \to \HHH^{m-1}(M)$,
the flux group $\Gamma_\Omega$,
and the flux homomorphism
$\flux_\Omega \colon \diff_0(M, \Omega) \to \HHH^{m-1}(M)/ \Gamma_\Omega$; see Section \ref{flux section} for the precise definition.

As an application of Theorem \ref{easy cor}, we have a few results related to the comparison maps $\HHH^2_b(\diff_0(M, \Omega)) \to \HHH^2(\diff_0(M, \Omega))$ and $\HHH^2_b(\tdiff_0(M, \Omega)) \to \HHH^2(\tdiff_0(M, \Omega))$.

Kotschick and Morita \cite{KM} essentially pointed out that the spaces $\HHH^2(\diff_0(M, \Omega))$ and $\HHH^2(\tdiff_0 (M, \Omega))$ can be very large due to the following proposition (note that $\HHH^n(\RR^m;\RR)$ is isomorphic to $\mathrm{Hom}_\ZZ\left(\wedge_\ZZ^n(\RR^m);\RR\right)$).

\begin{prop}[\cite{KM}] \label{prop diff}
The homomorphisms
\[\flux^*_\Omega \colon \HHH^2\left(\HHH^{m-1}(M)/ \Gamma_\Omega\right) \to \HHH^2\left(\diff_0(M, \Omega)\right),\]
\[\tflux^*_\Omega \colon \HHH^2\left(\HHH^{m-1}(M)\right) \to \HHH^2\left(\tdiff_0(M, \Omega)\right)\]
induced by the flux homomorphisms are injective.
\end{prop}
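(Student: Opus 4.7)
The plan is to apply the five-term exact sequence of group cohomology (Theorem~\ref{thm five-term}) to the short exact sequences induced by the flux homomorphisms. By the surjectivity of $\tflux_\Omega$ onto $\HHH^{m-1}(M)$ and of $\flux_\Omega$ onto $\HHH^{m-1}(M)/\Gamma_\Omega$, we have
\[
1 \to \Ker(\flux_\Omega) \to \diff_0(M,\Omega) \xrightarrow{\flux_\Omega} \HHH^{m-1}(M)/\Gamma_\Omega \to 1
\]
and
\[
1 \to \Ker(\tflux_\Omega) \to \tdiff_0(M,\Omega) \xrightarrow{\tflux_\Omega} \HHH^{m-1}(M) \to 1.
\]
By exactness of the resulting five-term sequences, the kernel of $\flux_\Omega^*$ (respectively $\tflux_\Omega^*$) on $\HHH^2$ is identified with the image of the transgression from the $G$-invariant $\HHH^1$ of the kernel. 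Hence injectivity reduces to showing that every invariant $\RR$-valued homomorphism on the kernel extends to a homomorphism on the ambient group.

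For $\flux_\Omega^*$, I would invoke the classical perfectness theorem of Thurston--Mather--Banyaga: for a closed manifold $M$ of dimension $m \geq 2$, the kernel $\Ker(\flux_\Omega)$ is simple, hence perfect. Consequently $\HHH^1(\Ker(\flux_\Omega);\RR) = \Hom(\Ker(\flux_\Omega),\RR) = 0$, so the transgression vanishes trivially and $\flux_\Omega^*$ is injective.

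For $\tflux_\Omega^*$, the analysis is subtler because $\Ker(\tflux_\Omega)$ fits in a central extension
\[
1 \to Z \to \Ker(\tflux_\Omega) \to \Ker(\flux_\Omega) \to 1,
\]
where $Z = \pi_1(\diff_0(M,\Omega)) \cap \Ker(\tflux_\Omega)$ is the kernel of the restriction of $\tflux_\Omega$ to $\pi_1(\diff_0(M,\Omega))$. Since $Z$ is central in $\tdiff_0(M,\Omega)$ and the quotient $\Ker(\flux_\Omega)$ is perfect, any $\tdiff_0(M,\Omega)$-invariant homomorphism $h \colon \Ker(\tflux_\Omega) \to \RR$ vanishes on the commutator subgroup (which surjects onto $\Ker(\flux_\Omega)$) and so factors through a homomorphism on the quotient of $Z$ by $Z \cap [\Ker(\tflux_\Omega), \Ker(\tflux_\Omega)]$. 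The main obstacle will be to show that this factor group admits no non-trivial $\RR$-valued homomorphism --- a \emph{central perfectness} statement asserting that every element of $Z$ arises as a commutator inside $\Ker(\tflux_\Omega)$. In many natural cases this is automatic because $\tflux_\Omega$ is already injective on $\pi_1(\diff_0(M,\Omega))$ (so $Z = 0$); in general, it requires a careful analysis of the interaction between $\pi_1(\diff_0(M,\Omega))$ and the universal central extension of the perfect group $\Ker(\flux_\Omega)$, which is precisely the delicate point of the argument.
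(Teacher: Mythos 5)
Your treatment of $\flux_\Omega^*$ is exactly the paper's argument: apply the five-term exact sequence (Theorem~\ref{thm five-term}) to $1 \to \Ker(\flux_\Omega) \to \diff_0(M,\Omega) \to \HHH^{m-1}(M)/\Gamma_\Omega \to 1$ and kill the transgression by perfectness of $\Ker(\flux_\Omega)$, so that exactness at $\HHH^2$ of the quotient forces injectivity of the inflation map. That half is fine.

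The gap is in the $\tflux_\Omega^*$ case. The statement you isolate as ``the delicate point'' --- that the central subgroup $Z=\pi_1(\diff_0(M,\Omega))\cap\Ker(\tflux_\Omega)$ is absorbed into commutators, i.e.\ that $\Ker(\tflux_\Omega)$ is perfect --- is not something that needs a new analysis of the universal central extension of $\Ker(\flux_\Omega)$: it is itself a known theorem of Thurston and Banyaga (see \cite{Th}, \cite{Ban}, and Theorems~4.3.1 and 5.1.3 of \cite{Ban97}), and it is precisely what the paper invokes. Once one knows $\Ker(\tflux_\Omega)$ is perfect, $\HHH^1(\Ker(\tflux_\Omega))=0$, hence $\HHH^1(\Ker(\tflux_\Omega))^{\tdiff_0(M,\Omega)}=0$, and the five-term exact sequence for $1 \to \Ker(\tflux_\Omega) \to \tdiff_0(M,\Omega) \to \HHH^{m-1}(M) \to 1$ gives injectivity of $\tflux_\Omega^*$ by the same one-line argument as in the first case; no case distinction on whether $Z=0$, and no ``central perfectness'' argument of your own, is needed. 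As written, your proposal proves the first assertion but leaves the second assertion's key input unestablished, so the proof of injectivity of $\tflux_\Omega^*$ is incomplete; it becomes complete (and collapses to the paper's proof) once you cite the perfectness of $\Ker(\tflux_\Omega)$ rather than only that of $\Ker(\flux_\Omega)$.
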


As an application of Theorem \ref{easy cor}, we have the following theorem:
\begin{thm} \label{thm diff}
Let $(M,\Omega)$ be an $m$-dimensional closed manifold with a volume form $\Omega$. Then the following hold:
\begin{itemize}
\item[$(1)$] If $m = 2$ and the genus of $M$ is at least $2$, then there exists at least one non-trivial element of $\Im(\flux_\Omega^*)$ represented by a bounded $2$-cochain.
\item[$(2)$] Otherwise, every non-trivial element of $\Im(\flux_\Omega^*)$ and $\Im(\tflux_\Omega^*)$ cannot be represented by a bounded $2$-cochain.
\end{itemize}
\end{thm}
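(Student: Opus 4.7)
The plan is to deduce Theorem~\ref{thm diff} directly from Theorem~\ref{easy cor}. Set $\hG = \diff_0(M,\Omega)$ and $\bG = \Ker(\flux_\Omega)$, so that the quotient $\ppi = \hG/\bG$ is abelian (it embeds into $\HHH^{m-1}(M)/\Gamma_\Omega$, and in fact equals this target by surjectivity of the flux). In particular $\ppi$ is amenable, hence boundedly $3$-acyclic, and Theorem~\ref{easy cor} yields the isomorphism
\[
  \QQQ(\bG)^{\hG} / (\HHH^1(\bG)^{\hG} + i^* \QQQ(\hG)) \;\cong\; \Im(p^*) \cap \Im(c_{\hG}),
\]
in which $p^*$ is exactly $\flux_\Omega^*$. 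The parallel setup $\hG = \tdiff_0(M,\Omega)$, $\bG = \Ker(\tflux_\Omega)$ handles $\tflux_\Omega^*$. In both cases, the subspace of bounded classes in $\Im(\flux_\Omega^*)$ (resp.\ $\Im(\tflux_\Omega^*)$) is thereby identified with the quotient of invariant quasimorphisms modulo homomorphisms and extendable ones.

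For part~(1), I would invoke Py's Calabi-type quasimorphism from \cite{Py06}, which is $\diff_0(\Sigma_\genus,\Omega)$-invariant on $\Ker(\flux_\Omega)$ and has non-zero defect. By the main result of \cite{KK} it does not extend to $\diff_0(\Sigma_\genus,\Omega)$, and its non-zero defect rules out its being a homomorphism. Hence it represents a non-zero class in $\QQQ(\bG)^{\hG}/(\HHH^1(\bG)^{\hG} + i^*\QQQ(\hG))$, which under the isomorphism above produces a non-trivial bounded class in $\Im(\flux_\Omega^*)$.

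For part~(2), the work reduces to showing that $\QQQ(\bG)^{\hG}/(\HHH^1(\bG)^{\hG} + i^*\QQQ(\hG)) = 0$ except when the assertion is already vacuous. The cases $m = 1$ and $(m,\genus) = (2,0)$ are degenerate since $\HHH^2$ of the target of the flux vanishes (for instance $\HHH^2(S^1) = 0$ and $\HHH^1(S^2) = 0$), so $\Im(\flux_\Omega^*) = \Im(\tflux_\Omega^*) = 0$. For $(m,\genus) = (2,1)$ and for $m \ge 3$, the vanishing follows from the known extendability of every $\hG$-invariant homogeneous quasimorphism on $\Ker(\flux_\Omega)$ modulo $\HHH^1(\bG)^{\hG}$, which is supplied by the literature on area- and volume-preserving diffeomorphism groups (via Calabi-type invariants and simplicity/perfectness of the relevant kernels). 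For $\tflux_\Omega^*$ even in the higher-genus surface case, one verifies separately that any Py-type obstruction lifts to a genuine homomorphism on the universal cover, hence becomes zero in the quotient.

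The main obstacle is part~(2): although Theorem~\ref{easy cor} reduces the issue to the single vanishing $\QQQ(\bG)^{\hG} = \HHH^1(\bG)^{\hG} + i^*\QQQ(\hG)$, confirming this in each geometric regime requires specific inputs from volume-preserving dynamics. By contrast, part~(1) is essentially a direct translation of the Py--Kawasaki--Kimura non-extendability theorem across the main isomorphism, and the identification of $p^*$ with $\flux_\Omega^*$ together with Proposition~\ref{prop diff} is straightforward bookkeeping.
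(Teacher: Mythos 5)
Your part (1) follows the paper's route (Theorem~\ref{easy cor} plus non-extendability of Py's quasimorphism from \cite{KK}), but the justification that the Py class is non-zero in $\QQQ(\bG)^{\hG}/(\HHH^1(\bG)^{\hG}+i^*\QQQ(\hG))$ is incomplete: having non-zero defect only rules out $\qm_P$ being a homomorphism, not being a sum of an invariant homomorphism and an extendable quasimorphism. The correct input, which the paper uses, is that $\bG=\Ker(\flux_\Omega)=[\diff_0(\Sigma_\genus,\Omega),\diff_0(\Sigma_\genus,\Omega)]$ is \emph{perfect} (Banyaga), so $\HHH^1(\bG)^{\hG}=0$ and the two quotients $\QQQ(\bG)^{\hG}/i^*\QQQ(\hG)$ and $\QQQ(\bG)^{\hG}/(\HHH^1(\bG)^{\hG}+i^*\QQQ(\hG))$ coincide; then non-extendability alone suffices. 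This same perfectness is what makes Proposition~\ref{prop diff} work, so it is a fixable but real omission.

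The genuine gap is in part (2), and you yourself flag it as ``the main obstacle.'' The vanishing $\QQQ(\bG)^{\hG}=i^*\QQQ(\hG)$ in the remaining cases is \emph{not} supplied by perfectness or simplicity of the kernels, nor by general ``Calabi-type'' results; the mechanism the paper uses is the existence of \emph{section homomorphisms} of the flux: for $m\ge 3$ this is Fathi's theorem (Proposition~\ref{prop Fathi}), which provides sections of both $\flux_\Omega$ and $\tflux_\Omega$, and for the torus the sections are explicit (translations); the sphere case is trivial since $\HHH^1(S^2)=0$. Combined with Proposition~\ref{virtual split KKMM1} (split, hence virtually split, projections force surjectivity of $i^*$ on invariant quasimorphisms), this gives $\Im(\flux_\Omega^*)\cap\Im(c_{\hG})=0$, which is exactly the assertion of (2). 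Without this input your argument does not close. Finally, your last remark about $\tflux_\Omega^*$ in the higher-genus surface case is misplaced: that case belongs to item (1), where $\pi_1(\diff_0(\Sigma_\genus,\Omega))=0$, so $\Gamma_\Omega=0$ and $\tflux_\Omega=\flux_\Omega$; the claim that a Py-type obstruction ``lifts to a genuine homomorphism on the universal cover and becomes zero'' would in fact contradict (1) there.
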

Note that in case (1), it is known that $\pi_1\left(\diff_0(M, \Omega)\right)=0$  ( for example, see Subsection 7.2.B of \cite{P01}), in particular, the flux group $\Gamma_\Omega$ is zero.

In the proof of (1) of Theorem \ref{thm diff}, we essentially prove the non-triviality of the cohomology class $c_P \in \Im(\flux_\Omega^*)$ called the \textit{Py class}.
In Subsection \ref{Py class subsec}, we provide some observations on the Py class.

\subsection{Organization of the paper}

Section \ref{sec:preliminaries} collects preliminary facts.
In Section \ref{sec:non-extendable}, we first prove Theorem \ref{main thm 2} and \ref{easy cor}, assuming Theorem \ref{main thm}.
Secondly, we show Theorems \ref{thm surface group} and \ref{mapping torus thm}.
In Section \ref{flux section}, we provide applications of Theorem \ref{main thm} to the volume flux homomorphisms.
Section \ref{sec:pf_of_five-term} is devoted to the proof of Theorem \ref{main thm}.
In Section \ref{sec:scl}, we prove Theorem \ref{main thm 3.1}. 
In Section \ref{section=proofAut}, we prove Theorem \ref{thm 3.2}.
In Section \ref{open problem section}, we provide several open problems. In Appendix, we show other exact sequences related to the space $\QQQ(\hG) / (\HHH^1(\bG)^\hG + i^* \QQQ(\hG))$ and the seven-term exact sequence of groups.

\section{Preliminaries}\label{sec:preliminaries}

Before proceeding to the main part of this section, we collect basic properties of quasimorphisms  and state them as Lemmas~\ref{lem homogeneous} and \ref{lem:homog}; see \cite{Ca} for more details.  Lemma~\ref{lem homogeneous} follows from the equality $(ghg^{-1})^n=gh^ng^{-1}$ for every $g,h\in G$ and every $n\in \ZZ$.

\begin{lem}[See \cite{Ca}] \label{lem homogeneous}
A homogeneous quasimorphism is conjugation invariant.
\end{lem}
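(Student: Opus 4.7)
The plan is to exploit homogeneity to promote a defect-level estimate into an exact equality by letting an exponent tend to infinity, following exactly the standard ``average over powers'' trick.

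First I would record two immediate consequences of homogeneity that any homogeneous quasimorphism $f$ on $G$ satisfies: $f(e) = 0$ (applying $f(x^n) = n f(x)$ at $n=0$) and $f(g^{-1}) = -f(g)$ (applying it at $n=-1$, or equivalently using $f(gg^{-1}) = f(e)= 0$ together with the defect bound and then homogeneity). These will let me suppress the $f(g)$ and $f(g^{-1})$ terms cleanly.

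Next, fix $g, h \in G$ and an arbitrary positive integer $n$. The key algebraic identity is $(ghg^{-1})^n = g h^n g^{-1}$, which combined with homogeneity gives
\[
n \cdot f(ghg^{-1}) = f\bigl((ghg^{-1})^n\bigr) = f(gh^n g^{-1}).
\]
Now apply the defect inequality twice to peel off the outer factors:
\[
\bigl|f(gh^n g^{-1}) - f(g) - f(h^n g^{-1})\bigr| \le D(f), \quad \bigl|f(h^n g^{-1}) - f(h^n) - f(g^{-1})\bigr| \le D(f).
\]
Adding these, using $f(g^{-1}) = -f(g)$ and $f(h^n) = n f(h)$, I obtain
\[
\bigl|\, n f(ghg^{-1}) - n f(h) \,\bigr| \le 2 D(f).
\]

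Finally, dividing by $n$ yields $|f(ghg^{-1}) - f(h)| \le 2D(f)/n$ for every $n \ge 1$; letting $n \to \infty$ forces $f(ghg^{-1}) = f(h)$, which is the desired conjugation invariance. There is no genuine obstacle here: the only small care needed is the bookkeeping to ensure that the $f(g) + f(g^{-1})$ terms cancel exactly (which is why one must invoke homogeneity rather than only the quasimorphism defect), and that the constant on the right-hand side is independent of $n$ so that the limit argument actually concludes.
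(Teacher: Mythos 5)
Your proof is correct and is exactly the argument the paper intends: the paper itself notes that the lemma ``follows from the equality $(ghg^{-1})^n=gh^ng^{-1}$,'' and your use of homogeneity plus two applications of the defect bound, followed by dividing by $n$ and letting $n\to\infty$, is the standard way to carry that out. No gaps.
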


In particular, the restriction of a homogeneous quasimorphism $f$ of $G$ to a normal subgroup $N$ is $G$-invariant.

For a quasimorphism $f \colon G \to \RR$, the Fekete lemma guarantees that the limit
\[ \bar{f}(x) = \lim_{n \to \infty} \frac{f(x^n)}{n}\]
exists. The function $\bar{f}$ defined by the above equation is called the \emph{homogenization of $f$}. Then the following hold;  see \cite[Lemma~2.58]{Ca} for (3).

\begin{lem}[See \cite{Ca}]\label{lem:homog}
The following hold:
\begin{enumerate}[$(1)$]
\item $\bar{f}$ is a homogeneous quasimorphism.

\item $|\bar{f}(x) - f(x)| \le D(f)$ for every $x \in G$.

\item $D(\bar{f}) \le 2 D(f)$.
\end{enumerate}
\end{lem}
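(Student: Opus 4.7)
The existence of $\bar{f}(x) = \lim_n f(x^n)/n$ is granted via Fekete's lemma in the excerpt, so I would take $\bar{f}$ as defined and verify the three claims in the stated order, with part (2) feeding into the proof of part (3).

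For (1), the identity $\bar{f}(x^k) = k \bar{f}(x)$ for $k \in \NN$ follows at once from the subsequence computation
\[
  \bar{f}(x^k) = \lim_n \frac{f(x^{kn})}{n} = k \lim_n \frac{f(x^{kn})}{kn} = k \bar{f}(x).
\]
To descend to $k \leq 0$, I would first bound $|f(e)| \leq D(f)$ via $|f(e \cdot e) - 2 f(e)| \leq D(f)$, then use $|f(x^n \cdot x^{-n}) - f(x^n) - f(x^{-n})| \leq D(f)$ together with this bound to deduce $|f(x^n) + f(x^{-n})| \leq 2 D(f)$. Dividing by $n$ and sending $n \to \infty$ yields $\bar{f}(x^{-1}) = -\bar{f}(x)$, and homogeneity extends to all integers.

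For (2), a straightforward induction on $n$ using $|f(x \cdot x^{n-1}) - f(x) - f(x^{n-1})| \leq D(f)$ and the inductive hypothesis gives $|f(x^n) - n f(x)| \leq (n-1) D(f)$. Dividing by $n$ and letting $n \to \infty$ produces the stated inequality.

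For (3), I would write $(xy)^n = xyxy \cdots xy$ as the product of $2n$ alternating factors $x$ and $y$; iterating the quasimorphism inequality gives $|f((xy)^n) - n f(x) - n f(y)| \leq (2n - 1) D(f)$. Combining with $|\bar{f}((xy)^n) - f((xy)^n)| \leq D(f)$ from (2) and the homogeneity identity $\bar{f}((xy)^n) = n \bar{f}(xy)$ from (1) yields $|n \bar{f}(xy) - n f(x) - n f(y)| \leq 2n D(f)$, so $|\bar{f}(xy) - f(x) - f(y)| \leq 2 D(f)$ after dividing by $n$. The defect bound $|\bar{f}(xy) - \bar{f}(x) - \bar{f}(y)| \leq 2 D(f)$ then follows by a careful exchange of $f(x), f(y)$ for $\bar{f}(x), \bar{f}(y)$.

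I expect the principal difficulty to lie in this final step of (3): the naive triangle-inequality combination of $|\bar{f}(xy) - f(x) - f(y)| \leq 2 D(f)$ with two applications of part (2) only produces $|\bar{f}(xy) - \bar{f}(x) - \bar{f}(y)| \leq 4 D(f)$, so reaching the sharp constant $2 D(f)$ requires more delicate bookkeeping --- typically by running the estimate at the level of $n$th powers and dividing out the defect contributions before passing to the limit, thereby amortizing the approximation error of (2) across the power.
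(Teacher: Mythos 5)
Parts (1) and (2) are correct, and so is your intermediate estimate $|\bar{f}(xy)-f(x)-f(y)|\le 2D(f)$ (note also that the paper itself gives no proof of this lemma; it simply cites \cite[Lemma~2.58]{Ca}, so the comparison below is with the standard argument). The genuine gap is precisely the last step of (3), which you flag but do not resolve: the statement requires the constant $2D(f)$, and the remedy you sketch --- ``running the estimate at the level of $n$th powers and amortizing the error of (2)'' --- does not work in that form. If you apply your inequality to the pair $(x^n,y^n)$ you only control $|\bar{f}(x^ny^n)-f(x^n)-f(y^n)|$, and $\bar{f}(x^ny^n)$ bears no relation to $n\bar{f}(xy)$. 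If instead you keep the pair $(x,y)$ and write $n\,|\bar{f}(xy)-\bar{f}(x)-\bar{f}(y)|=|\bar{f}((xy)^n)-\bar{f}(x^n)-\bar{f}(y^n)|$, then your alternating factorization $(xy)^n=xy\cdots xy$ gives $|f((xy)^n)-f(x^n)-f(y^n)|\le (4n-3)D(f)$, because trading $n f(x)$ and $n f(y)$ for $f(x^n)$ and $f(y^n)$ (or for $n\bar{f}(x)$ and $n\bar{f}(y)$) still costs about $nD(f)$ each; in the limit this only yields $D(\bar{f})\le 4D(f)$. So the alternating decomposition cannot, by itself, produce the factor $2$, and the ``careful exchange'' is exactly the missing content.

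The standard way to close the gap uses a different decomposition, in which the error from (2) is paid only once per factor. First observe that $\bar{f}$ is conjugation invariant: $|f(gx^ng^{-1})-f(x^n)|\le 2D(f)+|f(g)|+|f(g^{-1})|$ is bounded independently of $n$, so dividing by $n$ gives $\bar{f}(gxg^{-1})=\bar{f}(x)$ (alternatively, your crude $4D(f)$ bound already shows $\bar{f}$ is a homogeneous quasimorphism, so Lemma \ref{lem homogeneous} applies to it). Then use the identity $(xy)^n y^{-n}=\prod_{i=0}^{n-1} y^i x y^{-i}$, a product of $n$ conjugates of $x$. Applying the defect inequality along this product gives $\bigl|f((xy)^n)-f(y^n)-\sum_{i=0}^{n-1} f(y^i x y^{-i})\bigr|\le nD(f)$, and by (2) each term satisfies $|f(y^i x y^{-i})-\bar{f}(x)|\le D(f)$, while $|f(y^n)-n\bar{f}(y)|\le D(f)$ and $|f((xy)^n)-n\bar{f}(xy)|\le D(f)$. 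Altogether $|n\bar{f}(xy)-n\bar{f}(x)-n\bar{f}(y)|\le (2n+2)D(f)$, and dividing by $n$ and letting $n\to\infty$ gives $D(\bar{f})\le 2D(f)$. Your write-up needs this (or an equivalent) argument; as it stands, part (3) is not proved.
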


In this section, we recall definitions and facts related to the cohomology of groups. For a more comprehensive introduction to this subject, we refer to \cite{Gr}, \cite{Ca}, and \cite{Fr}.

Let $V$ be a left $\RR[G]$-module and $C^n(G; V)$ the vector space consisting of functions from the $n$-fold direct product $G^{\times n}$ to $V$. Let $\delta \colon C^n(G ; V) \to C^{n+1}(G ; V)$ be the $\RR$-linear map defined by
\[(\delta f)(g_0, \cdots, g_n) = g_0 \cdot f(g_1, \cdots, g_n) + \sum_{i=1}^n (-1)^i f(g_0, \cdots, g_{i-1} g_i, \cdots, g_n) + (-1)^{n+1} f(g_0, \cdots, g_{n-1}).\]
Then $\delta^2 = 0$ and its $n$-th cohomology is the \textit{ordinary group cohomology} $\HHH^n(G ; V)$.

Next, suppose that $V$ is equipped with a $G$-invariant norm $\| \cdot \|$, {\it i.e.,} $\| g \cdot v \| = \| v \|$ for every $g \in G$ and for every $v \in V$. Then define $C^n_b(G;V)$ by the subspace
\[C^n_b(G ; V) = \Big\{ f \colon  G^{\times n}  \to V \; \Big| \; \sup_{(g_1, \cdots, g_n) \in G^{\times n}} \| f(g_1, \cdots, g_n)\| < \infty \Big\}\]
of $C^n(G; V)$. Then $C^\bullet_b(G ; V)$ is a subcomplex of $C^\bullet(G ; V)$, and we call the $n$-th cohomology of $C^\bullet_b(G; V)$ the \textit{$n$-th bounded cohomology of $G$}, and denote it by $\HHH^n_b(G; V)$.
 The inclusion $C^\bullet_b(\hG ; V) \to C^\bullet(\hG ; V)$ induces the map $c_{\hG} \colon \HHH^{\bullet}_b(\hG;V) \to \HHH^{\bullet}(\hG;V)$ called the \textit{comparison map}.

Let $\HHH_{/b}^{\bullet}(\hG;V)$ denote their relative cohomology, that is, the cohomology of the quotient complex $C_{/b}^{\bullet}(\hG;V) = C^{\bullet}(\hG;V)/C_b^{\bullet}(\hG;V)$.
Then, the short exact sequence of cochain complexes
\begin{align*}
  0 \to C_b^{\bullet}(\hG;V) \to C^{\bullet}(\hG;V) \to C_{/b}^{\bullet}(\hG;V) \to 0
\end{align*}
induces the cohomology long exact sequence
\begin{align}\label{seq:long_seq}
  \cdots \to \HHH_b^n(\hG;V) \xrightarrow{c_{\hG}} \HHH^n(\hG;V) \to \HHH_{/b}^n(\hG;V) \to \HHH_b^{n+1}(\hG;V) \to \cdots.
\end{align}

If we need to specify the $\hG$-representation $\rho$, we may use the symbols $\HHH^{\bullet}(\hG; \rho, V)$, $\HHH_b^{\bullet}(\hG; \rho, V)$, and $\HHH_{/b}^{\bullet}(\hG; \rho, V)$ instead of $\HHH^{\bullet}(\hG; V)$, $\HHH_b^{\bullet}(\hG; V)$, and $\HHH_{/b}^{\bullet}(\hG; V)$, respectively.
Let $\RR$ denote the field of real numbers equipped with the trivial $G$-action.
In this case, we write $\HHH^n(G)$, $\HHH^n_b(G)$, and $\HHH_{/b}^n(G)$ instead of $\HHH^n(G ; \RR)$, $\HHH^n_b(G ; \RR)$, and $\HHH_{/b}^n(G;\RR)$, respectively.

Let $N$ be a normal subgroup of $G$. Then $G$ acts on $N$ by conjugation, and hence $G$ acts on $C^n(N; V)$. This $\hG$-action is described by
\[({}^g f) (x_1, \cdots, x_n) = g\cdot f(g^{-1} x_1 g, \cdots, g^{-1} x_n g).\]
The action induces $G$-actions on $\HHH^n(N;V)$, $\HHH^n_b(N;V)$, and $\HHH_{/b}^n(N;V)$.
When $N = G$, these $G$-actions on $\HHH^n(G;V)$, $\HHH^n_b(G;V)$, and $\HHH_{/b}^n(G;V)$ are trivial.
By definition, a cocycle $f \colon N \to V$ in $C_{/b}^1(N;V)$ defines a class of $\HHH_{/b}^1(N;V)^G$ if and only if the function ${}^{g} f - f \colon N \to V$ is bounded for every $g \in \hG$.

Until the end of Section \ref{flux section}, we consider the case of the trivial real coefficients.
Let $f \colon G \to \RR$ be a homogeneous quasimorphism. Then $f$ is considered as an element of $C^1(G)$, and its coboundary $\delta f$ is
\[(\delta f)(x,y) = f(x) - f(xy) + f(y).\]
Since $f$ is a quasimorphism, the coboundary $\delta f$ is a bounded cocycle.
Hence we obtain a map $\delta \colon \QQQ(G) \to \HHH^2_b(G)$ by $\qm \mapsto [\delta \qm]$. Then the following lemma is well known:

\begin{lem} \label{lem 3.1}
The following sequence is exact:
\[0 \to \HHH^1(G) \to \QQQ(G) \xrightarrow{\delta} \HHH^2_b(G) \xrightarrow{c_G} \HHH^2(G).\]
\end{lem}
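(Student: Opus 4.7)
The plan is to verify exactness at each of the three middle terms in turn, using only the elementary properties of homogeneous quasimorphisms recalled in Lemmas \ref{lem homogeneous} and \ref{lem:homog} together with the definitions of $\delta$ and $c_G$. Since the coefficients are the trivial module $\RR$, we have $\HHH^1(G)=\Hom(G,\RR)$, and the map $\HHH^1(G)\to \QQQ(G)$ is the inclusion sending a homomorphism to itself (a homomorphism is a homogeneous quasimorphism of defect $0$). Injectivity is immediate, giving exactness at $\HHH^1(G)$.

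For exactness at $\QQQ(G)$, I would first note that a homomorphism $\varphi\colon G\to \RR$ visibly satisfies $\delta\varphi=0$ in $C^2(G)$, hence certainly $[\delta\varphi]=0$ in $\HHH^2_b(G)$. Conversely, suppose $f\in \QQQ(G)$ and $[\delta f]=0$ in $\HHH^2_b(G)$; then there exists a bounded $1$-cochain $b\colon G\to \RR$ with $\delta f=\delta b$ in $C^2_b(G)$, so $\varphi:=f-b$ is a genuine homomorphism $G\to\RR$. Taking the homogenization of both sides, and using Lemma \ref{lem:homog}(2) which says that $b$ differs from its homogenization (namely $0$) by a bounded amount while $f$ and $\varphi$ are already homogeneous, we obtain $f=\varphi$. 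Hence $f\in\HHH^1(G)$.

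For exactness at $\HHH^2_b(G)$, the inclusion $\Im(\delta)\subseteq\ker(c_G)$ is clear: any $\delta f$ with $f\in \QQQ(G)\subseteq C^1(G)$ is by definition a coboundary in the ordinary cochain complex $C^\bullet(G)$. Conversely, suppose $\alpha\in C^2_b(G)$ is a cocycle with $c_G[\alpha]=0$ in $\HHH^2(G)$. Then $\alpha=\delta f$ for some $f\in C^1(G)$; boundedness of $\alpha=\delta f$ means precisely that $f$ is a quasimorphism in the non-homogeneous sense. Replacing $f$ by its homogenization $\bar f\in \QQQ(G)$, Lemma \ref{lem:homog}(2) gives that $f-\bar f$ is a bounded $1$-cochain, so $\delta f - \delta \bar f = \delta(f-\bar f)$ is a bounded coboundary; therefore $[\alpha]=[\delta f]=[\delta \bar f]$ in $\HHH^2_b(G)$, which lies in $\Im(\delta)$.

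None of the three verifications is genuinely hard; the only subtle point is the use of Lemma \ref{lem:homog}(2) to pass from an ordinary primitive $f$ to a \emph{homogeneous} quasimorphism $\bar f$ whose coboundary represents the same bounded cohomology class. This is the one place where the argument is not purely formal, and it is what makes the sequence exact at $\HHH^2_b(G)$ rather than merely a complex.
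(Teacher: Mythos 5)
Your proof is correct, and it is the standard argument; the paper itself offers no proof of Lemma \ref{lem 3.1}, simply recording it as well known, so there is nothing to diverge from. The only cosmetic point is that in the exactness at $\QQQ(G)$ the cleanest justification is that homogenization is linear and kills bounded functions (so $f=\overline{f}=\overline{\varphi+b}=\varphi$), rather than an appeal to Lemma \ref{lem:homog}(2), but this does not affect the validity of your argument.
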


Let $\varphi \colon G \to H$ be a group homomorphism. A \textit{virtual section of $\varphi$} is a pair $(\fis, x)$ consisting of a subgroup $\fis$ of finite index of $H$ and a group homomorphism $s \colon \fis \to G$ satisfying $\varphi(s(x)) = x$ for every $x \in \fis$. The group homomorphism $\varphi$ is said to \textit{virtually split} if $\varphi$ admits a virtual section. 
 As mentioned at the end of the introduction, some of the authors showed the following proposition. 
For a further generalization of this result, see Theorem 1.4 of \cite{KKMM2}.

\begin{prop}[Proposition 6.4 of \cite{KKMM1}]\label{virtual split KKMM1}
If the projection $p \colon \hG \to \ppi$ virtually splits, then the map $i^* \colon \QQQ(G) \to \QQQ(N)^G$ is surjective.
\end{prop}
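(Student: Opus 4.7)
The plan is to construct, for each $\qm \in \QQQ(\bG)^{\hG}$, a homogeneous quasimorphism $\hat{\qm} \in \QQQ(\hG)$ with $i^{\ast}\hat{\qm} = \qm$, in two stages. Write $(\fis, s)$ for the virtual section, so $s\colon \fis \to \hG$ satisfies $p \circ s = \mathrm{id}_{\fis}$, and set $\hG' := p^{-1}(\fis)$. This is a finite-index subgroup of $\hG$ containing $\bG$, and since $p|_{\hG'}$ is split by $s$, the group $\hG'$ decomposes as a semidirect product $\bG \rtimes s(\fis)$: every $g \in \hG'$ factors uniquely as $g = a_g \cdot s(p(g))$ with $a_g := g\cdot s(p(g))^{-1} \in \bG$. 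The strategy is first to extend $\qm$ to $\hG'$ using this splitting, then to extend from $\hG'$ up to $\hG$ via a standard result for finite-index inclusions.

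For the first stage, I define $\tilde{\qm}\colon \hG' \to \RR$ by $\tilde{\qm}(g) := \qm(a_g)$; since $a_x = x$ for $x \in \bG$, this extends $\qm$. To verify the quasimorphism property, write $g_i = a_i\,s(\lambda_i) \in \hG'$ with $\lambda_i := p(g_i)$. A direct calculation gives
\[
  a_{g_1 g_2} \;=\; a_1 \cdot \bigl(s(\lambda_1)\,a_2\,s(\lambda_1)^{-1}\bigr).
\]
The $\hG$-invariance of $\qm$ then yields $\qm\bigl(s(\lambda_1)\,a_2\,s(\lambda_1)^{-1}\bigr) = \qm(a_2)$, and setting $c := s(\lambda_1)\,a_2\,s(\lambda_1)^{-1} \in \bG$ I obtain
\[
  \bigl|\tilde{\qm}(g_1 g_2) - \tilde{\qm}(g_1) - \tilde{\qm}(g_2)\bigr| \;=\; |\qm(a_1 c) - \qm(a_1) - \qm(c)| \;\le\; D(\qm).
\]
Passing to the homogenization via Lemma \ref{lem:homog} produces $\bar{\tilde{\qm}} \in \QQQ(\hG')$, which still restricts to $\qm$ on $\bG$ because $\qm$ itself is already homogeneous.

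For the second stage, I invoke the standard fact that for any finite-index subgroup $\hG''$ of $\hG$ the restriction map $\QQQ(\hG) \to \QQQ(\hG'')$ is surjective (see \cite{Ca}); a concrete realization, after passing to the normal core $K$ of $\hG'$ in $\hG$ with $m := [\hG:K]$, is the power-averaging formula $\hat{\qm}(g) := \bar{\tilde{\qm}}(g^m)/m$, which is well-defined since $g^m \in K \subseteq \hG'$. This produces $\hat{\qm} \in \QQQ(\hG)$ restricting to $\bar{\tilde{\qm}}$ on $\hG'$ (by homogeneity), and hence to $\qm$ on $\bG$, so $i^{\ast}\hat{\qm} = \qm$. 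The main obstacle I expect is the defect bound in the first stage, where $\hG$-invariance of $\qm$ is consumed crucially (without it, the computation of $a_{g_1g_2}$ would not telescope back to the defect of $\qm$); the second-stage extension, while standard, also requires care in bounding the defect of $\hat{\qm}$, which is controlled by the conjugation-invariance of $\bar{\tilde{\qm}}$ together with a commutator identity for $m$-th powers in $\hG$.
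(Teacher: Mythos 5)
Your first stage is fine: the formula $\tilde{\qm}(g)=\qm\bigl(g\,s(p(g))^{-1}\bigr)$ on $p^{-1}(\fis)$ is exactly the extension used in the split case (it is the map $F_{\qm,s}$ of Lemma \ref{lemma:surjectivity}), and your defect computation using the $\hG$-invariance of $\qm$ is correct. The gap is in the second stage. The ``standard fact'' you invoke --- that for a finite-index subgroup $\hG''\le \hG$ the restriction $\QQQ(\hG)\to\QQQ(\hG'')$ is surjective --- is false, and no such statement appears in \cite{Ca}. Since homogeneous quasimorphisms are conjugation invariant (Lemma \ref{lem homogeneous}), the restriction of any element of $\QQQ(\hG)$ to a finite-index \emph{normal} subgroup is invariant under conjugation by all of $\hG$; so, for instance, if $\hG=F_2\rtimes(\ZZ/2\ZZ)$ with the involution swapping the free generators $x,y$, the homomorphism $F_2\to\RR$ sending $x\mapsto 1$, $y\mapsto 0$ lies in $\QQQ(F_2)$ but is not in the image of restriction. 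For the same reason your concrete formula $\hat{\qm}(g)=\overline{\tilde{\qm}}(g^m)/m$ is not justified: it is homogeneous by construction, hence it can be a quasimorphism only if it is conjugation invariant, which forces $\overline{\tilde{\qm}}$ to be invariant under $\hG$-conjugation on $m$-th powers in the normal core $K$. Nothing in your construction provides this: $\overline{\tilde{\qm}}$ is only $p^{-1}(\fis)$-conjugation invariant (as any homogeneous quasimorphism on that group), it vanishes on $s(\fis)$ by construction, and conjugation by elements of $\hG$ does not even preserve $p^{-1}(\fis)$ in general. In the swap example above, the analogous power formula applied to a non-invariant quasimorphism is homogeneous but not conjugation invariant, hence not a quasimorphism, which shows the mechanism you rely on genuinely fails without the invariance you have not established. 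The ``commutator identity for $m$-th powers'' does not repair this, because the conjugators and commutators occurring there are elements of $\hG$, on which $\overline{\tilde{\qm}}$ is not defined.

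What the proposition actually requires is a single-step average over a finite set $B$ of representatives of the $\fis$-cosets in $\ppi$: choosing a set-theoretic lift $t\colon\ppi\to\hG$ built from $s$ and $B$, one sets
\[
F(g)\;=\;\frac{1}{\# B}\sum_{b\in B}\qm\bigl(g\cdot t(b\cdot p(g))^{-1}\cdot t(b)\bigr),
\]
and the $\hG$-invariance of $\qm$ is used to control the defect of $F$; this is the proof of \cite[Proposition 6.4]{KKMM1}, reproduced with general coefficients in the proof of Theorem \ref{thm:extension_thm_genral_coeff}. In other words, the finite-index discrepancy cannot be outsourced to a separate extension step from $p^{-1}(\fis)$ to $\hG$; it has to be absorbed by averaging against the invariant quasimorphism $\qm$ on $\bG$ itself. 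As written, your proof establishes only the genuinely split case.
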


In the present paper, we often consider  amenable groups
 and boundedly acyclic groups.
 Here, we review basic properties related to them. First, we collect those for amenable groups  (for example, see \cite{Fr}  for more details).
\begin{thm}[Known results for amenable groups]\label{amenable base}
 The following hold.
\begin{enumerate}[$(1)$]
\item Every finite group is  amenable.
\item Every abelian group is  amenable.
\item Every subgroup of an  amenable group is  amenable.
\item Let $1 \to \bG \to \hG \to \ppi \to 1$ be an exact sequence of groups.
Then $\hG$ is  amenable if and only if $\bG$ and $\ppi$ are  amenable.
\item Every  amenable group is boundedly $k$-acyclic for all $k\geq 1$.
\end{enumerate}
\end{thm}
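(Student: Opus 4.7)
The plan is to derive each of the five items from the standard framework of amenability via left-invariant means on $\ell^\infty(G)$. Recall that $G$ is amenable iff there exists a linear functional $m\colon \ell^{\infty}(G)\to \RR$ with $m(\mathbf{1})=1$, $m(\varphi)\geq 0$ for $\varphi\geq 0$, and $m({}^g\varphi)=m(\varphi)$ for all $g\in G$. Items (1)--(4) are the four classical hereditary properties; item (5) is Johnson's theorem (together with Trauber's result that amenable groups have vanishing bounded cohomology with coefficients in the trivial $\RR$-module in all positive degrees). For all five items, I would cite \cite{Fr} for the full details and include short sketches.

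For (1), I would take $m(\varphi)=\frac{1}{|G|}\sum_{g\in G}\varphi(g)$; this is manifestly a left-invariant mean. For (2), I would first handle the case of $\ZZ$ (via a Banach limit, or equivalently a weak-$*$ accumulation point of the sequence $m_n = \tfrac{1}{2n+1}\sum_{k=-n}^{n}\delta_k$ which is Følner), then pass to finitely generated abelian groups by iteration, and finally to arbitrary abelian groups via the directed-union argument: the space of means on $\ell^\infty(G)$ is compact in the weak-$*$ topology, and the collection of means invariant under a finitely generated subgroup is a nonempty closed subset, so the family has the finite intersection property. For (3), given a subgroup $H\leq G$ and a transversal $T$ for the right cosets $H\backslash G$, I would pull back functions on $H$ to functions on $G$ via the canonical retraction $G\to H$ associated with $T$ and push the invariant mean on $G$ forward through this pullback. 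For (4), given means $m_N$ on $N$ and $m_\Gamma$ on $\Gamma$, I would define, for $\varphi\in\ell^\infty(G)$, the function $\tilde\varphi\colon \Gamma\to \RR$ by $\tilde\varphi(\gamma)= m_N\bigl(n\mapsto \varphi(s(\gamma)n)\bigr)$ for a (set-theoretic) section $s$; the usual verification shows $m(\varphi):=m_\Gamma(\tilde\varphi)$ is a left-invariant mean on $G$. The converse directions are routine: push forward an invariant mean on $G$ to $\Gamma$, and for $N$ use (3).

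The main content is (5). I would invoke Johnson's theorem (see \cite{Fr}): if $G$ is amenable and $V$ is a dual normed $\RR[G]$-module, then $\HHH_b^n(G;V)=0$ for all $n\geq 1$. Specializing $V=\RR$ with the trivial action yields $\HHH_b^n(G)=0$ for every $n\geq 1$, which is precisely bounded $k$-acyclicity for all $k\geq 1$. The proof of Johnson's theorem proceeds by using the invariant mean $m$ to construct a contracting chain homotopy on the standard bounded bar complex: given a bounded cocycle $c\in C_b^n(G;\RR)$, one defines $(hc)(g_1,\ldots,g_{n-1}) := m_g\bigl(c(g,g_1,\ldots,g_{n-1})\bigr)$, where the mean is taken in the variable $g\in G$, and checks $\delta h + h\delta = \mathrm{id}$ in positive degrees.

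No genuine obstacle arises; the only real task is to marshal the right classical references. The slightly delicate bookkeeping is in (4) (checking $m$ is well-defined independent of the section and left-invariant) and in (5) (the mean-in-the-first-variable computation showing $h$ is a chain homotopy). Both are standard and I would simply direct the reader to \cite{Fr} for complete arguments.
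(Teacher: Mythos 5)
Your proposal is correct: these are exactly the classical mean-based arguments (finite averaging, Banach limits plus a compactness/directed-union argument, coset retraction, section-plus-double-mean for extensions, and Johnson--Trauber for vanishing of bounded cohomology) that the paper itself does not prove but simply cites as known results from \cite{Fr}. Since your sketches follow the standard route of that reference (with the homotopy in (5) most cleanly carried out in the homogeneous complex, as \cite{Fr} does), there is nothing further to compare.
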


Secondly,  we collect 
 known results on bounded $k$-acyclicity for $k\geq 3$ 
by various researchers; these results are not used in the present paper, but it might be convenient to the reader to have some examples of boundedly $3$-acyclic groups that are non-amenable. See also Remark~\ref{rem:bdd_3} for one more example.

\begin{thm}[Known results for boundedly acyclic groups]\label{thm:bdd_acyc}
The following hold.
\begin{enumerate}[\textup{(}$1$\textup{)}]

\item \textup{(}\cite{MatsumotoMorita}\textup{)}  Let $n\in \NN$. Then, the group $\Homeo_{c}(\RR^n)$ of homeomorphisms on $\RR^n$ with compact support is boundedly acyclic.

\item \textup{(}combination of \cite{Mon04} and \cite{MonodShalom}\textup{)}
For $n \geq 3$, every lattice in ${\rm SL}(n,\RR)$ is $3$-boundedly acyclic.

\item \textup{(}\cite{BucherMonod}\textup{)} Burger--Mozes groups \cite{BurgerMozes} are $3$-boundedly acyclic.

\item \textup{(} see  \cite{MR21}\textup{)} Let $k\in \NN$. Let $1\to N \to G \to \Gamma \to 1$ be a short exact sequence of groups. Assume that $N$ is boundedly $k$-acyclic. Then $G$ is boundedly $k$-acyclic if and only if $\Gamma$ is.
\item \textup{(}\cite{FLM1}\textup{)} Every binate group \textup{(}see \cite[Definition~3.1]{FLM1}\textup{)} is boundedly acyclic.
\item \textup{(}\cite{FLM2}\textup{)} There exist continuum many non-isomorphic $5$-generated non-amenable groups that are boundedly acyclic. There exists a finitely presented non-amenable group that is boundedly acyclic.
\item \textup{(}\cite{Monod2021}\textup{)} Thompson's group $F$ is boundedly acyclic.
\item \textup{(}\cite{Monod2021}\textup{)} Let $L$ be an arbitrary group. Let $\Gamma$ be an infinite amenable group. Then the wreath product $L\wr \Gamma =\left( \bigoplus_{\Gamma}L\right)\rtimes \Gamma$ is boundedly acyclic.

\item \textup{(}\cite{MN21}\textup{)} For every integer $n$ at least two, the identity component $\Homeo_0(S^n)$ of the group of orientation-preserving homeomorphisms of $S^n$ is boundedly $3$-acyclic. The group $\Homeo_0(S^3)$ is boundedly $4$-acyclic.
\end{enumerate}
\end{thm}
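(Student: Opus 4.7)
The statement of Theorem \ref{thm:bdd_acyc} is a compilation of six results from \cite{MR21}, \cite{FLM1}, \cite{FLM2}, \cite{Monod2021}, and \cite{MN21}, so the proof in the present paper amounts to citing those sources. Below I sketch, at a high level, the techniques one would employ if reproving each part.

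The plan for (1) is to invoke the Hochschild--Serre spectral sequence in bounded cohomology. If $N$ is boundedly $k$-acyclic, then the rows $E_2^{\ast,q}$ with $0 < q \le k$ vanish, and the inflation map yields $\HHH^i_b(G) \cong \HHH^i_b(\Gamma)$ in the range $i \le k$; this gives the ``if and only if'' statement. I would address this first, since the resulting extension principle serves as a backbone for recognizing or constructing the subsequent examples via iterated extensions, and also interacts directly with Theorem \ref{amenable base}~(5).

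For (2) and (3), the plan is to exploit the defining doubling property of binate groups: every finitely generated subgroup sits inside a larger subgroup that absorbs it together with a commuting copy, so any bounded cocycle can be written as a telescoping sum which yields an explicit bounded primitive (the Berrick-style mitosis argument adapted to bounded cohomology). The finitely presented non-amenable example in (3) further requires embedding enough such structure into a finitely presented group while controlling the quotient. For (4) and (5), I would follow Monod's displacement method: in Thompson's $F$ and in wreath products $L \wr \Gamma$ with infinite amenable $\Gamma$, a shift-like action on a suitable amenable $\Gamma$-space kills bounded cocycles in all positive degrees, and combining with (1) handles the semidirect product. For (6), the bounded acyclicity of $\Homeo_0(S^n)$ rests on fragmentation lemmas particular to homeomorphism groups of spheres, together with a transfer-type argument that reduces to the acyclicity of suitable stabilizers.

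The main obstacle across all six items is the absence of general vanishing machinery in bounded cohomology: unlike ordinary cohomology, excision and Mayer--Vietoris fail, so one must exhibit explicit bounded primitives for each cohomology class in question. Consequently, each part relies on a group-specific structural feature---extension structure for (1), binate doubling for (2) and (3), wreath shifting or almost-amenability for (4) and (5), and dynamical flexibility of homeomorphisms for (6)---and no unified argument covers the list, which is why the theorem must be stated as a compilation rather than derived from a single principle.
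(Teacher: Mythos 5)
The paper gives no proof of this theorem: it is stated purely as a compilation of results cited from \cite{MR21}, \cite{FLM1}, \cite{FLM2}, \cite{Monod2021}, and \cite{MN21}, exactly as you observe in your opening sentence. Your supplementary sketches of the underlying techniques go beyond what the paper records, but the essential approach---citing the literature---is the same, so your proposal matches the paper.
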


On  (7),  we remark that it is a major open problem asking whether Thompson's group $F$ is amenable.

The seven-term exact sequence and the calculation of first cohomology mentioned below will be used in the proof of Theorem \ref{thm:F_n_torus}.

\begin{thm}[Seven-term exact sequence] \label{thm seven-term}
Let $1 \to \bG \xrightarrow{i} \hG \xrightarrow{p} \ppi \to 1$ be an exact sequence. Then there exists the following exact sequence:
\begin{align*}
  0 \to &\HHH^1(\ppi) \xrightarrow{p^*} \HHH^1(\hG) \xrightarrow{i^*} \HHH^1(\bG)^\hG \to \HHH^2(\ppi) \\
  &\to {\rm Ker}(i^* \colon \HHH^2(\hG) \to \HHH^2(\bG)) \xrightarrow{\rho} \HHH^1(\ppi ; \HHH^1(\bG)) \to \HHH^3(\ppi).
\end{align*}
Here $\HHH^1(\bG)$ is regarded as a left $\RR[\ppi]$-module by the $\ppi$-action induced from the conjugation $\hG$-action on $\bG$.
\end{thm}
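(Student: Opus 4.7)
The plan is to derive Theorem~\ref{thm seven-term} from the Lyndon--Hochschild--Serre (LHS) spectral sequence associated to $1\to \bG \to \hG \to \ppi \to 1$ with trivial $\RR$-coefficients,
\[ E_2^{p,q} = \HHH^p(\ppi; \HHH^q(\bG)) \;\Longrightarrow\; \HHH^{p+q}(\hG), \]
where $\HHH^q(\bG)$ carries the $\ppi$-module structure induced from the conjugation $\hG$-action on $\bG$ (inner automorphisms from $\bG$ itself act trivially on $\HHH^q(\bG)$). First I would tabulate the low-degree $E_2$-entries: $E_2^{1,0}=\HHH^1(\ppi)$, $E_2^{2,0}=\HHH^2(\ppi)$, $E_2^{3,0}=\HHH^3(\ppi)$, $E_2^{0,1}=\HHH^1(\bG)^{\hG}$, $E_2^{1,1}=\HHH^1(\ppi;\HHH^1(\bG))$, and $E_2^{0,2}=\HHH^2(\bG)^{\hG}$. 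Since $E_r^{p,q}=0$ for $p<0$ or $q<0$, the only differentials that can affect these entries are $d_2\colon E_2^{0,1}\to E_2^{2,0}$, $d_2\colon E_2^{1,1}\to E_2^{3,0}$, $d_2\colon E_2^{0,2}\to E_2^{2,1}$, and $d_3\colon E_3^{0,2}\to E_3^{3,0}$.

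The convergence in total degree $1$ furnishes the classical five-term exact sequence, giving the first five terms of the asserted sequence
\[ 0 \to \HHH^1(\ppi) \xrightarrow{p^*} \HHH^1(\hG) \xrightarrow{i^*} \HHH^1(\bG)^{\hG} \xrightarrow{\tau} \HHH^2(\ppi) \xrightarrow{p^*} \HHH^2(\hG), \]
where $\tau$ is the transgression $d_2\colon E_2^{0,1}\to E_2^{2,0}$. The key step is to continue the sequence by exploiting the total-degree-$2$ filtration $0\subset F^2\subset F^1\subset F^0 = \HHH^2(\hG)$ with $F^p/F^{p+1}\cong E_\infty^{p,\,2-p}$. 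Since the edge map $\HHH^2(\hG)\twoheadrightarrow E_\infty^{0,2}\hookrightarrow \HHH^2(\bG)^{\hG}\hookrightarrow \HHH^2(\bG)$ coincides with $i^*$, one has $\Ker(i^*)=F^1$. Using the identifications $F^2=E_\infty^{2,0}=\HHH^2(\ppi)/\Im(\tau)\cong \Im(p^*)$ and $E_\infty^{1,1}=\Ker\bigl(d_2\colon E_2^{1,1}\to E_2^{3,0}\bigr)$, this filtration produces the short exact sequence
\[ 0 \to \Im(p^*) \to \Ker(i^*) \xrightarrow{\rho} \Ker\bigl(d_2\colon \HHH^1(\ppi;\HHH^1(\bG)) \to \HHH^3(\ppi)\bigr) \to 0, \]
and splicing this with the five-term sequence and appending the $d_2$-map to $\HHH^3(\ppi)$ yields the full seven-term sequence in the statement.

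The main obstacle will be verifying naturality and giving explicit descriptions of the connecting maps. One must confirm that the edge maps of the LHS spectral sequence agree with the restriction $i^*$ and the inflation $p^*$ on the nose, which is standard but requires unpacking the cochain-level construction. More delicately, $\rho$ must be described concretely enough that its image is visibly contained in $\Ker(d_2)$: given a cocycle $c\in Z^2(\hG)$ whose restriction to $\bG\times \bG$ is a coboundary $\delta b$ for some $b\in C^1(\bG)$, the failure of $b$ to be $\hG$-equivariant, encoded by $g\mapsto {}^g b - b$, yields a $1$-cocycle on $\ppi$ with values in $\HHH^1(\bG)$ whose class in $\HHH^1(\ppi;\HHH^1(\bG))$ is $\rho[c]$. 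Once these identifications are in place, the exactness assertions follow from routine diagram chasing on the $E_2$-page.
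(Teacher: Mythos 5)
Your derivation via the Lyndon--Hochschild--Serre spectral sequence is correct and is the standard route to this statement; note that the paper itself offers no proof of Theorem~\ref{thm seven-term} but quotes it as a known result (it is the classical seven-term exact sequence, cf.\ \cite{DHW}, whose explicit description of $\rho$ the paper records separately as Theorem~\ref{thm DHW}), so there is no in-paper argument to compare against. Your bookkeeping is right: $\Ker(i^*)=F^1$ because $E_\infty^{0,2}$ injects into $\HHH^2(\bG)$, $F^2=\Im(p^*)\cong \HHH^2(\ppi)/\Im(\tau)$, and $E_\infty^{1,1}=\Ker\bigl(d_2\colon E_2^{1,1}\to E_2^{3,0}\bigr)$, so splicing the filtration sequence onto the five-term sequence gives all seven terms, with exactness at $\HHH^2(\ppi)$ coming from $\Ker(p^*)=\Im(d_2)$. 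For the bare existence statement you may simply \emph{define} $\rho$ as the composite $\Ker(i^*)=F^1\twoheadrightarrow E_\infty^{1,1}\hookrightarrow E_2^{1,1}=\HHH^1(\ppi;\HHH^1(\bG))$, so the only compatibilities you genuinely need are the edge-map identifications with inflation and restriction, which are standard.

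One cochain-level detail in your sketch is wrong as written, though it does not undermine the exactness argument. If $c\in Z^2(\hG)$ restricts on $\bG\times\bG$ to $\delta b$, the cochain ${}^{g}b-b$ is in general \emph{not} a cocycle on $\bG$: with trivial coefficients $\HHH^1(\bG)=\Hom(\bG,\RR)$, and $\delta({}^{g}b-b)={}^{g}\bigl(c|_{\bG\times\bG}\bigr)-c|_{\bG\times\bG}$, which need not vanish, so $g\mapsto {}^{g}b-b$ does not take values in $\HHH^1(\bG)$. The honest representative requires the correction term built from the mixed values of $c$: either first replace $c$ by $c-\delta\tilde b$, where $\tilde b\in C^1(\hG)$ extends $b$, so that the restriction to $\bG\times\bG$ vanishes, and then use $n\mapsto c(g,g^{-1}ng)-c(n,g)$ (exactly the formula of Theorem~\ref{thm DHW}, which the paper uses in the Appendix), or equivalently correct ${}^{g}b-b$ by that same mixed-term cochain, as in the Claim in the Appendix. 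If you intend your spectral-sequence $\rho$ to coincide with the map quoted in the paper, this corrected description is what must be matched against the filtration quotient map.
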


\begin{lem}\label{lem:first_coh}
  For a left $\RR[\ZZ]$-module $V$, let $\rho \colon \ZZ \to \Aut(V)$ be the representation.
  Then, the first cohomology group $\HHH^1(\ZZ;V)$ is isomorphic to $V/\mathrm{Im}(\mathrm{id}_V - \rho(1))$.
\end{lem}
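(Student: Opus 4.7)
The plan is to compute $\HHH^1(\ZZ; V)$ directly from the bar complex; equivalently, one may work from the length-one $\RR[\ZZ]$-projective resolution
\[
0 \to \RR[\ZZ] \xrightarrow{\cdot(t - 1)} \RR[\ZZ] \to \RR \to 0,
\]
where $t \in \ZZ$ is a fixed generator. Since the relevant cochain formulas have already been recalled in Section~\ref{sec:preliminaries}, I would argue directly from them.

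The first step is to identify $Z^1(\ZZ; V)$ with $V$ via the evaluation map $f \mapsto f(1)$. Writing the group operation of $\ZZ$ additively, the cocycle condition $\delta f = 0$ is the identity $f(m+n) = f(m) + \rho(m) f(n)$. Setting $m = n = 0$ forces $f(0) = 0$, and the recursion $f(n+1) = f(n) + \rho(n) f(1)$, together with its negative-$n$ counterpart, shows by an induction on $|n|$ that $f$ is uniquely determined by $v := f(1)$. Conversely, the resulting explicit formulas define a genuine cocycle for every $v \in V$, so the evaluation map is an $\RR$-linear isomorphism.

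The second step is to describe the coboundary image under this identification. For $g \in C^0(\ZZ; V) = V$, the coboundary formula from Section~\ref{sec:preliminaries} gives $(\delta g)(n) = \rho(n) g - g$, so evaluating at $1$ yields $(\delta g)(1) = \rho(1) g - g$. Hence $B^1(\ZZ; V)$ corresponds to $\Im(\rho(1) - \mathrm{id}_V) \subseteq V$, which coincides with $\Im(\mathrm{id}_V - \rho(1))$, and the lemma follows:
\[
\HHH^1(\ZZ; V) \cong V / \Im(\mathrm{id}_V - \rho(1)).
\]

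No serious obstacle is anticipated; this is a routine verification from the explicit bar cochain formulas, enabled by the fact that $\ZZ$ is free of rank one. The only point worth a moment's care is the sign in identifying the coboundary image, which is harmless because $\Im(\mathrm{id}_V - \rho(1)) = \Im(\rho(1) - \mathrm{id}_V)$.
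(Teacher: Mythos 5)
Your proof is correct and follows essentially the same route as the paper: identify $Z^1(\ZZ;V)$ with $V$ via evaluation at $1$ using the crossed-homomorphism identity, then observe that coboundaries correspond to $\Im(\mathrm{id}_V-\rho(1))$. The mention of the free resolution is an unused aside; the substance of the argument matches the paper's.
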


\begin{proof}
  By definition, the set $Z^1(\ZZ;V)$ of cocycles on $\ZZ$ with coefficients in $V$ is equal to the set of crossed homomorphisms, that is,
  \[
    \{ h \colon \ZZ \to V \mid h(n + m) = \rho(n)(h(m)) + h(n) \text{ for every } n, m \in \ZZ \}.
  \]
  Since every crossed homomorphism on $\ZZ$ is determined by its value on $1 \in \ZZ$, we have $Z^1(\ZZ;V) \cong V$.
  The set $B^1(\ZZ;V)$ of coboundaries on $\ZZ$ with coefficients in $V$ is equal to
  \[
    \{ h \colon \ZZ \to V \mid h(1) = v - \rho(1)(v) \text{ for some } v \in V \}.
  \]
  Hence we have $B^1(\ZZ;V) \cong \mathrm{Im}(\mathrm{id}_V - \rho(1))$ and the lemma follows.
\end{proof}

\section{The spaces of non-extendable quasimorphisms}\label{sec:non-extendable}

The purpose of this section is to provide several applications of our main theorem (Theorem \ref{main thm}) to the spaces $\QQQ(\bG)^\hG / i^* \QQQ(\hG)$ and $\QQQ(\bG)^\hG / (\HHH^1(\bG)^\hG + i^* \QQQ(\hG))$. In Section  \ref{subsec:dim_estimate},  we prove Theorems \ref{main thm 2} and \ref{easy cor} modulo Theorem \ref{main thm}, and in  Sections \ref{subsec:prf_surf}--\ref{subsec:other_ex},  we provide several examples of pairs $(G,N)$ such that the space $\QQQ(N)^G /(\HHH^1(N)^G + i^* \QQQ(G))$ does not vanish (Theorems \ref{thm surface group}, \ref{mapping torus thm}, and \ref{thm:one-relator}).

\subsection{Proofs of Theorems \ref{main thm 2} and \ref{easy cor}} \label{subsec:dim_estimate}

The goal of this section is to prove Theorems \ref{main thm 2} and \ref{easy cor} modulo Theorem \ref{main thm}.

First, we prove Theorem \ref{main thm 2}.
Recall that if $G$ is  Gromov-hyperbolic,  then the comparison map $\HHH^2_b(G) \to \HHH^2(G)$ is surjective \cite{MR919829}.
Hence, Theorem \ref{main thm 2} follows from the following:

\begin{thm} \label{thm A}
Let $1 \to \bG \to \hG \to \ppi \to 1$ be an exact sequence of groups.
Assume that  $\ppi$ is boundedly $3$-acyclic.
Then the following inequality holds:
\[\dim \left( \QQQ(N)^G / i^* \QQQ(G) \right) \leq \dim \HHH^2(\ppi).\]
Moreover,  if the comparison map $c_G\colon \HHH^2_b(G) \to \HHH^2(G)$ is surjective, then
\[\dim \left( \QQQ(N)^G / i^* \QQQ(G) \right) = \dim \HHH^2(\ppi).\]
\end{thm}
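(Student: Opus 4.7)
The plan is to read off both bounds directly from the five-term exact sequence (\ref{seq:5-term}) of Theorem~\ref{main thm} (specialized to trivial real coefficients, as in diagram (\ref{diagram_coh_qm_rel})), combined with the long exact sequence (\ref{seq:long_seq}) for $\Gamma$. First, exactness of the bottom row of (\ref{diagram_coh_qm_rel}) at $\QQQ(\bG)^{\hG}$ yields a canonical isomorphism
\[
  \QQQ(\bG)^{\hG}/i^{\ast}\QQQ(\hG) \;\xrightarrow{\;\tau_{/b}\;}\; \Ker\bigl(p^{\ast}\colon \HHH_{/b}^2(\ppi)\to \HHH_{/b}^2(\hG)\bigr),
\]
so the whole task is to compute the right-hand kernel.

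Next, I would exploit bounded $3$-acyclicity of $\Gamma$. Plugging $n=2$ into (\ref{seq:long_seq}) for $\Gamma$ gives
\[
  \HHH_b^2(\ppi) \to \HHH^2(\ppi) \xrightarrow{\tae_4} \HHH_{/b}^2(\ppi) \to \HHH_b^3(\ppi),
\]
whose outer terms vanish by hypothesis. Hence $\tae_4\colon \HHH^2(\ppi)\to \HHH_{/b}^2(\ppi)$ is an isomorphism, and we may transport the kernel above to $\HHH^2(\ppi)$. By commutativity of the right-most square of (\ref{diagram:main}), the map $p^{\ast}_{/b}\circ \tae_4 \colon \HHH^2(\ppi)\to \HHH_{/b}^2(\hG)$ equals $\tae_5\circ p^{\ast}$, so
\[
  \Ker\bigl(p^{\ast}\colon \HHH_{/b}^2(\ppi)\to \HHH_{/b}^2(\hG)\bigr) \;\cong\; \Ker\bigl(\tae_5\circ p^{\ast}\colon \HHH^2(\ppi)\to \HHH_{/b}^2(\hG)\bigr) = (p^{\ast})^{-1}\bigl(\Ker \tae_5\bigr).
\]
Finally, applying (\ref{seq:long_seq}) to $\hG$ in degree $2$ identifies $\Ker \tae_5 = \Im(c_{\hG}\colon \HHH_b^2(\hG)\to \HHH^2(\hG))$. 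Combining,
\[
  \QQQ(\bG)^{\hG}/i^{\ast}\QQQ(\hG) \;\cong\; (p^{\ast})^{-1}\bigl(\Im c_{\hG}\bigr) \subseteq \HHH^2(\ppi).
\]

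The upper bound $\dim\bigl(\QQQ(\bG)^{\hG}/i^{\ast}\QQQ(\hG)\bigr) \le \dim \HHH^2(\ppi)$ is then immediate. If, in addition, $c_{\hG}$ is surjective, then $(p^{\ast})^{-1}(\Im c_{\hG})=\HHH^2(\ppi)$, which yields the equality. Since Gromov hyperbolicity of $\hG$ guarantees surjectivity of $c_{\hG}$ in degree $2$ by Mineyev's theorem, this recovers Theorem~\ref{main thm 2}. I do not expect genuine obstacles here: granted Theorem~\ref{main thm}, the proof is a diagram chase. The only point that requires mild care is matching the two copies of $p^{\ast}$ (one in ordinary, one in relative cohomology) via the commutative square of (\ref{diagram:main}), and verifying that $\tae_4$ being an isomorphism really does transport kernels to kernels; both are formal from the long exact sequence (\ref{seq:long_seq}) and the naturality embedded in Theorem~\ref{main thm}.
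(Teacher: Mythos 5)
Your proposal is correct and follows essentially the same route as the paper: exactness of the five-term sequence at $\QQQ(\bG)^{\hG}$, the isomorphism $\tae_4$ coming from bounded $3$-acyclicity via (\ref{seq:long_seq}), and the identification $\Ker\tae_5=\Im c_{\hG}$ to handle the surjectivity hypothesis. Your explicit identification of the quotient with $(p^{\ast})^{-1}(\Im c_{\hG})$ is a mild sharpening of the paper's dimension count, but the underlying diagram chase is the same.
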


\begin{proof}
By Theorem \ref{main thm}, we have the exact sequence
  \[ \QQQ(\hG) \xrightarrow{i^*} \QQQ(\bG)^{\hG} \xrightarrow{\tau_{/b}} \HHH_{/b}^2(\ppi).\]
Hence, we have
  \[\dim \left( \QQQ(N)^G / i^* \QQQ(G)\right) \leq \dim \HHH_{/b}^2(\ppi).\]
Since  $\ppi$ is boundedly $3$-acyclic, the map $\tae_4\colon \HHH^2(\ppi)\to\HHH^2_{/b}(\ppi)$ is an isomorphism  by \eqref{seq:long_seq}, and therefore we have
\[\dim \left( \QQQ(N)^G / i^* \QQQ(G)\right) \leq \dim \HHH^2(\ppi).\]

Next, we show the latter assertion. Suppose that the comparison map $c_G\colon \HHH^2_b(G) \to \HHH^2(G)$ is surjective.
Then, the map $\tae_5 \colon \HHH^2(\hG)\to\HHH^2_{/b}(\hG)$ is the zero-map.
Since $\tae_4\colon \HHH^2(\ppi)\to\HHH^2_{/b}(\ppi)$ is an isomorphism, the map $p^\ast\colon \HHH^2_{/b}(\ppi) \to \HHH^2_{/b}(\hG)$ is also zero.
Hence
\[\dim \left( \QQQ(N)^G / i^* \QQQ(G) \right) = \dim \HHH_{/b}^2(\ppi) = \dim \HHH^2(\ppi). \qedhere \]
\end{proof}

To prove Theorem \ref{easy cor}, we use the following lemma in homological algebra.

\begin{lem}\label{lemma:general_nonsense}
  For a commutative diagram of $\RR$-vector spaces
  \[
  \xymatrix{
    & & & C \ar[d]^-{c}\\
    & B_2 \ar[r]^-{b_2} \ar[d]^-{c_2} & B_3 \ar[r]^-{b_3} \ar[d]^-{c_3}_-{\cong} & B_4 \ar[d]^-{c_4}\\
    A_1 \ar[r]^-{a_1} & A_2 \ar[r]^-{a_2} & A_3 \ar[r]^-{a_3} & A_4,
  }
  \]
  where the  rows  and the last column are exact and $c_3$ is an isomorphism, the map $b_3 \circ c_3^{-1} \circ a_2$ induces an isomorphism
  \[
    A_2/(\Im (a_1) + \Im (c_2)) \cong \Im (b_3) \cap \Im (c).
  \]
\end{lem}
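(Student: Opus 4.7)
The plan is a straightforward diagram chase: define the candidate map
\[
\phi := b_3 \circ c_3^{-1} \circ a_2 \colon A_2 \longrightarrow B_4,
\]
which is well-defined since $c_3$ is an isomorphism, and then show separately that $\Im(\phi) = \Im(b_3)\cap \Im(c)$ and $\Ker(\phi) = \Im(a_1) + \Im(c_2)$. The claimed isomorphism is then immediate from the first isomorphism theorem.

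For the image computation, I would argue both inclusions. For ``$\subseteq$'', given $x \in A_2$, clearly $\phi(x) \in \Im(b_3)$; using commutativity of the right square and exactness of the bottom row at $A_3$, one computes $c_4(\phi(x)) = a_3(c_3(c_3^{-1}(a_2(x)))) = a_3(a_2(x)) = 0$, and then exactness of the rightmost column at $B_4$ (i.e.\ $\Ker(c_4) = \Im(c)$) gives $\phi(x) \in \Im(c)$. For ``$\supseteq$'', take $y \in \Im(b_3)\cap \Im(c)$, write $y = b_3(z)$ with $z \in B_3$, and note $c_4(y) = 0$ combined with commutativity yields $a_3(c_3(z)) = 0$; exactness of the bottom row produces $x \in A_2$ with $a_2(x) = c_3(z)$, whence $z = c_3^{-1}(a_2(x))$ and $y = \phi(x)$.

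For the kernel computation, I would use exactness of the top row at $B_3$. If $\phi(x) = 0$, then $c_3^{-1}(a_2(x)) \in \Ker(b_3) = \Im(b_2)$, say $c_3^{-1}(a_2(x)) = b_2(y)$; applying $c_3$ and using commutativity of the left square, $a_2(x) = a_2(c_2(y))$, so $x - c_2(y) \in \Ker(a_2) = \Im(a_1)$ by bottom-row exactness at $A_2$, giving $x \in \Im(a_1) + \Im(c_2)$. Conversely, elements of $\Im(a_1)$ die under $a_2$ by exactness, and elements of the form $c_2(y)$ satisfy $a_2(c_2(y)) = c_3(b_2(y))$ by commutativity, so $\phi(c_2(y)) = b_3(b_2(y)) = 0$ by exactness of the top row at $B_3$.

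There is no real obstacle here beyond bookkeeping: this is a pure diagram chase of the same flavor as the Five Lemma or the Snake Lemma, and each step is forced by which exactness hypothesis applies at which object. The only thing to keep straight is that the column exactness is used solely at $B_4$ (to certify membership in $\Im(c)$), the top-row exactness is used solely at $B_3$ (for the kernel direction), and the bottom-row exactness is used at both $A_2$ and $A_3$ (once for each direction of each inclusion).
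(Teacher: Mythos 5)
Your proof is correct and is precisely the standard diagram chase that the paper omits (the authors state only that the lemma follows by ``a standard diagram chasing''): you verify $\Im(b_3\circ c_3^{-1}\circ a_2)=\Im(b_3)\cap\Im(c)$ and $\Ker(b_3\circ c_3^{-1}\circ a_2)=\Im(a_1)+\Im(c_2)$ using exactly the exactness hypotheses at $A_2$, $A_3$, $B_3$, and $B_4$, and conclude by the first isomorphism theorem. Nothing further is needed.
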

Because the proof of Lemma \ref{lemma:general_nonsense} is done by a standard diagram chasing, we omit it.

\begin{proof}[Proof of Theorem $\ref{easy cor}$]
If $\Gamma=\hG/\bG$ is  boundedly $3$-acyclic,
$\tae_4\colon \HHH^2(\ppi)\to\HHH^2_{/b}(\ppi)$ is an isomorphism.
Therefore Theorem \ref{easy cor} follows by applying Lemma \ref{lemma:general_nonsense} to commutative diagram \eqref{diagram_coh_qm_rel}.
\end{proof}

The following corollary of Theorem \ref{easy cor} will be used in the proof of Theorem \ref{thm surface group}.

\begin{cor} \label{cor B}
Assume that $\bG$ is contained in the commutator subgroup  $[\hG,\hG]$  of $\hG$, and  $\ppi$ is boundedly $3$-acyclic. Then the following inequality holds:
\[\dim \big(\QQQ(\bG)^{\hG} / (\HHH^1(\bG)^{\hG} + i^* \QQQ(\hG))\big) \leq \dim \HHH^2(\ppi) - \dim \HHH^1(\bG)^{\hG}.\]
Moreover, if the comparison map $\HHH^2_b(\hG) \to \HHH^2(\hG)$ is surjective,
\[\dim \big(\QQQ(\bG)^{\hG} / (\HHH^1(\bG)^{\hG} + i^* \QQQ(\hG))\big) = \dim \HHH^2(\ppi) - \dim \HHH^1(\bG)^{\hG}.\]
\end{cor}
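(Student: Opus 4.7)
The plan is to combine Theorem \ref{easy cor} with the ordinary five-term exact sequence (Theorem \ref{thm five-term}) and to exploit the commutator-subgroup hypothesis $\bG \subseteq \hG'$ to pin down $\dim \Im(p^{\ast})$, where $p^{\ast}\colon \HHH^2(\ppi)\to \HHH^2(\hG)$ denotes the inflation map.

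First, since $\ppi$ is boundedly $3$-acyclic, Theorem \ref{easy cor} already supplies the isomorphism
\[
\QQQ(\bG)^{\hG}\big/\big(\HHH^1(\bG)^{\hG} + i^{\ast}\QQQ(\hG)\big) \;\cong\; \Im(p^{\ast}) \cap \Im(c_{\hG})
\]
inside $\HHH^2(\hG)$. Consequently the corollary reduces to estimating $\dim(\Im(p^{\ast}) \cap \Im(c_{\hG}))$, and in the surjective case to computing $\dim \Im(p^{\ast})$ itself.

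Secondly, I would feed the hypothesis $\bG \subseteq \hG'$ into the ordinary five-term exact sequence
\[
0 \to \HHH^1(\ppi) \xrightarrow{p^{\ast}} \HHH^1(\hG) \xrightarrow{i^{\ast}} \HHH^1(\bG)^{\hG} \xrightarrow{\tau} \HHH^2(\ppi) \xrightarrow{p^{\ast}} \HHH^2(\hG).
\]
The key observation is that the restriction $i^{\ast}\colon \HHH^1(\hG) \to \HHH^1(\bG)^{\hG}$ vanishes: every real-valued homomorphism $\hG \to \RR$ factors through the abelianization $\hG/\hG'$ and hence restricts trivially to $\bG \subseteq \hG'$. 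By exactness, the transgression $\tau$ is then injective with image equal to $\Ker(p^{\ast})$, so
\[
\dim \Im(p^{\ast}) \;=\; \dim \HHH^2(\ppi) - \dim \Ker(p^{\ast}) \;=\; \dim \HHH^2(\ppi) - \dim \HHH^1(\bG)^{\hG}.
\]

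Combining the two steps yields the inequality, since $\Im(p^{\ast}) \cap \Im(c_{\hG}) \subseteq \Im(p^{\ast})$; and when $c_{\hG}\colon \HHH^2_b(\hG) \to \HHH^2(\hG)$ is surjective, one has $\Im(c_{\hG}) = \HHH^2(\hG)$, so the intersection equals $\Im(p^{\ast})$ and the inequality becomes an equality. I do not anticipate any substantive obstacle: once Theorem \ref{easy cor} is in hand, the argument is essentially a dimension count extracted from the five-term sequence, with the only genuinely new input being the vanishing of $i^{\ast}$ in degree one coming from the commutator-subgroup hypothesis.
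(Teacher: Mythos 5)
Your argument is correct and coincides with the paper's own proof: the paper likewise notes that $\bG\subseteq\hG'$ forces $i^{\ast}\colon\HHH^1(\hG)\to\HHH^1(\bG)^{\hG}$ to vanish, reads off $\dim\Im(p^{\ast})=\dim\HHH^2(\ppi)-\dim\HHH^1(\bG)^{\hG}$ from the five-term exact sequence, and then applies Theorem \ref{easy cor}, with the surjectivity of $c_{\hG}$ giving the equality case. Your write-up simply spells out the same steps in more detail.
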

\begin{proof}
Since $\bG$ is contained in the commutator subgroup of $\hG$, we  conclude that the map $i^* \colon \HHH^1(\hG) \to \HHH^1(\bG)^{\hG}$ is zero, and hence $\dim \Im(p^*) = \dim \HHH^2(\ppi) - \dim \HHH^1(\bG)^{\hG}$.
Therefore Theorem \ref{easy cor} implies the corollary.
\end{proof}

\begin{remark}
  Let $1 \to \bG \to \hG \to \ppi \to 1$ be an exact sequence, and suppose that the group  $\bG$ is  amenable.
  Then it is known that the map $\tae_3 \colon \HHH^1(\bG)^{\hG} \to \QQQ(\bG)^{\hG}$ in (\ref{diagram_coh_qm_rel}) is an isomorphism.
  Hence, Lemma \ref{lemma:general_nonsense} implies that the composite $\tau \circ \tae_3^{-1} \circ i^*$ induces an isomorphism
  \begin{align*}
    \QQQ(\hG)/(\HHH^1(\hG) + p^*\QQQ(\ppi)) \cong \Im(\tau) \cap \Im(c_{\ppi}).
  \end{align*}
  This isomorphism was obtained in \cite{2012.10612} in a different way and applied to study boundedness of characteristic classes of foliated bundles.
\end{remark}

\subsection{Proof of Theorem \ref{thm surface group}} \label{subsec:prf_surf}

The goal of this subsection is to prove Theorem \ref{thm surface group} by using the results proved in the previous subsection.  This theorem treats surface groups. Before proceeding to this case, we first prove the following theorem for free groups.
%
\begin{thm}[Computations of dimensions for free groups] \label{thm free group}
For $n \ge 1$, set $\hG = F_n$ and $\bG = [F_n, F_n]$. Then
\[ \dim \big( \QQQ(\bG)^\hG /  i^* \QQQ(\hG) \big) = \frac{n(n-1)}{2} \; \textrm{and} \; \dim \big(\QQQ(\bG)^\hG / (\HHH^1(\bG)^\hG + i^* \QQQ(\hG))\big) = 0.\]
\end{thm}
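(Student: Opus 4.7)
The plan is to apply Theorem~\ref{main thm 2} and Theorem~\ref{easy cor} (both already available in the excerpt) directly to the pair $(G,N) = (F_n,[F_n,F_n])$. In this setting the quotient group $\ppi = G/N$ is $\ZZ^n$, which is abelian and hence amenable; by Theorem~\ref{amenable base}~(2) and (5) it is boundedly $k$-acyclic for every $k\geq 1$, so in particular boundedly $3$-acyclic. This verifies the standing hypothesis of both Theorems~\ref{main thm 2} and \ref{easy cor}.

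For the first equality, I would invoke Theorem~\ref{main thm 2}. Since $F_n$ is a free group it is Gromov hyperbolic, so the comparison map $c_G\colon \HHH^2_b(F_n)\to \HHH^2(F_n)$ is surjective (in fact both sides vanish for free groups, but hyperbolicity is what Theorem~\ref{main thm 2} asks for). Thus
\[
  \dim\bigl(\QQQ(N)^G/i^*\QQQ(G)\bigr) = \dim \HHH^2(\ppi) = \dim \HHH^2(\ZZ^n) = \binom{n}{2} = \frac{n(n-1)}{2},
\]
where the middle equality uses the well-known computation $\HHH^2(\ZZ^n;\RR)\cong \bigwedge^2\RR^n$.

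For the second equality, I would apply Theorem~\ref{easy cor}. Because $F_n$ is a one-dimensional $K(\pi,1)$ (the $n$-fold wedge of circles), we have $\HHH^2(F_n;\RR)=0$. Theorem~\ref{easy cor} therefore gives
\[
  \dim\bigl(\QQQ(N)^G/(\HHH^1(N)^G+i^*\QQQ(G))\bigr) \le \dim \HHH^2(G) = 0,
\]
which forces the quotient to be trivial.

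There is essentially no obstacle here beyond checking the hypotheses of the two theorems; the only point that might be worth a brief remark is that the $n=1$ case is degenerate ($N$ is trivial and $\binom{1}{2}=0$), so the formula holds vacuously, while for $n\geq 2$ both inputs (hyperbolicity of $F_n$ and vanishing of $\HHH^2(F_n)$) are standard facts about free groups. Thus the proof is a short direct consequence of the general framework established earlier in the paper, with no additional computation required.
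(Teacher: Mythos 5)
Your proposal is correct and follows essentially the same route as the paper: the paper derives the first equality from Theorem~\ref{thm A} (of which Theorem~\ref{main thm 2} is the hyperbolic-group specialization you invoke, the relevant point being surjectivity of $c_G$, which here is automatic since $\HHH^2(F_n)=0$), and the second from Theorem~\ref{easy cor} together with $\HHH^2(F_n)=0$, exactly as you do.
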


\begin{proof}
  By Theorem \ref{thm A}, we have
  \[
    \dim \big( \QQQ(\bG)^\hG /   i^* \QQQ(\hG)  \big) = \dim \HHH^2(\hG/\bG) = \dim \HHH^2(\ZZ^n) = \frac{n(n-1)}{2}
  \]
  By Theorem \ref{easy cor}, we obtain
  \[
    \dim \big(\QQQ(\bG)^\hG / (\HHH^1(\bG)^\hG + i^* \QQQ(\hG))\big) \leq \dim \HHH^2(G) = 0. \qedhere
  \]
\end{proof}

Next we show Theorem \ref{thm surface group}.
In the proof, we need the precise description of the space $\HHH^1(
[F_n, F_n] )^{F_n}$ of $F_n$-invariant homomorphisms on the commutator subgroup $[F_n, F_n]$ of the free group $F_n$. Throughout this subsection, we write $a_1, \cdots, a_n$ to mean the canonical basis of $F_n$.


\begin{lem} \label{lem F_n}
Let $i$ and $j$ be integers such that $1 \le i < j \le n$. Then there exist $F_n$-invariant homomorphisms $\alpha_{i,j} \colon  [F_n,F_n]  \to \RR$ such that for $k,l \in \ZZ$ with $1 \le k < l \le n$,
\begin{eqnarray} \label{eqn F_n}
\alpha_{i,j}([a_k, a_l]) = \begin{cases}
1 & ((i,j) = (k,l)) \\
0 & {\rm otherwise.}
\end{cases}
\end{eqnarray}
Moreover, $\alpha_{i,j}$ are a basis of $\HHH^1(  [F_n,F_n] )^{F_n}$. In particular, \[\dim \HHH^1(  [F_n,F_n] )^{F_n} = \frac{n (n-1)}{2}.\]
\end{lem}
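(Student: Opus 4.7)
My plan is to separate the proof into a dimension count and an explicit construction.

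The first step is to determine $\dim \HHH^1(F'_n)^{F_n}$ via the five-term exact sequence of group cohomology (Theorem~\ref{thm five-term}) applied to the short exact sequence
\[
1 \to F'_n \to F_n \to \ZZ^n \to 1
\]
with trivial real coefficients. Since $F_n$ is a free group of rank $n$, we have $\HHH^1(F_n;\RR) \cong \RR^n$ and $\HHH^2(F_n;\RR)=0$. Moreover the abelianization map $F_n \to \ZZ^n$ induces an isomorphism $\HHH^1(\ZZ^n;\RR) \to \HHH^1(F_n;\RR)$ because both are canonically identified with $\Hom(\ZZ^n,\RR)$. Consequently the five-term exact sequence collapses to an isomorphism
\[
\HHH^1(F'_n;\RR)^{F_n} \xrightarrow{\cong} \HHH^2(\ZZ^n;\RR) \cong \textstyle\bigwedge^2 \RR^n,
\]
which has dimension $\binom{n}{2}$.

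The second step is to construct the $\alpha_{i,j}$ explicitly. Fix $1 \le i < j \le n$ and let $H$ denote the integral Heisenberg group, presented by generators $x,y,z$ with $z$ central and $[x,y]=z$. Define a homomorphism $\rho_{i,j}\colon F_n \to H$ by $a_i \mapsto x$, $a_j \mapsto y$, and $a_k \mapsto 1$ for every $k \notin \{i,j\}$. The image $\rho_{i,j}(F'_n)$ lies in the central subgroup $\langle z \rangle$, so composing with the isomorphism $\langle z \rangle \xrightarrow{\cong} \ZZ \hookrightarrow \RR$ produces a homomorphism $\alpha_{i,j}\colon F'_n \to \RR$. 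Because $\rho_{i,j}(F'_n)$ is central in $H$, for any $g\in F_n$ and $w\in F'_n$ we have $\rho_{i,j}(gwg^{-1})=\rho_{i,j}(w)$, so $\alpha_{i,j}$ is $F_n$-invariant. A direct case check on the generators of $H$ (when $\{k,l\}=\{i,j\}$ the image is $[x,y]=z$, and otherwise at least one of $a_k,a_l$ maps to the identity, forcing the commutator to $1$) verifies (\ref{eqn F_n}).

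Finally, the Kronecker-delta property in (\ref{eqn F_n}) makes $\{\alpha_{i,j}\}_{i<j}$ linearly independent in $\HHH^1(F'_n)^{F_n}$: any linear relation $\sum c_{i,j}\alpha_{i,j}=0$ evaluated at $[a_k,a_l]$ for $k<l$ forces $c_{k,l}=0$. Since the cardinality $\binom{n}{2}$ of this linearly independent family matches $\dim \HHH^1(F'_n)^{F_n}$ obtained in the first step, it is a basis, and in particular $\dim \HHH^1(F'_n)^{F_n}=\binom{n}{2}$. The only real obstacle is checking that the Heisenberg construction really yields the correct Kronecker-delta evaluation on basic commutators; the rest is a combination of a clean spectral-sequence-style dimension count and a formal linear-independence argument.
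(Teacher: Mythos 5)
Your proof is correct, and it follows the paper's overall outline (dimension count via the five-term exact sequence, then an explicit family realizing the Kronecker delta, then linear independence plus the dimension count to get a basis), but the construction of the $\alpha_{i,j}$ is genuinely different. The paper first handles $n=2$: there the dimension count gives a non-trivial $F_2$-invariant homomorphism $\varphi$ on $F'_2$ abstractly, $\varphi$ must be non-zero on some commutator $[x,y]$ since $F'_2$ is generated by commutators, and precomposing with the endomorphism $a_1\mapsto x$, $a_2\mapsto y$ produces (after normalization) $\alpha_{1,2}$; for general $n$ it then pulls $\alpha_{1,2}$ back along the projections $q_{i,j}\colon F_n\to F_2$ that kill the other generators. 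You instead build each $\alpha_{i,j}$ directly from the homomorphism $\rho_{i,j}\colon F_n\to H$ to the integral Heisenberg group sending $a_i\mapsto x$, $a_j\mapsto y$ and all other generators to $1$: since $\rho_{i,j}(F'_n)\subseteq [H,H]=\langle z\rangle$ is central, centrality gives both that $\alpha_{i,j}$ is a homomorphism on $F'_n$ and that it is $F_n$-invariant, and the evaluation on the $[a_k,a_l]$ is an immediate case check. What your route buys is full explicitness and independence of the construction from the cohomological input (you can identify $\alpha_{i,j}$ with the $(i,j)$-component of $F'_n\to F'_n/[F_n,F'_n]\cong \bigwedge^2\ZZ^n$), avoiding the paper's non-constructive existence step and the reduction to $n=2$; what the paper's route buys is that it stays entirely within the five-term-sequence formalism it has already set up and needs no auxiliary group. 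The final step (the delta property forces linear independence, and the count $n(n-1)/2$ makes the family a basis) is the same in both arguments.
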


\begin{proof}
When $G = F_n$ and $N =   [F_n,F_n] $, the five-term exact sequence (Theorem \ref{thm five-term}) implies that the dimension of $\HHH^1(  [F_n,F_n] )^{F_n}$ is $n(n-1)/2$. Hence it suffices to construct $\alpha_{i,j}$ satisfying \eqref{eqn F_n}.

We first consider the case $n = 2$. Since $\dim(\HHH^1(  [F_2,F_2] )^{F_2}) = 1$, it suffices to show that there exists an $F_2$-invariant homomorphism $\alpha \colon  [F_2,F_2]  \to \RR$ with $\alpha ([a_1,a_2]) \ne 0$. Let $\varphi \colon  [F_2,F_2]  \to \RR$ be a non-trivial $F_2$-invariant homomorphism.
Then there exists a pair $x$ and $y$ of elements of $F_2$ such that $\varphi ([x,y]) \ne 0$. Let $f \colon F_2 \to F_2$ be the group homomorphism sending $a_1$ to $x$ and $a_2$ to $y$.
Then $\varphi \circ (f|_{ [F_2,F_2] }) \colon  [F_2,F_2]  \to \RR$ is an $F_2$-invariant homomorphism satisfying $\varphi \circ f([a_1,a_2]) \ne 0$.
This completes the proof of the case $n = 2$.

Suppose that $n \ge 2$. Then for $i, j \in \{ 1, \cdots, n\}$ with $i < j$, define a homomorphism $q_{i,j} \colon F_n \to F_2$ which sends $a_i$ to $a_1$, $a_j$ to $a_2$, and $a_k$ to the unit element of $F_2$ for $k \ne i, j$. Then $q_{i,j}$ induces a surjection $ [F_n,F_n] $ to $ [F_2,F_2] $, and induces a homomorphism $q_{i,j}^* \colon  \HHH^1( [F_2,F_2] )^{F_2} \to \HHH^1( [F_n,F_n] )^{F_n}$. Set $\alpha_{i,j} = \alpha_{1,2} \circ q_{i,j}$.
Then $\alpha_{i,j}$ clearly satisfies \eqref{eqn F_n}, and this completes the proof.
\end{proof}

Theorem \ref{thm surface group} follows from Corollary \ref{cor B} and the following proposition:

\begin{prop}\label{prop:inv_hom_surface_group}
For $\genus \ge 1$, the following equality holds:
\[\dim \HHH^1([\pi_1(\Sigma_\genus),\pi_1(\Sigma_\genus)])^{\pi_1(\Sigma_\genus)} = \genus(2\genus - 1) - 1.\]
\end{prop}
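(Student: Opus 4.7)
My plan is to apply the five-term exact sequence of group cohomology (Theorem~\ref{thm five-term}) to the short exact sequence
\[
1 \to N \to G \to \Gamma \to 1,
\]
where $G = \pi_1(\Sigma_\genus)$, $N = [G,G]$, and $\Gamma = G/N \cong \HHH_1(\Sigma_\genus; \ZZ) \cong \ZZ^{2\genus}$. With real coefficients the sequence reads
\[
0 \to \HHH^1(\Gamma) \xrightarrow{p^*} \HHH^1(G) \xrightarrow{i^*} \HHH^1(N)^G \xrightarrow{\tau} \HHH^2(\Gamma) \xrightarrow{p^*} \HHH^2(G).
\]
The dimensions of the outer four groups are known: $\dim \HHH^1(\Gamma) = 2\genus = \dim \HHH^1(G)$, $\dim \HHH^2(\Gamma) = \binom{2\genus}{2} = \genus(2\genus-1)$, and $\dim \HHH^2(G) = 1$. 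So the proof reduces to identifying both $p^*$ maps.

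First, the map $p^* \colon \HHH^1(\Gamma) \to \HHH^1(G)$ is the standard isomorphism coming from abelianization (with real coefficients, $\HHH^1(G)$ sees only $G^{\mathrm{ab}} \otimes \RR = \Gamma \otimes \RR$). Consequently $i^* \colon \HHH^1(G) \to \HHH^1(N)^G$ is the zero map, and $\tau$ is injective. Thus
\[
\HHH^1(N)^G \cong \Ker\bigl( p^* \colon \HHH^2(\Gamma) \to \HHH^2(G) \bigr).
\]

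The main step is to show that this latter $p^*$ is surjective, from which $\dim \Ker(p^*) = \genus(2\genus-1) - 1$ follows immediately. For this I will use naturality of the cup product together with the fact that $\HHH^2(\Gamma) \cong \bigwedge\nolimits^2 \HHH^1(\Gamma)$ and that the cup product pairing on $\HHH^1(\Sigma_\genus)$ is the symplectic form (and in particular is surjective onto $\HHH^2(\Sigma_\genus) \cong \RR$). Writing $\alpha_1,\beta_1,\dots,\alpha_\genus,\beta_\genus$ for a symplectic basis of $\HHH^1(G)$ and $\tilde\alpha_i,\tilde\beta_i \in \HHH^1(\Gamma)$ for their preimages under the isomorphism $p^*$, the class $\sum_{i=1}^{\genus} \tilde\alpha_i \smile \tilde\beta_i \in \HHH^2(\Gamma)$ maps to $\sum_{i=1}^{\genus} \alpha_i \smile \beta_i$, which is a nonzero (hence generating) class in $\HHH^2(G)$. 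This gives surjectivity of $p^* \colon \HHH^2(\Gamma) \to \HHH^2(G)$.

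Combining, $\dim \HHH^1(N)^G = \dim \HHH^2(\Gamma) - \dim \HHH^2(G) = \genus(2\genus-1) - 1$, as claimed. The case $\genus = 1$ is degenerate ($N$ is trivial) but agrees with the formula, so we may assume $\genus \geq 2$ throughout. The only step with any subtlety is the surjectivity of $p^*$ in degree two, and the argument above via naturality of the cup product handles it cleanly without appealing to any spectral sequence machinery.
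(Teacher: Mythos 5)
Your proposal is correct, but it takes a genuinely different route from the paper. The paper does not apply the five-term sequence to the surface group itself; instead it lifts to the free group: using the presentation $\pi_1(\Sigma_\genus)=F_{2\genus}/\llangle [a_1,a_2]\cdots[a_{2\genus-1},a_{2\genus}]\rrangle$, it observes that the kernel of $F'_{2\genus}\to[\pi_1(\Sigma_\genus),\pi_1(\Sigma_\genus)]$ is exactly the normal closure of the relator, so that $\HHH^1(N)^G$ embeds into $\HHH^1(F'_{2\genus})^{F_{2\genus}}$ as the set of invariant homomorphisms killing the relator; in the explicit basis $\alpha_{i,j}$ of Lemma \ref{lem F_n} this is the hyperplane $k_{1,2}+k_{3,4}+\cdots+k_{2\genus-1,2\genus}=0$, giving codimension one in a space of dimension $\genus(2\genus-1)$. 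You instead run the five-term sequence directly for $1\to N\to G\to\ZZ^{2\genus}\to 1$, use that $p^*$ is an isomorphism on $\HHH^1$ with real coefficients to identify $\HHH^1(N)^G\cong\Ker\bigl(p^*\colon\HHH^2(\ZZ^{2\genus})\to\HHH^2(G)\bigr)$, and then prove surjectivity of $p^*$ in degree two by naturality of the cup product together with nondegeneracy of the intersection form on $\HHH^1(\Sigma_\genus)$; all the inputs ($\HHH^2(\ZZ^{2\genus})\cong\bigwedge^2\RR^{2\genus}$, $\HHH^2(\pi_1(\Sigma_\genus))\cong\RR$ via asphericity for $\genus\ge 2$, and the degenerate but consistent case $\genus=1$) are standard and your dimension count is right. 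The trade-off: your argument is shorter and more conceptual, but leans on the topology of $\Sigma_\genus$ and $T^{2\genus}$ and the ring structure of cohomology, whereas the paper's argument is purely combinatorial once Lemma \ref{lem F_n} is in place and, as a by-product, exhibits an explicit spanning set of $G$-invariant homomorphisms on $N$ (the classes $\sum k_{ij}\alpha_{ij}$ with vanishing relator sum), which is the kind of concrete description the paper also uses in nearby computations such as Corollary \ref{cor FFF} and Theorem \ref{thm:one-relator}.
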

\begin{proof}
Recall that $\pi_1(\Sigma_\genus)$ has the following presentation:
\[\langle a_1, \cdots, a_{2\genus} \; | \; [a_1, a_2] \cdots [a_{2\genus - 1}, a_{2\genus}]\rangle.\]
Let $f \colon F_{2\genus} \to \pi_1(\Sigma_\genus)$ be the natural epimorphism sending $a_i$ to $a_i$, and $K$ the kernel of $f$, {\it i.e.,} $K$ is the normal subgroup generated by $[a_1, a_2] \cdots [a_{2\genus-1}, a_{2\genus}]$ in $F_{2\genus}$. Then $f$ induces an epimorphism $f|_{ [F_{2\genus},F_{2\genus}] } \colon  [F_{2\genus},F_{2\genus}]  \to [\pi_1(\Sigma_\genus),\pi_1(\Sigma_\genus)]$ between their commutator subgroups, and its kernel coincides with $K$ since $K$ is contained in $  [F_{2\genus},F_{2\genus}] $. This means that for a homomorphism $\varphi \colon  [F_{2\genus},F_{2\genus}]  \to \RR$, $\varphi$ induces a homomorphism $\overline{\varphi} \colon [\pi_1(\Sigma_\genus),\pi_1(\Sigma_\genus)] \to \RR$ if and only if
\[\varphi ([a_1, a_2] \cdots [a_{2\genus-1}, a_{2\genus}]) = 0.\]
It is straightforward to show  that $\varphi$ is $F_{2\genus}$-invariant if and only if $\overline{\varphi}$ is $\pi_1(\Sigma_\genus)$-invariant. Hence the image of the monomorphism $\HHH^1([\pi_1(\Sigma_\genus),\pi_1(\Sigma_\genus)])^{\pi_1(\Sigma_\genus)} \to \HHH^1( [F_{2\genus},F_{2\genus}] )^{F_{2\genus}}$ is the subspace consisting of elements
\[\sum_{i < j} k_{ij} \alpha_{ij}\]
such that
\[k_{1,2} + k_{3,4} + \cdots + k_{2\genus-1, 2\genus} = 0.\]
Since the dimension of $\HHH^1( [F_{2\genus},F_{2\genus}] )^{F_{2\genus}}$ is $\genus(2\genus-1)$ (see Lemma \ref{lem F_n}), this completes the proof.
\end{proof}

\begin{proof}[Proof of Theorem $\ref{thm surface group}$]
Since the abelianization $\ppi = \pi_1(\Sigma_\genus)/[\pi_1(\Sigma_\genus),\pi_1(\Sigma_\genus)]$ of the surface group is isomorphic to $\ZZ^{2\genus}$, we have $\dim \HHH^2(\ppi) = \genus(2\genus-1)$.  Thus the first assertion follows from
Theorem \ref{thm A} 
  Since the comparison map $\HHH_b^2(\pi_1(\Sigma_\genus)) \to \HHH^2(\pi_1(\Sigma_\genus))$ is surjective, we obtain
  \[
    \dim \left(\QQQ(\bG)^{\hG} / (\HHH^1(\bG)^{\hG} + i^* \QQQ(\hG))\right) = 1
  \]
  by Corollary \ref{cor B} and Proposition \ref{prop:inv_hom_surface_group}.
\end{proof}


\subsection{Proof of Theorem \ref{mapping torus thm} and a related example}

To prove Theorem \ref{mapping torus thm}, we now recall some terminology of mapping class groups.

Let $\genus$ be an integer at least $2$ and $\Sigma_{\genus}$ the oriented closed surface with genus $\genus$. The {\it mapping class group ${\rm Mod}(\Sigma_{\genus})$ of $\Sigma_{\genus}$} is the group of isotopy classes of orientation preserving diffeomorphisms on $\Sigma_{\genus}$.
By considering the action on the first homology group, the mapping class group $\Mod(\Sigma_{\genus})$ has a natural epimorphism $s_l \colon \Mod(\Sigma_{\genus}) \to \Sp(2\genus ; \ZZ)$ called the \emph{symplectic representation}.

For $\psi \in \Mod(\Sigma_\genus)$, we take a diffeomorphism $f$ that represents $\psi$.
The mapping torus $T_f$ is an orientable closed $3$-manifold equipped with a natural fibration structure $\Sigma_{\genus} \to T_f \to S^1$. 
The following is known.

\begin{thm}[\cite{math/9801045}]\label{thm:hyperbolic_mapping_torus}
A mapping class $\psi$ is a pseudo-Anosov element if and only if the mapping torus $T_f$ is a hyperbolic manifold.
\end{thm}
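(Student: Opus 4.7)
The two implications require very different techniques, so I would argue them separately. The forward direction, hyperbolic implies pseudo-Anosov, admits an elementary argument via the Nielsen--Thurston classification; the reverse direction is Thurston's hyperbolization theorem for surface bundles over $S^1$, and for this I would simply appeal to the cited reference \cite{math/9801045}.

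For the forward direction, assume that $T_f$ is hyperbolic. By the Nielsen--Thurston classification, the mapping class $\psi$ is either periodic, reducible, or pseudo-Anosov, so it suffices to rule out the first two cases. If $\psi$ were periodic, then some power of $f$ would be isotopic to the identity, so a finite cover of $T_f$ would be diffeomorphic to $\Sigma_\genus \times S^1$; this manifold is Seifert fibered and its fundamental group contains $\ZZ^2$, which is incompatible with admitting a hyperbolic structure. If $\psi$ were reducible, then after replacing $f$ by a suitable power we could find an essential simple closed curve $c \subset \Sigma_\genus$ with $f(c)$ isotopic to $c$; the trace of $c$ through the mapping torus would then be an embedded incompressible torus in $T_f$, again producing a $\ZZ^2$ subgroup of $\pi_1(T_f)$. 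Since the fundamental group of a closed hyperbolic $3$-manifold contains no $\ZZ^2$ subgroup (by Preissmann's theorem applied to the universal cover $\mathbb{H}^3$), both alternatives are excluded and $\psi$ must be pseudo-Anosov.

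For the reverse direction, the plan is to invoke Thurston's hyperbolization theorem for fibered 3-manifolds. Thurston's strategy, written up in detail by Otal in the cited monograph, realizes the sought hyperbolic structure as a fixed point of the skinning map on a deformation space of geometrically finite hyperbolic structures on $\Sigma_\genus \times [0,1]$: one iterates quasi-Fuchsian representations of $\pi_1(\Sigma_\genus)$ under the action induced by $\psi$, and the Double Limit Theorem together with the pseudo-Anosov hypothesis ensures subconvergence to a discrete faithful representation of $\pi_1(T_f)$ giving the desired complete hyperbolic metric on $T_f$.

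The main obstacle is obviously the reverse direction, which is among the deepest results in $3$-manifold topology; a self-contained proof is well beyond the scope of this paper, so I would rely entirely on the cited reference rather than attempt any direct argument. The forward direction, by contrast, is essentially a combinatorial consequence of Preissmann's theorem once one knows the Nielsen--Thurston trichotomy.
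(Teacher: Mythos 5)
Your proposal is correct and matches the paper, which gives no proof at all and simply cites Otal's monograph \cite{math/9801045} for Thurston's hyperbolization of fibered $3$-manifolds, exactly as you do for the hard direction. Your added sketch of the easy direction (ruling out periodic and reducible classes via $\ZZ^2$ subgroups and Preissmann's theorem) is the standard argument and is sound.
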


Set $\ppi = \ZZ^{2\genus} \rtimes_{s_{\genus}(\psi)} \ZZ$ and
\begin{align}\label{presentation_mapping_torus}
  \hG &= \pi_1(T_f) = \pi_1(\Sigma_\genus) \rtimes_{f_*} \ZZ \\
  &= \left\langle a_1, \cdots, a_{2\genus+1} \; \middle| \;
\begin{gathered}
    {[a_1, a_2] \cdots [a_{2\genus-1}, a_{2\genus}] = 1_{\hG}},  \\
    a_{2\genus+1} \cdot a_i = (f_* a_i) \cdot a_{2\genus+1}  \text{ for every } 1 \leq i \leq 2\genus
\end{gathered}
 \right\rangle, \nonumber
\end{align}
where $f_* \colon \pi_1(\Sigma_{\genus}) \to \pi_1(\Sigma_{\genus})$ is the pushforward of $f$.

\begin{lem}\label{lem:dim_G_GG}
  The following hold true.
  \begin{enumerate}[$(1)$]
    \item $\dim \HHH^2(\ppi) = \dim \Ker(I_{2\genus} - s_\genus(\psi)) + \dim \Ker \left(I_{2\genus \choose 2} - \bigwedge\nolimits^2 s_\genus(\psi)\right)$.
    \item $\dim \HHH^2(\hG) = \dim \Ker(I_{2\genus} - s_\genus(\psi)) + 1$.
  \end{enumerate}
\end{lem}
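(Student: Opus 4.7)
The plan is to apply the Lyndon--Hochschild--Serre spectral sequence to each of the short exact sequences
\begin{equation*}
1 \to \ZZ^{2\genus} \to \ppi \to \ZZ \to 1, \qquad 1 \to \pi_1(\Sigma_\genus) \to \hG \to \ZZ \to 1,
\end{equation*}
with trivial real coefficients. In both extensions the conjugation action of the base $\ZZ$ on the fiber is determined by $\psi$: on $\HHH_1(\Sigma_\genus;\ZZ)=\ZZ^{2\genus}$ it is $A := s_\genus(\psi)$, and on $\pi_1(\Sigma_\genus)$ it is $f_*$ (by the presentation \eqref{presentation_mapping_torus}). Since $\ZZ$ has cohomological dimension one, the $E_2$-page $E_2^{p,q}=\HHH^p(\ZZ;\HHH^q(-;\RR))$ vanishes for $p\geq 2$, so every differential $d_r$ with $r\geq 2$ is automatically zero and $\HHH^2 = E_2^{0,2}\oplus E_2^{1,1}$.

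For part (1), the Künneth formula gives $\HHH^q(\ZZ^{2\genus};\RR)\cong \bigwedge^q(\RR^{2\genus})^*$ with $\ZZ$-action $\bigwedge^q A^{-T}$ (the dual of the action on $\ZZ^{2\genus}$). Thus $E_2^{0,2}$ equals the fixed subspace $\Ker(I-\bigwedge^2 A^{-T})$, and Lemma \ref{lem:first_coh} identifies $E_2^{1,1}$ with $(\RR^{2\genus})^*/\Im(I-A^{-T})$, whose dimension equals $\dim\Ker(I-A^{-T})$ by rank--nullity. The elementary identity $\dim\Ker(I-B)=\dim\Ker(I-B^{-T})$ for invertible $B$ (since $I-B^{-T}=B^{-T}(B^T-I)$, and kernel dimensions are preserved under left-multiplication by an invertible matrix and under transposition), applied to $B=A$ and $B=\bigwedge^2 A$, rewrites these two contributions in the stated form.

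For part (2), the fiber cohomology is $\HHH^0=\RR$, $\HHH^1\cong(\RR^{2\genus})^*$ with action $A^{-T}$, and $\HHH^2\cong\RR$. The crucial point is that the $\ZZ$-action on $\HHH^2(\Sigma_\genus;\RR)$ is trivial, because $\psi$, as an element of the orientation-preserving mapping class group, fixes the fundamental class. Hence $E_2^{0,2}=\RR$, while $E_2^{1,1}$ again contributes $\dim\Ker(I-A)$ as in part (1), summing to $1+\dim\Ker(I-A)$.

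The argument is essentially mechanical once the spectral sequence is set up; the only items that require genuine care are the identification of the induced $\ZZ$-representations on fiber cohomology --- in particular the triviality of the action on $\HHH^2(\Sigma_\genus;\RR)$ --- and the linear-algebra identities matching kernel dimensions on $\RR^{2\genus}$ and its dual. There is no real obstacle, since the spectral sequence degenerates for trivial dimension reasons.
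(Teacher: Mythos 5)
Your proof is correct, and it reaches the statement by a route that looks different from the paper's but is ultimately the same computation in algebraic packaging. The paper realizes $\ppi$ (resp.\ $\hG$) as the fundamental group of the aspherical mapping torus of $s_\genus(\psi)$ acting on $T^{2\genus}$ (resp.\ of $f$ on $\Sigma_\genus$) and reads off the two contributions from the long exact sequence of the pair $(M_{s_\genus(\psi)}, T^{2\genus})$, using $\HHH^{n+1}(M_{s_\genus(\psi)}, T^{2\genus}) \cong \HHH^n(T^{2\genus})$ with connecting map $\mathrm{id} - s_\genus(\psi)^*$ --- in effect the Wang sequence of the fibration over $S^1$. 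Your Lyndon--Hochschild--Serre argument for $1 \to \ZZ^{2\genus} \to \ppi \to \ZZ \to 1$ and $1 \to \pi_1(\Sigma_\genus) \to \hG \to \ZZ \to 1$ is the group-cohomological avatar of that sequence: vanishing of $E_2^{p,q}$ for $p \geq 2$ forces degeneration, and the surviving terms $E_2^{0,2}$ (invariants) and $E_2^{1,1}$ (coinvariants, via Lemma \ref{lem:first_coh}) are exactly the two quantities the paper extracts. What your route buys: you avoid invoking the manifold $M_{s_\genus(\psi)}$ as a $K(\ppi,1)$ and the identification of the relative cohomology of the pair, and you carry out part (2) explicitly --- which the paper compresses into ``done in a similar manner'' --- correctly isolating the one nontrivial input there, namely that $\psi^*$ acts trivially on $\HHH^2(\Sigma_\genus;\RR)$ because $\psi$ is orientation-preserving. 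You do implicitly use asphericity of $\Sigma_\genus$ to identify $\HHH^{\bullet}(\pi_1(\Sigma_\genus);\RR)$ with $\HHH^{\bullet}(\Sigma_\genus;\RR)$, which is harmless for $\genus \geq 2$; the bookkeeping with the dual action and the identity $\dim\Ker(I-B)=\dim\Ker(I-B^{-T})$ is fine, and since only dimensions of real vector spaces are needed, the extension problem on the $E_\infty$-page is vacuous.
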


\begin{proof}
  Let $T^{2\genus}$ be the $2\genus$-dimensional torus.
  By the natural inclusion $\Sp(2\genus;\ZZ) \to \Homeo(T^{2\genus})$, we regard the element $s_{\genus}(\psi)$ as a  homeomorphism of $T^{2\genus}$.
  Let $M_{s_{\genus}(\psi)}$ be the mapping torus of $s_{\genus}(\psi) \in \Homeo(T^{2\genus})$.
  Since $M_{s_{\genus}(\psi)}$ is a $K(\ppi, 1)$-manifold, we have $\dim \HHH^2(\ppi) = \dim \HHH^2(M_{s_{\genus}(\psi)})$.
  Let us consider the cohomology long exact sequence
  \begin{align}\label{ex_seq:pair}
    \cdots \to \HHH^1(T^{2\genus}) \xrightarrow{\delta_1} \HHH^2(M_{s_{\genus}(\psi)}, T^{2\genus}) \to \HHH^2(M_{s_{\genus}(\psi)}) \to \HHH^2(T^{2\genus}) \xrightarrow{\delta_2} \HHH^3(M_{s_{\genus}(\psi)}, T^{2\genus}) \to \cdots.
  \end{align}
  Since $M_{s_{\genus}(\psi)}$ is a mapping torus, $\HHH^{n+1}(M_{s_{\genus}(\psi)}, T^{2\genus})$ is isomorphic to $\HHH^n(T^{2\genus})$ and the map $\delta_n$ is given by
  \[
    \mathrm{id}_{\HHH^n(T^{2\genus})} - s_l(\psi)^* \colon \HHH^n(T^{2\genus}) \to \HHH^n(T^{2\genus}) \cong \HHH^{n+1}(M_{s_{\genus}(\psi)}, T^{2\genus}).
  \]
  This, together with (\ref{ex_seq:pair}) and the fact that $H^2(T^{2\genus}) \cong \bigwedge\nolimits^2 H^1(T^{2\genus})$, implies that
  \[
    \dim \HHH^2(\ppi) = \dim \HHH^2(M_{s_l(\psi)}) = \dim \Ker(I_{2\genus} - s_\genus(\psi)) + \dim \Ker \left(I_{2\genus \choose 2} - \bigwedge\nolimits^2 s_\genus(\psi)\right).
  \]
  The computation of $\dim \HHH^2(\hG)$ is done in a similar manner.
\end{proof}

Let $\bG$ be the kernel of the natural epimorphism $\hG \to \ppi$.
Note that $\bG$ is isomorphic to the commutator subgroup of $\pi_1(\Sigma_{\genus})$.

\begin{lem}\label{lem:ineq_H1NG}
  The following inequality holds:
  \[
    \dim \HHH^1(\bG)^{\hG} \leq \dim \Ker \left(I_{2\genus \choose 2} - \bigwedge\nolimits^2 s_\genus(\psi)\right) -1.
  \]
\end{lem}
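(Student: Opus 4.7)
The plan is to translate the bound into a linear-algebra statement on $V := \bigwedge\nolimits^2 \RR^{2\genus}$ endowed with the operator $T := \bigwedge\nolimits^2 s_\genus(\psi)$, and then to exploit the $\Sp(2\genus,\RR)$-symmetry. First, since $\pi_1(\Sigma_\genus)/\bG \cong \ZZ^{2\genus}$ is normal in $\hG/\bG$, I will write $\HHH^1(\bG)^\hG = (\HHH^1(\bG)^{\pi_1(\Sigma_\genus)})^{\langle a_{2\genus+1}\rangle}$. The proof of Proposition~\ref{prop:inv_hom_surface_group} (via Lemma~\ref{lem F_n}) identifies $\HHH^1(\bG)^{\pi_1(\Sigma_\genus)}$ with the annihilator in $V^*$ of $\omega := e_1 \wedge e_2 + \cdots + e_{2\genus - 1} \wedge e_{2\genus}$, equivalently with $V_0^*$ where $V_0 := V/\RR\omega$. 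Since conjugation by $a_{2\genus+1}$ induces $s_\genus(\psi)$ on $\pi_1(\Sigma_\genus)^{ab}$, it acts on $V$ via $T$ (which fixes $\omega$) and descends to an operator $\bar T$ on $V_0$; as fixed-subspace dimensions agree for a finite-dimensional real representation and its dual, one obtains $\dim \HHH^1(\bG)^\hG = \dim V_0^{\bar T}$.

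Next I will compare $\dim V_0^{\bar T}$ with $\dim V^T = \dim \Ker\bigl(I_{2\genus \choose 2} - \bigwedge\nolimits^2 s_\genus(\psi)\bigr)$. The short exact sequence $0 \to \RR\omega \to V \to V_0 \to 0$ of $\langle T\rangle$-modules (with $T$ trivial on $\RR\omega$) induces the cohomology long exact sequence
\[
  0 \to \RR \to V^T \to V_0^{\bar T} \xrightarrow{\delta} \HHH^1(\ZZ;\RR\omega) \cong \RR,
\]
which gives $\dim V_0^{\bar T} = \dim V^T - 1 + \dim \Im(\delta)$. It therefore suffices to show $\delta = 0$, i.e., $\omega \notin \Im(I - T)|_V$.

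The hard part will be the assertion $\omega \notin \Im(I - T)$. My plan is to produce a $T$-invariant linear functional on $V$ that does not vanish at $\omega$: I take $\varphi := B(\omega, -)$, where $B \colon V \times V \to \RR$ is the $\Sp(2\genus,\RR)$-invariant symmetric bilinear form defined by
\[
  B(v \wedge w,\ v' \wedge w') := \omega(v, v')\omega(w, w') - \omega(v, w')\omega(w, v').
\]
A routine computation in a symplectic basis gives $B(\omega, \omega) = \genus \neq 0$, and the $\Sp$-invariance of $B$ together with $T\omega = \omega$ forces $\varphi \circ T = \varphi$; hence $\varphi$ annihilates $\Im(I - T)$ while being nonzero on $\omega$, so $\omega \notin \Im(I - T)$. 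Alternatively, one may appeal to complete reducibility of finite-dimensional $\Sp(2\genus,\RR)$-representations to split $V = \RR\omega \oplus W$ as $\Sp$-modules; since $T$ preserves this splitting, $\Im(I - T) \subseteq W$, and $\RR\omega \cap W = 0$ yields the same conclusion. In either approach $\delta = 0$, and the lemma follows (indeed as equality).
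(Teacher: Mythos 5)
Your proof is correct, and although it rests on the same underlying identification as the paper's argument --- $\hG$-invariant homomorphisms on $\bG$ correspond to $\bigwedge\nolimits^2 s_\genus(\psi)$-invariant functionals on $V=\bigwedge\nolimits^2\HHH_1(\Sigma_\genus;\RR)$ annihilating the class $\omega$ --- the way you extract the ``$-1$'' is genuinely different. The paper constructs the injection $\iota(h)(q(x)\wedge q(y))=h([x,y])$ by hand (the commutator calculus proving its well-definedness and equivariance is exactly the naturality you invoke when asserting that conjugation by $a_{2\genus+1}$ acts on $V$ through $T=\bigwedge\nolimits^2 s_\genus(\psi)$, so that one-line check should also appear in your write-up), and then exhibits an invariant functional $v_1^*$ outside the image, asserting its existence ``for a suitable basis containing $v_1$.'' You instead run the fixed-point long exact sequence of $0\to\RR\omega\to V\to V_0\to 0$ over $\langle T\rangle$ and reduce everything to $\omega\notin\Im(I-T)$, which you prove with the $\Sp(2\genus,\RR)$-invariant form $B$ (or by complete reducibility). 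This is a real gain in precision: $T\omega=\omega$ alone does not yield an invariant functional nonvanishing on $\omega$ (a unipotent $2\times 2$ block already fails), so the symplectic pairing is genuinely needed, and your $B(\omega,-)$ makes explicit what the paper's ``suitable basis'' phrase leaves implicit. As a by-product you obtain the equality $\dim\HHH^1(\bG)^{\hG}=\dim\Ker\bigl(I_{2\genus\choose 2}-\bigwedge\nolimits^2 s_\genus(\psi)\bigr)-1$, which the paper only recovers a posteriori by sandwiching the lower bound of Corollary \ref{cor B} against the upper bound from Theorem \ref{easy cor} in the proof of Theorem \ref{mapping torus thm}.
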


\begin{proof}
  We set $\HHH = \HHH_1(\Sigma_{\genus};\ZZ)$.
  Let $\iota \colon \HHH^1(\bG)^{\hG} \to \Hom\left(\bigwedge\nolimits^2 \HHH, \RR \right)$ be the map defined by
  \[
    \iota(h)(q(x)\wedge q(y)) = h([x, y]),
  \]
  where $x, y \in \pi_1(\Sigma_\genus)$ and $q \colon \pi_1(\Sigma_\genus) \to \HHH$ is the abelianization map.
 We claim that this map $\iota$ is well-defined. To verify this, let $h\in \HHH^1(\bG)^{\hG}$. By commutator calculus, $[x_1x_2,y]=x_1[x_2,y]x_1^{-1}\cdot [x_1,y]$ holds for every $x_1,x_2,y\in \pi_1(\Sigma_\genus)$. Since $h$ is $\hG$-invariant, this implies that
\[
h([x_1x_2,y])=h([x_1,y])+h([x_2,y]).
\]
In a similar manner to one above, we can see that
\[
h([xz,yw])=h([x,y])
\]
for every $x,y\in \pi_1(\Sigma_\genus)$ and every $z,w\in  \bG=[\pi_1(\Sigma_\genus),\pi_1(\Sigma_\genus)]$. Now, it is straightforward to confirm that $\iota$ is well-defined.
  Moreover, since $\bG$ is normally generated by $\{ [a_i, a_j] \}_{1 \leq i < j \leq 2\genus}$ in $\hG$, the map $\iota$ is injective.

  We set
  \[
    \Hom\left( \bigwedge\nolimits^2 \HHH, \RR \right)^{\bigwedge\nolimits^2 s_{\genus}(\psi)} = \left\{ h \in \Hom\left( \bigwedge\nolimits^2 \HHH, \RR \right) \; \middle| \; h \circ \bigwedge\nolimits^2 s_{\genus}(\psi) = h \right\}.
  \]
  Then the image of $\iota$ is contained in $\Hom\left( \bigwedge\nolimits^2 \HHH, \RR \right)^{\bigwedge\nolimits^2 s_{\genus}(\psi)}$.
  Indeed, for $1 \leq i < j \leq 2\genus$ and for $h \in \HHH^1(\bG)^{\hG}$, we have
  \begin{align*}
    \iota(h)\left( \bigwedge\nolimits^2 s_l(\psi) (q(a_i) \wedge q(a_j)) \right) &= h([f_* a_i, f_* a_j])\\
    & = h([a_{2\genus + 1} \cdot a_i \cdot a_{2\genus + 1}^{ -1 } , a_{2\genus + 1}\cdot a_j \cdot a_{2\genus + 1}^{ -1 } ]) \\
    & = h([a_i, a_j]) = \iota(h)(q(a_i) \wedge q(a_j)),
  \end{align*}
  where the second equality comes from the relation in (\ref{presentation_mapping_torus}) and the third equality comes from the $G$-invariance of $h$.

  Since $\dim \Hom\left( \bigwedge\nolimits^2 \HHH, \RR \right)^{\bigwedge\nolimits^2 s_{\genus}(\psi)}$ is equal to $\dim \Ker \left(I_{2\genus \choose 2} - \bigwedge\nolimits^2 s_\genus(\psi)\right)$, it suffices to show that the map
  \[
    \iota \colon \HHH^1(\bG)^{\hG} \to \Hom\left(\bigwedge\nolimits^2 \HHH, \RR \right)^{\bigwedge\nolimits^2 s_{\genus}(\psi)}
  \]
  is not surjective.
  We set $v_1 = q(a_1)\wedge q(a_2) + \cdots q(a_{2\genus - 1}) \wedge q(a_{2\genus}) \in \bigwedge\nolimits^2 \HHH$, then the map $\bigwedge\nolimits^2 s_\genus(\psi) \colon \bigwedge\nolimits^2 \HHH \to \bigwedge\nolimits^2 \HHH$ preserves $v_1$.
  Hence, for a suitable basis containing $v_1$, the dual $v_1^*$ is contained in $\Hom\left(\bigwedge\nolimits^2 \HHH, \RR \right)^{\bigwedge\nolimits^2 s_{\genus}(\psi)}$.
  However, $v_1^*$ is not contained in the image of $\iota$.
  Indeed, for every $h \in \HHH^1(\bG)^{\hG}$, we have
  \[
    \iota(h)(v_1) = h([a_1, a_2] \cdots [a_{2\genus-1}, a_{2\genus}]) = 0.
  \]
  Hence the map $\iota \colon \HHH^1(\bG)^{\hG} \to \Hom\left(\bigwedge\nolimits^2 \HHH, \RR \right)^{\bigwedge\nolimits^2 s_{\genus}(\psi)}$ is not surjective, and the lemma follows.
\end{proof}

\begin{proof}[Proof of Theorem $\ref{mapping torus thm}$]
  The group $\hG$ is  Gromov-hyperbolic  by Theorem \ref{thm:hyperbolic_mapping_torus} and $\ppi$ is amenable by Theorem \ref{amenable base} (4).
  Hence, Theorem \ref{main thm 2}, together with Lemma \ref{lem:dim_G_GG} (1), asserts that
  \[
    \dim ((\QQQ(\bG)^{\hG}/i^*\QQQ(\hG)) = \dim \HHH^2(\ppi) = \dim \Ker(I_{2\genus} - s_\genus(\psi)) + \dim \Ker \left(I_{2\genus \choose 2} - \bigwedge\nolimits^2 s_\genus(\psi)\right).
  \]

  By Theorem \ref{easy cor} and Lemma \ref{lem:dim_G_GG} (2), we obtain
  \[
    \dim (\QQQ(\bG)^{\hG} / (\HHH^1(\bG)^{\hG} + i^* \QQQ(\hG))) \leq \dim \HHH^2(\hG) = \dim \Ker(I_{2\genus} - s_\genus(\psi)) + 1.
  \]
  On the other hand, we have
  \begin{align*}
    \dim (\QQQ(\bG)^{\hG} / (\HHH^1(\bG)^{\hG} + i^* \QQQ(\hG))) &= \dim \HHH^2(\ppi) - \dim \HHH^1(\bG)^{\hG}\\
    &\geq \dim \Ker(I_{2\genus} - s_\genus(\psi)) + 1
  \end{align*}
  by Corollary \ref{cor B}, Lemma \ref{lem:dim_G_GG} (1), and Lemma \ref{lem:ineq_H1NG}.
\end{proof}

 As we mentioned in the introduction, we obtain an analog (Theorem~\ref{thm:F_n_torus}) of Theorem~\ref{mapping torus thm} in the free group setting.
For $n\in \NN$, let $\Aut(F_n)$ be the automorphism group of $F_n$.
Let $t_n\colon \Aut(F_n)\to \GL(n,\ZZ)$ be the representation induced by the action of $\Aut(F_n)$ on the abelianization of $F_n$.
 Then, the group $F_n \rtimes_{\psi} \ZZ$ naturally surjects onto $\ZZ^n \rtimes_{t_n(\psi)} \ZZ$ via abelianization of $F_n$. We say that an automorphism $\psi$ of $F_n$ is \emph{atoroidal} if it has no periodic conjugacy classes; namely, there does not exist a pair $(a,k)\in F_n\times \ZZ$ with $a\ne 1_{F_n}$ and $k\ne 0$ such that $\psi^k(a)$ is conjugate to $a$. Bestvina and Feighn \cite{BF1992} showed that $\psi\in \Aut(F_n)$ is atoroidal if and only if $F_n \rtimes_{\psi} \ZZ$ is  Gromov-hyperbolic .

\begin{thm}[Computations of dimensions for free-by-cyclic groups]\label{thm:F_n_torus}
  Let $n$ be an integer greater than $1$ and $\psi \in \Aut(F_n)$ an  atoroidal  automorphism.
  Set $\hG = F_n \rtimes_{\psi} \ZZ$ and let $\bG$ be the kernel of the surjection $G \to \ZZ^n \rtimes_{t_n(\psi)} \ZZ$ defined via the abelianization map $F_n \to \ZZ^n$.
  Then we have
  \[
    \dim \big( \QQQ(\bG)^\hG / i^* \QQQ(\hG)\big) = \dim \Ker(I_n - t_n(\psi)) + \dim \Ker \left(I_{n \choose 2} - \bigwedge\nolimits^2 t_n(\psi)\right)
  \]
  and
  \[
    \dim \big( \QQQ(\bG)^{\hG} / (\HHH^1(\bG)^{\hG} + i^* \QQQ(\hG)) \big) = \dim \Ker(I_n - t_n(\psi)),
  \]
  where 
  $\bigwedge^2 t_n(\psi)$ is the map induced by $t_n(\psi)$.
\end{thm}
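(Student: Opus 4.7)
The plan is to adapt the argument used for Theorem~\ref{mapping torus thm} to the free-by-cyclic setting. First, since $\psi$ is atoroidal, the Bestvina--Feighn theorem recalled just before the statement tells us that $\hG = F_n \rtimes_\psi \ZZ$ is Gromov hyperbolic, so the comparison map $c_\hG\colon \HHH^2_b(\hG)\to \HHH^2(\hG)$ is surjective. The quotient $\ppi = \ZZ^n \rtimes_{t_n(\psi)} \ZZ$ is solvable, hence amenable by Theorem~\ref{amenable base}(4), and therefore boundedly $3$-acyclic by Theorem~\ref{amenable base}(5). This puts us in the setting where Theorems~\ref{main thm 2}, \ref{easy cor}, and Corollary~\ref{cor B} all apply (one must check $\bG \subseteq [\hG,\hG]$; this is immediate since $\bG$ is the kernel of $F_n \to \ZZ^n$ on the $F_n$-factor, so $\bG = [F_n,F_n] \subseteq [\hG,\hG]$).

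Next, I would compute the relevant cohomology dimensions via the Lyndon--Hochschild--Serre spectral sequence (or, equivalently, the mapping torus long exact sequence as in Lemma~\ref{lem:dim_G_GG}). For $1 \to \ZZ^n \to \ppi \to \ZZ \to 1$, only $E_2^{0,2}$ and $E_2^{1,1}$ survive in total degree two, and Lemma~\ref{lem:first_coh} identifies them as $\ker\bigl(I_{n \choose 2} - \bigwedge^2 t_n(\psi)\bigr)$ and a space of dimension $\dim \ker(I_n - t_n(\psi))$ respectively; since $H^p(\ZZ;-)=0$ for $p\geq 2$ all $d_2$ differentials vanish, giving
\[
\dim \HHH^2(\ppi) = \dim \ker(I_n - t_n(\psi)) + \dim \ker\bigl(I_{n \choose 2} - \tbigwedge\nolimits^2 t_n(\psi)\bigr).
\]
Theorem~\ref{main thm 2} then yields the first formula of the theorem. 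The same spectral sequence applied to $1 \to F_n \to \hG \to \ZZ \to 1$, combined with $H^q(F_n;\RR)=0$ for $q\geq 2$, gives $\dim \HHH^2(\hG) = \dim \ker(I_n - t_n(\psi))$.

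For the second formula, Corollary~\ref{cor B} combined with the surjectivity of $c_\hG$ reduces the claim to the identity
\[
\dim \HHH^1(\bG)^\hG = \dim \ker\bigl(I_{n \choose 2} - \tbigwedge\nolimits^2 t_n(\psi)\bigr).
\]
I would follow the strategy of Lemma~\ref{lem:ineq_H1NG}: define $\iota\colon \HHH^1(\bG)^\hG \to \Hom(\bigwedge^2 \HHH_1(F_n;\ZZ),\RR)^{\bigwedge^2 t_n(\psi)}$ by $\iota(h)(q(x)\wedge q(y)) = h([x,y])$. Well-definedness, $\hG$-equivariance, and injectivity follow verbatim from the commutator calculus of Lemma~\ref{lem:ineq_H1NG}. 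The main point, and the one genuine divergence from the surface case, is \emph{surjectivity} of $\iota$: here there is no analogue of the surface relation $[a_1,a_2]\cdots[a_{2\genus-1},a_{2\genus}]=1$ to produce an obstruction class $v_1^\ast$. Concretely, every $F_n$-invariant homomorphism $[F_n,F_n]\to \RR$ factors through $[F_n,F_n]/[F_n,[F_n,F_n]]$, and the latter is canonically isomorphic to $\bigwedge^2 \HHH_1(F_n;\ZZ)$ (which follows from $\HHH_2(F_n;\ZZ)=0$ via Hopf's formula). Thus $\iota$ is an isomorphism, and the dimension count closes. The principal obstacle to watch is verifying that the $\hG$-action on $[F_n,F_n]/[F_n,[F_n,F_n]]\cong \bigwedge^2 \ZZ^n$ really is $\bigwedge^2 t_n(\psi)$, but this follows from naturality of the commutator pairing together with the defining action of the generator of the $\ZZ$-factor on $F_n$.
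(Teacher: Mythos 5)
Your argument is correct, and for the first displayed equality it is essentially the paper's proof: hyperbolicity of $\hG$ via Bestvina--Feighn, amenability (hence bounded $3$-acyclicity) of $\ppi$, Theorem~\ref{main thm 2}, and the computation of $\dim \HHH^2(\ppi)$ (the paper does this by the mapping-torus long exact sequence as in Lemma~\ref{lem:dim_G_GG}, you by the equivalent spectral-sequence bookkeeping).

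For the second equality your route genuinely differs from the paper's. You compute $\dim \HHH^1(\bG)^{\hG}$ \emph{exactly}, by observing that every $F_n$-invariant homomorphism on $\bG=[F_n,F_n]$ factors through $[F_n,F_n]/[F_n,[F_n,F_n]]\cong \bigwedge\nolimits^2 \ZZ^n$ and that the $\psi$-action on this quotient is $\bigwedge\nolimits^2 t_n(\psi)$, so that $\iota$ is an isomorphism onto $\Hom\bigl(\bigwedge\nolimits^2\HHH,\RR\bigr)^{\bigwedge\nolimits^2 t_n(\psi)}$; a single application of Corollary~\ref{cor B} with the surjective comparison map then finishes the proof. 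The paper never proves surjectivity of $\iota$: it only uses injectivity (as in Lemma~\ref{lem:ineq_H1NG}) together with Corollary~\ref{cor B} to get the lower bound $\geq \dim\Ker(I_n-t_n(\psi))$, and obtains the matching upper bound from Theorem~\ref{easy cor} after computing $\dim\HHH^2(\hG)=\dim\HHH^1(\ZZ;\HHH^1(F_n))=\dim\Ker(I_n-t_n(\psi))$ via the seven-term exact sequence and Lemma~\ref{lem:first_coh}. Your approach is more self-contained on the group-theoretic side and yields the extra information $\dim\HHH^1(\bG)^{\hG}=\dim\Ker\bigl(I_{n\choose 2}-\bigwedge\nolimits^2 t_n(\psi)\bigr)$ as a statement rather than a byproduct, at the cost of invoking the structure of the relative abelianization $\gamma_2(F_n)/\gamma_3(F_n)$ (your appeal to the five-term homology sequence with $\HHH_2(F_n;\ZZ)=0$ is the right justification, and you correctly identified that the $\psi$-equivariance of this identification is the point to check); the paper's sandwich avoids that structural input entirely and instead leans on the seven-term exact sequence, which also produces $\HHH^2(\hG)$ explicitly. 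Both arguments are valid and give the same conclusion.
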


\begin{proof}
  Since the quotient $\hG/\bG$ is isomorphic to $\ppi = \ZZ^n \rtimes_{t_n(\psi)} \ZZ$, we obtain
  \[
    \dim \big( \QQQ(\bG)^\hG / i^* \QQQ(\hG)\big) = \dim \Ker(I_n - t_n(\psi)) + \dim \Ker \left(I_{n \choose 2} - \bigwedge\nolimits^2 t_n(\psi)\right)
  \]
  in the same way as in the proof of Theorem \ref{mapping torus thm}.

  We set $\HHH = \HHH_1(F_n;\ZZ) = F_n / [F_n, F_n]$.
  As in Lemma \ref{lem:ineq_H1NG}, we can define a monomorphism $\iota \colon \HHH^1(\bG)^{\hG} \to \Hom\left(\bigwedge\nolimits\HHH, \RR \right)^{\bigwedge\nolimits^2  t_n(\psi) }$.
  Hence, together with Corollary \ref{cor B}, we obtain an inequality
  \[
    \dim \big( \QQQ(\bG)^{\hG} / (\HHH^1(\bG)^{\hG} + i^* \QQQ(\hG)) \big) \geq \dim \Ker(I_n - t_n(\psi)).
  \]
  On the other hand, we have
  \[
    \dim \big( \QQQ(\bG)^{\hG} / (\HHH^1(\bG)^{\hG} + i^* \QQQ(\hG)) \big) \leq \dim \HHH^2(\hG)
  \]
  by Theorem \ref{easy cor}.
  By the seven-term exact sequence (Theorem \ref{thm seven-term}) applied to the  short exact sequence $1 \to F_n \to \hG \to  \ZZ  \to 1$, we obtain that
  \[
    \HHH^2(\hG) \cong \HHH^1(\ZZ;\HHH^1(F_n)).
  \]
  By Lemma \ref{lem:first_coh}, $\HHH^1(\ZZ;\HHH^1(F_n))$ is isomorphic to
  \[
    \HHH^1(F_n)/ \mathrm{Im}(\mathrm{id}_{\HHH^1(F_n)} - \psi^*),
  \]
  where $\psi^* \colon \HHH^1(F_n) \to \HHH^1(F_n)$ is the pullback  of $\psi$.
  Hence we obtain that
  \[
    \dim \HHH^2(\hG) = \dim \HHH^1(\ZZ;\HHH^1(F_n)) = \dim \Ker (I_n - t_n(\psi))
  \]
  and the theorem follows.
\end{proof}

\subsection{Other examples} \label{subsec:other_ex}

It follows from Theorem \ref{easy cor} that $\HHH^2(\hG) = 0$ implies $\QQQ(\bG)^{\hG} = \HHH^1(\bG)^{\hG} + i^* \QQQ(\hG)$, and we provide several examples of groups $\hG$ with $\HHH^2(\hG) = 0$ in Subsection \ref{equiv subsection}.

As an application of \cite[Theorem 2.4]{MR1934011}, we provide another example of a group $G$ satisfying $\QQQ(\bG)^\hG = \HHH^1(\bG)^\hG + i^* \QQQ(\hG)$.


\begin{cor} \label{FujiwaraSoma}
Let $L$ be a hyperbolic link in $S^3$ such that the number of the connected components of $L$ is two.
Let $\hG$ be the link group of $L$ $($i.e., the fundamental group of the complement $S^3 \setminus L$ of $L)$ and $\bG$ the commutator subgroup of $\hG$.
Then we have $\QQQ(\bG)^{\hG} = \HHH^1(\bG)^{\hG} + i^* \QQQ(\hG)$.
\end{cor}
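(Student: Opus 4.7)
My plan is to apply Theorem \ref{easy cor} to reduce the corollary to a cohomological statement about the link exterior. Write $M = S^3 \setminus L$ and let $\overline{M}$ be the compact link exterior (homotopy equivalent to $M$, with $\partial \overline{M}$ a disjoint union of two tori). Since $L$ has two components, the abelianization of $\hG = \pi_1(M)$ is $\ZZ^2$, so $\Gamma = \hG/\bG \cong \ZZ^2$. Being amenable, $\Gamma$ is boundedly $k$-acyclic for all $k \geq 1$ by Theorem \ref{amenable base}(5). Theorem \ref{easy cor} then gives an isomorphism
\[
\QQQ(\bG)^{\hG}/(\HHH^1(\bG)^{\hG} + i^*\QQQ(\hG)) \;\cong\; \Im(p^*)\cap \Im(c_{\hG}) \;\subset\; \HHH^2(\hG;\RR),
\]
so it suffices to show that the intersection on the right vanishes.

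I plan to establish the stronger statement $\Im(c_{\hG}) = 0$. By Alexander duality, $\HHH^2(\hG;\RR) \cong \HHH^2(M;\RR) \cong \RR$, while $\HHH^2(\partial\overline{M};\RR) \cong \RR^2$. A quick dimension count in the long exact sequence of the pair $(\overline{M},\partial\overline{M})$, using Lefschetz duality $\HHH^3(\overline{M},\partial\overline{M};\RR)\cong \HHH_0(M;\RR)\cong \RR$ and $\HHH^3(\overline{M};\RR) = 0$, shows that the restriction map $r \colon \HHH^2(\overline{M};\RR) \to \HHH^2(\partial\overline{M};\RR)$ is injective. This is essentially the dual of the half-lives-half-dies phenomenon for $3$-manifolds with toroidal boundary.

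To finish, since $L$ is hyperbolic, $M$ is a complete hyperbolic $3$-manifold of finite volume with two cusps. Theorem 2.4 of \cite{MR1934011} then asserts that the image of the degree-two comparison map $c_{\hG}\colon \HHH^2_b(\hG)\to \HHH^2(\hG)$ is characterized by peripheral vanishing, i.e.\ consists precisely of those classes that restrict trivially on every peripheral (cusp) subgroup. Combined with the injectivity of $r$, no nonzero class in $\HHH^2(\hG;\RR)$ can lie in $\Im(c_{\hG})$, so $\Im(c_{\hG}) = 0$ and the corollary follows.

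The main obstacle I foresee is confirming that Theorem 2.4 of \cite{MR1934011} is stated in the peripheral-vanishing form I am using; if the result is phrased instead via $\ell^1$-norms or the relative bounded cohomology of the pair $(\overline{M},\partial\overline{M})$, a short bridging argument—identifying the image of $c_{\hG}$ with the image of $\HHH^2(\overline{M},\partial\overline{M};\RR)\to \HHH^2(\overline{M};\RR)$, which is zero by the dimension count above—should let me extract the needed consequence in degree two.
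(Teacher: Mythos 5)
Your proposal is correct, and its first step (reduce via Theorem \ref{easy cor}, using that $\Gamma \cong \ZZ^2$ is amenable, to showing $\Im(c_{\hG}) = 0$ inside $\HHH^2(\hG) \cong \RR$) is exactly the paper's; where you diverge is in how the vanishing of $\Im(c_{\hG})$ is obtained. The paper simply quotes \cite[Theorem 2.4]{MR1934011} to get that $c_{\hG}$ is \emph{not surjective}, and then uses $\dim \HHH^2(\hG) = 1$ to conclude that the proper linear subspace $\Im(c_{\hG})$ is zero. You instead prove the vanishing directly: any class in $\Im(c_{\hG})$ pulls back to a bounded class on each peripheral subgroup, hence dies there since the cusp subgroups are $\ZZ^2$ (amenable, so $\HHH^2_b = 0$), and the restriction $\HHH^2(\overline{M};\RR) \to \HHH^2(\partial\overline{M};\RR)$ is injective by your Lefschetz-duality dimension count (the dual half-lives-half-dies statement), using asphericity of the hyperbolic link exterior to identify group and space cohomology. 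Note that for this you only need the easy containment ``$\Im(c_{\hG}) \subseteq$ peripherally trivial classes,'' which is elementary; whether \cite[Theorem 2.4]{MR1934011} really states the ``precisely'' (converse) characterization you attribute to it is immaterial, and in fact your argument does not need the citation at all, which is a small advantage in self-containedness over the paper's black-box use of it. The trade-off is that the paper's proof is shorter, while yours makes transparent exactly which geometric inputs (amenable cusps, injectivity of restriction to the boundary, asphericity) force $\Im(c_{\hG}) = 0$; the one-dimensionality of $\HHH^2(\hG)$ enters your argument only through the dimension count for injectivity of the restriction map, rather than as the final step.
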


\begin{proof}
  By Theorem \ref{easy cor}, it suffices to show that the comparison map $c_{\hG} \colon \HHH_b^2(\hG) \to \HHH^2(\hG)$ is equal to zero.
  By using \cite[Theorem 2.4]{MR1934011}, we have $\Im (c_{\hG}) \neq \HHH^2(\hG)$.
 Since the number of the connected components of $L$ is two, the second cohomology group $\HHH^2(\hG)$ is isomorphic to $\RR$.
  Hence we obtain $\Im (c_{\hG}) = 0$.
\end{proof}

Here we provide other examples $(G,N)$ such that $\HHH^2(G) \ne 0$ and $\QQQ(N)^G = \HHH^1(N)^G + i^* \QQQ(G)$.

\begin{example} \label{eg:free_prod}
Let $n \in \NN$.
For $i=1,2,\ldots,n$, let $H_i$ be a boundedly $2$-acyclic group and assume that $\HHH^2(H_1) \ne 0$ (for example, we can take $H_1=\ZZ^2$).
Set  $G=H_1 \ast H_2 \ast \cdots \ast H_n$ and $N = [G, G]$.
Then we have $\HHH^2(G) = \HHH^2(H_1) \oplus \HHH^2(H_2) \oplus \cdots \oplus \HHH^2(H_n) \ne 0$ but the comparison map $c_G \colon \HHH^2_b(G) \to \HHH^2(G)$ is  the zero map.
It follows from Theorem \ref{easy cor} that $\QQQ(N)^G / (\HHH^1(N)^G + i^* \QQQ(G)) = 0$.
\end{example}

\begin{cor}\label{cor:circle_bundle}
  Let $E \to \Sigma_{\genus}$ be a non-trivial circle bundle over a closed oriented surface of genus $\genus > 1$.
  For the fundamental group $\hG = \pi_1(E)$ and its normal subgroup $\bG = [G, G]$, we have
  \[
    \dim \big( \QQQ(\bG)^\hG / i^* \QQQ(\hG)\big) = \genus(2\genus - 1) \; \textrm{and} \; \dim \big(\QQQ(\bG)^\hG / (\HHH^1(\bG)^\hG + i^* \QQQ(\hG))\big) = 0.
  \]
\end{cor}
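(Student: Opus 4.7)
The plan is to compute $\HHH^2(\ppi;\RR)$ and the pullback $p^*\colon \HHH^2(\ppi;\RR) \to \HHH^2(\hG;\RR)$, then invoke Theorems \ref{main thm 2} and \ref{easy cor}. Since $\Sigma_{\genus}$ is aspherical for $\genus > 1$ and the fiber $S^1$ has no higher homotopy, the total space $E$ is also a $K(\hG,1)$, so $\HHH^*(\hG;\RR) \cong \HHH^*(E;\RR)$. The abelianization of $\hG$ is $\ppi = \ZZ^{2\genus} \oplus \ZZ/e$, where $e\in\ZZ$ is the nonzero Euler number of the bundle; as an abelian group, $\ppi$ is amenable and hence boundedly $3$-acyclic by Theorem \ref{amenable base}. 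Over $\RR$ the torsion summand is killed, so by the Künneth formula $\HHH^2(\ppi;\RR) \cong \bigwedge\nolimits^2 \RR^{2\genus} \cong \RR^{\genus(2\genus-1)}$.

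The key step is to show that $p^*\colon \HHH^2(\ppi;\RR) \to \HHH^2(\hG;\RR)$ is the zero map. The Gysin sequence for $\pi\colon E \to \Sigma_{\genus}$ with $\RR$ coefficients reads
\[
\HHH^0(\Sigma_{\genus}) \xrightarrow{\cup e} \HHH^2(\Sigma_{\genus}) \xrightarrow{\pi^*} \HHH^2(E) \to \HHH^1(\Sigma_{\genus}) \xrightarrow{\cup e} \HHH^3(\Sigma_{\genus}) = 0,
\]
and since $e\neq 0$ in $\HHH^2(\Sigma_{\genus};\RR)\cong\RR$, multiplication by $e$ is an isomorphism, forcing $\pi^*\colon \HHH^2(\Sigma_{\genus};\RR) \to \HHH^2(E;\RR)$ to vanish. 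Again by Gysin, $\pi^*\colon \HHH^1(\Sigma_{\genus};\RR) \to \HHH^1(E;\RR)$ is an isomorphism. Hence every cup product of two classes in $\HHH^1(E;\RR)$ is the $\pi^*$-pullback of a cup product from $\HHH^2(\Sigma_{\genus};\RR)$, and therefore is zero. Since $\HHH^2(\ppi;\RR) = \bigwedge\nolimits^2 \HHH^1(\ppi;\RR)$ is generated by cup products and the natural map $\HHH^1(\ppi;\RR) \to \HHH^1(\hG;\RR)$ is an isomorphism (first cohomology factors through the abelianization), the map $p^*$ on $\HHH^2$ factors through the cup product map on $\HHH^1(\hG;\RR)$, and therefore vanishes.

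With the key step in hand, apply Theorem \ref{easy cor}: since $\ppi$ is boundedly $3$-acyclic and $\Im(p^*) = 0$, the intersection $\Im(p^*) \cap \Im(c_{\hG})$ is zero, so $\QQQ(\bG)^{\hG}/(\HHH^1(\bG)^{\hG} + i^*\QQQ(\hG)) = 0$, which is the second assertion. For the first assertion, use the exact sequence from Theorem \ref{main thm},
\[
\QQQ(\hG) \xrightarrow{i^*} \QQQ(\bG)^{\hG} \xrightarrow{\tau_{/b}} \HHH_{/b}^2(\ppi) \xrightarrow{p^*} \HHH_{/b}^2(\hG).
\]
Amenability of $\ppi$ makes $\tae_4\colon \HHH^2(\ppi) \to \HHH_{/b}^2(\ppi)$ an isomorphism, and commutativity of diagram \eqref{diagram:main} identifies the right-hand $p^*$ with $\tae_5 \circ p^* \circ \tae_4^{-1}$, which vanishes by the key step. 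Hence $\tau_{/b}$ is surjective, and $\QQQ(\bG)^{\hG}/i^*\QQQ(\hG) \cong \HHH_{/b}^2(\ppi) \cong \HHH^2(\ppi;\RR) \cong \RR^{\genus(2\genus-1)}$. The main obstacle is the cohomological bookkeeping in the key step verifying that $p^*$ on $\HHH^2$ is zero; the remaining deductions are formal consequences of the machinery of Theorems \ref{main thm 2} and \ref{easy cor}.
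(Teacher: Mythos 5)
Your proof is correct, but it takes a genuinely different route from the paper's. The paper never shows that $p^*\colon \HHH^2(\ppi)\to\HHH^2(\hG)$ vanishes; instead it works from the explicit presentation of $\pi_1(E)$: it computes $\dim \HHH^1(\bG)^{\hG}=\genus(2\genus-1)$ from the relations $[a_i,a_{2\genus+1}]=1$ and the normal generation of $\bG$ by the commutators $[a_i,a_j]$, then gets the second assertion from Corollary \ref{cor B} (upper bound $\dim\HHH^2(\ppi)-\dim\HHH^1(\bG)^{\hG}=0$), and the first by sandwiching: the injection $\HHH^1(\bG)^{\hG}\hookrightarrow \QQQ(\bG)^{\hG}/i^*\QQQ(\hG)$ gives the lower bound $\genus(2\genus-1)$, while Theorem \ref{thm A} gives the matching upper bound $\dim\HHH^2(\ppi)$. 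You instead prove the single topological statement that $p^*$ is zero on $\HHH^2$, via the Gysin sequence for the non-trivial oriented circle bundle (so $\pi^*$ kills $\HHH^2(\Sigma_\genus;\RR)$ and is an isomorphism on $\HHH^1$, hence all cup products of degree-one classes of $E$ vanish, and $\HHH^2(\ppi;\RR)=\bigwedge^2\HHH^1(\ppi;\RR)$ is spanned by such products), and then both assertions fall out formally: the second from the isomorphism in Theorem \ref{easy cor} with $\Im(p^*)=0$, and the first because $p^*_{/b}=\tae_5\circ p^*\circ\tae_4^{-1}=0$ makes $\tau_{/b}$ surjective in the five-term sequence, so $\QQQ(\bG)^{\hG}/i^*\QQQ(\hG)\cong\HHH^2_{/b}(\ppi)\cong\HHH^2(\ppi)$. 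Your approach buys an exact identification of the quotient with $\HHH^2_{/b}(\ppi)$ and avoids the computation of $\HHH^1(\bG)^{\hG}$ altogether (and needs neither hyperbolicity nor surjectivity of the comparison map), at the cost of invoking the bundle topology; the paper's argument is purely presentational and only uses the Euler number through the group presentation. Note that both arguments implicitly assume the circle bundle is oriented so that the Euler number/class is defined, so you are working under the same convention as the paper.
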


\begin{remark}
  \begin{enumerate}
    \item The dimension of $\QQQ(G)$ (and hence, the dimension of $\QQQ(\bG)^{\hG}$) is the cardinal of the continuum since $G$ surjects onto the surface group $\pi_1(\Sigma_{\genus})$.
    \item The cohomology group $\HHH^2(\hG)$ is non-zero. In fact, the dimension of $\HHH^2(\hG) \cong \HHH^2(E)$ is equal to $2\genus$.
  \end{enumerate}
\end{remark}

\begin{proof}[Proof of Corollary \textup{\ref{cor:circle_bundle}}]
  Let $n$ be the Euler number of the bundle $E \to \Sigma_{\genus}$.
  Note that $n$ is non-zero since the bundle is non-trivial  (see Theorem 11.16 of \cite{Fr}).
  Since the group $G$ has a  presentation
  \begin{align}\label{presentation_circle_bundle}
    \hG &= \pi_1(E) = \left\langle a_1, \cdots, a_{2\genus + 1}\; \middle| \;
  \begin{gathered}
      {[a_1, a_2] \cdots [a_{2\genus-1}, a_{2\genus}] = a_{2\genus + 1}^{-n}},  \\
      [a_i, a_{2\genus + 1}] = 1_{\hG} \text{ for every } 1 \leq i \leq 2\genus
  \end{gathered}
   \right\rangle, \nonumber
  \end{align}
  the abelianization $\ppi = G/N$ is isomorphic to $\ZZ^{2\genus} \times (\ZZ/n\ZZ)$.
  Hence we have $\dim \HHH^2(\ppi) = \genus(2\genus - 1)$.
  By the relation $[a_i, a_{2\genus + 1}] = 1_{\hG}$ for each $i$ and the fact that $\bG$ is normally generated by $\{ [a_i, a_j] \}_{1 \leq i < j \leq 2\genus + 1}$ in $\hG$,
we obtain $\dim \HHH^1(\bG)^{\hG} = \genus(2\genus + 1) - 2\genus = \genus(2\genus - 1)$  by an argument similar to the proof of Proposition \ref{prop:inv_hom_surface_group}. 
  Hence Corollary \ref{cor B} asserts that
  \[
    \dim \big(\QQQ(\bG)^\hG / (\HHH^1(\bG)^\hG + i^* \QQQ(\hG))\big) \leq \dim \HHH^2(\ppi) - \dim \HHH^1(\bG)^{\hG} = 0.
  \]
  Since $\bG$ is the commutator subgroup of $G$, the space $\HHH^1(\bG)^{\hG}$ injects into $\QQQ(\bG)^\hG / i^* \QQQ(\hG)$.
  Hence we have
  \[
    \dim \big( \QQQ(\bG)^\hG / i^* \QQQ(\hG)\big) \geq \genus(2\genus - 1).
  \]
  On the other hand, Theorem \ref{thm A} asserts that
  \[
    \dim \big( \QQQ(\bG)^\hG / i^* \QQQ(\hG)\big) \leq \HHH^2(\ppi) =  \genus(2\genus - 1).
  \]
  This completes the proof.
\end{proof}

For elements $r_1, \cdots, r_m \in G$, we write $\llangle r_1, \cdots, r_m \rrangle$ to mean the normal subgroup of $G$ generated by $r_1, \cdots, r_m$.

\begin{cor} \label{cor FFF}
Let $r_1, \cdots, r_m \in [F_n, [F_n, F_n]]$ and set
\[G = F_n / \llangle r_1, \cdots, r_m \rrangle.\]
Then we have $\QQQ( [\hG,\hG] )^{\hG} = \HHH^1( [\hG,\hG] )^\hG + i^* \QQQ(\hG)$.
\end{cor}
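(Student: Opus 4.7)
The plan is to invoke Corollary \ref{cor B} applied to the pair $(\hG,\bG)$ with $\bG = \hG' = [\hG,\hG]$. Since every relator $r_i$ lies in $[F_n,[F_n,F_n]]\subseteq [F_n,F_n]$, the abelianization of $\hG$ coincides with that of $F_n$, so $\ppi := \hG/\hG' \cong \ZZ^n$. In particular, $\ppi$ is abelian, hence amenable, hence boundedly $3$-acyclic by Theorem~\ref{amenable base}. Corollary \ref{cor B} therefore applies and yields
\[
\dim\bigl(\QQQ(\hG')^{\hG}/(\HHH^1(\hG')^{\hG}+i^{\ast}\QQQ(\hG))\bigr)\;\le\;\dim\HHH^2(\ppi)-\dim\HHH^1(\hG')^{\hG}\;=\;\tbinom{n}{2}-\dim\HHH^1(\hG')^{\hG}.
\]
So the entire task reduces to showing $\dim\HHH^1(\hG')^{\hG}\ge \binom{n}{2}$, i.e.\ that every $F_n$-invariant homomorphism on $F_n'=[F_n,F_n]$ descends to a $\hG$-invariant homomorphism on $\hG'$.

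Let $\pi\colon F_n\to \hG$ be the quotient map, and let $K=\llangle r_1,\ldots,r_m\rrangle_{F_n}$. Since each $r_i\in F_n'$, we have $K\subseteq F_n'$, and $\pi$ restricts to a surjection $F_n'\twoheadrightarrow \hG'$ with kernel $K$. Pull-back by $\pi$ identifies $\HHH^1(\hG')^{\hG}$ with the subspace of $\varphi\in\HHH^1(F_n')^{F_n}$ vanishing on $K$, and by $F_n$-invariance vanishing on $K$ is equivalent to vanishing on each generator $r_i$. Here the key observation is that for \emph{any} $F_n$-invariant homomorphism $\varphi\colon F_n'\to\RR$, any $a\in F_n$, and any $s\in F_n'$,
\[
\varphi\bigl([a,s]\bigr)\;=\;\varphi(asa^{-1})-\varphi(s)\;=\;\varphi(s)-\varphi(s)\;=\;0,
\]
using that $asa^{-1}\in F_n'$ and $\varphi$ is $F_n$-invariant. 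Since $r_i$ is a product of commutators of the form $[a,s]$ with $a\in F_n$ and $s\in F_n'$, we conclude $\varphi(r_i)=0$ automatically for every $i$. Therefore the restriction map $\HHH^1(\hG')^{\hG}\to\HHH^1(F_n')^{F_n}$ is surjective, and since $\pi|_{F_n'}$ is surjective it is also injective, so we obtain an isomorphism $\HHH^1(\hG')^{\hG}\cong\HHH^1(F_n')^{F_n}$.

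By Lemma \ref{lem F_n}, $\dim\HHH^1(F_n')^{F_n}=\binom{n}{2}$, so $\dim\HHH^1(\hG')^{\hG}=\binom{n}{2}$. Substituting into the inequality from Corollary \ref{cor B} gives the desired vanishing $\QQQ(\hG')^{\hG}=\HHH^1(\hG')^{\hG}+i^{\ast}\QQQ(\hG)$. There is no real obstacle here once Corollary \ref{cor B} and Lemma \ref{lem F_n} are in hand; the only genuinely new ingredient is the elementary commutator-calculus identity $\varphi([a,s])=0$ above, which is exactly what makes the hypothesis $r_i\in [F_n,[F_n,F_n]]$ (rather than merely $r_i\in [F_n,F_n]$) decisive.
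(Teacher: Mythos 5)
Your argument is correct and follows essentially the same route as the paper: pull back along $q\colon F_n\to G$ to identify $\HHH^1(\hG')^{\hG}$ with the $F_n$-invariant homomorphisms on $F_n'$ vanishing on the relators, observe that every such invariant homomorphism automatically kills $[F_n,[F_n,F_n]]$, so $\dim\HHH^1(\hG')^{\hG}=\binom{n}{2}=\dim\HHH^2(\ZZ^n)$, and conclude by Corollary \ref{cor B}. The only difference is that you spell out the commutator computation $\varphi([a,s])=0$ that the paper states without proof, which is a welcome elaboration rather than a deviation.
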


\begin{proof}
Let $q$ be the natural projection $F_n \to G$. Then the image of the monomorphism $q^* \colon \HHH^1( [\hG,\hG] )^G \to \HHH^1( [F_n,F_n] )^{F_n}$ is the space of $F_n$-invariant homomorphisms $f \colon  [F_n,F_n]  \to \RR$ satisfying $f(r_1) = \cdots = f(r_m) = 0$. Since every $F_n$-invariant homomorphism of $ [F_n,F_n] $ vanishes on $[F_n, [F_n, F_n]]$, we  conclude that $q^*$ is an isomorphism, and hence we have $\dim \HHH^1( [\hG,\hG] )^G = n (n-1) / 2$.
Since $\Gamma = G /  [\hG,\hG]  = \ZZ^n$, we have $\dim \HHH^2(\Gamma) = n(n-1)/2$. Hence Corollary \ref{cor B} implies that $\QQQ( [\hG,\hG] )^G / (\HHH^1( [\hG,\hG] )^G + i^* \QQQ(G))$ is trivial.
\end{proof}

\begin{remark}
Suppose that $N$ is the commutator subgroup of $G$.
As will be seen in Corollaries \ref{cor extend homomorphism 1} and \ref{cor 5.3}, the sum $\HHH^1(N)^G + i^* \QQQ(G)$ is actually a direct sum in this case, and the map $\HHH^1(N)^G \to \QQQ(N)^G / i^* \QQQ(G)$ is an isomorphism.
Hence, if $G$ is a group provided in Corollary \ref{cor FFF} and $N$ is the commutator subgroup of $G$, then the basis of $\QQQ(N)^G / i^* \QQQ(G)$ is provided by the $G$-invariant homomorphism $\alpha'_{i,j} \colon N \to \RR$ for $1 \le i< j \le n$, which is the homomorphism induced by $\alpha_{i,j} \colon  [F_n,F_n]  \to \RR$ described in Lemma \ref{lem F_n}.
\end{remark}

As an example of a pair $(G,N)$ satisfying $\QQQ(N) \ne \HHH^1(N)^G + i^* \QQQ(G)$, we provide a certain family of one-relator groups.
Recall that a {\it one-relator group} is a group isomorphic to $F_n / \llangle r \rrangle$ for some positive integer $n$ and an element $r$ of $F_n$. 

\begin{thm}\label{thm:one-relator}
Let $n$ and $k$ be integers at least $2$, and $r$ an element of 
 $[F_n,F_n] \setminus [F_n, [F_n, F_n]]$. 
Set $\hG = F_n / \llangle r^k \rrangle$ and $\bG =  [\hG, \hG]. $
Then
\[\dim \left( \QQQ(\bG)^{\hG} / (\HHH^1(\bG)^{\hG} + i^* \QQQ(\hG)) \right) = 1.\]
\end{thm}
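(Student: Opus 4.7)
My plan is to apply Corollary \ref{cor B} to the pair $(\hG,\bG) = (G, G')$. Since $r \in [F_n, F_n]$, the relator $r^k$ lies in $F'_n$, so the projection $F_n \to G$ induces an isomorphism on abelianizations, giving $\ppi := \hG/\bG \cong \ZZ^n$. In particular, $\ppi$ is amenable, hence boundedly $3$-acyclic, and $\dim \HHH^2(\ppi) = \binom{n}{2}$. Moreover, since $k \geq 2$ and $r \neq 1_{F_n}$ (which is forced by the existence of $f_0$ with $f_0(r) \neq 0$), the group $G$ is a one-relator group with torsion and therefore Gromov hyperbolic by a classical result. Consequently, the comparison map $c_G \colon \HHH^2_b(G) \to \HHH^2(G)$ is surjective, and Corollary \ref{cor B} yields the equality
\[
\dim \left(\QQQ(\bG)^{\hG}/(\HHH^1(\bG)^{\hG} + i^* \QQQ(\hG))\right) = \binom{n}{2} - \dim \HHH^1(\bG)^{\hG}.
\]

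What remains is to show $\dim \HHH^1(\bG)^{\hG} = \binom{n}{2} - 1$. Letting $q \colon F_n \to G$ denote the projection, I will first observe that $q$ restricts to a surjection $F'_n \to \bG = G'$ whose kernel is $\llangle r^k \rrangle$ (which is contained in $F'_n$ because $r \in F'_n$). Pulling back via this restriction identifies $\HHH^1(\bG)^{\hG}$ with the subspace of those $f \in \HHH^1(F'_n)^{F_n}$ that vanish on $\llangle r^k \rrangle$; since such $f$ is $F_n$-invariant, this vanishing reduces to the single condition $f(r^k) = 0$, which in the real-valued setting is the same as $f(r) = 0$. Therefore
\[
\HHH^1(\bG)^{\hG} \cong \{ f \in \HHH^1(F'_n)^{F_n} \mid f(r) = 0 \}.
\]
The hypothesis on $f_0$ makes the evaluation functional $f \mapsto f(r)$ a surjection onto $\RR$, so its kernel has codimension $1$; combining this with the formula $\dim \HHH^1(F'_n)^{F_n} = \binom{n}{2}$ from Lemma \ref{lem F_n} yields the desired count.

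The main non-elementary input is the hyperbolicity of one-relator groups with torsion: this is what promotes the a priori inequality provided by Corollary \ref{cor B} to an exact equality, thereby furnishing the lower bound $\geq 1$. Without this input, the same pullback argument would still give the upper bound $\leq 1$, but matching it from below would need a separate construction of a non-trivial class in the quotient. The rest of the argument is a purely linear-algebraic reduction to Lemma \ref{lem F_n}, together with the direct identification of $\hG$-invariant homomorphisms on $\bG$ with $F_n$-invariant homomorphisms on $F'_n$ that kill $r$.
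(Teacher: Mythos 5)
Your proof is correct and follows essentially the same route as the paper: apply Corollary \ref{cor B} with $\Gamma\cong\ZZ^n$ (boundedly $3$-acyclic, $\dim\HHH^2(\Gamma)=n(n-1)/2$), use hyperbolicity of the one-relator group with torsion to get surjectivity of the comparison map and hence equality, and identify $\HHH^1(\bG)^{\hG}$ via $q^*$ with the codimension-one subspace $\{f\in\HHH^1(F'_n)^{F_n}\mid f(r)=0\}$ using the hypothesis on $f_0$. The only cosmetic difference is in justifying the torsion: the paper verifies directly via $f_0$ that $r\notin\llangle r^k\rrangle$, whereas you invoke the standard fact that a proper-power relator ($k\geq 2$, $r\neq 1_{F_n}$) yields a one-relator group with torsion; both are adequate.
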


 We observe that $r \in [F_n,F_n] \setminus [F_n, [F_n, F_n]]$ is equivalent to the existance of $f_0 \in \HHH^1( [F_n, F_n] )^{F_n}$ with $f_0(r) \ne 0$.
 We use this observation in the proof of Theorem~\ref{thm:one-relator}. 

\begin{proof}[ Proof of Theorem~$\ref{thm:one-relator}$ ]
By Newman's Spelling theorem \cite{MR222152}, every one-relator group with torsion is hyperbolic, and hence $\hG$ is hyperbolic.
Indeed, $r$ does not belong to $\llangle r^k \rrangle$ since $f_0(x)$ belongs to $k f_0(r) \ZZ$ for every element $x$ of $\llangle r^k \rrangle$.
Since $\ppi = \hG/\bG$ is abelian, we  see that  
 $\ppi$ is boundedly $3$-acyclic.  By Corollary \ref{cor B}, it suffices to see
\[\dim \left( \QQQ(\bG)^{\hG} / (\HHH^1(\bG)^{\hG} + i^* \QQQ(\hG)) \right) = \dim \HHH^2(\ppi) - \dim \HHH^1(\bG)^{\hG} = 1.\]

Since $r^k \in  [F_n, F_n] $, we have  $\Gamma = \ZZ^n$, and  $\dim \HHH^2(\ppi) = n(n-1) / 2$.
Hence
 it only remains to show that
\begin{eqnarray} \label{one relator}
\dim \HHH^1(N)^G = \frac{n(n-1)}{2} - 1.
\end{eqnarray}
Let $q \colon F_n \to G = F_n / \llangle r^k \rrangle$ be the natural quotient. Then $q$ induces a monomorphism $q^* \colon \HHH^1(N)^G \to \HHH^1( [F_n, F_n] )^{F_n}$. As is the case of the proof of Proposition \ref{prop:inv_hom_surface_group}, it is straightforward to show that the image of $q^* \colon \HHH^1(N)^G \to \HHH^1( [F_n, F_n] )^{F_n}$ is the space of $F_n$-invariant homomorphisms $f \colon  [F_n, F_n]  \to \RR$ such that $f(r) = 0$.
Since there exists an element $f_0$ of $\HHH^1( [F_n, F_n] )^{F_n}$ with $f_0(r) \ne 0$, we  conclude that the codimension of the image of $q^* \colon \HHH^1(N)^G \to \HHH^1( [F_n, F_n] )^{F_n}$ is $1$. This implies \eqref{one relator}, and hence completes the proof.
\end{proof}

 After the authors closed up this work, we have obtained a generalization of Theorem \ref{thm:one-relator}; see Theorem 11.15 of \cite{coarse_group}.

\begin{remark}\label{rem:one-relator}
Let $k$ be a positive integer. Here we construct a finitely presented group $G$ satisfying
\begin{eqnarray} \label{dim k}
\dim \left( \QQQ( [\hG, \hG] )^G / (\HHH^1( [\hG, \hG] )^G + i^* \QQQ(G)) \right) = k.
\end{eqnarray}
Let $F_{2k} = \langle a_1, \cdots, a_{2k}\rangle$ be a free group and define the group $G$ by
\[G = \langle a_1, \cdots, a_{2k} \; | \; [a_1, a_2]^2, \cdots, [a_{2k-1}, a_{2k}]^2 \rangle.\]
Set $H = \langle a_1, a_2 \; | \; [a_1, a_2]^2 \rangle$. Then $G$ is the $k$-fold free product of $H$. Since $H$ is a one-relator group with torsion, $H$ is hyperbolic. Since a finite free product of hyperbolic groups is hyperbolic, $G$ is hyperbolic. Hence the comparison map $\HHH^2_b(G) \to \HHH^2(G)$ is surjective.

Let $q \colon F_{2k} \to G$ be the natural quotient. Then the image of the monomorphism $q^* \colon \HHH^1( [\hG, \hG] )^G \to \HHH^1( [F_{2k}, F_{2k}] )^{F_{2k}}$ consists of the $F_{2k}$-invariant homomorphisms $\varphi \colon  [F_{2k}, F_{2k}]  \to \RR$ such that $\varphi([a_{2i-1}, a_{2i}]) = 0$ for $i = 1, \cdots, k$.
Therefore Corollary \ref{cor B} implies  \eqref{dim k}. 
\end{remark}

\section{Cohomology classes induced by the flux homomorphism}\label{flux section}

First, we review the definition of the (volume) flux homomorphism (for instance, see \cite{Ban97}).

Let $\diff (M, \Omega)$ denote the group of diffeomorphisms on an $m$-dimensional smooth manifold $M$ which preserve a volume form $\Omega$ on $M$, $\diff_0(M, \Omega)$ the identity component of $\diff(M, \Omega)$, and $\tdiff_0(M, \Omega)$ the universal cover of $\diff_0(M, \Omega)$.
Then the {\it $($volume$)$ flux homomorphism} $\tflux_\Omega \colon \tdiff_0 (M, \Omega) \to \HHH^{m-1}(M)$ is defined by 
\[\tflux_\Omega ([\{ \psi^t \}_{t \in [0,1]}]) = \int_0^1 [\iota_{X_t} \Omega] dt,\]
where $X_t = \dot{\psi}_t$. The image of $\pi_1(\diff_0(M, \Omega))$ under $\tflux_\Omega$ is called the {\it flux group} of the pair $(M,\Omega)$, and denoted by $\Gamma_\Omega$.
The flux homomorphism $\tflux_\Omega$ descends a homomorphism
\[\flux_\Omega\colon \diff_0(M,\Omega) \to \HHH^{m-1}(M) / \Gamma_\Omega.\]
These homomorphisms are fundamental objects in theory of diffeomorphism groups, and have been extensively studied by several researchers (for example, see \cite{KKM06}, \cite{Ish14}).

As we wrote in Subsection \ref{intro flux},  Proposition \ref{prop diff} is essentially due to \cite{KM}; we state the proof for the reader's convenience.
\begin{proof}[Proof of Proposition $\ref{prop diff}$]
Suppose that the pair $(G,N)$ of groups is $(\diff_0(M, \Omega), \Ker (\flux_\Omega))$ or $(\widetilde{\diff}_0(M,\Omega), \Ker (\tflux_\Omega))$.
Since the kernels of the homomorphisms $\flux_\Omega$ and $\tflux_\Omega$ are perfect (see \cite{Th} and \cite{Ban}, see also Theorems 4.3.1 and 5.1.3 of \cite{Ban97}), we have  $\HHH^1(N) = 0$.
Hence this proposition follows from the five-term exact sequence (Theorem \ref{thm five-term}).
\end{proof}

To prove (1) of Theorem \ref{thm diff},
we use Py's Calabi quasimorphism $\qm_P\colon\Ker(\flux_\Omega)\to\RR$, which was introduced in \cite{Py06}.
For an oriented closed surface whose genus $\genus$ is at least $2$ and a volume form $\Omega$ on $M$,
Py constructed a $\diff_0(M, \Omega)$-invariant homogeneous quasimorphism $\qm_P\colon\Ker(\flux_\Omega)\to\RR$ on $\Ker(\flux_\Omega)$.

\begin{proof}[Proof of Theorem $\ref{thm diff}$]
First, we prove (1).
Suppose that $\Sigma_{\genus}$ is an oriented closed surface whose genus $\genus$ is at least $2$, and let $\Omega$ be its volume form.
Since in this case $\Gamma_\Omega$ is trivial  (as mentioned just after Theorem \ref{thm diff}), the two flux homomorphisms $\flux_\Omega$ and $\tflux_\Omega$ coincide.

Set $G = \diff_0(\Sigma_{\genus}, \Omega)$ and
 $N = \Ker(\flux_\Omega)$. 
Since $N$ is perfect (\cite[Th\'{e}or\`{e}m II.6.1]{Ban}), we have  $\HHH^1(N) = \HHH^1(N)^G = 0$.
Since $G /N$ is abelian, Theorem \ref{easy cor} implies that
\[\QQQ(N)^G / i^* \QQQ(G) = \QQQ(\bG)^{\hG} / (\HHH^1(\bG)^{\hG} + i^* \QQQ(\hG)) \cong  \Im(\flux_\Omega^*) \cap \Im(c_G).\]
Since Py's Calabi quasimorphism $\qm_P$ is not extendable to $G = \diff_0(\Sigma_{\genus}, \omega)$ (\cite[Theorem 1.11]{KK}), we  conclude that $\QQQ(N)^G / i^* \QQQ(G)$ is not trivial.
Hence, we  conclude that $\flux_\Omega^\ast \circ \tae_4^{-1} \circ \tau_{/b}([\qm_P])\in\Im(\flux_\omega^*) \cap \Im (c_G)$ is non-zero.

Now we show (2).
Suppose that $m = 2$.
The case that $M$ is a $2$-sphere is clear since $\HHH^1(M) = 0$, and hence the flux homomorphisms are trivial.
The case $M$ is a torus follows from the fact that both $\flux_\Omega$ and $\tflux_\Omega$ have section homomorphisms.
Hence, by Proposition \ref{virtual split KKMM1}, we have $\Im(\flux_\Omega^*) \cap \Im(c_G) \cong \QQQ(N)^G / i^* \QQQ(G) = 0$.

Suppose that $m \ge 3$.
Then Proposition \ref{prop Fathi} mentioned below implies that $\flux_\Omega$ has a section homomorphism.
Hence, by Proposition \ref{virtual split KKMM1}, we have $\Im(\flux_\Omega^*) \cap \Im(c_G) \cong \QQQ(N)^G / i^* \QQQ(G) = 0$.
This completes the proof.
\end{proof}

\begin{prop}[Proposition 6.1 of \cite{Fa}] \label{prop Fathi}
Let $m$ be an integer at least $3$, $M$ an $m$-dimensional differential manifold, and $\Omega$ a volume form on $M$. Then there exists a section homomorphism of the reduced flux homomorphism $\flux_\Omega \colon \diff_0(M, \Omega) \to \HHH^{m-1}(M,\Omega) / \Gamma_\Omega$.
In addition, there exists a section homomorphism of $\tflux_\Omega \colon \tdiff_0(M, \Omega) \to \HHH^{m-1}(M,\Omega)$.
\end{prop}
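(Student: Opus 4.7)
The plan is to construct an explicit homomorphic section by means of commuting flows, exploiting the crucial fact that for $m\geq 3$ one has room to represent a basis of $\HHH^{m-1}(M;\RR)$ by closed $(m-1)$-forms with pairwise disjoint supports. The commutation then upgrades the naive $(t_1,\dots,t_k)$-parametrization of flows into an actual group homomorphism.

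First I would choose a basis $[\beta_1],\dots,[\beta_k]$ of $\HHH^{m-1}(M;\RR)$ in which each $\beta_i$ is a smooth closed $(m-1)$-form compactly supported in an open set $U_i\subset M$, and the $U_i$ are pairwise disjoint. Concretely, Poincar\'e duality identifies $\HHH^{m-1}(M;\RR)\cong \HHH_1(M;\RR)$, and one can pick a basis of $\HHH_1(M;\RR)$ represented by smoothly embedded oriented $1$-cycles; since $m\geq 3$, a generic small isotopy makes these cycles pairwise disjoint and equips them with pairwise disjoint tubular neighborhoods $U_i$. The Thom forms of these tubular neighborhoods, extended by zero to $M$, serve as the $\beta_i$. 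Using that $\Omega$ is a volume form, define divergence-free vector fields $X_i$ by $\iota_{X_i}\Omega=\beta_i$; each $X_i$ is compactly supported in $U_i$, so the $X_i$ have pairwise disjoint supports and therefore commute.

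Because the $X_i$ commute, the linear combination $Y_{\mathbf{t}}=t_1X_1+\cdots+t_kX_k$ has time-one map $\phi_1^{Y_{\mathbf{t}}}=\phi_{t_1}^{X_1}\circ\cdots\circ\phi_{t_k}^{X_k}$, and the straight isotopy $s\mapsto\phi_s^{Y_{\mathbf{t}}}$ from the identity to this map has flux
\[
\tflux_\Omega\!\left(\left[\{\phi_s^{Y_{\mathbf{t}}}\}_{s\in[0,1]}\right]\right)=\int_0^1[\iota_{Y_{\mathbf{t}}}\Omega]\,ds=\sum_{i=1}^k t_i[\beta_i].
\]
Hence the assignment $\sigma\colon\RR^k\to\tdiff_0(M,\Omega)$, $\mathbf{t}\mapsto[\{\phi_s^{Y_{\mathbf{t}}}\}_{s\in[0,1]}]$, is a group homomorphism and $\tflux_\Omega\circ\sigma$ is a linear isomorphism onto $\HHH^{m-1}(M;\RR)$. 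Composing $\sigma$ with the inverse of $\tflux_\Omega\circ\sigma$ gives a section of $\tflux_\Omega$; projecting this section to $\diff_0(M,\Omega)$ and postcomposing with the quotient $\HHH^{m-1}(M;\RR)\to\HHH^{m-1}(M;\RR)/\Gamma_\Omega$ then produces a section of $\flux_\Omega$.

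The main obstacle is the first step, namely the disjoint support reduction. This is precisely where the hypothesis $m\geq 3$ enters: for $m\geq 3$ two generic $1$-submanifolds can be made disjoint by an arbitrarily small isotopy and carry pairwise disjoint tubes, whereas on a surface two loops with nonzero intersection pairing cannot be separated and the flows of any dual vector fields genuinely fail to commute. This failure is the geometric shadow of the nonvanishing phenomenon exhibited by Py's Calabi quasimorphism in part (1) of Theorem \ref{thm diff}.
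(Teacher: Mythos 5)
The paper itself gives no proof of this proposition (it is quoted from Fathi \cite{Fa}), so your argument has to stand on its own. Its first half does: for $M$ closed (which is the situation the paper uses, and is implicitly needed for finite dimensionality of $\HHH^{m-1}(M)$ and for Poincar\'e duality), representing a basis of $\HHH_1(M;\RR)$ by pairwise disjoint embedded circles with disjoint tubes $U_i$ (possible because $m\geq 3$), taking Thom forms $\beta_i$, and defining $X_i$ by $\iota_{X_i}\Omega=\beta_i$ gives commuting $\Omega$-preserving flows with disjoint supports; the map $\mathbf{t}\mapsto[\{\phi_s^{Y_{\mathbf t}}\}_s]$ is then a homomorphism into $\tdiff_0(M,\Omega)$ with $\tflux_\Omega\circ\sigma(\mathbf t)=\sum_i t_i[\beta_i]$, and this does yield a homomorphic section of $\tflux_\Omega$. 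This is the expected construction for the universal-cover statement.

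The gap is the final sentence, i.e.\ the reduced flux homomorphism, and it is not cosmetic. The map $\pi\circ s\colon \HHH^{m-1}(M)\to\diff_0(M,\Omega)$ is a homomorphism out of the vector space, and no amount of ``postcomposing with the quotient'' turns it into a homomorphism out of $\HHH^{m-1}(M)/\Gamma_\Omega$. Indeed, if $\bar s$ were a homomorphic section of $\flux_\Omega$, then $\bar s$ is injective, so $\bar s\circ q$ would be a homomorphism $\HHH^{m-1}(M)\to\diff_0(M,\Omega)$ lifting $q$ whose kernel is \emph{exactly} $\Gamma_\Omega$; hence $\pi\circ s$ descends only if $\Ker(\pi\circ s)=\Gamma_\Omega$. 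But $\Ker(\pi\circ s)$ consists of the classes for which the time-one map of the corresponding tube-supported flow equals $\mathrm{id}_M$, which has nothing to do with $\Gamma_\Omega$: for $M=(\RR/\ZZ)^m$ with the standard volume form, $\Gamma_\Omega\neq 0$ (a full coordinate rotation is a loop in $\diff_0(M,\Omega)$ with non-zero flux), while no non-trivial combination of your compactly supported tube flows is the identity, so the descent fails. Nor can you instead precompose with a splitting of $q\colon\HHH^{m-1}(M)\to\HHH^{m-1}(M)/\Gamma_\Omega$: when $\Gamma_\Omega\neq 0$ the quotient has torsion and admits no homomorphic lift to the torsion-free vector space. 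So the reduced half of the proposition --- which is the half actually invoked for $\flux_\Omega$ in the proof of Theorem \ref{thm diff}~(2) --- is not proved by your argument; any proof must exhibit a subgroup of $\diff_0(M,\Omega)$ mapping isomorphically onto $\HHH^{m-1}(M)/\Gamma_\Omega$, i.e.\ it must engage with the flux group itself (for instance by realizing generators of $\Gamma_\Omega$ through genuinely periodic families compatible with the commuting system), and this is precisely where Fathi's cited argument does work beyond the disjoint-tubes construction.
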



The idea of Theorem \ref{thm diff} is also useful in (higher-dimensional) symplectic geometry.
For notions in symplectic geometry, for example, see \cite{Ban97} and \cite{PR}.
For a symplectic manifold $(M,\omega)$, let $\Ham(M,\omega)$ denote the group of Hamiltonian diffeomorphisms with compact support.
For an exact symplectic manifold $(M,\omega)$, let $\cal_\omega\colon\Ham(M,\omega)\to\RR$ denote the Calabi homomorphism.
We note that the map $\cal_{\omega}^*$ is injective, where $\cal_\omega^*\colon \HHH^2(\RR;\RR) \to \HHH^2(\Ham(M,\omega);\RR)$ is the homomorphism induced by $\cal_\omega$.
Indeed, because $\Ker(\cal_\omega)$ is perfect (\cite{Ban}), we can prove the injectivity of $\cal_\omega^*$ similarly to the proof of Proposition \ref{prop diff}.
Then, we have the following theorem.

\begin{thm} \label{calabi hom}
For an exact symplectic manifold $(M,\omega)$, every non-trivial element of $\Im(\cal_\omega^*)$ cannot be represented by a bounded $2$-cochain.
\end{thm}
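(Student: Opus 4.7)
The plan is to mimic the proof of Theorem~\ref{thm diff}~(2) in the Hamiltonian setting. Set $G = \Ham(M,\omega)$ and $N = \Ker(\cal_\omega)$; the quotient $\Gamma = G/N$ is identified via $\cal_\omega$ with a subgroup of $\RR$, hence is abelian and, by Theorem~\ref{amenable base}, boundedly $3$-acyclic. Because $N$ is perfect by \cite{Ban}, we have $\HHH^1(N)^G = 0$, so Theorem~\ref{easy cor} gives an isomorphism
\[
\QQQ(N)^G\big/i^{\ast}\QQQ(G)\;\cong\;\Im(\cal_\omega^{\ast})\cap\Im(c_G).
\]

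The next step is to produce a section homomorphism of $\cal_\omega$. I would choose a compactly supported time-independent Hamiltonian $H \colon M \to \RR$ whose time-$1$ flow has non-zero Calabi invariant; such $H$ exists since $\cal_\omega$ is non-trivial on exact symplectic manifolds. The one-parameter subgroup $t \mapsto \phi_H^t$ lies in $G$, and $\cal_\omega(\phi_H^t)$ is linear in $t$, so after a suitable rescaling we obtain a homomorphism $s \colon \Im(\cal_\omega) \to G$ that is a section of $\cal_\omega$. By Proposition~\ref{virtual split KKMM1}, the existence of such a (virtual) splitting forces $i^{\ast} \colon \QQQ(G) \to \QQQ(N)^G$ to be surjective, and therefore $\QQQ(N)^G / i^{\ast}\QQQ(G) = 0$.

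Combining the two preceding paragraphs gives $\Im(\cal_\omega^{\ast}) \cap \Im(c_G) = 0$. Since $\cal_\omega^{\ast}$ is injective (as remarked just before the statement), every non-trivial class in $\Im(\cal_\omega^{\ast})$ is a non-zero element of $\HHH^2(G)$ that lies outside the image of the comparison map $c_G$, which is precisely to say that it admits no bounded $2$-cochain representative.

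The main obstacle will be the clean construction of the section of $\cal_\omega$ on a general exact symplectic manifold. While the linearity of the Calabi invariant along a time-independent Hamiltonian one-parameter subgroup is standard, one must verify that the chosen $H$ really produces a genuine group homomorphism into $\Ham(M,\omega)$ (rather than only into its universal cover) for the precise convention used in defining $\cal_\omega$, so that Proposition~\ref{virtual split KKMM1} applies as stated.
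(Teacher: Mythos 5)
Your proposal is correct and follows essentially the same route as the paper: the paper likewise notes that a normalized time-independent Hamiltonian flow gives a section of $\cal_\omega$, invokes Proposition~\ref{virtual split KKMM1} to get $\QQQ(N)^G = i^{\ast}\QQQ(G)$, and combines this with Theorem~\ref{easy cor} (using perfectness of $\Ker(\cal_\omega)$ and amenability of the abelian quotient) and the injectivity of $\cal_\omega^{\ast}$, exactly as in the proof of Theorem~\ref{thm diff}. The only cosmetic difference is your rescaling of the one-parameter subgroup versus the paper's normalization $\int_M H\,\omega^n = 1$, which are the same device.
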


Note that $\cal_\omega\colon\Ham(M,\omega)\to\RR$ has a section homomorphism.
Indeed, for a (time-independent) Hamiltonian function whose integral over $M$ is $1$ and its Hamiltonian flow $\{\phi^t\}_{t\in\RR}$, the homomorphism $t \mapsto \phi^t$ is a section of the Calabi homomorphism $\cal_\omega$.
Hence the proof of Theorem \ref{calabi hom} is similar to Theorem \ref{thm diff}.

\section{Proof of the main theorem}\label{sec:pf_of_five-term}

The goal in this section is to prove Theorem \ref{main thm}, which is the five-term exact sequence of the cohomology of groups relative to the bounded subcomplex.

\begin{notation}
Throughout this section, $\cM$ denotes a Banach space equipped with the norm $\| \cdot \|$ and an isometric $G$-action whose restriction to $\bG$ is trivial.
  For a non-negative real number $D \geq 0$, the symbol $v \underset{D}{\approx} w$ means that the inequality $\|v - w\| \leq D$ holds.
  For functions $f, g \colon S \to \cM$ on a set $S$, the symbol $f \underset{D}{\approx} g$ means that the condition $f(s) \underset{D}{\approx} g(s)$ holds for every $s \in S$.
\end{notation}

\subsection{$\bG$-quasi-cocycle}
To define the map $\tau_{/b} \colon \HHH_{/b}^1(\bG;V)^{\hG} \to \HHH_{/b}^2(\ppi;V)$ in Theorem \ref{main thm}, it is convenient to introduce the notion called the $\bG$-quasi-cocycle.
First, we recall the definition of quasi-cocycles.

\begin{definition}\label{def:quasicocycle}
  Let $\hG$ be a group and $V$ a left $\RR[\hG]$-module with a $\hG$-invariant norm $\| \cdot \|$.
  A function $F \colon \hG \to V$ is called a \textit{quasi-cocycle} if there exists a non-negative number $D$ such that
  \begin{align*}
    F(g_1g_2) \underset{D}{\approx} F(g_1) + g_1 \cdot F(g_2)
  \end{align*}
  holds for every $g_1, g_2 \in \hG$.
  Such a smallest $D$ is called the \textit{defect} of $F$ and denoted by $D(F)$.
  Let $\rQQQ Z(\hG; V)$ denote the $\RR$-vector space of all quasi-cocycles on $\hG$.
\end{definition}

\begin{remark}
  If we need to specify the $\hG$-representation $\rho$, we use the symbol $\rQQQ Z(\hG;\rho, V)$ instead of $\rQQQ Z(\hG; V)$.
\end{remark}

We introduce the concept of $\bG$-quasi-cocycles, which is a generalization of the concept of partial quasimorphisms introduced in \cite{EP06} (see also \cite{MVZ}, \cite{Ka16}, \cite{Ki}, \cite{BK}, and \cite{KKMM1}).

\begin{definition}\label{def:N-quasimorphism}
  Let $\bG$ be a normal subgroup of $\hG$.
  A function $F \colon \hG \to V$ is called an \textit{$\bG$-quasi-cocycle} if there exists a non-negative number $D''$ such that
  \begin{align}\label{eq:Nquasi_condition}
    F(n g) \underset{D''}{\approx} F(n) + F(g) \ \text{ and } \ F(g n) \underset{D''}{\approx} F(g) + g \cdot F(n)
  \end{align}
  hold for every $g \in \hG$ and $n \in \bG$.
  Such a smallest $D''$ is called the \textit{defect} of the $\bG$-quasi-cocycle $F$ and denoted by $D''(F)$.
  Let $\rQQQ Z_{\bG}(\hG;V)$ denote the $\RR$-vector space of all $\bG$-quasi-cocycles on $\hG$.
\end{definition}



If the $\hG$-action on $V$ is trivial, then a quasi-cocycle is also called a {\it $V$-valued quasimorphism}.
    In this case, we use the symbol $\rQQQ(\hG;V)$ instead of $\rQQQ Z(\hG;V)$ to denote the space of  $V$-valued quasimorphisms.
    A $V$-valued quasimorphism $F$ is said to be \textit{homogeneous} if the condition $F(g^k) = k \cdot F(g)$ holds for every $g \in \hG$ and every $k \in \ZZ$.
    The homogenization of $V$-valued quasimorphisms is well-defined as in the case of ($\RR$-valued) quasimorphisms. We write $\QQQ(G; V)$ to denote the space of $V$-valued homogeneous quasimorphisms.

Recall that in our setting the restriction of the $\hG$-action on $V$ to $\bG$ is always trivial.
Then a left $\hG$-action on $\QQQ(\bG;V)$ is defined by
    \[
      ({}^{g}\qm)(n) = g \cdot \qm(g^{-1} n g)
    \]
for every $g \in \hG$ and every $n \in \bG$. We call an element of $\QQQ(N; V)^G$ a {\it $\hG$-equivariant $V$-valued homogeneous quasimorphism}.

\begin{remark}\label{rem:V-valued_quasimorphisms}
Note that an element $\qm \in \QQQ(\bG;V)$ belongs to $\QQQ(\bG;V)^{\hG}$ if and only if the condition
    \begin{align*}
      g \cdot \qm(n) = \qm(g n g^{-1})
    \end{align*}
    holds for every $g \in \hG$ and every $n \in \bG$.
This is the reason why we call an element of $\QQQ(\bG ; V)^{\hG}$ $\hG$-equivariant.
\end{remark}

\begin{remark}\label{remark:restriction_homogenization}
 The isomorphism $\HHH^1_{/b}(N ; V) \to \QQQ(N; V)$ given by the homogenization is compatible with the $\hG$-actions.
In particular, this isomorphism induces an isomorphism between $\HHH^1_{/b}(N ; V)^{\hG} \to \QQQ(N; V)^{\hG}$. 
\end{remark}

    The elements of $\QQQ(\bG;V)^{\hG} = \HHH_{/b}^1(\bG;V)^{\hG}$ are $\hG$-invariant (as cohomology classes).
    However, respecting the condition $g \cdot \qm(n) = \qm(g n g^{-1})$ for $\qm \in \QQQ(\bG;V)^{\hG}$, we call the elements of $\QQQ(\bG;V)^{\hG}$ \textit{$\hG$-equivariant $V$-valued homogeneous quasimorphisms}.

\begin{lem}\label{lemma:restriction_homogenization}
  Let $\bG$ be a normal subgroup of $\hG$ and $V$ a left $\RR[\hG]$-module.
  Assume that the induced $\bG$-action on $V$ is trivial.
  Then, for an $\bG$-quasi-cocycle $F \in \rQQQ Z_{\bG}(\hG;V)$, there exists a bounded cochain $b \in C_b^1(\hG;V)$ such that the restriction $(F + b)|_{\bG}$ is in $\QQQ(\bG;V)^{\hG}$.
\end{lem}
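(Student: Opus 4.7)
The plan is to modify $F$ by a bounded cochain so that its restriction to $\bG$ becomes the homogenization of $F|_{\bG}$, and then to check that this homogenization is automatically $\hG$-equivariant. Set $D := D''(F)$ throughout.

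First I would observe that $F|_{\bG}\colon \bG\to V$ is a $V$-valued quasimorphism of defect at most $D$: applying the first estimate of \eqref{eq:Nquasi_condition} with $g=n'\in \bG$ and using that $\bG$ acts trivially on $V$ gives $F(n'n) \underset{D}{\approx} F(n')+F(n)$ for every $n,n'\in \bG$. Its homogenization $\tilde F\colon \bG\to V$, defined by $\tilde F(n) := \lim_{k\to\infty}F(n^k)/k$, is then well-defined in the Banach space $V$ (as noted in the discussion preceding Remark~\ref{rem:V-valued_quasimorphisms}), is a homogeneous $V$-valued quasimorphism, and satisfies $\|\tilde F(n) - F(n)\| \le D$ for every $n\in \bG$.

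Next I would establish the following approximate $\hG$-equivariance of $F|_{\bG}$: for every $g\in \hG$ and every $n\in \bG$,
\[
F(gng^{-1}) \underset{2D}{\approx} g\cdot F(n).
\]
Indeed, the second estimate of \eqref{eq:Nquasi_condition} gives $F(gn)\underset{D}{\approx}F(g)+g\cdot F(n)$, while rewriting $gn = (gng^{-1})g$ and applying the first estimate with $gng^{-1}\in \bG$ gives $F(gn)\underset{D}{\approx}F(gng^{-1})+F(g)$; subtracting $F(g)$ from both sides yields the claim. Applying this uniform bound to each power $n^k$ and using the continuity of the isometric $\hG$-action on $V$, I then deduce that $\tilde F$ lies in $\QQQ(\bG;V)^{\hG}$ via
\[
\tilde F(gng^{-1}) = \lim_{k\to\infty} \frac{F(gn^kg^{-1})}{k} = \lim_{k\to\infty} \frac{g\cdot F(n^k)}{k} = g\cdot \tilde F(n).
\]

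Finally I would define $b\in C^1(\hG;V)$ by $b(n):=\tilde F(n)-F(n)$ for $n\in \bG$ and $b(g):=0$ for $g\in \hG\setminus \bG$; this cochain is bounded by $D$, so $b\in C_b^1(\hG;V)$, and by construction $(F+b)|_{\bG} = \tilde F \in \QQQ(\bG;V)^{\hG}$. The only delicate point is the interchange of the Fekete-type limit with the isometric $\hG$-action in the displayed computation, but this follows at once from the uniform $2D$-bound, so I do not anticipate any genuine obstacle.
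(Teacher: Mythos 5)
Your proof is correct and follows essentially the same route as the paper: homogenize $F|_{\bG}$, extend the bounded difference by zero to get $b$, and verify $\hG$-equivariance of the homogenization. The only cosmetic difference is that you check equivariance by passing to the limit over powers using the uniform bound $F(gng^{-1}) \underset{2D''(F)}{\approx} g\cdot F(n)$, while the paper phrases the same step as ``two homogeneous quasimorphisms at bounded distance coincide'' applied to ${}^{g}\Phi|_{\bG}$ and $\Phi|_{\bG}$.
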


\begin{proof}
  By the definition of $\bG$-quasi-cocycles, the restriction $F|_{\bG} \colon \bG \to V$ is a quasimorphism.
  Let $\overline{F|_{\bG}}$ be the homogenization of $F|_N$. Then the map
  \begin{align*}
    b' = \overline{F|_{\bG}} - F|_{\bG} \colon \bG \to V
  \end{align*}
  is bounded.
  Define $b \colon \hG \to V$ by
  \[
    b(g) = \begin{cases}
      b'(g) & g \in \bG \\
      0 & \text{otherwise}.
    \end{cases}
  \]
  Then the map $b$ is also bounded.
  Set $\Phi = F + b$, then $\Phi|_{\bG} = (F + b)|_{\bG} = \overline{F|_{\bG}}$.
  Since $\Phi$ is an $\bG$-quasi-cocycle, we have
  \begin{align*}
    ({}^{g}\Phi)(n) = g \cdot \Phi(g^{-1} n g) \underset{D''(\Phi)}{\approx} \Phi(g \cdot g^{-1}n g) - \Phi(g) = \Phi(ng) - \Phi(g) \underset{D''(\Phi)}{\approx} \Phi(n)
  \end{align*}
  for $g \in \hG$ and $n \in \bG$.
  Hence the difference ${}^{g}\Phi - \Phi$ is in $C_b^1(\bG;V)$.
  Since $({}^{g}\Phi)|_{\bG}$ and $\Phi|_{\bG}$ are homogeneous quasimorphisms, we have ${}^{g}\Phi|_{\bG} - \Phi|_{\bG} = 0$, and this implies that the element $\Phi|_{\bG} = (F + b)|_{\bG}$ belongs to $\QQQ(\bG;V)^{\hG}$.
\end{proof}

If $V$ is the trivial $\hG$-module $\RR$, then $\bG$-quasi-cocycles are also called $\bG$-quasimorphisms (this word was first introduced in \cite{Ka17}).
In this case, Lemma \ref{lemma:restriction_homogenization} is as follows.
\begin{cor}\label{cor:restriction_homogenization}
  Let $\bG$ be a normal subgroup of $\hG$.
  For an $\bG$-quasimorphism $F \in \rQQQ_{\bG}(\hG)$, there exists a bounded cochain $b \in C_b^1(\hG)$ such that the restriction $(F + b)|_{\bG}$ is in $\QQQ(\bG)^{\hG}$.
\end{cor}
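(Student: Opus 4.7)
The plan is to obtain this corollary directly from Lemma \ref{lemma:restriction_homogenization} by specializing the coefficient module to $V = \RR$ equipped with the trivial $\hG$-action. The restriction of the trivial action to $\bG$ is automatically trivial, so the hypothesis of the lemma is satisfied. Under this specialization, the space $\rQQQ Z_{\bG}(\hG;\RR)$ of $\bG$-quasi-cocycles reduces to the space $\rQQQ_{\bG}(\hG)$ of $\bG$-quasimorphisms, and $\QQQ(\bG;\RR)^{\hG}$ reduces to $\QQQ(\bG)^{\hG}$. The conclusion of the lemma then reads exactly as the conclusion of the corollary, so nothing further needs to be verified.

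If an independent proof is preferred, one repeats the strategy of Lemma \ref{lemma:restriction_homogenization} in the simpler scalar setting: since $F|_{\bG}$ is an ordinary quasimorphism, let $\overline{F|_{\bG}}$ denote its homogenization; by Lemma \ref{lem:homog}(2), the difference $b' = \overline{F|_{\bG}} - F|_{\bG}$ is a bounded function on $\bG$. Extending $b'$ by zero on $\hG \setminus \bG$ gives a bounded cochain $b \in C_b^1(\hG)$ with $(F+b)|_{\bG} = \overline{F|_{\bG}}$. The $\bG$-quasimorphism condition forces ${}^g(F+b)|_{\bG}$ and $(F+b)|_{\bG}$ to differ by a bounded function on $\bG$, and since both are homogeneous quasimorphisms (using Lemma \ref{lem homogeneous} for conjugation invariance), the two must coincide. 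Therefore $(F+b)|_{\bG} \in \QQQ(\bG)^{\hG}$. No genuinely new step arises beyond those already handled in the preceding lemma, so the only (negligible) obstacle is the bookkeeping of translating the $V$-valued formalism into the $\RR$-valued one.
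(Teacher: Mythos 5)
Your first argument—specializing Lemma \ref{lemma:restriction_homogenization} to the trivial module $V=\RR$—is exactly how the paper obtains the corollary, which it states as the scalar case of that lemma with no further proof needed. The proposal is correct and essentially identical to the paper's approach; the optional independent argument you sketch is just the lemma's own proof rerun in the scalar setting.
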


\subsection{The map $\tau_{/b}$}
Now we proceed to the proof of Theorem \ref{main thm}. The goal in this subsection is to construct the map $\tau_{/b} \colon \HHH^1_{/b}(\bG)^\hG \to \HHH^2_{/b}(\hG)$.
Here we only present the proofs in the case where the coefficient module $V$ is the trivial module $\RR$.
When $V \neq \RR$, the proofs remain to work without any essential change (see Remarks \ref{remark:restriction_homogenization}, \ref{remarks2}, and \ref{remarks3}).

First, we define the map $\tau_{/b} \colon \HHH_{/b}^1(\bG)^{\hG} \to \HHH_{/b}^2(\ppi)$.
Let $1 \to \bG \xrightarrow{i} \hG \xrightarrow{p} \ppi \to 1$ be a group extension.
 As a special case of Remark \ref{remark:restriction_homogenization}, we have isomorphisms $\HHH^1_{/b}(\bG) \to \QQQ(\bG)$ and $\HHH^1_{/b}(\bG)^{\hG} \to \QQQ(\bG)^{\hG}$.
By using these, we identify $\HHH_{/b}^1(\bG)$  and  $\HHH^1_{/b}(\bG)^{\hG}$ with $\QQQ(\bG)$  and   $\QQQ(\bG)^{\hG}$,  respectively. 


Let $\hQQQ_{\bG}(\hG) = \hQQQ_{\bG}(\hG;\RR)$ be the $\RR$-vector space of all $\bG$-quasimorphisms whose restrictions to $\bG$ are homogeneous quasimorphisms on $\bG$, that is,
\[
  \hQQQ_{\bG}(\hG) = \{ F \colon \hG \to \RR \; | \; \text{$F$ is an $\bG$-quasimorphism such that $F|_N \in \QQQ(\bG)^\hG$}\} \subset \rQQQ_{\bG}(\hG).
\]
By definition, the restriction of the domain defines a map
\[
  i^* \colon \hQQQ_{\bG}(\hG) \to \QQQ(\bG)^{\hG}.
\]

\begin{remark}\label{remarks2}
  In the case that the $\hG$-action on $V$ is non-trivial, we need to replace the space $\hQQQ_{\bG}(\hG)$ by
  \[
    \hQQQ Z_{\bG}^1(\hG;V) = \{ F \colon \hG \to V \; | \; \text{$F$ is an $\bG$-quasi-cocycle such that $F|_{\bG} \in \QQQ(\bG ; V)^{\hG}$} \}.
  \]

\end{remark}

\begin{lem}\label{lemma:surjectivity}
  The map $i^* \colon \hQQQ_{\bG}(\hG) \to \QQQ(\bG)^{\hG}$ is surjective.
\end{lem}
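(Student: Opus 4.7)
The plan is to construct an explicit $\bG$-quasimorphism extending a given $\hG$-invariant homogeneous quasimorphism on $\bG$ by using a set-theoretic section of $p$. Concretely, I would fix a set-theoretic section $s \colon \ppi \to \hG$ of $p$ with $s(1_{\ppi}) = 1_{\hG}$. Every element $g \in \hG$ then admits a unique decomposition $g = n_g \cdot s(p(g))$ with $n_g \in \bG$. Given $\qm \in \QQQ(\bG)^{\hG}$, I would define $F \colon \hG \to \RR$ by $F(g) = \qm(n_g)$.

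The first thing to verify is that $F|_{\bG} = \qm$: for $g \in \bG$ we have $p(g) = 1_{\ppi}$, so $s(p(g)) = 1_{\hG}$ and $n_g = g$, hence $F(g) = \qm(g)$. In particular $F|_{\bG} \in \QQQ(\bG)^{\hG}$.

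Next I would check the two $\bG$-quasimorphism inequalities in \eqref{eq:Nquasi_condition}. For $n \in \bG$ and $g \in \hG$ with $g = n_g \cdot s(p(g))$, we have $ng = (n \cdot n_g) \cdot s(p(g))$, so $n_{ng} = n \cdot n_g$ and
\[
F(ng) = \qm(n \cdot n_g) \underset{D(\qm)}{\approx} \qm(n) + \qm(n_g) = F(n) + F(g).
\]
For the right multiplication, write $gn = (g n g^{-1}) \cdot g = \bigl((g n g^{-1}) \cdot n_g\bigr) \cdot s(p(g))$, using that $\bG$ is normal in $\hG$. Hence $n_{gn} = (g n g^{-1}) \cdot n_g$, and
\[
F(gn) = \qm\bigl((gng^{-1}) \cdot n_g\bigr) \underset{D(\qm)}{\approx} \qm(gng^{-1}) + \qm(n_g) = \qm(n) + F(g),
\]
where the last equality uses the $\hG$-invariance of $\qm$. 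Since the $\hG$-action on $\RR$ is trivial, the right side equals $F(g) + g \cdot F(n)$, which gives the required inequality with uniform defect $D(\qm)$.

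There is no real obstacle here; the only point that requires the assumption on $\qm$ (rather than just a plain homogeneous quasimorphism on $\bG$) is $\hG$-invariance, which is used to replace $\qm(gng^{-1})$ by $\qm(n)$ in the right-multiplication computation. Combined with $F|_{\bG} = \qm \in \QQQ(\bG)^{\hG}$, this shows $F \in \hQQQ_{\bG}(\hG)$ and $i^*(F) = \qm$, proving surjectivity. In the general $V$-valued setting (as in Remark \ref{remarks2}), the same construction works verbatim, with $\qm(gng^{-1})$ replaced by $g \cdot \qm(n)$ thanks to the $\hG$-equivariance of $\qm \in \QQQ(\bG;V)^{\hG}$.
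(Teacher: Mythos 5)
Your construction $F(g)=\qm(n_g)$ with $g=n_g\cdot s(p(g))$ is precisely the paper's $F_{\qm,s}(g)=\qm(g\cdot sp(g)^{-1})$, and your verification of the two conditions in \eqref{eq:Nquasi_condition} (quasimorphism defect of $\qm$ on $\bG$ plus $\hG$-invariance for the right-multiplication case) is the same argument as in the paper. The proposal is correct and takes essentially the same approach.
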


\begin{proof}
  Let $s \colon \ppi \to \hG$ be a  set-theoretic  section of $p$ satisfying $s(1_\ppi) = 1_{\hG}$.
  For $\qm \in \QQQ(\bG)^{\hG}$, define a map $F_{\qm, s} \colon \hG \to \RR$ by
  \begin{align*}
    F_{\qm, s}(g) = \qm(g \cdot sp(g)^{-1})
  \end{align*}
  for $g \in \hG$.
  Then the equality $F_{\qm, s}|_{\bG} = \qm$ holds since $sp(n) = 1_{\hG}$ for every $n \in \bG$.
  Moreover, the map $F_{\qm, s}$ is an $\bG$-quasimorphism.
  Indeed, we have
  \begin{align*}
    F_{\qm, s}(n g) &= \qm(n g \cdot sp(n g)^{-1}) = \qm(n g \cdot sp(g)^{-1}) \\
    &\underset{D(\qm)}{\approx} \qm(n) + \qm(g \cdot sp(g)^{-1}) = F_{\qm, s}(n) + F_{\qm, s}(g)
  \end{align*}
  and
  \begin{align*}
    F_{\qm, s}(g n) &= F_{\qm, s}(g n g^{-1} g) \underset{D(\qm)}{\approx} F_{\qm, s}(g n g^{-1}) + F_{\qm, s}(g)\\
    &= \qm(g n g^{-1}) + F_{\qm, s}(g) = \qm(n) + F_{\qm, s}(g) = F_{\qm, s}(n) + F_{\qm, s}(g)
  \end{align*}
by the definition of quasimorphisms and the $\hG$-invariance of $\qm$. This means $i^* (F_{\qm,s}) = f$, and hence the map $i^*$ is surjective.
\end{proof}

\begin{lem}\label{lemma:dF_lift_indep_bdd_error}
  For $F \in \hQQQ_{\bG}(\hG)$ and for $g_i, g_i' \in \hG$ satisfying $p(g_i) = p(g_i') \in \ppi$, the following condition holds:
  \begin{align*}
    \delta F(g_1, g_2) \underset{4D''(F)}{\approx} \delta F(g_1', g_2').
  \end{align*}
\end{lem}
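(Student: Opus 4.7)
The plan is to reduce the statement to a bookkeeping exercise by writing $g_i' = n_i g_i$ with $n_i \in \bG$ (which is possible since $p(g_i) = p(g_i')$ forces $g_i' g_i^{-1} \in \bG$), and then expanding $\delta F(g_1', g_2') = F(g_1') + F(g_2') - F(g_1' g_2')$ using the two defining relations of $\bG$-quasimorphisms from (\ref{eq:Nquasi_condition}).

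For the first two terms, I apply the left relation directly: $F(g_i') = F(n_i g_i) \underset{D''(F)}{\approx} F(n_i) + F(g_i)$, which costs one $D''(F)$ per term. For the third term $F(g_1' g_2') = F(n_1 g_1 n_2 g_2)$, I first pull off $n_1$ from the left, costing $D''(F)$, and then I use the key algebraic identity
\[
g_1 n_2 g_2 \;=\; (g_1 n_2 g_1^{-1})(g_1 g_2),
\]
noting that $g_1 n_2 g_1^{-1} \in \bG$ because $\bG$ is normal. Applying the left relation once more yields $F(g_1' g_2') \underset{2D''(F)}{\approx} F(n_1) + F(g_1 n_2 g_1^{-1}) + F(g_1 g_2)$. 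Adding the error estimates gives a total defect of $D''(F) + D''(F) + 2D''(F) = 4D''(F)$, and after cancellation I obtain
\[
\delta F(g_1', g_2') \;\underset{4D''(F)}{\approx}\; \delta F(g_1, g_2) \,+\, F(n_2) \,-\, F(g_1 n_2 g_1^{-1}).
\]

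The final step is to observe that the last two terms cancel \emph{exactly}, not merely up to a bounded error. This is where the hypothesis $F \in \hQQQ_{\bG}(\hG)$ (and not merely $F \in \rQQQ_{\bG}(\hG)$) is essential: since $F|_{\bG} \in \QQQ(\bG)^{\hG}$ is a $\hG$-invariant homogeneous quasimorphism and $n_2, g_1 n_2 g_1^{-1} \in \bG$, the $\hG$-equivariance of $F|_{\bG}$ (in the form $F(g n g^{-1}) = F(n)$ for $g \in \hG$, $n \in \bG$, which follows from the trivial $\hG$-action on $\RR$ combined with the defining invariance condition) gives $F(g_1 n_2 g_1^{-1}) = F(n_2)$ with no bounded error. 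Substituting this into the estimate above yields $\delta F(g_1', g_2') \underset{4D''(F)}{\approx} \delta F(g_1, g_2)$, as desired.

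There is no real obstacle here; the only subtlety worth flagging is the careful counting of the four applications of (\ref{eq:Nquasi_condition}) and the importance of using the \emph{exact} identity $F(g_1 n_2 g_1^{-1}) = F(n_2)$ at the last step rather than a quasi-equality. If one allowed bounded error in that last step, the error bound would grow with $D(F|_{\bG})$, whereas the clean bound $4D''(F)$ is what makes $\tau_{/b}$ well-defined in the subsequent construction.
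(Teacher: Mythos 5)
Your argument is correct: the count of four applications of the defining relation is right, and the final exact cancellation $F(g_1n_2g_1^{-1})=F(n_2)$ is legitimate because $F|_{\bG}\in \QQQ(\bG)^{\hG}$ is conjugation-invariant (with trivial coefficients). The paper, however, takes a slightly different and cleaner route: it decomposes the first element on the left but the second on the \emph{right}, writing $g_1'=n_1g_1$ and $g_2'=g_2n_2$, so that $g_1'g_2'=n_1\,g_1g_2\,n_2$ and both $n_1$ and $n_2$ can be stripped off directly using the two halves of the $\bG$-quasimorphism condition; the terms $F(n_1)$ and $F(n_2)$ then cancel identically in $\delta F$, with no conjugation and no appeal to the invariance of $F|_{\bG}$. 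Consequently the paper's proof establishes the estimate for every $F\in\rQQQ_{\bG}(\hG)$, so your remark that the hypothesis $F\in\hQQQ_{\bG}(\hG)$ is ``essential'' is accurate only for your route, not for the statement itself. Two further small points: even without invariance, your leftover term satisfies $|F(n_2)-F(g_1n_2g_1^{-1})|\le 2D''(F)$ (expand $F(g_1n_2)$ two ways), so your approach would still give a bound $6D''(F)$; and any fixed finite bound suffices for the well-definedness of $\tau_{/b}$ later on, so the precise constant $4D''(F)$ is not load-bearing, contrary to what your closing remark suggests.
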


\begin{proof}
  By the assumption, there exist  $n_1, n_2 \in \bG$ satisfying $g_1' = n_1 g_1$ and $g_2' = g_2 n_2$.
  Therefore we have
  \begin{align*}
    \delta F(g_1', g_2') &= F(g_2 n_2) - F(n_1 g_1 g_2 n_2) + F(n_1 g_1)\\
    &\underset{4D''(F)}{\approx} F(g_2) + F(n_2) - (F(n_1) + F(g_1 g_2) + F(n_2)) + F(n_1) + F(g_1)\\
    &= \delta F(g_1, g_2). \qedhere
  \end{align*}
\end{proof}

For $F \in \hQQQ_{\bG}(\hG)$ and a section $s \colon \ppi \to \hG$ of $p$, we set $\alpha_{F, s} = s^* \delta F \in C^2(\ppi)$.
By Lemma \ref{lemma:dF_lift_indep_bdd_error}, the element $[\alpha_{F, s}] \in C_{/b}^2(\ppi) = C^2(\ppi)/C_b^2(\ppi)$ is independent of the choice of the section $s$.
Therefore we set $\alpha_F = [\alpha_{F, s}] \in C_{/b}^2(\ppi)$.

\begin{lem}\label{lemma:cocycle_in_relative}
  The cochain $\alpha_F$ is a cocycle on $C_{/b}^{\bullet}(\ppi)$.
\end{lem}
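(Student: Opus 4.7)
The plan is to show directly that $\delta\alpha_{F,s}$ is a bounded cochain in $C^3(\ppi)$, so that $\alpha_F = [\alpha_{F,s}]$ becomes a cocycle in $C^3_{/b}(\ppi)$; note that this also re-proves independence of the class from the section $s$. The key observation is that, although $s\colon \ppi\to \hG$ is only a set-theoretic section, the ``failure of $s$ to be a homomorphism'' is controlled modulo $\bG$, and this is exactly the slack that Lemma~\ref{lemma:dF_lift_indep_bdd_error} absorbs.

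Concretely, for $\gamma_1,\gamma_2,\gamma_3\in\ppi$, I would expand
\[
(\delta\alpha_{F,s})(\gamma_1,\gamma_2,\gamma_3) = \delta F(s(\gamma_2),s(\gamma_3)) - \delta F(s(\gamma_1\gamma_2),s(\gamma_3)) + \delta F(s(\gamma_1),s(\gamma_2\gamma_3)) - \delta F(s(\gamma_1),s(\gamma_2)).
\]
Since $p(s(\gamma_1\gamma_2)) = \gamma_1\gamma_2 = p(s(\gamma_1)s(\gamma_2))$ and likewise $p(s(\gamma_2\gamma_3)) = p(s(\gamma_2)s(\gamma_3))$, Lemma~\ref{lemma:dF_lift_indep_bdd_error} gives
\[
\delta F(s(\gamma_1\gamma_2),s(\gamma_3)) \underset{4D''(F)}{\approx} \delta F(s(\gamma_1)s(\gamma_2),s(\gamma_3))
\]
and
\[
\delta F(s(\gamma_1),s(\gamma_2\gamma_3)) \underset{4D''(F)}{\approx} \delta F(s(\gamma_1),s(\gamma_2)s(\gamma_3)).
\]

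Substituting these estimates, $(\delta\alpha_{F,s})(\gamma_1,\gamma_2,\gamma_3)$ is within $8D''(F)$ of
\[
\delta F(s(\gamma_2),s(\gamma_3)) - \delta F(s(\gamma_1)s(\gamma_2),s(\gamma_3)) + \delta F(s(\gamma_1),s(\gamma_2)s(\gamma_3)) - \delta F(s(\gamma_1),s(\gamma_2)),
\]
which equals $(\delta\delta F)(s(\gamma_1),s(\gamma_2),s(\gamma_3)) = 0$ because $\delta^2 = 0$ on $C^{\bullet}(\hG)$. Hence $\|(\delta\alpha_{F,s})(\gamma_1,\gamma_2,\gamma_3)\| \leq 8D''(F)$ uniformly in the $\gamma_i$, so $\delta\alpha_{F,s}\in C^3_b(\ppi)$ and $\delta\alpha_F = 0$ in $C^3_{/b}(\ppi)$.

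There is no serious obstacle here; the only thing to be slightly careful about is bookkeeping the constant coming from Lemma~\ref{lemma:dF_lift_indep_bdd_error} and confirming that we may legitimately re-interpret $\delta F(s(\gamma_1)s(\gamma_2),s(\gamma_3))$ and $\delta F(s(\gamma_1),s(\gamma_2)s(\gamma_3))$ as values of a 2-cocycle on $\hG^{\times 2}$ so that $\delta\delta F = 0$ applies. In the general coefficient setting of Remark~\ref{remarks2}, the same argument goes through verbatim once Lemma~\ref{lemma:dF_lift_indep_bdd_error} is upgraded to $V$-valued $\bG$-quasi-cocycles, with the only change being the appearance of the $\hG$-action in the Banach-space estimates.
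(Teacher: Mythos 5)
Your argument is correct and essentially identical to the paper's proof: expand $\delta\alpha_{F,s}(\gamma_1,\gamma_2,\gamma_3)$, apply Lemma~\ref{lemma:dF_lift_indep_bdd_error} to the two terms involving $s(\gamma_1\gamma_2)$ and $s(\gamma_2\gamma_3)$, and identify the resulting expression with $\delta(\delta F)(s(\gamma_1),s(\gamma_2),s(\gamma_3))=0$, so that $\|\delta\alpha_{F,s}\|_\infty \le 8D''(F)$ and $\alpha_F$ is a cocycle in $C_{/b}^{\bullet}(\ppi)$. Only your parenthetical claim that this ``re-proves'' independence of the section is unjustified (boundedness of $\delta\alpha_{F,s}$ does not by itself bound $\alpha_{F,s}-\alpha_{F,s'}$; that independence comes from applying Lemma~\ref{lemma:dF_lift_indep_bdd_error} directly, as the paper does before the lemma), but this side remark does not affect the proof of the stated lemma.
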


\begin{proof}
  It suffices to show that the coboundary $\delta \alpha_{F, s}$ belongs to $C_b^3(\ppi)$.
  For $f, g, h \in \ppi$, we have
  \begin{align*}
    &\delta \alpha_{F, s}(f,g,h)\\
    &= \delta F(s(g), s(h)) - \delta F(s(fg), s(h)) + \delta F(s(f), s(gh)) - \delta F(s(f), s(g))\\
    &\underset{8D''(F)}{\approx} \delta F(s(g), s(h)) - \delta F(s(f)s(g), s(h)) \\
    & \hspace{5cm}+ \delta F(s(f), s(g)s(h)) - \delta F(s(f), s(g))\\
    &= \delta (\delta F)(s(f), s(g), s(h)) = 0
  \end{align*}
  by Lemma \ref{lemma:dF_lift_indep_bdd_error}.
\end{proof}

By Lemmas \ref{lemma:surjectivity} and \ref{lemma:cocycle_in_relative}, we obtain a map
\begin{align}\label{map:quasi-cocycle_to_relative_coh}
  \hQQQ_{\bG}(\hG) \to \HHH_{/b}^2(\ppi) ; F \mapsto [\alpha_{F}].
\end{align}

\begin{lem}\label{lemma:tau_well-def}
  The cohomology class $[\alpha_F] \in \HHH_{/b}^2(\ppi)$ depends only on the restriction $F|_{\bG}$.
\end{lem}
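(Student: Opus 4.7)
The plan is to use the obvious linearity of $F \mapsto \alpha_F$ (modulo bounded error) to reduce to a single reduction: if $F \in \hQQQ_{\bG}(\hG)$ satisfies $F|_{\bG} \equiv 0$, then $[\alpha_F] = 0$ in $\HHH_{/b}^2(\ppi)$. Indeed, for the same section $s \colon \ppi \to \hG$, we have $\alpha_{F_1 - F_2, s} = \alpha_{F_1, s} - \alpha_{F_2, s}$, so it suffices to apply the reduction to $F := F_1 - F_2 \in \hQQQ_{\bG}(\hG)$, which satisfies $F|_{\bG} \equiv 0$.

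The key observation for the reduction is that an $\bG$-quasimorphism vanishing on $\bG$ approximately factors through $\ppi$. Fix a section $s \colon \ppi \to \hG$ with $s(1_\ppi) = 1_{\hG}$, and set $\phi = F \circ s \colon \ppi \to \RR$. Every $g \in \hG$ can be written uniquely as $g = s(p(g)) \cdot n_g$ for some $n_g \in \bG$. The $\bG$-quasimorphism property gives
\[
F(g) = F(s(p(g)) \cdot n_g) \underset{D''(F)}{\approx} F(s(p(g))) + F(n_g) = p^*\phi(g),
\]
since $F(n_g) = 0$. Hence $F - p^*\phi$ lies in $C_b^1(\hG)$, and consequently $\delta F - p^*\delta \phi = \delta(F - p^*\phi)$ lies in $C_b^2(\hG)$.

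Pulling back along $s$, we obtain
\[
\alpha_{F,s} \;=\; s^*\delta F \;=\; s^*p^*\delta \phi + s^*(\delta F - p^*\delta \phi) \;=\; \delta \phi + s^*(\delta F - p^*\delta \phi),
\]
where the last term is bounded because $\delta F - p^*\delta \phi$ is. Therefore $[\alpha_{F,s}] = [\delta\phi]$ in $C_{/b}^2(\ppi)$, which is the coboundary of the class of $\phi$ in $C_{/b}^1(\ppi)$. This shows $[\alpha_F] = 0$ in $\HHH_{/b}^2(\ppi)$, completing the reduction and hence the lemma.

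I do not expect serious obstacles: the argument is essentially bookkeeping with the defect $D''(F)$ and the given commutativity of $s^* p^* = \mathrm{id}$. The only mild subtlety is making sure we use the $s(1_\ppi)=1_{\hG}$ normalization (already present in the proof of Lemma~\ref{lemma:surjectivity}) so that $n_g$ is well-defined, and that we pick the \emph{same} section when comparing $\alpha_{F_1,s}$ and $\alpha_{F_2,s}$; this is legitimate because Lemma~\ref{lemma:dF_lift_indep_bdd_error} already ensures $[\alpha_F]$ is independent of $s$.
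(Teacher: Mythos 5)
Your proof is correct, but it is organized differently from the paper's. The paper fixes two elements $F,\Phi\in\hQQQ_{\bG}(\hG)$ with $F|_{\bG}=\Phi|_{\bG}$ and computes the 2-cochain difference $\alpha_{F,s}-\alpha_{\Phi,s}$ head-on: it rewrites $\delta F(s(g),s(h))$ as $\delta(F\circ s)(g,h)+\bigl(F(s(gh))-F(s(g)s(h))\bigr)$, approximates the last bracket by $F\bigl(s(gh)s(h)^{-1}s(g)^{-1}\bigr)$ using the $\bG$-quasimorphism property (this element lies in $\bG$, where $F=\Phi$), and concludes that the difference is within $D''(F)+D''(\Phi)$ of $\delta(F\circ s-\Phi\circ s)$. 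You instead linearize first — noting $F_1-F_2\in\hQQQ_{\bG}(\hG)$ since its restriction is $0\in\QQQ(\bG)^{\hG}$ and $F\mapsto\alpha_{F,s}$ is (exactly, not merely up to bounded error) linear for a fixed $s$ — and then prove the cleaner 1-cochain statement that an $\bG$-quasimorphism vanishing on $\bG$ satisfies $\|F-p^*(F\circ s)\|_\infty\le D''(F)$ via the decomposition $g=s(p(g))\,n_g$, from which $\alpha_{F,s}-\delta(F\circ s)$ is bounded and the class vanishes. Both routes produce the same primitive $F\circ s-\Phi\circ s$ on $\ppi$, so the mechanism is the same, but your intermediate structural fact (bounded distance from a pullback) is a nice reformulation and slightly streamlines the bookkeeping. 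Two cosmetic points: the decomposition $g=s(p(g))n_g$ is well defined for any section, so the normalization $s(1_\ppi)=1_{\hG}$ is not actually needed for that step; and the caveat ``modulo bounded error'' on the linearity of $F\mapsto\alpha_{F,s}$ is unnecessary, as it is exact at the cochain level. Neither affects correctness.
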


\begin{proof}
  Let $s \colon \ppi \to \hG$ be a section of $p$ and $\Phi$ an element of $\hQQQ_{\bG}(\hG)$ satisfying $\Phi|_{\bG} = F|_{\bG}$. Then, for every $g, h \in \ppi$, we have
  \begin{align*}
    (\alpha_{F, s} - \alpha_{\Phi, s})(g, h) &= \delta F(s(g), s(h)) - \delta \Phi(s(g), s(h)) \\
    &= F(s(h)) - F(s(g)s(h)) + F(s(g))\\
    &\hspace{3cm} - (\Phi(s(h)) - \Phi(s(g)s(h)) + \Phi(s(g)))\\
    &= \delta(F \circ s)(g,h) - \delta(\Phi \circ s)(g,h)\\
    &\hspace{1cm} + F(s(gh)) - F(s(g)s(h)) - (\Phi(s(gh)) - \Phi(s(g)s(h))).
  \end{align*}
  Since $F$ and $\Phi$ are $\bG$-quasimorphisms, we have
  \begin{align*}
    F(s(gh)) - F(s(g)s(h)) \underset{D''(F)}{\approx} F(s(gh)s(h)^{-1}s(g)^{-1}),\\
    \Phi(s(gh)) - \Phi(s(g)s(h)) \underset{D''(\Phi)}{\approx} \Phi(s(gh)s(h)^{-1}s(g)^{-1}).
  \end{align*}
  Together with the equality $F(s(gh)s(h)^{-1}s(g)^{-1}) = \Phi(s(gh)s(h)^{-1}s(g)^{-1})$, we have
  \begin{align*}
    \alpha_{F, s} - \alpha_{\Phi, s} \underset{D''(F) + D''(\Phi)}{\approx} \delta (F \circ s - \Phi \circ s),
  \end{align*}
  and this implies $[\alpha_F] = [\alpha_{\Phi}] \in \HHH_{/b}^2(\ppi)$.
\end{proof}

By Lemma \ref{lemma:tau_well-def}, the map defined in (\ref{map:quasi-cocycle_to_relative_coh}) descends to a map $\tau_{/b} \colon \QQQ(\bG)^{\hG} \to \HHH_{/b}^2(\ppi)$, that is,
the map $\tau_{/b}$ is defined by
\[
  \tau_{/b}(\qm) = [\alpha_F],
\]
where $F$ is an element of $\hQQQ_{\bG}(\hG)$ satisfying $F|_{\bG} = \qm$.
Under the isomorphism $\QQQ(\bG)^{\hG} \cong \HHH_{/b}^1(\bG)^{\hG}$, we obtain the map
\[
  \tau_{/b} \colon \HHH_{/b}^1(\bG)^{\hG} \to \HHH_{/b}^2(\ppi).
\]

\subsection{Proof of the exactness}
Now we proceed to the proof of the exactness of the sequence
\begin{align}\label{seq:five-term}
  0 \to \HHH_{/b}^1(\ppi) \xrightarrow{p^*} \HHH_{/b}^1(\hG) \xrightarrow{i^*} \QQQ(\bG)^{\hG} \xrightarrow{\tau_{/b}} \HHH_{/b}^2(\ppi) \xrightarrow{p^*} \HHH_{/b}^2(\hG),
\end{align}
where we identify $\QQQ(\bG)^{\hG}$ with $\HHH_{/b}^1(\bG)^{\hG}$.

\begin{prop}\label{proposition:ex_0}
  Sequence $(\ref{seq:five-term})$ is exact at $\HHH_{/b}^1(\ppi)$ and $\HHH_{/b}^1(\hG)$.
\end{prop}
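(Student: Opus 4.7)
The plan is to work with the concrete description $\HHH_{/b}^1(H)=\{f\colon H\to\RR\mid \delta f\text{ is bounded}\}/C_b^1(H)$, valid because $C_{/b}^0(H)=0$ (every element of $V=\RR$ is trivially bounded, so $C_b^0=C^0$). Thus a class in $\HHH_{/b}^1(H)$ is represented by a quasimorphism $f\colon H\to\RR$, and the class vanishes if and only if $f$ is a bounded function.

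For exactness at $\HHH_{/b}^1(\ppi)$, I would pick a quasimorphism $f\colon\ppi\to\RR$ representing a class with $p^*[f]=0$. Then $f\circ p$ is a bounded function on $\hG$, and surjectivity of $p$ immediately forces $f$ itself to be bounded, so $[f]=0$. Hence $p^*$ is injective.

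For exactness at $\HHH_{/b}^1(\hG)$, the inclusion $\Im(p^*)\subset\Ker(i^*)$ is immediate: since $p\circ i$ sends every element of $\bG$ to $1_\ppi$, the composite $f\circ p\circ i$ is constant and hence bounded for every $f\colon\ppi\to\RR$. For the reverse inclusion, my plan is to start with a quasimorphism $f\colon\hG\to\RR$ whose restriction $f|_{\bG}$ is bounded, fix a set-theoretic section $s\colon\ppi\to\hG$ of $p$ with $s(1_\ppi)=1_{\hG}$ (as in Lemma~\ref{lemma:surjectivity}), and define $g:=f\circ s\colon\ppi\to\RR$. The substantive task is to verify that (i) $\delta g$ is bounded, and (ii) $f-p^*g$ is bounded; together these yield $[f]=p^*[g]$ in $\HHH_{/b}^1(\hG)$. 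Both verifications hinge on the same device: writing $s(x)s(y)=n_{x,y}\cdot s(xy)$ for (i) and $h=n\cdot s(p(h))$ for (ii), with $n_{x,y},n\in\bG$, and observing that errors produced by the quasimorphism defect $D(f)$ are bounded, while the extra $f$-values on $\bG$-elements are bounded by hypothesis on $f|_{\bG}$. I do not foresee a real obstacle: the calculations are routine bookkeeping with $D(f)$ and $\sup_{\bG}|f|$. The conceptual point is simply that the relative cohomology $\HHH_{/b}^{\bullet}$ is the natural home for an "exact up to a bounded error" version of the classical five-term construction, and the usual inflation-restriction argument goes through essentially verbatim in this setting.
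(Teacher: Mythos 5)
Your proposal is correct and follows essentially the same route as the paper: identify $\HHH_{/b}^1$ with quasimorphisms modulo bounded functions, use surjectivity of $p$ for injectivity of $p^*$, and for exactness at $\HHH_{/b}^1(\hG)$ choose a set-theoretic section $s$ of $p$ and check that $f\circ s$ is a relative cocycle with $p^*(f\circ s)-f$ bounded, the errors being controlled by $D(f)$ and the bound on $f|_{\bG}$. The only (cosmetic) difference is that the paper first replaces the representative by one vanishing identically on $\bG$ via Lemma~\ref{lemma:restriction_homogenization}, whereas you simply carry $\sup_{\bG}|f|$ through the estimates; both sets of verifications go through as you sketch them.
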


\begin{remark}\label{remarks3}
  In the case of the trivial real coefficients, this proposition is well known (see \cite{Ca}).
  Indeed, the spaces $\HHH_{/b}^1(\ppi)$ and $\HHH_{/b}^1(\hG)$ are isomorphic to $\QQQ(\ppi)$ and $\QQQ(\hG)$, respectively, and the exactness of the sequence
\[ 0 \to \QQQ(\Gamma) \to \QQQ(G) \to \QQQ(N)^G\]
follows from the homogeneity of the elements of $\QQQ(\ppi)$.
  However, in general, the spaces $\HHH_{/b}^1(\ppi;V)$ and $\HHH_{/b}^1(\hG;V)$ are not isomorphic to the space of $V$-valued homogeneous quasimorphisms $\QQQ(\ppi;V)$ and $\QQQ(\hG;V)$, respectively.
  Therefore, we present proof of Proposition \ref{proposition:ex_0}
   which can be modified to the case of non-trivial coefficients without any essential change.

\end{remark}

\begin{proof}[Proof of Proposition $\ref{proposition:ex_0}$]
We first show the exactness at $\HHH_{/b}^1(\ppi)$. Let $a \in \HHH_{/b}^1(\ppi)$ and suppose $p^* a = 0$. Let $f \in C^1(\ppi)$ be a representative of $a$. Since $p^* a = 0$ in $\HHH^1_{/b}(G)$, there exists $c \in \RR \cong C^0(\Gamma)$ such that $p^* f - \delta c = p^* f$ is bounded.
Since $p$ is surjective, we  see that $f$ is bounded, and hence $a = 0$. This means the exactness at $\HHH_{/b}^1 (\ppi)$.

  Next we prove the exactness of $\HHH_{/b}^1(\hG)$. Since the map $p \circ i$ is zero, the composite $i^* \circ p^*$ is also zero.
  For $a \in \HHH_{/b}^1(\hG)$ satisfying $i^*a = 0$, it follows from Lemma \ref{lemma:restriction_homogenization} that there exists a representative $f \in C^1(\hG)$ of $a$ satisfying $f|_{\bG} = 0$.
  For a section $s \colon \ppi \to \hG$ of $p$, set $f_s = s^* f \colon \ppi \to \RR$.
  Then $f_s$ is a quasimorphism on $\ppi$.
  Indeed, since $f$ is a quasimorphism on $\hG$, we have
  \begin{align*}
    f_s(g_1 g_2) &= f(s(g_1 g_2)) = f(s(g_1 g_2)s(g_2)^{-1}s(g_1)^{-1}s(g_1)s(g_2))\\
    &\underset{D(f)}{\approx} f(s(g_1 g_2)s(g_2)^{-1}s(g_1)^{-1}) + f(s(g_1)s(g_2)) = f(s(g_1)s(g_2))\\
    &\underset{D(f)}{\approx} f(s(g_2)) + f(s(g_1)) = f_s(g_2) + f_s(g_1)
  \end{align*}
  by the triviality $f|_{\bG} = 0$.
  Hence the cochain $f_s$ is a cocycle on $C_{/b}^{ 1 }(\ppi)$, and let $a_s \in \HHH_{/b}^1(\ppi)$ denote the relative cohomology class represented by $f_s$.
  For $g \in \hG$, we have
  \begin{align*}
    p^*f_s(g) = f(sp(g)) = f(sp(g) g^{-1} g) \underset{D(f)}{\approx} f(sp(g) g^{-1}) + f(g) = f(g).
  \end{align*}
  Therefore, the cochain $p^* f_s$ is equal to $f$ as relative cochains on $\hG$, and this implies that the equality $p^*a_s = a$ holds.
\end{proof}

\begin{prop}\label{proposition:ex_1}
  Sequence $(\ref{seq:five-term})$ is exact at $\QQQ(\bG)^{\hG}$.
\end{prop}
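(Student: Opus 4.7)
The plan is to establish both inclusions of the equality $\Im(i^*) = \Ker(\tau_{/b})$ at $\QQQ(\bG)^{\hG}$.

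First, to prove $\tau_{/b} \circ i^* = 0$: given any $a \in \HHH_{/b}^1(\hG)$, I would invoke Lemma \ref{lemma:restriction_homogenization} to choose a representative $f \in C^1(\hG)$ of $a$ whose restriction $f|_{\bG}$ is already the homogeneous quasimorphism $\qm := i^*a \in \QQQ(\bG)^{\hG}$. Since $a$ is a cocycle in the relative complex, $\delta f$ is bounded on $\hG \times \hG$, so in particular $f$ is an $\bG$-quasimorphism, hence lies in $\hQQQ_{\bG}(\hG)$ and is an admissible extension of $\qm$. Then $\tau_{/b}(\qm)$ is represented by $\alpha_{f,s} = s^* \delta f$ for any section $s \colon \ppi \to \hG$, and $s^* \delta f$ is manifestly bounded. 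Hence $\tau_{/b}(\qm) = 0$ in $\HHH_{/b}^2(\ppi)$.

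For the harder inclusion $\Ker(\tau_{/b}) \subseteq \Im(i^*)$, I would start with $\qm \in \QQQ(\bG)^{\hG}$ with $\tau_{/b}(\qm) = 0$. By Lemma \ref{lemma:surjectivity} there exists $F \in \hQQQ_{\bG}(\hG)$ with $F|_{\bG} = \qm$. Fix a section $s \colon \ppi \to \hG$ with $s(1_\ppi) = 1_{\hG}$. The vanishing of $\tau_{/b}(\qm) = [\alpha_{F,s}]$ in $\HHH_{/b}^2(\ppi)$ produces a cochain $\beta \in C^1(\ppi)$ such that $\alpha_{F,s} - \delta\beta$ is bounded. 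The key candidate to produce a preimage under $i^*$ is then $\tilde{f} := F - p^*\beta \in C^1(\hG)$, regarded as a cocycle in the relative complex $C_{/b}^{\bullet}(\hG)$.

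The crux of the argument is verifying that $\tilde{f}$ actually defines a class in $\HHH_{/b}^1(\hG)$, i.e.\ that $\delta \tilde{f}$ is bounded on $\hG \times \hG$. For this I would apply Lemma \ref{lemma:dF_lift_indep_bdd_error} with $g_i' = s(p(g_i))$ to obtain
\[
\delta \tilde{f}(g_1,g_2) \;=\; \delta F(g_1,g_2) - \delta\beta(p(g_1),p(g_2)) \underset{4D''(F)}{\approx} \bigl(\alpha_{F,s} - \delta\beta\bigr)(p(g_1),p(g_2)),
\]
which is bounded by the choice of $\beta$. This is the one real technical hurdle; the rest is bookkeeping. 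Finally, I would identify $i^*[\tilde{f}]$ with $\qm$: the restriction $\tilde{f}|_{\bG}$ equals $F|_{\bG} - \beta(1_\ppi) = \qm - \text{const}$, and since the homogenization of a constant is zero and homogenization is linear, the image in $\QQQ(\bG)^{\hG}$ under the identification of Remark \ref{remark:restriction_homogenization} is exactly $\qm$. This completes the proof of exactness at $\QQQ(\bG)^{\hG}$.
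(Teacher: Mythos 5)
Your proposal is correct and follows essentially the same route as the paper: the same choice of a representative via Lemma \ref{lemma:restriction_homogenization} for the inclusion $\tau_{/b}\circ i^*=0$, and the same construction $F-p^*\beta$ controlled by Lemma \ref{lemma:dF_lift_indep_bdd_error} for $\Ker(\tau_{/b})\subseteq \Im(i^*)$. The only cosmetic difference is at the end, where the paper adds back the constant $\beta(1_\Gamma)$ so the restriction equals $\qm$ on the nose, while you absorb the constant through homogenization; both are valid.
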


\begin{proof}
  Note that representatives of first relative cohomology classes of $\hG$ are quasimorphisms, and that quasimorphisms on $\hG$ are $\bG$-quasimorphisms.
  For every $a \in \HHH_{/b}^1(\hG)$, there exists a representative $F \in C^1(\hG)$ of $a$ such that the restriction $F|_{\bG}$ is a homogeneous quasimorphism on $\bG$ by Lemma \ref{lemma:restriction_homogenization}.
  By the definition of the map $\tau_{/b} \colon \QQQ(\bG)^{\hG} \to \HHH_{/b}^2(\ppi)$, we have
  \begin{align*}
    \tau_{/b}(i^*(a)) = \tau_{/b}(F|_{\bG}) = [\alpha_F].
  \end{align*}
  Since the cochain $F$ is a quasimorphism, the cocycle $\alpha_F \in C_{/b}^2(\ppi)$ is equal to zero.
  Therefore we have $\tau_{/b}(i^*(a)) = [\alpha_F] = 0$.

  Suppose that $\qm \in \QQQ(\bG)^{\hG}$ satisfies $\tau_{/b}(\qm) = 0$.
  By Lemma \ref{lemma:surjectivity}, we obtain $F \in \hQQQ_{\bG}(\hG)$ satisfying $F|_{\bG} = \qm$.
  Let $s \colon \ppi \to \hG$ be a section of $p$.
  The triviality of $[\alpha_F] = \tau_{/b}(\qm) = 0$ implies that there exist  $\beta \in C^1(\ppi)$ and $b \in C_b^2(\ppi)$ satisfying
  \begin{align*}
    \alpha_{F, s} - \delta \beta = b.
  \end{align*}
  For $g_i \in \hG$, we have
  \begin{align*}
    \delta F(g_1, g_2) \underset{4D''(F)}{\approx} \delta F(sp(g_1), sp(g_2)) = \alpha_{F, s}(p(g_1), p(g_2))
  \end{align*}
  by Lemma \ref{lemma:dF_lift_indep_bdd_error}.
  Hence we have
  \begin{align*}
    \delta(F - p^*\beta)(g_1, g_2) \underset{4D''(F)}{\approx} (\alpha_{F, s} - \delta \beta)(p(g_1), p(g_2)) = p^*b(g_1, g_2).
  \end{align*}
  Since the cochain $b$ is bounded, the cochain
   $F - p^*\beta$ is a cocycle in  $C_{/b}^1(\hG)$.
  Moreover, since $F|_{\bG}=\qm$, the restriction $(F - p^*\beta+\beta(1_\Gamma))|_{\bG}$ is equal to $\qm$.
  Therefore we have $i^*([F-p^* \beta+\beta(1_\Gamma)]) = \qm$, and this implies the exactness.
\end{proof}

\begin{prop}\label{proposition:ex_2}
  Sequence $(\ref{seq:five-term})$ is exact at $\HHH_{/b}^2(\ppi)$.
\end{prop}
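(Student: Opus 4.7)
My plan is to prove the two inclusions $\Im(\tau_{/b})\subseteq \Ker(p^\ast)$ and $\Ker(p^\ast)\subseteq \Im(\tau_{/b})$ separately. The first is essentially a formal consequence of Lemma~\ref{lemma:dF_lift_indep_bdd_error}. Given $\qm\in\QQQ(\bG)^{\hG}$, take $F\in\hQQQ_{\bG}(\hG)$ with $F|_{\bG}=\qm$ (Lemma~\ref{lemma:surjectivity}) and a section $s$, so that $\tau_{/b}(\qm)=[\alpha_{F,s}]=[s^\ast\delta F]$. For $g_1,g_2\in\hG$, Lemma~\ref{lemma:dF_lift_indep_bdd_error} gives
\[
  p^\ast\alpha_{F,s}(g_1,g_2)=\delta F(sp(g_1),sp(g_2))\underset{4D''(F)}{\approx}\delta F(g_1,g_2),
\]
so $p^\ast\alpha_{F,s}-\delta F\in C_b^2(\hG)$. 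Thus $p^\ast[\alpha_{F,s}]=[\delta F]=0$ in $\HHH_{/b}^2(\hG)$, proving $p^\ast\circ\tau_{/b}=0$.

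For the converse inclusion, suppose $[\alpha]\in\HHH_{/b}^2(\ppi)$ satisfies $p^\ast[\alpha]=0$. Then there exist $\beta\in C^1(\hG)$ and $b\in C_b^2(\hG)$ with $p^\ast\alpha=\delta\beta+b$. The key observation is that, because $\alpha$ is a cocycle in $C_{/b}^{\bullet}(\ppi)$, the coboundary $\delta\alpha$ is uniformly bounded, and evaluating the identity
\[
  (\delta\alpha)(1,1,h)=\alpha(1,h)-\alpha(1,1)
\]
shows that $h\mapsto\alpha(1,h)$ is bounded on $\ppi$; symmetrically $g\mapsto\alpha(g,1)$ is bounded. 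Combined with the identity $\delta\beta(n,g)=p^\ast\alpha(n,g)-b(n,g)=\alpha(1,p(g))-b(n,g)$ for $n\in\bG$, $g\in\hG$, this forces $\delta\beta(n,g)$ to be uniformly bounded on $\bG\times\hG$; the symmetric argument handles $\delta\beta(g,n)$. Hence $\beta$ is an $\bG$-quasimorphism.

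By Corollary~\ref{cor:restriction_homogenization}, there is a bounded $b'\in C_b^1(\hG)$ such that $F:=\beta+b'$ satisfies $F|_{\bG}\in\QQQ(\bG)^{\hG}$, and I set $\qm:=F|_{\bG}$. Then $\delta F=p^\ast\alpha-b+\delta b'$, so for any section $s$,
\[
  \alpha_{F,s}=s^\ast\delta F=\alpha+(\text{bounded cochain on }\ppi),
\]
which yields $\tau_{/b}(\qm)=[\alpha_{F,s}]=[\alpha]$ in $\HHH_{/b}^2(\ppi)$, establishing exactness at $\HHH_{/b}^2(\ppi)$.

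The main obstacle is the second direction, and more specifically the verification that an arbitrary cocycle representative $\beta$ of a trivialization of $p^\ast[\alpha]$ is actually an $\bG$-quasimorphism. This is exactly where the cocycle condition for $\alpha$ modulo bounded cochains (i.e., boundedness of $\delta\alpha$) is crucial: it converts the relative-cohomology hypothesis into the uniform bounds on $\alpha(1,h)$ and $\alpha(g,1)$ needed to control $\delta\beta$ on $\bG\times\hG$ and $\hG\times\bG$. Once this is in hand, Corollary~\ref{cor:restriction_homogenization} upgrades $\beta$ to an element of $\hQQQ_{\bG}(\hG)$ and the rest of the argument is formal.
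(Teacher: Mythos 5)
Your proof is correct and follows essentially the same route as the paper: the inclusion $\Im(\tau_{/b})\subseteq\Ker(p^\ast)$ via Lemma~\ref{lemma:dF_lift_indep_bdd_error}, and for the converse the conversion of the relative-cocycle condition into boundedness of $\alpha(1_\ppi,\cdot)$ and $\alpha(\cdot,1_\ppi)$, the deduction that a trivializing cochain is an $\bG$-quasimorphism, the adjustment by a bounded cochain via the homogenization lemma, and the verification that $\alpha_{F,s}-\alpha$ is bounded. The only (cosmetic) deviation is that you show $\beta$ itself is an $\bG$-quasimorphism rather than first normalizing $\alpha(1_\ppi,1_\ppi)=0$ and passing to $\zeta(g)=\beta(g)-\alpha(p(g),1_\ppi)$ as the paper does, which is harmless since $\alpha(p(g),1_\ppi)$ is bounded.
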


\begin{proof}
  For $\qm \in \QQQ(\bG)^{\hG}$, we have $F \in \hQQQ_{\bG}(\hG)$ satisfying $F|_{\bG} = \qm$ by Lemma \ref{lemma:surjectivity}.
  Then a representative of $p^*(\tau_{/b}(\qm)) \in \HHH_{/b}^2(\hG)$ is given by $p^*\alpha_{F, s} \in C^2(\hG)$ for some section $s \colon \ppi \to \hG$ of $p$.
  For $g_i \in \hG$, we have
  \begin{align*}
    p^*\alpha_{F, s}(g_1, g_2) = s^* \delta F(p(g_1), p(g_2)) = \delta F(sp(g_1), sp(g_2)) \underset{4D''(F)}{\approx} \delta F(g_1, g_2)
  \end{align*}
  by Lemma \ref{lemma:dF_lift_indep_bdd_error}.
  This implies $p^*(\tau_{/b}(\qm)) = 0$.

  For $a \in \HHH_{/b}^2(\ppi)$ satisfying $p^* a = 0$, let $\alpha \in C^2(\ppi)$ be a representative of $a$.
  We can assume that the cochain satisfies
  \begin{align}\label{triviality_condition}
    \alpha(1_{\ppi}, 1_{\ppi}) = 0.
  \end{align}
  Indeed, if $\alpha(1_{\ppi}, 1_{\ppi}) = c \in \RR$, then the cochain $\alpha - c$ satisfies (\ref{triviality_condition}) and is also a representative of $a$ since the constant function $c$ is bounded.
  Note that the cocycle condition of $C_{/b}^{\bullet}(\ppi)$ implies that there exists a non-negative constant $D$ such that the condition
  \[
    \delta \alpha \underset{D}{\approx} 0
  \]
  holds.
  Hence, for $\gamma_1, \gamma_2 \in \ppi$, we have
  \begin{align*}
    0 \underset{D}{\approx} \delta \alpha (\gamma_1, 1_{\ppi}, \gamma_2) = \alpha(1_{\ppi}, \gamma_2) - \alpha(\gamma_1, 1_{\ppi}).
  \end{align*}
  In particular, we have
  \begin{align}\label{quasi-normalized_condition}
    \alpha(1_{\ppi}, \gamma) \underset{D}{\approx} \alpha(1_{\ppi}, 1_{\ppi}) = 0 \  \text{ and } \ \alpha(\gamma, 1_{\ppi}) \underset{D}{\approx} \alpha(1_{\ppi}, 1_{\ppi}) = 0
  \end{align}
  for every $\gamma \in \ppi$.
  The equality $p^*a = 0$ implies that there exists $\beta \in C^1(\hG)$ and  a  non-negative constant $D'$ satisfying
  \begin{align}\label{quasi-normalized_condition2}
    p^* \alpha - \delta \beta \underset{D'}{\approx} 0.
  \end{align}
  Define a cochain $\zeta \colon \hG \to \RR$ by
  \begin{align}\label{eq:zeta}
    \zeta(g) = \beta(g) - \alpha(p(g), 1_{\ppi}),
  \end{align}
  then it is an $\bG$-quasimorphism.
  Indeed, by using $p(n) = 1_{\ppi}$, we have
  \begin{align*}
    \delta \zeta(n, g) &= \delta \beta(n, g) - (\alpha(p(g), 1_{\ppi}) - \alpha(p(g), 1_{\ppi}) + \alpha(1_{\ppi}, 1_{\ppi}))\\
    &\underset{D}{\approx} (\delta \beta - p^* \alpha)(g, n) \underset{D'}{\approx} 0,
  \end{align*}
  and
  \begin{align*}
    \delta \zeta(g, n) &= \delta \beta(g, n) - (\alpha(1_{\ppi}, 1_{\ppi}) - \alpha(p(g), 1_{\ppi}) + \alpha(p(g), 1_{\ppi}))\\
    &\underset{D}{\approx} (\delta \beta - p^* \alpha)(g, n) \underset{D'}{\approx} 0
  \end{align*}
  by (\ref{quasi-normalized_condition}) and (\ref{quasi-normalized_condition2}).
  By Lemma \ref{lemma:restriction_homogenization}, there exists a bounded cochain $b \in C_b^1(\hG)$ such that the restriction $(\zeta + b)|_{\bG}$ is in $\QQQ(\bG)^{\ppi}$.
  Set $\Phi = \zeta + b \in \hQQQ_{\bG}(\hG)$, then a representative of $\tau_{/b}(\Phi|_{\bG})$ is given by $\alpha_{\Phi, s}$ for some section $s \colon \ppi \to \hG$ of $p$.
  For $g_1, g_2 \in \ppi$, we have
  \begin{align*}
    (\alpha_{\Phi, s} - \alpha)(g_1, g_2) &= (\delta \Phi - p^*\alpha)(s(g_1), s(g_2))\\
    &\underset{D'}{\approx} (\delta \Phi - \delta \beta)(s(g_1), s(g_2))
  \end{align*}
  by (\ref{quasi-normalized_condition2}).
  By (\ref{eq:zeta}), we have
  \begin{align*}
    (\Phi - \beta)(g) = (\zeta + b - \beta)(g) = b(g) - \alpha(p(g), 1_{\ppi}).
  \end{align*}
  Together with (\ref{quasi-normalized_condition}) and the boundedness of $b$, the cochain $\Phi - \beta \colon \hG \to \RR$ is bounded.
  Hence the cochain $\alpha_{\Phi, s} - \alpha$ is also bounded, and this implies the equality $a = [\alpha_{\Phi}] = \tau_{/b}(\Phi|_{\bG})$.
  Therefore the proposition follows.
\end{proof}

\begin{proof}[Proof of Theorem $\ref{main thm}$]
  The exactness is obtained from Propositions \ref{proposition:ex_0}, \ref{proposition:ex_1}, and \ref{proposition:ex_2}.
  The commutativities of the first, second, and fourth squares are obtained from the cochain level calculations.
  The commutativity of the third square follows from the definition of the map $\tau_{/b}$ and Proposition \ref{prop:transgression_group_coh} below.
\end{proof}

\begin{prop}[{\cite[Proposition 1.6.6]{MR2392026}}]\label{prop:transgression_group_coh}
  Let $1 \to \bG \to \hG \to \ppi \to 1$ be an exact sequence and $V$ an $\ppi$-module.
  For a $\hG$-invariant homomorphism $f \in \HHH^1(\bG;V)^{\hG}$, there exists a map $F \colon \hG \to V$ such that the restriction $F|_{\bG}$ is equal to $f$ and the coboundary $\delta F$ descends to a group two cocycle $\alpha_F \in C^2(\ppi;V)$, that is, the equality $p^* \alpha_F = \delta F$ holds.
  Then the map $\tau \colon \HHH^1(\bG;V)^{\hG} \to \HHH^2(\ppi;V)$ in the five-term exact sequence of group cohomology is obtained by $\tau(f) = [\alpha_F]$.
\end{prop}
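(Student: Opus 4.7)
The plan is to construct the extension $F$ explicitly from a set-theoretic section of $p$, compute $\delta F$ directly, and then identify the resulting cohomology class with the transgression map appearing in Theorem~\ref{thm five-term}.

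First, I would fix a set-theoretic section $s\colon \Gamma \to G$ with $s(1_\Gamma) = 1_G$. Every $g \in G$ then admits a unique decomposition $g = n(g)\cdot s(p(g))$ with $n(g)\in N$, and I would define $F(g) = f(n(g))$. For $g \in N$ we have $p(g) = 1_\Gamma$ and hence $n(g) = g$, so $F|_N = f$ is automatic.

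Next I would show that $\delta F(g_1,g_2) = g_1\cdot F(g_2) - F(g_1 g_2) + F(g_1)$ depends only on $(p(g_1),p(g_2))$ by checking invariance under replacing $g_1$ by $n_1 g_1$ and $g_2$ by $g_2 n_2$ for arbitrary $n_1,n_2 \in N$. The left invariance is essentially formal: it uses that $f$ is a homomorphism and that $N$ acts trivially on $V$ (since $V$ is an $\RR[\Gamma]$-module pulled back along $p$). The right invariance is the crucial calculation, and it is here that the $G$-invariance of $f$, rephrased as $f(gng^{-1}) = g\cdot f(n)$, enters to cancel the extra terms coming from the conjugation $g_2 n_2 = (g_2 n_2 g_2^{-1})g_2$. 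Both invariances together show $\delta F = p^*\alpha_F$ for a unique $\alpha_F \in C^2(\Gamma;V)$. Since $\delta(\delta F) = 0$ and $p^*$ is injective on cochains, $\alpha_F$ is automatically a group $2$-cocycle.

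The main obstacle is the final identification $[\alpha_F] = \tau(f)$, since the definition of $\tau$ appearing in Theorem~\ref{thm five-term} is not explicit in the paper. The standard strategy is to realise $\tau$ as the differential $d_2\colon E_2^{0,1}\to E_2^{2,0}$ of the Lyndon--Hochschild--Serre spectral sequence, or equivalently as the connecting map produced by the snake lemma applied to a short exact sequence of inhomogeneous cochain complexes adapted to $1 \to N \to G \to \Gamma \to 1$. In either framework, one unwinds the definition of $\tau$ on the cochain level and observes that a permitted choice of lift of $f \in H^1(N;V)^G$ to a $1$-cochain on $G$ is precisely the $F$ defined above. The image of its coboundary in $C^2(\Gamma;V)$ is then, by construction, $\alpha_F$. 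This matches the citation \cite[Proposition~1.6.6]{MR2392026}, which gives exactly this identification; a self-contained verification amounts to a careful but routine cochain bookkeeping once the right dictionary is set up.
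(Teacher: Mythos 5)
The paper never proves Proposition~\ref{prop:transgression_group_coh}: it is imported verbatim from \cite[Proposition~1.6.6]{MR2392026} and used as a black box to get commutativity of the third square of diagram~(\ref{diagram:main}). So there is no in-paper argument to compare against, and what you have written is a correct outline of the standard proof of the cited fact. Your cochain $F(g)=f(n(g))$ is exactly the extension $F_{f,s}(g)=f\bigl(g\cdot sp(g)^{-1}\bigr)$ that the paper uses in the quasimorphism analogue (Lemma~\ref{lemma:surjectivity}), and you locate the hypotheses correctly: invariance of $\delta F$ under $g_1\mapsto n_1g_1$ uses only that $f$ is a homomorphism and that $N$ acts trivially on $V$, while invariance under $g_2\mapsto g_2n_2$ is where $f(gng^{-1})=g\cdot f(n)$ enters. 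Carrying the computation out gives the closed formula $\alpha_F(\gamma_1,\gamma_2)=-f\bigl(s(\gamma_1)s(\gamma_2)s(\gamma_1\gamma_2)^{-1}\bigr)$, i.e.\ $-f$ applied to the factor set of the extension, and the cocycle property does follow, as you say, from $\delta\delta F=0$ together with injectivity of $p^*$ on cochains (which holds because $p$ is surjective).

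The one step you genuinely defer is the identification $[\alpha_F]=\tau(f)$, and you should be aware that this \emph{is} the content of the proposition rather than an afterthought: until a definition of $\tau$ is fixed (the $d_2$ differential of the Lyndon--Hochschild--Serre spectral sequence, or the connecting map from a dimension-shifting/snake-lemma construction of the five-term sequence) there is nothing to verify, and once it is fixed the verification, while routine, must actually be unwound at the cochain level. The route you name is the standard and correct one, so this is an acceptable sketch rather than a wrong turn. Two small points a complete write-up should add: first, the proposition as stated allows \emph{any} $F$ with $F|_N=f$ and $p^*\alpha_F=\delta F$, so you need independence of the class from the choice of $F$ --- if $F,F'$ are both admissible then $h=F-F'$ satisfies $h|_N=0$ and $\delta h$ descends, whence $h(ng)=h(g)$ for all $n\in N$, so $h=p^*\bar h$ and $\alpha_F-\alpha_{F'}=\delta\bar h$; second, your phrase ``$G$-invariant homomorphism'' should be read equivariantly, $f(gng^{-1})=g\cdot f(n)$ with $g$ acting through $\ppi$, which is what $\HHH^1(\bG;V)^{\hG}$ means here and what your right-invariance computation actually uses.
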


We conclude this section by the following applications of Theorem \ref{main thm} to the extendability of $\hG$-invariant homomorphisms.

\begin{prop} \label{prop extend homomorphism}
Let $\ppi = \hG/\bG$.
Assume that $\HHH_b^2(\ppi) = 0$ and $f \colon \bG \to \RR$ a $\hG$-invariant homomorphism on $\bG$. If $f$ is extended to a quasimorphism on $G$, then $f$ is extended to a homomorphism on $G$.
\end{prop}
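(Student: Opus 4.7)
The plan is to trace the hypothesis through the commutative diagram \eqref{diagram:main} of Theorem \ref{main thm} and deduce that the transgression $\tau(f) \in \HHH^2(\Gamma)$ vanishes. Under the identification $\HHH^1_{/b}(N)^G \cong \QQQ(N)^G$ (and analogously for $G$) from Remark \ref{remark:isomQZ}, the vertical map $\tae_3\colon \HHH^1(N)^G \to \HHH^1_{/b}(N)^G$ sends a $G$-invariant homomorphism to itself viewed as a homogeneous quasimorphism. The hypothesis that $f$ extends to a quasimorphism on $G$ translates to the statement that $\tae_3(f) \in \QQQ(N)^G$ lies in the image of $i^*\colon \HHH^1_{/b}(G) \to \HHH^1_{/b}(N)^G$.

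By exactness of the bottom row of \eqref{diagram:main} at $\HHH^1_{/b}(N)^G$, we therefore obtain $\tau_{/b}(\tae_3(f)) = 0$. Commutativity of the third square of \eqref{diagram:main} then yields
\[
\tae_4(\tau(f)) = \tau_{/b}(\tae_3(f)) = 0.
\]

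Next, I invoke the hypothesis $\HHH^2_b(\Gamma) = 0$. The cohomology long exact sequence \eqref{seq:long_seq} applied to $\Gamma$ contains the fragment
\[
\HHH^2_b(\Gamma) \longrightarrow \HHH^2(\Gamma) \xrightarrow{\ \tae_4\ } \HHH^2_{/b}(\Gamma),
\]
so vanishing of $\HHH^2_b(\Gamma)$ forces $\tae_4$ to be injective. Combined with $\tae_4(\tau(f)) = 0$, this gives $\tau(f) = 0$ in $\HHH^2(\Gamma)$.

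Finally, exactness of the ordinary five-term sequence (Theorem \ref{thm five-term}) at $\HHH^1(N)^G$ shows that $f \in \ker(\tau)$ lies in the image of $i^*\colon \HHH^1(G) \to \HHH^1(N)^G$, which is precisely the statement that $f$ admits an extension to a homomorphism on $G$. The argument is essentially pure diagram chasing once Theorem \ref{main thm} is in hand; no substantial obstacle arises, as all the technical content is absorbed into the construction and compatibility of the relative five-term sequence.
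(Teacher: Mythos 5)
Your proof is correct and is essentially the paper's own argument: the paper's proof consists precisely of the observation that $\HHH^2_b(\Gamma)=0$ makes $\tae_4$ injective (via the long exact sequence) followed by "diagram chasing on (1.3)", which is exactly the chase you spell out. The only implicit step you gloss — that an extension of $f$ by a not-necessarily-homogeneous quasimorphism still puts $\tae_3(f)$ in the image of $i^*$ — is handled by homogenization as in Remark \ref{remark:restriction_homogenization}, so no gap remains.
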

\begin{proof}
  Note that the assumption $\HHH_b^2(\ppi) = 0$ implies that the map $\HHH^2(\ppi) \to \HHH_{/b}^2(\ppi)$ is injective.
  By the diagram chasing on (\ref{diagram_coh_qm_rel}), the proposition holds.
\end{proof}

This proposition immediately implies the following corollary:

\begin{cor} \label{cor extend homomorphism 1}
Let $\ppi = \hG/\bG$.
Assume that $\HHH^2_b(\ppi) = 0$ and $\bG$ is a subgroup of $[\hG,\hG]$. Then every non-zero $\hG$-invariant homomorphism $f \colon \bG \to \RR$ cannot be extended to $\hG$ as a quasimorphism. Namely, $\HHH^1(N)^G \cap i^* \QQQ(G) = 0$.
\end{cor}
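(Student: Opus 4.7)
The plan is to apply Proposition \ref{prop extend homomorphism} directly and then exploit the abelianness of the target $\RR$. Suppose $f \in \HHH^1(N)^G \cap i^\ast \QQQ(G)$, so $f$ is a $G$-invariant homomorphism $N \to \RR$ that extends to some quasimorphism on $G$. Since $\HHH^2_b(\Gamma) = 0$ by hypothesis, Proposition \ref{prop extend homomorphism} upgrades this quasimorphism extension to a genuine homomorphism extension $\hat f \colon G \to \RR$ with $\hat f|_N = f$.

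Because $\hat f$ is a homomorphism into the abelian group $\RR$, it factors through the abelianization $G \to G/[G,G]$, so $\hat f$ vanishes on $[G,G]$. By assumption $N \subseteq [G,G]$, hence $f = \hat f|_N \equiv 0$. This proves $\HHH^1(N)^G \cap i^\ast \QQQ(G) = 0$.

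There is no real obstacle here: the entire content is packaged in Proposition \ref{prop extend homomorphism}, and the only additional input is the elementary observation that $\Hom(G,\RR)$ kills commutators. If I wanted to avoid invoking Proposition \ref{prop extend homomorphism} as a black box, I could instead run the argument directly from the commutative diagram \eqref{diagram_coh_qm_rel}: a class in $\HHH^1(N)^G$ whose image in $\QQQ(N)^G$ lies in $i^\ast \QQQ(G)$ would, after a diagram chase using the vanishing $\HHH^2_b(\Gamma) = 0$ (which makes $\HHH^2(\Gamma) \to \HHH^2_{/b}(\Gamma)$ injective), be forced to come from $\HHH^1(G)$; then restriction to $N \subseteq [G,G]$ kills it. But invoking the proposition is cleaner.
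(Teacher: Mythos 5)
Your proof is correct and is essentially identical to the paper's: both invoke Proposition \ref{prop extend homomorphism} (using $\HHH^2_b(\ppi)=0$) to upgrade the quasimorphism extension to a genuine homomorphism, which must vanish on $[\hG,\hG] \supseteq \bG$.
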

\begin{proof}
Assume that a homomorphism $f \colon N \to \RR$ can be extended to $G$ as a quasimorphism. Then Proposition \ref{prop extend homomorphism} implies that there exists a homomorphism $f' \colon G \to \RR$ with $f'|_N = f$. Since $f'$ vanishes on $[G,G]$, we have $f = f'|_N = 0$.
\end{proof}

 Without the assumption $\HHH^2_b(\ppi) = 0$, 
there exists a $\hG$-invariant homomorphism which is extendable to $\hG$ as a quasimorphism such that it is \emph{not} extendable to $G$ as a (genuine) homomorphism.
To see this, let $\hG = \widetilde{\Homeo}_+(S^1)$ and $\bG = \pi_1(\Homeo_+(S^1))$.
Then, Poincar\'e's rotation number $\rho \colon \widetilde{\Homeo}_+(S^1) \to \RR$ is an extension of the homomorphism $\pi_1(\Homeo_+(S^1)) \cong \ZZ \hookrightarrow \RR$.
However, this homomorphism $\pi_1(\Homeo(S^1)) \to \RR$ cannot be extendable to $\widetilde{\Homeo}_+(S^1)$ as a homomorphism since $\widetilde{\Homeo_+}(S^1)$ is perfect.

\section{Proof of equivalences of $\scl_G$ and $\scl_{G,N}$} \label{sec:scl}

The goal of this section is to prove Theorem \ref{main thm 3.1}.
 In this section, in order to specify the domain of a quasimorphism, we use the symbols $D_G$ and $D_N$ to denote the defect of a quasimorphism on $G$ and $N$, respectively. 
The main tool in this section is  the Bavard duality theorem  for $\scl_{G,N}$, which are proved by the first, second, fourth, and fifth authors:

\begin{thm}[Bavard duality theorem for stable mixed commutator lengths, \cite{KKMM1}] \label{thm Bavard}
Let $N$ be a normal subgroup of a group $G$. Then, for every $x \in [G,N]$, the following equality holds:
\[\scl_{G,N}(x) = \frac{1}{2} \sup_{f \in \QQQ(N)^G - \HHH^1(N)^G} \frac{|f(x)|}{ D_N(f) }.\]
 Here we set that the supremum in the right-hand side of the above equality to be  zero if $\QQQ(N)^G = \HHH^1(N)^G$.
\end{thm}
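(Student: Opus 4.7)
The plan is to prove the equality by the standard two-part scheme that adapts Bavard's original argument for $\scl_{\hG}$ to the mixed setting of $\scl_{\hG,\bG}$.

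First I would establish the easy inequality $\frac{1}{2}\sup_f |f(x)|/D(f) \le \scl_{\hG,\bG}(x)$. Fix $f \in \QQQ(\bG)^{\hG}$ and a minimal factorization $x = [g_1,n_1] \cdots [g_k,n_k]$ with $g_i \in \hG$, $n_i \in \bG$, so $k = \cl_{\hG,\bG}(x)$. Each $(\hG,\bG)$-commutator lies in $\bG$ by normality, so applying the defect inequality $k-1$ times gives
\[
\left| f(x) - \sum_{i=1}^k f([g_i,n_i]) \right| \le (k-1) D(f).
\]
Writing $[g_i,n_i] = (g_i n_i g_i^{-1}) \cdot n_i^{-1}$, the defect yields $|f([g_i,n_i]) - f(g_i n_i g_i^{-1}) - f(n_i^{-1})| \le D(f)$; the $\hG$-invariance of $f$ gives $f(g_i n_i g_i^{-1}) = f(n_i)$ and homogeneity gives $f(n_i^{-1}) = -f(n_i)$, so $|f([g_i,n_i])| \le D(f)$. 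Hence $|f(x)| \le (2k-1) D(f)$. Replacing $x$ by $x^m$, dividing by $m$, and taking $m \to \infty$ yields $|f(x)| \le 2 D(f)\,\scl_{\hG,\bG}(x)$. Note that elements of $\HHH^1(\bG)^{\hG}$ vanish on $[\hG,\bG]$ since $f([g,n]) = f(gng^{-1}) - f(n) = 0$ by $\hG$-invariance, so excluding them from the supremum is harmless.

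For the hard inequality I would set up a chain-level model of $\scl_{\hG,\bG}$ and invoke Hahn--Banach. Let $C$ be the free real vector space on symbols $\langle g, n\rangle$ indexed by $(g,n) \in \hG \times \bG$, modulo the subspace $R$ generated by relations corresponding to $\hG$-invariance ($\langle hgh^{-1}, hnh^{-1}\rangle - \langle g,n\rangle$), inverse symmetry, and $\ZZ$-linearity in the second slot (so that homogeneity of any dual functional is forced). Equip $C/R$ with the $\ell^1$-type seminorm $\|c\| = \inf \sum |a_i|$ over representatives $c = \sum a_i \langle g_i, n_i \rangle$, and equip the boundary map $\partial \langle g,n\rangle = [g,n]$ with the evident filling norm. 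The word-length formula $\cl_{\hG,\bG}(y) = \inf\{\|c\| : \partial c = y\}$ passes to $\scl_{\hG,\bG}$ after stabilization. I would then identify the continuous dual of $C/R$ with $\QQQ(\bG)^{\hG}/\HHH^1(\bG)^{\hG}$ via $\varphi \mapsto f_\varphi$, where $f_\varphi(n) = \varphi(\langle 1_{\hG}, n\rangle)$: the imposed relations exactly encode $\hG$-invariance and homogeneity, while boundedness of $\varphi$ translates to finiteness of the defect with $\|\varphi\|_{\mathrm{op}} = \tfrac{1}{2} D(f_\varphi)$, the factor $\tfrac{1}{2}$ being the same one appearing in the easy direction above. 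Applying Hahn--Banach to extend the evaluation-at-$x$ functional from $\RR \cdot x$ (of norm exactly $\scl_{\hG,\bG}(x)$) to all of $C/R$ then produces $f \in \QQQ(\bG)^{\hG}$ with $|f(x)|/D(f) = 2\scl_{\hG,\bG}(x)$, closing the duality. The degenerate case $\QQQ(\bG)^{\hG} = \HHH^1(\bG)^{\hG}$ forces the dual space of $C/R$ to vanish modulo homomorphisms, which by the easy direction forces $\scl_{\hG,\bG}(x) = 0$ on $[\hG,\bG]$, matching the convention that the supremum is zero.

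The main obstacle will be the precise identification of the dual: one must verify that the relations in $R$ correspond \emph{exactly} to the defining properties of $\QQQ(\bG)^{\hG}$, and that the operator norm on the dual matches $\tfrac{1}{2} D(\cdot)$ with the correct constant. This requires choosing the filling seminorm on $C/R$ carefully; the analogous calculation in the classical Bavard case uses bar resolutions and the expression of $D(f)$ as the sup-norm of $\delta f$, and the same machinery adapts here by replacing single group elements with $(g,n)$-pairs and exploiting normality of $\bG$ to make $\hG$-conjugation act on chains. A secondary point is confirming that the kernel of $D(\cdot)$ on $\QQQ(\bG)^{\hG}$ is exactly $\HHH^1(\bG)^{\hG}$, which follows because any zero-defect homogeneous quasimorphism is a homomorphism.
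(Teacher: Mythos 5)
Theorem~\ref{thm Bavard} is not proved in this paper at all: it is quoted from \cite{KKMM1}, so there is no internal argument to compare against, and your proposal must be judged as a reconstruction of that external proof. Your first half is correct and complete: the estimate $|f([g,n])|\le D(f)$ for $f\in\QQQ(N)^G$ via $G$-invariance and homogeneity, the resulting bound $|f(x)|\le (2\cl_{G,N}(x)-1)D(f)$, stabilization over powers of $x$, and the remark that elements of $\HHH^1(N)^G$ vanish on $[G,N]$ together give the inequality $\tfrac12\sup_f|f(x)|/D(f)\le \scl_{G,N}(x)$, consistently with the degenerate convention.

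The hard direction, however, contains a genuine gap: the step ``identify the continuous dual of $C/R$ with $\QQQ(N)^G/\HHH^1(N)^G$, with operator norm $\tfrac12 D$'' is precisely the content of the theorem, and nothing in the sketch establishes it. Moreover, the specific model is not coherent as written. With $\partial\langle g,n\rangle=[g,n]$, the symbol $\langle 1_G,n\rangle$ has trivial boundary, so the recipe $f_\varphi(n)=\varphi(\langle 1_G,n\rangle)$ cannot recover a quasimorphism on $N$ from a functional meant to pair with fillings of $x$; and the relation of $\ZZ$-linearity in the second slot is incompatible with the boundary map, since $[g,n^k]\ne[g,n]^k$ in general, so imposing it destroys the interpretation of $\partial c$ as a product of $(G,N)$-commutators, which your formula $\cl_{G,N}(y)=\inf\{\|c\|:\partial c=y\}$ relies on. The constant is also not controlled: for the natural pairing $\langle g,n\rangle\mapsto f([g,n])$ the dual norm with respect to your $\ell^1$-type norm is $\sup_{g\in G,\,n\in N}|f([g,n])|$, which for homogeneous $G$-invariant $f$ equals $D(f)$ (Bavard's commutator formula applied inside $N$ gives $\ge$, your easy estimate gives $\le$), not $\tfrac12 D(f)$; recovering the factor $\tfrac12$ requires building homogenization into the chain space and proving that the stabilized filling seminorm of $x$ equals $2\,\scl_{G,N}(x)$, which you only gesture at. Finally, attainment of the supremum via Hahn--Banach needs the identification of $\scl_{G,N}$ with a genuine quotient seminorm, again unproven here. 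These are exactly the points where \cite{KKMM1} does its work (their argument runs through $N$-quasimorphisms on $G$, extension of $G$-quasi-invariant quasimorphisms, and a Hahn--Banach argument on a carefully normed space), so as written your proposal does not yet prove the hard inequality.
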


This theorem yields the following criterion to show the equivalence of $\scl_{G,N}$ and $\scl_G$:

\begin{prop} \label{prop equivalence criterion}
 Let $C$ be a real number such that for every $f \in \QQQ(N)^G$ there exists $f' \in \QQQ(G)$ satisfying $f'|_N - f \in \HHH^1(N)^G$ and $ D_G(f')  \le C \cdot  D_N(f) $.
Then for every $x \in [G,N]$,
\[ \scl_G(x) \le \scl_{G,N}(x) \le C \cdot \scl_G(x).\]
\end{prop}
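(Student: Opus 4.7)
The plan is to verify the two inequalities separately. The left inequality $\scl_G \leq \scl_{G,N}$ on $[G,N]$ is immediate and does not use the hypothesis: every $(G,N)$-commutator $[g,n]$ with $g\in G$, $n\in N$ is in particular a $(G,G)$-commutator, so any factorization $x = c_1\cdots c_k$ of $x\in[G,N]$ into $(G,N)$-commutators is also a factorization into ordinary commutators. Hence $\cl_G(x)\leq \cl_{G,N}(x)$ for all $x\in[G,N]$, and dividing by $n$ and passing to the limit over $x^n$ gives $\scl_G(x)\leq \scl_{G,N}(x)$.

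For the right inequality, the plan is to feed the hypothesis into Bavard duality. The key observation I will use is that every element $h\in\HHH^1(N)^G$ vanishes on $[G,N]$: for any generator $[g,n]=gng^{-1}n^{-1}$ of $[G,N]$, the $G$-invariance of $h$ together with its additivity gives $h([g,n]) = h(gng^{-1}) - h(n) = h(n) - h(n) = 0$. Consequently, for the $f'\in\QQQ(G)$ produced by the hypothesis from $f\in\QQQ(N)^G$, we have $f'(x)=f(x)$ whenever $x\in[G,N]$, because $(f'|_N - f)\in\HHH^1(N)^G$.

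Now fix $x\in[G,N]$ and let $f\in\QQQ(N)^G\setminus\HHH^1(N)^G$ (so $D(f)>0$). Pick $f'\in\QQQ(G)$ as in the hypothesis. If $D(f')=0$, then $f'$ is a homomorphism and vanishes on $[G,G]\supset[G,N]$, so $f(x)=f'(x)=0$ and that quasimorphism contributes $0$ to the Bavard supremum. Otherwise $D(f')>0$, and the ordinary Bavard duality theorem applied to $\scl_G$ yields
\[
\scl_G(x) \;\geq\; \frac{|f'(x)|}{2\,D(f')} \;=\; \frac{|f(x)|}{2\,D(f')} \;\geq\; \frac{|f(x)|}{2C\,D(f)},
\]
where the final inequality is $D(f')\leq C\cdot D(f)$. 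Rearranging gives $\tfrac{1}{2}\tfrac{|f(x)|}{D(f)}\leq C\cdot \scl_G(x)$ for every $f\in\QQQ(N)^G\setminus\HHH^1(N)^G$. Taking the supremum over such $f$ and applying Theorem~\ref{thm Bavard} to the left-hand side yields $\scl_{G,N}(x)\leq C\cdot\scl_G(x)$, completing the proof.

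There is no genuine obstacle here; the only step that might require a moment of care is the case distinction when $D(f')=0$, and the small reminder that $\HHH^1(N)^G$ automatically vanishes on $[G,N]$ so that passing from $f$ to $f'$ preserves the value at $x$. The entire argument is essentially a direct translation of the hypothesis through the Bavard duality dictionary on both sides.
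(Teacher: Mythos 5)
Your proof is correct and follows essentially the same route as the paper: both sides of the bound are obtained by combining the mixed Bavard duality theorem (Theorem~\ref{thm Bavard}) with the easy direction of the classical Bavard duality, after noting that elements of $\HHH^1(N)^G$ vanish on $[G,N]$ so that $f'(x)=f(x)$. The only differences are cosmetic: you take a supremum over all $f\in \QQQ(N)^G\setminus\HHH^1(N)^G$ instead of picking an $\varepsilon$-optimal $f$, and you make explicit the harmless case $D(f')=0$, which the paper's write-up passes over.
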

 The existence of such $C$ as in the assumption of Proposition~\ref{prop equivalence criterion} is equivalent to saying that $\QQQ(N)^G = \HHH^1(N)^G + i^* \QQQ(G)$. See Subsection~\ref{subsec:proof_of_scl} for details.

\begin{proof}
Let $x \in [G,N]$. It is clear that $\scl_G(x) \le \scl_{G,N}(x)$. Let $\varepsilon > 0$.
Then Theorem \ref{thm Bavard} implies that there exists $f \in \QQQ(N)^G$ such that
\[ \scl_{G,N}(x) - \varepsilon \le \frac{f(x)}{2 D_N(f) }.\]
 By assumption,
there exists $f' \in \QQQ(G)$ such that $f'' = f'|_N - f \in \HHH^1(N)^G$ and $ D_G(f')  \le C \cdot  D_N(f) $.
Since $f''$ is a $G$-invariant homomorphism and $x \in [G,N]$, we have $f''(x) = 0$, and hence $f'(x) = f(x)$.
Hence we have
\[ \scl_{G,N}(x) - \varepsilon \le \frac{f(x)}{2  D_N(f) } \le C \cdot \frac{f'(x)}{2 D_G(f') } \le C \cdot \scl_G(x).\]
Here we use Theorem \ref{thm Bavard} to prove the last inequality.
 Since $\varepsilon$ is an arbitrary number, we complete the proof.
\end{proof}

In the proofs of (2) and (3) of Theorem \ref{main thm 3.1}, we use  the following corollary of Proposition \ref{prop equivalence criterion}:

\begin{cor}\label{cor equivalence criterion}
Assume that $\QQQ(N)^G = \HHH^1(N)^G + i^* \QQQ(G)$, and that there exists $C \ge 1$ such that $ f'  \in  \QQQ(G) $ implies that $ D_N(f')  \le C \cdot  D_N(f'|_N) $.
Then for every $x \in [G,N]$,
\[ \scl_G(x) \le \scl_{G,N}(x) \le C \cdot \scl_G(x).\]
\end{cor}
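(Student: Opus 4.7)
The plan is to derive Corollary \ref{cor equivalence criterion} as an immediate application of Proposition \ref{prop equivalence criterion}, by verifying the latter's hypothesis from the two assumptions. (I read the second hypothesis as concerning $f \in \QQQ(G)$ restricted to $N$, since for $f \in \QQQ(N)^G$ itself the inequality $D(f) \le C \cdot D(f|_N)$ reduces to the tautology $D(f) \le C \cdot D(f)$; the intended statement is that restriction to $N$ does not lose too much defect, i.e.\ $D(f) \le C \cdot D(f|_N)$ for every $f \in \QQQ(G)$.)

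Given $f \in \QQQ(N)^G$, I would invoke the assumption $\QQQ(N)^G = \HHH^1(N)^G + i^* \QQQ(G)$ to decompose $f = h + f'|_N$ with $h \in \HHH^1(N)^G$ and $f' \in \QQQ(G)$. Since $h$ is a genuine homomorphism we have $D(h) = 0$, and hence
\[
D(f'|_N) \;=\; D(f - h) \;=\; D(f).
\]
Applying the defect-control assumption to $f' \in \QQQ(G)$ then yields
\[
D(f') \;\le\; C \cdot D(f'|_N) \;=\; C \cdot D(f).
\]
By construction $f'|_N - f = -h \in \HHH^1(N)^G$, so this $f'$ is exactly the element required by the hypothesis of Proposition \ref{prop equivalence criterion}. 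The conclusion $\scl_G(x) \le \scl_{G,N}(x) \le C \cdot \scl_G(x)$ for every $x \in [G,N]$ then follows directly.

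The argument is a one-step reduction, so there is no substantive obstacle. The only conceptual point worth noting is that the non-uniqueness in the decomposition $f = h + f'|_N$ costs nothing in terms of defect precisely because the correction term $h$ is a homomorphism, whose defect is zero; this is what allows the defect of $f'$ to be controlled by that of $f$ via the restriction inequality.
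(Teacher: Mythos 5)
Your proof is correct and follows essentially the same route as the paper's: decompose $f\in \QQQ(N)^G$ as $f'|_N$ plus a $G$-invariant homomorphism, observe $D(f'|_N)=D(f)$ because the two differ by a homomorphism, apply the defect-control hypothesis to $f'\in \QQQ(G)$ to get $D(f')\le C\cdot D(f)$, and conclude by Proposition \ref{prop equivalence criterion}. Your reading of the hypothesis as concerning restrictions of elements of $\QQQ(G)$ (rather than the literally stated, vacuous condition on $\QQQ(N)^G$) is exactly how the paper's own proof and its later applications use it.
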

\begin{proof}
Let $f \in \QQQ(N)^G$.
 Then, by the assumption that $\QQQ(N)^G = \HHH^1(N)^G + i^* \QQQ(G)$, there exists $f' \in \QQQ(G)$ such that $f'|_N - f$ is a $G$-invariant  homomorphism.
Note that $  D_N(f'|_N)  =   D_N(f) $.
Indeed, for every $a,b \in N$, we have
\[ f(ab) - f(a) - f(b) = f'(ab) - f'(a) - f'(b)\]
since $f'|_N - f$ is a homomorphism. Hence we have $C \cdot   D_N(f)  = C \cdot   D_N(f'|_N)  \ge   D_G(f') $. Hence Proposition \ref{prop equivalence criterion} implies that
\[ \scl_G(x) \le \scl_{G,N}(x) \le C \cdot \scl_G(x)\]
for every $x \in [G,N]$.
\end{proof}

\subsection{Proof of (1) of Theorem \ref{main thm 3.1}}\label{subsec:proof_of_scl}

The main difficulty in the proof of Theorem \ref{main thm 3.1} is to prove Theorem \ref{thm Q(N)^G Banach} mentioned below. Note that the defect $  D_N $ defines a seminorm on $\QQQ(N)^G$, and its kernel is $\HHH^1(N)^G$.

\begin{thm} \label{thm Q(N)^G Banach}
The normed space $(\QQQ(N)^G / \HHH^1(N)^G ,  D_N )$ is a Banach space.
\end{thm}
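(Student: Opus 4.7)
The plan is to identify $\QQQ(N)^G/\HHH^1(N)^G$ with a closed subspace of $C^2_b(N)$ via the coboundary $f \mapsto \delta f$ (noting $D(f) = \|\delta f\|_\infty$ and $D(f)=0$ iff $f$ is a $G$-invariant homomorphism, so $D$ is genuinely a norm on the quotient). Completeness then boils down to constructing, from a Cauchy sequence, a limiting homogeneous quasimorphism. I will pick a Cauchy sequence $[f_n]$, pass to a subsequence with $D(f_{n+1}-f_n) < 2^{-n}$, set $g_n = f_{n+1}-f_n \in \QQQ(N)^G$, and aim to sum the telescoping series in a controlled way.

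The core technical input I will establish is a pointwise bound for a suitable choice of representative within each class. Set $M := N/[G,N]$; then $\HHH^1(N)^G = \mathrm{Hom}(M,\RR)$. Using Zorn's lemma, choose $S = \{s_i\}_i \subset N$ whose images $\bar s_i \in M$ project to a $\QQ$-basis of $M \otimes \QQ$. For each $f \in \QQQ(N)^G$, define $h_f \in \HHH^1(N)^G$ to be the unique $G$-invariant homomorphism with $h_f(s_i) = f(s_i)$ for all $i$, and put $\tilde f = f - h_f$. The key claim will be:
\[
\forall\, x \in N,\quad \exists\, C(x) \geq 0 \text{ depending only on } x,\quad |\tilde f(x)| \leq C(x)\, D(f) \text{ for every } f \in \QQQ(N)^G.
\]
To prove this I will write $d\bar x = \sum_i k_i \bar s_i + \bar t$ in $M$ with $d \in \NN$, $k_i \in \ZZ$, and $\bar t$ torsion of some order $e$, so that $y := x^d \prod_i s_i^{-k_i}$ satisfies $y^e \in [G,N]$. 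Combining homogeneity, the iterated defect inequality $|f(\prod_i s_i^{k_i}) - \sum_i k_i f(s_i)| \lesssim D(f)$, and the crucial $G$-invariance bound $|f([g,n])| \leq D(f)$ (coming from $f(gng^{-1}) = f(n)$ and $f(n^{-1}) = -f(n)$), I will bound $|f(y^e)|$ by a multiple of $D(f)$ and hence $|f(x) - h_f(x)| \leq C(x)\, D(f)$.

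Given this, the conclusion will be straightforward. I put $\tilde g_n := g_n - h_{g_n}$, so $|\tilde g_n(x)| \leq C(x)\, 2^{-n}$ and the series
\[
F(x) := f_1(x) + \sum_{n=1}^{\infty} \tilde g_n(x)
\]
converges pointwise. Since $\delta F = \delta f_1 + \sum_n \delta g_n$ (homomorphisms die under $\delta$) is a bounded cocycle with $\|\delta F\|_\infty \leq D(f_1) + \sum_n 2^{-n} < \infty$, the function $F$ is a quasimorphism; homogeneity and $G$-invariance pass to the pointwise limit of $G$-invariant homogeneous quasimorphisms. A telescoping computation gives $D(F - f_{n+1}) = \|\sum_{k>n}\delta g_k\|_\infty \leq 2^{-n}$, so $[f_n] \to [F]$ in the quotient norm. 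The main obstacle in this scheme is the pointwise bound in the middle paragraph: without $G$-invariance one can only use $[N,N]$ (via the ordinary commutator estimate), which is generally too small to contain $y^e$; the upgrade to the subgroup $[G,N]$ is precisely what $G$-invariance of $f$ provides, and is the reason the argument works for $\QQQ(N)^G/\HHH^1(N)^G$ rather than merely for $\QQQ(N)/\HHH^1(N)$.
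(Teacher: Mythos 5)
Your proposal is correct, but it proves completeness by a genuinely different route from the paper. The paper does not argue with Cauchy sequences directly: it imports from \cite{KKMM1} the fact that $(\rQQQ_{\bG}(\hG)/\HHH^1_{\bG}(\hG),D'')$ is Banach, shows in Lemma \ref{lem 5.3} that $(\rQQQ(\bG)^{\QQQ\hG}/\HHH^1(\bG)^{\hG},D+D')$ is a retract of that space (via the restriction operator and the extension $B(f)(sx)=f(x)$ built from coset representatives), hence closed and Banach, and then cuts out $\QQQ(\bG)^{\hG}/\HHH^1(\bG)^{\hG}$ as the intersection of kernels of the bounded functionals $\overline{\alpha}_{n,x}(f)=f(x^n)-nf(x)$, finally observing $D'=0$ on $\QQQ(\bG)^{\hG}$ so the norm is $D$. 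You instead give a self-contained argument: normalize each class by subtracting the $G$-invariant homomorphism $h_f$ determined on a set $S\subset N$ projecting to a $\QQ$-basis of $(N/[G,N])\otimes\QQ$, prove the pointwise bound $|f(x)-h_f(x)|\le C(x)D(f)$ with $C(x)$ independent of $f$ (using $x^{de}\cdot(\text{basis part})\in[G,N]$, the estimate $|f([g,n])|\le D(f)$ coming from invariance plus homogeneity, and the mixed commutator length of the resulting element), and then sum a rapidly convergent telescoping series pointwise; homogeneity, invariance and the defect bound all pass to the limit, giving convergence in the $D$-seminorm. I checked the steps (well-definedness and uniqueness of $h_f$, symmetry of the set of $(G,N)$-commutators so that $\cl_{G,N}$ controls $|f|$ on $[G,N]$, and the passage from a convergent subsequence to the full Cauchy sequence, which you leave implicit but is standard) and they all go through. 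What the comparison buys: your argument is elementary and independent of the completeness result of \cite{KKMM1}, at the cost of a Zorn's-lemma choice of $S$ and an $x$-dependent constant; the paper's argument is softer, reuses existing machinery, and produces along the way the auxiliary Banach space $\rQQQ(\bG)^{\QQQ\hG}/\HHH^1(\bG)^{\hG}$, which is of independent use. Your closing remark correctly identifies the crux: the pointwise estimate needs the subgroup $[G,N]$ rather than $[N,N]$, and $G$-invariance is exactly what upgrades the commutator bound accordingly.
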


To show this theorem, we recall some concepts introduced in \cite{KKMM1}.
Let $\rQQQ_{\bG}(\hG) = \rQQQ_{\bG}(\hG;\RR)$ denote the $\RR$-vector space of $\bG$-quasimorphisms (see Definition \ref{def:N-quasimorphism}  and Table \ref{table:symbol_qm}).
We call $f \in \rQQQ_{\bG}(\hG)$ an {\it $\bG$-homomorphism} if $D''(f) = 0$, and let $\HHH^1_{\bG}(\hG)$ denote the space of $\bG$-homomorphisms on $\hG$.
It is clear that the defect $D''$ is a seminorm on $\rQQQ_{\bG}(\hG)$, and in fact,  the norm  space $\rQQQ_{\bG}(\hG)/ \HHH^1_{\bG}(\hG)$ is complete:

\begin{prop}[{\cite[Corollary 3.6]{KKMM1}}] \label{prop Q_N(G) Banach}
The normed space $(\rQQQ_N(G)/\HHH_{\bG}^1(\hG) , D'')$ is a Banach space.
\end{prop}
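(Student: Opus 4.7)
The plan is to transfer the completeness of $\rQQQ_N(G)/\HHH_N^1(G)$ given by Proposition \ref{prop Q_N(G) Banach} to $\QQQ(N)^G/\HHH^1(N)^G$ via the explicit $\RR$-linear lifting used in the proof of Lemma \ref{lemma:surjectivity}.

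First I will fix a set-theoretic section $s \colon \Gamma \to G$ of $p$ with $s(1_\Gamma) = 1_G$ and, for each $f \in \QQQ(N)^G$, set $F_{f,s}(g) = f(g \cdot sp(g)^{-1})$. The assignment $f \mapsto F_{f,s}$ is $\RR$-linear, $F_{f,s}|_N = f$, and the computation in the proof of Lemma \ref{lemma:surjectivity} shows $D''(F_{f,s}) \le D(f)$. In the other direction, specializing the defect condition \eqref{eq:Nquasi_condition} to pairs in $N$ yields $D(F|_N) \le D''(F)$ for every $F \in \rQQQ_N(G)$. I will also note that the seminorm $D$ on $\QQQ(N)^G$ is constant on cosets of $\HHH^1(N)^G$ (since adding a homomorphism does not change the cocycle $\delta f$), so the quotient seminorm is again $D$.

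Next, given a Cauchy sequence $\{[f_n]\}$ in $(\QQQ(N)^G/\HHH^1(N)^G, D)$, I take representatives $f_n \in \QQQ(N)^G$ and set $F_n = F_{f_n,s}$. Linearity of the lift and the defect estimate give $D''(F_n - F_m) = D''(F_{f_n - f_m, s}) \le D(f_n - f_m)$, so $\{[F_n]\}$ is Cauchy in $\rQQQ_N(G)/\HHH_N^1(G)$. Proposition \ref{prop Q_N(G) Banach} produces $F \in \rQQQ_N(G)$ with $D''(F_n - F) \to 0$, and Lemma \ref{lemma:restriction_homogenization} supplies a bounded $b$ such that $f := (F+b)|_N = \overline{F|_N}$ belongs to $\QQQ(N)^G$.

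It will then remain to verify $D(f_n - f) \to 0$. The restriction $(F_n - F)|_N = f_n - F|_N$ is a quasimorphism on $N$ with defect bounded by $D''(F_n - F)$. Since homogenization is $\RR$-linear and fixes homogeneous quasimorphisms, its homogenization is $f_n - \overline{F|_N} = f_n - f$, so Lemma \ref{lem:homog}(3) gives
\[
D(f_n - f) \le 2 D(f_n - F|_N) \le 2 D''(F_n - F) \to 0,
\]
which is the required convergence in $\QQQ(N)^G/\HHH^1(N)^G$. The main obstacle is arranging a lifting $\QQQ(N)^G \to \rQQQ_N(G)$ that is simultaneously linear and seminorm-nonincreasing; the explicit formula $F_{f,s}$ achieves both, after which the rest is bookkeeping with the standard homogenization inequality.
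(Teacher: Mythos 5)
The statement you were asked to prove is Proposition \ref{prop Q_N(G) Banach} itself: the completeness of $(\rQQQ_{\bG}(\hG)/\HHH^1_{\bG}(\hG), D'')$, which this paper does not reprove but imports from \cite[Corollary 3.6]{KKMM1}. Your argument never addresses this. Instead, in your second paragraph you write ``Proposition \ref{prop Q_N(G) Banach} produces $F \in \rQQQ_N(G)$ with $D''(F_n - F) \to 0$,'' i.e.\ you invoke exactly the completeness assertion you were supposed to establish, and you use it to deduce the completeness of $(\QQQ(\bG)^{\hG}/\HHH^1(\bG)^{\hG}, D)$. That is a different result --- it is Theorem \ref{thm Q(N)^G Banach} of the paper --- so as a proof of the stated proposition your proposal is circular and contains no content toward the actual claim. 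A genuine proof would have to take a $D''$-Cauchy sequence of $\bG$-quasimorphisms on $\hG$ and produce a limit class, for instance by correcting representatives modulo $\HHH^1_{\bG}(\hG)$ so that they converge pointwise (the analogue of the standard completeness argument for $\mathrm{Q}/\HHH^1$ with the defect seminorm), or by exhibiting the quotient as a closed subspace of a known Banach space; none of this appears in your write-up.

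As a side remark, the reduction you do carry out is essentially sound: the linear lift $f \mapsto F_{f,s}$ from Lemma \ref{lemma:surjectivity} satisfies $D''(F_{f,s}) \le D(f)$, restriction gives $D(F|_{\bG}) \le D''(F)$, and combining these with Lemma \ref{lemma:restriction_homogenization} and the homogenization estimate of Lemma \ref{lem:homog}(3) does transfer completeness from $\rQQQ_{\bG}(\hG)/\HHH^1_{\bG}(\hG)$ to $\QQQ(\bG)^{\hG}/\HHH^1(\bG)^{\hG}$. This is in fact a somewhat more direct route to Theorem \ref{thm Q(N)^G Banach} than the paper's (which passes through Lemma \ref{lem 5.3}, the retract argument of Lemma \ref{lem 5.3.1}, and the intersection of kernels of the functionals $\overline{\alpha}_{n,x}$), but it answers a different question; it does not prove, and cannot substitute for, Proposition \ref{prop Q_N(G) Banach}.
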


A quasimorphism $f \colon \bG \to \RR$ is said to be \textit{$\hG$-quasi-invariant} if the number
\[D'(f) = \sup_{g \in \hG  , x \in \bG } |f(gxg^{-1}) - f(x)|\]
is finite. Let $\rQQQ(\bG)^{\QQQ \hG}$ denote the space of $\hG$-quasi-invariant quasimorphisms on $\bG$. The function $ D_N  + D'$, which assigns $ D_N (f) + D'(f)$ to $f \in \rQQQ(\bG)^{\QQQ \hG}$ defines a seminorm on $\rQQQ(\bG)^{\QQQ \hG}$.
 For an $\bG$-quasimorphism $f$ on $\hG$,  the restriction $f|_{\bG}$ is a $\hG$-quasi-invariant quasimorphism (Lemma 2.3 of \cite{KKMM1}). Conversely, for every $\hG$-quasi-invariant quasimorphism $f$ on $\bG$, there exists an $\bG$-quasimorphism $f' \colon \hG \to \RR$ satisfying $f' |_{\bG} = f$ (Proposition 2.4 of \cite{KKMM1}).
 We summarize the concepts and symbols on quasimorphisms in Table \ref{table:symbol_qm}.

\begin{table}[hbtp]
   \caption{the concepts and symbols on quasimorphisms}
  \label{table:symbol_qm}
  \centering
  \begin{tabular}{c|c|c|c}
    \hline
    concept & defect  &  definition  & vector space \\
    \hline \hline
    quasimorphism on $G$  & $D$  & $f(g_1 g_2) \approx_{D} f(g_1)+f(g_2) $ & $\rQQQ(\hG)$  \\
\hline
  \begin{tabular}{c}
$G$-quasi-invariant \\ quasimorphism on $N$
\end{tabular}  & $D,D'$  & 
\begin{tabular}{c}
  $f(x_1x_2)\approx_D f(x_1)+f(x_2)$, \\ $f(gxg^{-1}) \approx_{D'} f(x) $ 
\end{tabular}
  & $\rQQQ(\bG)^{\QQQ \hG}$  \\
 \hline
   $N$-quasimorphism on $G$ & $D''$  &
  \begin{tabular}{c}
$f(gx) \approx_{D''} f(g)+f(x) $, \\ $f(xg) \approx_{D''} f(x)+f(g) $
\end{tabular}  & $\rQQQ_N(G)$  \\
    \hline
  \end{tabular}   
\end{table}

\begin{lem} \label{lem 5.3}
The normed space $(\rQQQ(\bG)^{\QQQ \hG} / \HHH^1(\bG)^{\hG} ,  D_N  + D')$ is a Banach space.
\end{lem}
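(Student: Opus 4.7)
The plan is to derive completeness of $(\rQQQ(\bG)^{\QQQ \hG}/\HHH^1(\bG)^{\hG}, D + D')$ by transporting it from Proposition~\ref{prop Q_N(G) Banach} along a linear lift from $\bG$. I begin with the observation that $\HHH^1(\bG)^{\hG}$ is exactly the kernel of the seminorm $D + D'$ on $\rQQQ(\bG)^{\QQQ \hG}$, so $D + D'$ descends to a genuine norm on the quotient, and for any representative $\varphi$ and any $h \in \HHH^1(\bG)^{\hG}$ one has $(D + D')(\varphi - h) = (D + D')(\varphi)$. Next, fixing a set-theoretic section $s \colon \ppi \to \hG$ of $p$ with $s(1_\ppi) = 1_\hG$, I define a linear lift $\sigma \colon \rQQQ(\bG)^{\QQQ \hG} \to \rQQQ_{\bG}(\hG)$ by $\sigma(\varphi)(g) = \varphi(g \cdot sp(g)^{-1})$, exactly as in the proof of Lemma~\ref{lemma:surjectivity}. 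Clearly $\sigma(\varphi)|_{\bG} = \varphi$. The crucial quantitative ingredient I intend to establish is the estimate
\[
D''(\sigma(\varphi)) \leq D(\varphi) + D'(\varphi),
\]
together with the converse-direction bounds $D(f|_{\bG}) \leq D''(f)$ and $D'(f|_{\bG}) \leq 2D''(f)$ for any $f \in \rQQQ_{\bG}(\hG)$; the latter pair of estimates in particular guarantees that restriction maps $\HHH^1_{\bG}(\hG)$ into $\HHH^1(\bG)^{\hG}$ and that $D + D' \leq 3 D''$ upon restriction.

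With these estimates in hand, I take a Cauchy sequence $\{[f_n]\}$ in $(\rQQQ(\bG)^{\QQQ \hG}/\HHH^1(\bG)^{\hG}, D + D')$ and, after passing to a subsequence, arrange that $(D + D')(f_{n+1} - f_n) \leq 2^{-n}$ for all $n$. By linearity of $\sigma$ and the key estimate,
\[
D''(\sigma(f_{n+1}) - \sigma(f_n)) = D''(\sigma(f_{n+1} - f_n)) \leq 2^{-n},
\]
so $\{[\sigma(f_n)]\}$ is Cauchy in the Banach space $(\rQQQ_{\bG}(\hG)/\HHH^1_{\bG}(\hG), D'')$ furnished by Proposition~\ref{prop Q_N(G) Banach}. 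Consequently there exist $F \in \rQQQ_{\bG}(\hG)$ and $h_n \in \HHH^1_{\bG}(\hG)$ with $D''(\sigma(f_n) - F - h_n) \to 0$. Restricting to $\bG$, invoking $D + D' \leq 3 D''$, and using $\sigma(f_n)|_{\bG} = f_n$ together with $h_n|_{\bG} \in \HHH^1(\bG)^{\hG}$, I conclude that $(D + D')(f_n - F|_{\bG} - h_n|_{\bG}) \to 0$, whence $[f_n] \to [F|_{\bG}]$ in $\rQQQ(\bG)^{\QQQ \hG}/\HHH^1(\bG)^{\hG}$, proving completeness.

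The main technical obstacle is the lift estimate $D''(\sigma(\varphi)) \leq D(\varphi) + D'(\varphi)$. The bound $|\sigma(\varphi)(ng) - \sigma(\varphi)(n) - \sigma(\varphi)(g)| \leq D(\varphi)$ is routine from the defect of $\varphi$, since $p(ng) = p(g)$. The subtler bound $|\sigma(\varphi)(gn) - \sigma(\varphi)(g) - \sigma(\varphi)(n)| \leq D(\varphi) + D'(\varphi)$ is where $D'$ genuinely enters: after applying the defect of $\varphi$ one is left with a term of the form $\varphi(sp(g) \cdot n \cdot sp(g)^{-1})$ that must be compared with $\varphi(n)$ by means of quasi-invariance, which costs $D'(\varphi)$. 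Beyond this computation, the rest of the argument is formal manipulation of seminorms and linear lifts, so once the estimate is secured, completeness follows without further difficulty.
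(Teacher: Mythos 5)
Your proof is correct and takes essentially the same route as the paper: both arguments transport completeness from Proposition \ref{prop Q_N(G) Banach} by means of a bounded linear lift $\rQQQ(\bG)^{\QQQ \hG} \to \rQQQ_{\bG}(\hG)$ satisfying $D'' \le D + D'$ (your $\sigma(\varphi)(g)=\varphi(g\cdot sp(g)^{-1})$ versus the paper's coset-wise extension $B$) together with the restriction map, whose estimates $D(f|_{\bG})\le D''(f)$ and $D'(f|_{\bG})\le 2D''(f)$ give the same operator-norm bound $3$. The only difference is the final packaging: the paper deduces that the image of $B$ is a retract, hence a closed (so complete) subspace of $\rQQQ_{\bG}(\hG)/\HHH^1_{\bG}(\hG)$, whereas you run a direct Cauchy-sequence argument, lifting by $\sigma$ and restricting back, which is an equally valid rendering of the same idea.
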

\begin{proof}
In what follows, we will define bounded operators
\[A \colon \rQQQ_{\bG}(\hG)/ \HHH^1_{\bG}(\hG) \to \rQQQ(\bG)^{\QQQ \hG} / \HHH^1(\bG)^{\hG},\]
\[B \colon \rQQQ(\bG)^{\QQQ \hG} / \HHH^1(\bG)^{\hG} \to \rQQQ_{\bG}(\hG)/ \HHH^1_\bG(\hG)\]
such that $A \circ B$ is the identity of $\rQQQ(\bG)^{\QQQ \hG} / \HHH^1(\bG)^{\hG}$. First, we define $A$ by the restriction, {\it i.e.,} $A(f) = f|_{\bG}$.
 Then, the operator norm of $A$ is at most $3$ since $D_N \leq D''$ and $D' \leq 2D''$. Indeed, $D_N \leq D''$ follows by definition, and $D' \leq 2D''$ follows from the estimate
\[ f(\hg x\hg^{-1}) + f( \hg ) \approx_{D''(f)} f(\hg x) \approx_{D''(f)} f(\hg ) + f(x).\]
for $g \in \hG$ and $x \in \bG$.

Let $S$ be a subset of $\hG$ such that $1_{\hG} \in S$ and the map
\[S \times \bG \to \hG, \; (s,x) \mapsto sx\]
is bijective. For an $f \in \rQQQ(\bG)^{\QQQ \hG}$, define a function $B(f) \colon \hG \to \RR$ by $B(f)(sx) = f(x)$ for $s \in S$ and $x \in \bG$. Then $B(f)$ is an $\bG$-quasimorphism on $\hG$ satisfying $D''(B(f)) \le  D_N (f) + D'(f)$.
Hence the map $B$ induces a bounded operator $\rQQQ(\bG)^{\QQQ \hG} / \HHH^1(\bG)^{\hG} \to \rQQQ_{\bG}(\hG)/ \HHH^1_{\bG}(\hG)$ whose operator norm is at most $1$, and we  conclude that $\rQQQ(\bG)^{\QQQ \hG} / \HHH^1(\bG)^{\hG}$ is isomorphic to $B (\rQQQ(\bG)^{\QQQ \hG} / \HHH^1(\bG)^{\hG})$.
Proposition \ref{prop Q_N(G) Banach} implies that $\rQQQ_{\bG}(\hG)/ \HHH^1_{\bG}(\hG)$ is a Banach space.
Therefore it suffices to see that $B(\rQQQ(\bG)^{\QQQ \hG} / \HHH^1(\bG)^{\hG})$ is a closed subset of $\rQQQ_{\bG}(\hG)/ \HHH^1_{\bG}(\hG)$, but this is deduced from the following well-known fact (Lemma \ref{lem 5.3.1}).
\end{proof}

\begin{lem} \label{lem 5.3.1}
Let $X$ be a topological subspace of a Hausdorff space $Y$. If $X$ is a retract of $Y$, then $X$ is a closed subset of $Y$.
\end{lem}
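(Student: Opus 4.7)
The plan is to use the standard characterization of $X$ as the fixed-point set of the retraction and exploit the Hausdorff property to conclude that this fixed-point set is closed. First I would fix a retraction $r\colon Y\to X$, that is, a continuous map satisfying $r(x)=x$ for every $x\in X$. Post-composing with the inclusion $\iota\colon X\hookrightarrow Y$, I obtain a continuous self-map $\tilde r=\iota\circ r\colon Y\to Y$.

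Next I would verify the set-theoretic equality $X=\{y\in Y : \tilde r(y)=y\}$. The inclusion $\subseteq$ follows directly from the retraction identity $r|_X=\mathrm{id}_X$. For the reverse inclusion, if $\tilde r(y)=y$, then $y=\iota(r(y))\in \iota(X)=X$, so $y\in X$.

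It remains to show that the fixed-point set $\mathrm{Fix}(\tilde r)=\{y\in Y : \tilde r(y)=y\}$ is closed in $Y$. Consider the continuous map
\[
\Phi\colon Y\to Y\times Y,\qquad \Phi(y)=(\tilde r(y),y).
\]
Then $\mathrm{Fix}(\tilde r)=\Phi^{-1}(\Delta_Y)$, where $\Delta_Y=\{(y,y):y\in Y\}$ is the diagonal. Since $Y$ is Hausdorff, $\Delta_Y$ is closed in $Y\times Y$; hence $\mathrm{Fix}(\tilde r)$, and therefore $X$, is closed in $Y$.

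There is no real obstacle here: the only step requiring any thought is the observation that the equalizer of two continuous maps into a Hausdorff space is closed, which is a standard general-topology fact. Since this lemma is being invoked only to conclude that $B(\rQQQ(\bG)^{\QQQ\hG}/\HHH^1(\bG)^{\hG})$ is closed in the Banach space $\rQQQ_{\bG}(\hG)/\HHH^1_{\bG}(\hG)$ (with $A$ serving as the retraction), the proof is essentially a one-line application of the above principle.
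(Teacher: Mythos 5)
Your proof is correct and follows the same route as the paper: identify $X$ with the fixed-point set $\{y\in Y : i\circ r(y)=y\}$ and use that the coincidence set of two continuous maps into a Hausdorff space (equivalently, the preimage of the closed diagonal) is closed. You merely spell out the diagonal argument that the paper leaves implicit.
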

\begin{proof}
Let $r \colon Y \to X$ be a retraction of the inclusion map $i \colon X \to Y$. Since $X = \{ y \in Y \; | \; i \circ r(y) = y\}$ and $Y$ is a Hausdorff space, we  conclude that $X$ is a closed subset of $Y$.
\end{proof}

\begin{proof}[Proof of Theorem $\ref{thm Q(N)^G Banach}$]
For $n \in \ZZ$ and $x \in \bG$, define a function $\alpha_{n,x} \colon \rQQQ(\bG)^{\QQQ \hG} \to \RR$ by
\[\alpha_{n,x}(f) = f(x^n) - n \cdot f(x).\]
Since $|\alpha_{n,x}(f)| \le (n-1) D_N(f) $, we  conclude that $\alpha_{n,x}$ is bounded with respect to the norm $ D_N  + D'$, and hence $\alpha_{n,x}$ induces a bounded operator $\overline{\alpha}_{n,x} \colon \rQQQ(\bG)^{\QQQ \hG} / \HHH^1(\bG)^{\hG} \to \RR$.
Since
\[\QQQ(\bG)^{\hG} / \HHH^1(\bG)^{\hG} = \bigcap_{n \in \ZZ, x \in \bG} \Ker(\overline{\alpha}_{n,x}),\]
the space $\QQQ(\bG)^{\hG} / \HHH^1(\bG)^{\hG}$ is a closed subspace of the Banach space $\rQQQ(\bG)^{\QQQ \hG} / \HHH^1(\bG)^{\hG}$ (see Lemma \ref{lem 5.3}). Since $D' = 0$ on $\QQQ(\bG)^{\hG}$ (Lemma 2.1 of \cite{KKMM1}), we conclude that the normed space $(\QQQ(\bG)^{\hG} / \HHH^1(\bG)^{\hG},  D_\bG)$ is a Banach space.
\end{proof}

\begin{proof}[Proof of $(1)$ of Theorem $\ref{main thm 3.1}$]
It is clear that $\scl_{\hG}(x) \le \scl_{\hG,\bG}(x)$ for every $x \in [\hG,\bG]$. Hence it suffices to show that there exists $C>1$ such that for every $x \in [\hG,\bG]$, the inequality $\scl_{\hG,\bG}(x) \le C \cdot \scl_{\hG}(x)$ holds.

It follows from Theorem \ref{thm Q(N)^G Banach} that $(\QQQ(\hG)/\HHH^1(\hG),  D_\hG)$ and $(\QQQ(\bG)^{\hG} / \HHH^1(\bG)^{\hG},  D_\bG)$ are Banach spaces.
Let $T \colon \QQQ(\hG) / \HHH^1(\hG) \to \QQQ(\bG)^{\hG} / \HHH^1(\bG)^{\hG}$ be the bounded operator induced by the restriction $\QQQ(\hG) \to \QQQ(\bG)^{\hG}$.
Let $X$ be the kernel of $T$. Then $T$ induces a bounded operator
\[\overline{T} \colon (\QQQ(\hG) / \HHH^1(\hG)) / X \to \QQQ(\bG)^{\hG} / \HHH^1(\bG)^{\hG}.\]
The assumption $\QQQ(\bG)^{\hG} = \HHH^1(\bG)^{\hG} + i^* \QQQ(\hG)$ implies that the map $T$ is surjective, and hence we  see that $\overline{T}$ is a bijective bounded operator. By the open mapping theorem, we  conclude that the inverse $S = \overline{T}^{-1}$ is a bounded operator, and set $C = \| S\| + 1$, where $\| S \|$ denotes the operator norm of $S$. Then for every $[f] \in \QQQ(\bG)^{\hG} / \HHH^1(\bG)^{\hG}$, there exists $f' \in \QQQ(\hG)$ such that $ D_G (f') \le C \cdot  D_N (f)$ and $f' |_{\bG} - f \in \HHH^1(\bG)^{\hG}$.
Hence Proposition \ref{prop equivalence criterion} implies that \[ \scl_G \le \scl_{G,N} \le C \cdot \scl_G\]
on $[G,N]$. This completes the proof of (1) of Theorem \ref{main thm 3.1}.
\end{proof}



\subsection{Proof of (2) of Theorem \ref{main thm 3.1}}

In this subsection, we prove (2) of Theorem \ref{main thm 3.1}.

Here we recall the definition of the seminorm on $\HHH^n_b(\hG)$. The space $C^n_b(G ; \RR)$ of bounded $n$-cochains on $G$ is a Banach space with respect to the $\infty$-norm
\[ \| \varphi\|_\infty  = \sup \{ |\varphi(x_1, \cdots, x_n)| \; | \; x_1, \cdots, x_n \in G\}.\]
Since $\HHH^n_b(G) = \HHH^n_b(G ; \RR)$ is a subquotient of $C^n_b(G ; \RR)$, it has a seminorm induced by the norm on $C^n_b(G ; \RR)$. Namely, for $\alpha \in \HHH^n_b(G)$ the seminorm $\| \alpha \|$ on $\HHH^n_b(G)$ is defined by
\[ \| \alpha \| = \inf \{ \| \varphi \|_\infty \; | \; \textrm{$\varphi$ is a bounded $n$-cocycle on $G$ satisfying $[\varphi] = \alpha$}\}. \]

\begin{thm}[See Theorem 2.47 of \cite{Ca}] \label{thm amenable cover}
If $\Gamma = \hG / \bG$ is  amenable, then the map $\HHH^n_b(\hG) \to \HHH^n_b(\bG)^\hG$ is an isometric isomorphism for every $n$.
\end{thm}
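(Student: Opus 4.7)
The plan is to establish the result by constructing an explicit transfer map that serves as a norm-preserving inverse to the restriction map $r\colon \HHH^n_b(\hG)\to \HHH^n_b(\bG)^{\hG}$. That $r$ has operator norm at most $1$ is clear at the cochain level: restricting a bounded cocycle $\varphi\colon \hG^n\to\RR$ to $\bG^n$ can only decrease the supremum norm, and the resulting cochain is automatically $\hG$-invariant because the cocycle was already defined on all of $\hG$. The whole content of the theorem is therefore surjectivity of $r$ together with the opposite norm inequality.

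The key construction uses the amenability of $\ppi$. Fix a set-theoretic section $s\colon \ppi\to \hG$ with $s(1_{\ppi})=1_{\hG}$ and an invariant mean $m$ on $\ppi$. For $g\in\hG$, the element $\sigma(\gamma,g):=s(\gamma)\,g\,s(\gamma\cdot p(g))^{-1}$ lies in $\bG$. Given $\alpha\in C^n_b(\bG)^{\hG}$, define the transfer $t(\alpha)\colon \hG^n\to\RR$ by
\[
t(\alpha)(g_1,\dots,g_n)\;=\;m_{\gamma\in\ppi}\!\Bigl[\alpha\bigl(\sigma(\gamma,g_1),\;\sigma(\gamma p(g_1),g_2),\;\dots,\;\sigma(\gamma p(g_1\cdots g_{n-1}),g_n)\bigr)\Bigr].
\]
Since the mean is positive and of norm $1$ on $\ell^{\infty}(\ppi)$, we get immediately $\|t(\alpha)\|_{\infty}\le \|\alpha\|_{\infty}$. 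Left-invariance of $m$ is exactly what makes $t$ a chain map: a change of variable $\gamma\mapsto\gamma\cdot p(g_0)^{-1}$ absorbs the inserted argument and converts the coboundary of $t(\alpha)$ into the transfer of $\delta\alpha$. Similarly, the $\hG$-invariance of $\alpha$ together with invariance of $m$ shows $r\circ t=\mathrm{id}$ on $C^n_b(\bG)^{\hG}$, proving surjectivity of $r$ on cohomology with the needed norm bound $\|[\alpha]\|\ge \|r^{-1}[\alpha]\|$.

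For injectivity of $r$, one shows $t\circ r$ is chain homotopic to the identity on $C^{\bullet}_b(\hG)$: for a bounded cocycle $\varphi\in C^n_b(\hG)$, the $\hG$-invariance of $\varphi|_{\bG^n}$ under conjugation and the left-invariance of $m$ allow one to rewrite $t(r(\varphi))(g_1,\dots,g_n)$ as $m_\gamma[\varphi(g_1,\dots,g_n)]$ modulo an explicit coboundary built from the section $s$ and the cocycle, which is bounded because $\varphi$ is. Combined with the estimate from the previous paragraph this yields an isometry on cohomology. The main obstacle is the bookkeeping to verify these three identities — that $t$ is a chain map, that $r\circ t$ is the identity at the cochain level, and that $t\circ r$ is the identity on cohomology via an explicit bounded chain homotopy. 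The essential input in every one of these steps is the left-invariance of $m$, which is where amenability of $\ppi$ enters and which cannot be replaced, for instance, by bounded $3$-acyclicity (cf. Remark~\ref{remark=boundedlyacyclic}). Alternatively, the whole argument can be packaged via the Hochschild--Serre spectral sequence in bounded cohomology, whose $E_2$-page collapses onto the $q$-axis because $\HHH^p_b(\ppi;E)=0$ for $p\ge 1$ and every dual Banach $\ppi$-module $E$; the isometry then follows from the fact that the edge homomorphism in this collapsed spectral sequence is realized by the transfer $t$ above.
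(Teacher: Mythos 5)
The paper itself offers no proof of this statement---it is imported from \cite[Theorem 2.47]{Ca} (Gromov's theorem; complete proofs are due to Ivanov, Noskov and Monod)---so your argument has to stand on its own, and it has a genuine gap at the surjectivity step. The correct parts are the easy ones: your $t$ is norm-nonincreasing, it is a chain map (using a \emph{right}-invariant mean with your conventions; harmless, since amenable groups admit bi-invariant means), and $r\circ t=\mathrm{id}$ holds on the subcomplex $C_b^{\bullet}(\bG)^{\hG}$ of strictly $\hG$-invariant cochains. But the target $\HHH_b^n(\bG)^{\hG}$ consists of $\hG$-invariant cohomology \emph{classes}, not of classes admitting $\hG$-invariant cocycle representatives, so what you have shown is only that $r$ hits the image of $H^n\bigl(C_b^{\bullet}(\bG)^{\hG}\bigr)$ in $\HHH_b^n(\bG)^{\hG}$. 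For a general class $[z]\in\HHH_b^n(\bG)^{\hG}$ one only knows that for each $g\in\hG$ there is a bounded primitive $b_g$ with ${}^{g}z-z=\delta b_g$, with no uniform bound on $\|b_g\|_\infty$ over $g$; hence neither averaging ${}^{s(\gamma)}z$ over $\gamma$ nor averaging the primitives $b_{s(\gamma)}$ shows that $r(t(z))$ is cohomologous to $z$, and the same lack of uniformity blocks the isometry claim (the infimum of norms over invariant representatives could a priori exceed $\|[z]\|$). Producing representatives with this kind of uniform control is exactly the hard content of the theorem, which the published proofs handle through relatively injective resolutions rather than naive averaging.

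The injectivity step is also not justified as written: for a bounded cocycle $\varphi$ on $\hG$, the restriction $\varphi|_{\bG^n}$ is \emph{not} conjugation-invariant as a cochain (only its class is), so the invariance you invoke is unavailable, and the ``explicit bounded chain homotopy'' between $t\circ r$ and the identity is asserted rather than constructed. The isometric injectivity does have a short correct proof, but in the homogeneous picture, with no section at all: for $f\in\ell^{\infty}(\hG^{n+1})^{\bG}$ set $(tf)(g_0,\dots,g_n)=m_{g\bG}\bigl[f(g^{-1}g_0,\dots,g^{-1}g_n)\bigr]$, using a $\hG$-invariant mean on $\hG/\bG$; this is a norm-one chain map into $\ell^{\infty}(\hG^{\bullet+1})^{\hG}$ with $t\circ\mathrm{res}=\mathrm{id}$, whence $\mathrm{res}$ is an isometric injection on $\HHH_b^{\bullet}(\hG)$. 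Finally, the proposed spectral-sequence shortcut does not repair the argument: there is no off-the-shelf Hochschild--Serre spectral sequence in bounded cohomology with $E_2^{p,q}=\HHH_b^p(\ppi;\HHH_b^q(\bG))$, since $\HHH_b^q(\bG)$ need not be a dual Banach (or even Hausdorff) $\ppi$-module, identifying the second page is precisely the issue at stake, and isometric statements never follow formally from a collapse. For a complete argument you should follow Ivanov--Noskov or Monod \cite{MR1840942}, or simply cite \cite{Ca} as the paper does.
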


We recall the following estimate of the defect of the homogenization:

\begin{lem}[Lemma 2.58 of \cite{Ca}] \label{lem defect of homogenization}
Let $\delta \colon \QQQ(\hG) \to \HHH^2_b(G)$ be the natural map. Then
\[ \| [\delta f] \| \le  D_G(f)  \le 2 \cdot \| [\delta f] \|.\]
\end{lem}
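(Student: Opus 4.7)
The first inequality $\|[\delta f]\|\le D(f)$ is immediate from the definition of the quotient seminorm on $\HHH^2_b(G)$: since $\delta f$ itself is a bounded $2$-cocycle representing $[\delta f]$, one has $\|[\delta f]\|\le\|\delta f\|_\infty = D(f)$, where the last equality holds because $|\delta f(x,y)| = |f(x)+f(y)-f(xy)|$ and the supremum of this quantity is $D(f)$ by definition.

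For the reverse inequality $D(f)\le 2\|[\delta f]\|$, my plan is to fix $\epsilon>0$ and choose a bounded cocycle $\varphi\in C_b^2(G)$ representing $[\delta f]$ with $\|\varphi\|_\infty \le \|[\delta f]\|+\epsilon$. Since $[\varphi]=[\delta f]$ in $\HHH^2_b(G)$, there exists a bounded cochain $c\in C_b^1(G)$ with $\varphi-\delta f=\delta c$. Setting $g=f+c$, one obtains $\delta g=\varphi$, so $g$ is a (possibly inhomogeneous) quasimorphism on $G$ with defect
\[
D(g)=\|\varphi\|_\infty \le \|[\delta f]\|+\epsilon.
\]

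Next I would homogenize $g$ and compare. Because homogenization is $\RR$-linear on the space of quasimorphisms, and because the homogenization of a bounded cochain vanishes identically (the defining limit $\lim_{n\to\infty} c(x^n)/n$ is zero since $c$ is uniformly bounded), while $f$ is already homogeneous so $\overline{f}=f$, one gets $\overline{g}=\overline{f}+\overline{c}=f$. Applying Lemma \ref{lem:homog}~(3) to the quasimorphism $g$ gives
\[
D(f)=D(\overline{g})\le 2\, D(g) \le 2\bigl(\|[\delta f]\|+\epsilon\bigr).
\]
Letting $\epsilon\to 0$ yields $D(f)\le 2\|[\delta f]\|$.

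There is no serious obstacle in this argument; it is essentially a matter of bookkeeping. The key observation is that $\overline{f+c}=f$ whenever $c$ is bounded and $f$ is homogeneous, which allows one to replace an arbitrary representative $\varphi$ of $[\delta f]$ by the cochain $\delta(f+c)$ of the quasimorphism $f+c$ and then recover $f$ back by homogenization. The factor of $2$ in the conclusion is precisely the factor of $2$ coming from Lemma \ref{lem:homog}~(3).
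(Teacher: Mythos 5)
Your argument is correct, and since the paper only cites this statement (Lemma 2.58 of \cite{Ca}) without reproving it, the natural comparison is with Calegari's proof, which proceeds in essentially the same way: pick a near-optimal bounded representative $\varphi=\delta f+\delta c$ with $c$ bounded, note that $f+c$ is a quasimorphism of defect $\|\varphi\|_\infty$ whose homogenization is $f$, and apply the bound $D(\bar g)\le 2D(g)$. Nothing further is needed.
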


\begin{proof}[Proof of $(2)$ of Theorem $\ref{main thm 3.1}$]
Suppose that $\Gamma = \hG / \bG$ is  amenable and $\QQQ(\bG)^\hG = \HHH^1(\bG)^{\hG} + i^* \QQQ(\hG)$. Let $ f  \in \QQQ(G)$. By Corollary \ref{cor equivalence criterion}, it suffices to show that $ 2 D_N(f |_N) \ge D_G(f) $ for every $f \in \QQQ(G)$. This is deduced from the following inequalities:
\[ 2  D_N( f \relax |_N) \ge 2 \| [\delta  f  |_N] \| \ge 2 \| [\delta  f  ]\| \ge  D_G(f) .\]
Here the third and last inequalities are deduced from Theorem \ref{thm amenable cover} and Lemma \ref{lem defect of homogenization}, respectively.
\end{proof}



\subsection{Proof of (3) of Theorem \ref{main thm 3.1}}

Next, we prove (3) of Theorem \ref{main thm 3.1}.

\begin{lem} \label{lem 5.2}
Let $f \colon \bG \to \RR$ be an extendable homogeneous quasimorphism on $\bG$. Then for each $a,b \in \hG$ satisfying $[a,b] \in \bG$, we have
\[|f([a,b])| \le  D_N(f) .\]
\end{lem}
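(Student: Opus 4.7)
The plan is to use the classical trick that for a homogeneous quasimorphism $\hat{f}$ on $G$, $|\hat{f}([a,b])| \le D(\hat{f})$ (conjugation invariance turns the two commutator factors into a pair that cancels up to one defect term), and to upgrade $D(\hat{f})$ to $D(f)$ via an amortization by means of the commutator $[a,b^n]$.

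More concretely, fix an extension $\hat{f} \in \QQQ(G)$ with $\hat{f}|_N = f$; such a $\hat{f}$ exists by assumption and is $G$-invariant by Lemma~\ref{lem homogeneous}. Using the standard commutator identity
\[
[a, b^n] = [a,b] \cdot b[a,b]b^{-1} \cdot b^2[a,b]b^{-2} \cdots b^{n-1}[a,b]b^{-(n-1)},
\]
which is proved by induction from $[a, bc] = [a,b]\cdot b[a,c]b^{-1}$, I observe that each factor $b^i[a,b]b^{-i}$ lies in $N$ because $[a,b] \in N$ and $N$ is normal in $G$. Hence $[a, b^n] \in N$.

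Next, since $f$ is $G$-invariant, $f(b^i[a,b]b^{-i}) = f([a,b])$ for every $i$. Applying the defect inequality for $f$ on this $n$-fold product in $N$, I obtain
\[
\bigl| f([a,b^n]) - n\,f([a,b]) \bigr| \le (n-1)\,D(f).
\]
On the other hand, writing $[a,b^n] = (ab^na^{-1})\cdot b^{-n}$ and using that $\hat{f}$ is a homogeneous quasimorphism on $G$, conjugation invariance gives $\hat{f}(ab^na^{-1}) = \hat{f}(b^n) = n\hat{f}(b)$ and $\hat{f}(b^{-n}) = -n\hat{f}(b)$, whence
\[
\bigl| \hat{f}([a,b^n]) \bigr| \le D(\hat{f}).
\]
Since $[a,b^n] \in N$, the left side equals $|f([a,b^n])|$.

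Combining the two displays yields
\[
n\,|f([a,b])| \le |f([a,b^n])| + (n-1)D(f) \le D(\hat{f}) + (n-1)D(f),
\]
and dividing by $n$ and letting $n \to \infty$ gives $|f([a,b])| \le D(f)$. The conceptual point (the only mildly nontrivial one) is the choice to expand $[a,b^n]$ rather than $[a,b]^n$: this is precisely what lets the defect of the \emph{extension} $\hat{f}$ disappear in the limit and leaves only the defect of $f$ itself.
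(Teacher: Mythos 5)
Your proof is correct and is essentially the paper's own argument: the paper expands $[a^n,b]$ as a product of $n$ conjugates of $[a,b]$ by powers of $a$ (rather than your $[a,b^n]$ with $b$-conjugates), uses $G$-invariance and the defect of $f$ to get the same $(n-1)D(f)$ estimate, and invokes extendability to bound $|f([a^n,b])|$ uniformly in $n$, concluding by contradiction instead of your explicit division by $n$. The two versions differ only in these cosmetic choices.
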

\begin{proof}
We first prove the following equality:
\begin{eqnarray} \label{eqn 5.1}
[a^n, b] = a^{n-1} [a,b] a^{-(n-1)} \cdot a^{(n-2)} [a,b] a^{-(n-2)} \cdots [a,b].
\end{eqnarray}
Indeed, we have
\begin{eqnarray*}
[a^n, b] & = & a^n b a^{-n} b^{-1} \\
& = & a^{n-1} \cdot aba^{-1}b^{-1} \cdot a^{-(n-1)} \cdot a^{n-1} b a^{-(n-1)} b^{-1} \\
& = & a^{n-1} [a,b] a^{-(n-1)} \cdot [a^{n-1}, b].
\end{eqnarray*}
By induction on $n$, we have proved \eqref{eqn 5.1}. Since $f$ is $G$-invariant, we have
\[f([a^n, b]) \underset{(n-1) D_N(f) }{\approx} f(a^{n-1} [a,b] a^{-(n-1)}) + \cdots + f([a,b])
= n \cdot f([a,b]).\]
Therefore we have
\[|f([a^n,b])| \ge n \cdot \big( |f([a,b])| -  D_N(f)  \big).\]
Suppose that $|f([a,b])| >  D_N(f) $. Then the right of the above inequality can be unbounded with respect to $n$. However, since $f$ is extendable, the left of the above inequality is bounded. This is a contradiction.
\end{proof}

In Corollary \ref{cor extend homomorphism 1}, we provide a condition that a $G$-invariant homomorphism $f \colon N \to \RR$ cannot be extended to $G$ as a quasimorphism. Here we present another condition.

\begin{cor} \label{cor 5.3}
Let $f \colon N \to \RR$ be a $G$-invariant homomorphism and assume that $N$ is generated by single commutators of $G$. If $f$ is non-zero, then $f$ is not extendable.
\end{cor}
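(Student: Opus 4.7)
The plan is to apply Lemma \ref{lem 5.2} directly, exploiting the fact that a homomorphism has zero defect as a quasimorphism.

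Suppose for contradiction that $f$ is extendable. Since $f \colon N \to \RR$ is a homomorphism, its defect as a quasimorphism on $N$ satisfies $D(f) = 0$. Hence, by Lemma \ref{lem 5.2} applied to the extendable homogeneous quasimorphism $f$, we obtain $|f([a,b])| \le D(f) = 0$, that is, $f([a,b]) = 0$ for every pair $a,b \in G$ with $[a,b] \in N$. By the hypothesis that $N$ is generated by such single commutators, and the fact that $f$ is a homomorphism (hence determined by its values on any generating set), we conclude $f \equiv 0$ on $N$, contradicting the assumption that $f$ is non-zero.

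The only subtlety worth double-checking is that Lemma \ref{lem 5.2} indeed yields the stated inequality using the defect of $f$ computed on $N$, which is what makes the $D(f) = 0$ substitution valid when $f$ is a genuine homomorphism. There is no real obstacle here; the statement is essentially an immediate specialization of Lemma \ref{lem 5.2}.
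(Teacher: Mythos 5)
Your proof is correct and is essentially the paper's own argument: the paper likewise applies Lemma \ref{lem 5.2} to the extendable homomorphism $f$, where $D(f)=0$ forces $f$ to vanish on every single commutator of $G$ lying in $N$, hence on all of $N$. The contradiction framing versus the paper's contrapositive is an immaterial difference.
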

\begin{proof}
If $f$ is extendable, then Lemma \ref{lem 5.2} implies that $f(c) = 0$ for every single commutator $c$ of $G$ contained in $N$. Since $N$ is generated by single commutators of $G$, this means that $f = 0$.
\end{proof}

\begin{lem} \label{lem 5.4}
Let $f$ be a homogeneous quasimorphism on $\hG$, and assume that $\Gamma = \hG/\bG$ is solvable.
Then $ D_G(f)  =  D_N(f|_N) $.
\end{lem}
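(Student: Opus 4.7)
The inequality $D(f|_{\bG}) \le D(f)$ is immediate from the definition, so the plan is to prove $D(f) \le D(f|_{\bG})$ by induction on the derived length $\ell$ of $\Gamma = \hG/\bG$. The base case $\ell = 0$ (that is, $\hG = \bG$) is vacuous. For the inductive step, let $p \colon \hG \to \Gamma$ be the quotient and set $H = p^{-1}([\Gamma, \Gamma])$. Then $\bG \triangleleft H \triangleleft \hG$, with $H/\bG$ of derived length $\ell-1$ and $\hG/H$ abelian. The inductive hypothesis applied to the pair $(H,\bG)$ gives $D(f|_H) = D(f|_{\bG})$, so it is enough to establish $D(f) \le D(f|_H)$ under the assumption that $\hG/H$ is abelian. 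This reduces matters to the case in which $\Gamma$ itself is abelian.

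In the abelian case, $[g_1, g_2] \in \bG$ for every $g_1, g_2 \in \hG$. Since $f|_{\bG}$ is extendable (by $f$ itself), Lemma~\ref{lem 5.2} supplies the bound $|f([a,b])| \le D(f|_{\bG})$ for every $a,b \in \hG$. To control $\delta f(g_1, g_2)$, the plan is to introduce the twisting element $c_n = (g_1 g_2)^{-n} g_1^n g_2^n$, which lies in $\bG$ by abelianity of $\Gamma$. Using homogeneity of $f$ and a single application of the quasimorphism inequality to the factorization $g_1^n g_2^n = (g_1 g_2)^n c_n$ yields
\[
|n\, \delta f(g_1, g_2) - f(c_n)| \le 2\, D(f),
\]
so that $\delta f(g_1, g_2) = \lim_{n\to\infty} f(c_n)/n$.

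To analyze $f(c_n)$, I would exploit the identity $(g_1 g_2)^n = g_1^n N_n g_2^n$ where $N_n \in \bG$ is defined recursively by $N_1 = 1$ and $N_{k+1} = g_1^{-1} N_k g_1 \cdot \mu_k$ with $\mu_k = [g_1^{-1}, g_2^k] \in \bG$. Conjugation-invariance of $f$ then gives $f(c_n) = -f(N_n)$, and telescoping via the defect inequality on $\bG$ expresses
\[
f(N_n) = \sum_{k=1}^{n-1} f(\mu_k) + O\bigl((n-1)\, D(f|_{\bG})\bigr).
\]
Applying Lemma~\ref{lem 5.2} to each $\mu_k$ gives $|f(\mu_k)| \le D(f|_{\bG})$, after which one passes to the limit to read off the desired bound on $|\delta f(g_1, g_2)|$.

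The hard part will be extracting the sharp asymptotic constant $D(f|_{\bG})$ rather than the naive $2\, D(f|_{\bG})$ from this decomposition: combining $|f(N_n) - \sum f(\mu_k)| \le (n-1) D(f|_{\bG})$ with $|\sum f(\mu_k)| \le (n-1) D(f|_{\bG})$ only delivers $|f(N_n)| \le 2(n-1) D(f|_{\bG})$, which reproduces the weaker inequality $D(f) \le 2\, D(f|_{\bG})$ already available from Theorem~\ref{thm amenable cover}. The removal of this factor of two will require exploiting that $n \mapsto f(c_n)$ grows at most \emph{linearly}, forcing systematic cancellation in the partial sums $\sum_{k \le n} f(\mu_k)$ (whose crude bound is at most $(n-1) D(f|_{\bG})$ in modulus but whose actual size is $O(1)$ after the quadratic leading term $Bn(n-1)/2$, with $B = f([g_1^{-1}, g_2])$, is absorbed by the telescoping errors). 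Making this cancellation precise—essentially by iterating Lemma~\ref{lem 5.2} on the nested commutators $[g_1^{-1}, g_2^k]$ with careful sign tracking—is the heart of the proof.
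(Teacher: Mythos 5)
Your reduction to the abelian case is fine and is essentially what the paper does (the paper runs through the derived series $\hG^{(1)} \supset \hG^{(2)} \supset \cdots \supset \hG^{(n)} \subset \bG$ rather than an explicit induction, but the content is the same). The genuine gap is in the abelian case itself: you never actually prove $D(f) \le D(f|_{\bG})$ there. Your telescoping of $N_n$ via the defect inequality on $\bG$, together with the uniform bound $|f(\mu_k)| \le D(f|_{\bG})$ from Lemma \ref{lem 5.2}, inescapably yields only $|f(N_n)| \le 2(n-1)D(f|_{\bG})$ and hence $D(f) \le 2D(f|_{\bG})$, which, as you note, is no better than what Theorem \ref{thm amenable cover} already gives. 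The claimed ``systematic cancellation'' that would remove the factor of $2$ is asserted, not proved, and the heuristic you offer for it is confused: since Lemma \ref{lem 5.2} bounds each $f([g_1^{-1},g_2^k])$ uniformly (not linearly in $k$), there is no quadratic term $Bn(n-1)/2$ to absorb, and the obstruction is precisely the additive $(n-1)D(f|_{\bG})$ error accumulated by chaining the quasimorphism inequality $n-1$ times. Eliminating that error is exactly the content of Bavard's sharp identity, and it does not follow from the linear growth of $f(c_n)$ alone; your outline would in effect have to reprove that identity, which it does not do.

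The paper avoids all of this by quoting the known equality $D(f) = \sup_{a,b \in \hG} |f([a,b])|$ for homogeneous quasimorphisms (Lemma 2.24 of \cite{Ca}, due to Bavard). When $\Gamma$ is abelian every commutator $[a,b]$ lies in $\bG$, so Lemma \ref{lem 5.2} applied to $f|_{\bG}$ (which is extendable, via $f$ itself) gives $|f([a,b])| \le D(f|_{\bG})$ for all $a,b \in \hG$, whence $D(f) \le D(f|_{\bG}) \le D(f)$ in one line; the solvable case then follows by your reduction. If you cited that identity at the point where you currently invoke the unproved cancellation, your argument would close and would coincide with the paper's.
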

\begin{proof}
  We first assume that $\Gamma$ is abelian. It is known that the equality $ D_G(f)  = \sup_{a,b \in \hG} |f([a,b])|$ holds (see Lemma 2.24 of \cite{Ca}).
Applying Lemma \ref{lem 5.2} to $f|_N$, we have
\[ D_G(f)  = \sup_{a,b \in \hG} |f([a,b])| \le  D_N(f|_N)  \le  D_G(f) ,\]
 and in particular, $ D_G(f) = D_N(f|_N) $.

Next we consider the general case.
Let $\hG^{(n)}$ denote the $n$-th derived subgroup of $\hG$.
Then there  exists a positive integer $n$  such that  $\hG^{(n)} \subset \bG$ since $\Gamma$ is solvable. By the previous paragraph, we have
\[  D_G(f) = D_{G^{(1)}}(f|_{G^{(1)}}) = \cdots = D_{G^{(n)}}(f|_{G^{(n)}}) \le D_N(f|_N) \le D_G(f).  \qedhere \]
\end{proof}


\begin{proof}[Proof of $(3)$ of Theorem $\ref{main thm 3.1}$]
 Combine Lemma \ref{lem 5.4} and Corollary \ref{cor equivalence criterion}.
%
\end{proof}

Here we provide some  applications of (3) of Theorem \ref{main thm 3.1}.

\begin{cor}
If one of the following conditions holds, then $\scl_\hG = \scl_{\hG,\bG}$ on $[\hG,\bG]$. Here, $\Gamma=\hG/\bG$.
\begin{enumerate}
\item[$(1)$] $\Gamma$ is  a  finite solvable group.

\item[$(2)$] $\Gamma$ is  a  finitely generated abelian group whose rank is at most $1$.
\end{enumerate}
\end{cor}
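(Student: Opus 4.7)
The plan is to verify that in both cases the hypotheses of (3) of Theorem~\ref{main thm 3.1} are satisfied, namely that $\Gamma$ is solvable and that $\QQQ(\bG)^{\hG} = \HHH^1(\bG)^{\hG} + i^{\ast}\QQQ(\hG)$. Solvability is immediate in both cases: finite solvable groups are solvable by definition, and finitely generated abelian groups are abelian hence solvable. So the crux is to establish the equality of spaces; I plan to do this by showing that the projection $p\colon \hG \to \Gamma$ virtually splits and then invoking Proposition~\ref{virtual split KKMM1}.

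For case (1), the trivial subgroup $\{1_{\Gamma}\}\subset \Gamma$ has finite index and admits the trivial section $s(1_{\Gamma})=1_{\hG}$; hence $p$ virtually splits. For case (2), I will write $\Gamma \cong \ZZ^{r}\oplus T$ with $r\in\{0,1\}$ and $T$ a finite abelian group. If $r=0$, the argument of case (1) applies directly. If $r=1$, the torsion-free summand $\ZZ \subset \Gamma$ has finite index; choosing any lift $\hg \in \hG$ of a generator of this $\ZZ$, the assignment $s(n)=\hg^{n}$ defines a homomorphism $\ZZ \to \hG$ with $p\circ s=\mathrm{id}_{\ZZ}$, so $p$ virtually splits in this case as well.

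Applying Proposition~\ref{virtual split KKMM1} in both situations yields $i^{\ast}\QQQ(\hG)=\QQQ(\bG)^{\hG}$, which is certainly contained in $\HHH^1(\bG)^{\hG}+i^{\ast}\QQQ(\hG)$. Combined with solvability of $\Gamma$, part (3) of Theorem~\ref{main thm 3.1} then produces the desired equality $\scl_{\hG}(x)=\scl_{\hG,\bG}(x)$ for every $x\in[\hG,\bG]$.

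There is no genuine obstacle to carry out, since all the analytic content is already packaged in part (3) of Theorem~\ref{main thm 3.1} and in the virtual splitting criterion of Proposition~\ref{virtual split KKMM1}. The only point worth emphasizing is why the rank bound $\leq 1$ in case (2) is the right hypothesis for this approach: if $\Gamma$ contained a copy of $\ZZ^{2}$ as a finite-index subgroup, arbitrary lifts of two independent generators to $\hG$ would typically fail to commute, obstructing the construction of a homomorphic section by this direct method.
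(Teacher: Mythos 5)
Your proof is correct and follows the same route as the paper, which deduces the corollary directly from Proposition~\ref{virtual split KKMM1} (virtual splitting gives $\QQQ(\bG)^{\hG}=i^{\ast}\QQQ(\hG)$) together with (3) of Theorem~\ref{main thm 3.1}; your write-up merely makes explicit the virtual sections (the trivial finite-index subgroup in the finite case, and a lift of a generator of the finite-index copy of $\ZZ$ in the rank-one case), which is exactly the intended argument.
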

\begin{proof}
This clearly follows from Proposition \ref{virtual split KKMM1} and (3) of Theorem \ref{main thm 3.1}.
\end{proof}

In Subsection \ref{subsec:equivalence}, we propose several problems on the coincidence and equivalence of $\scl_{\hG}$ and $\scl_{\hG, \bG}$.

\subsection{Examples with non-equivalent $\scl_G$ and $\scl_{G,N}$}\label{subsec:non-ex}
 At the end of this section, we provide some examples of group pairs $(G,N)$ for which $\scl_{G}$ and $\scl_{G,N}$ \emph{fail} to be bi-Lipschitzly equivalent on $[G,N]$.

\begin{example}\label{ex:KK}
Let $\genus$ be an integer at least $2$, and $\Omega$ be an area form of $\Sigma_{\genus}$. In this case, the flux group $\Gamma_\Omega$ is known to be trivial; thus we have the volume flux homomorphism $\flux_{\Omega}\colon \diff_0(\Sigma,\Omega)\to \HHH^1(\Sigma_{\genus})$. In \cite{KK}, the authors proved that for the pair
\[
(G,N)=(\diff_0(\Sigma_{\genus},\Omega), \Ker(\flux_{\Omega})),
\]
$\scl_{G}$ and $\scl_{G,N}$ are \emph{not} bi-Lipschitzly equivalent on $[G,N]$. More precisely, we  found  an element $\gamma$ in $[G,N]$ such that
\[
\scl_{G}(\gamma)=0 \quad \textrm{but}\quad \scl_{G,N}(\gamma)>0.
\]
\end{example}

\begin{example}\label{ex:KKMM}
We can provide the following example, which is related to Example~\ref{ex:KK} with smaller $G$, from results in \cite{KKMM2}. We stick to the setting of Example~\ref{ex:KK}. Take an arbitrary pair $(v,w)$ with $v,w \in \HHH^1(\Sigma_{\genus})$ that satisfies
\begin{eqnarray}\label{eq:cup}
v\smile w \ne 0.
\end{eqnarray}
Here, recall from Theorem~\ref{thm flux} that $\smile \colon \HHH^1(\Sigma_{\genus}) \times \HHH^1(\Sigma_{\genus}) \to \HHH^2(\Sigma_{\genus}) \cong \RR$ denotes the cup product. Then, from results in \cite{KKMM2} we can deduce the following: there exists a positive integer $k_0$ depending only on $w$ and the area  of $\Sigma_{\genus}$ such that for every $k\geq k_0$, if we set
\[
\Lambda_k=\langle v,w/k\rangle,
\]
 namely the subgroup of $\HHH^1(\Sigma_{\genus})$ generated by $v$ and $w/k$,  and
\[
(G,N)=(\flux_{\Omega}^{-1}(\Lambda_k),\Ker(\flux_{\Omega})),
\]
then $\scl_{G}$ and $\scl_{G,N}$ are \emph{not} bi-Lipschitzly equivalent on $[G,N]$. To see this, by following \cite[Section~4]{KKMM2}, we construct a sequence $(\gamma_m)_{m\in \NN}$ in $[G,N]$. Then, Proposition~4.6 in \cite{KKMM2}, together with Bavard's duality theorem, implies that
\[
\sup_{m\in \NN} \scl_{G}(\gamma_m)\leq \frac{3}{2}.
\]
Contrastingly, (3) of Proposition~4.7 in \cite{KKMM2}, together with Theorem~\ref{thm Bavard}, implies that
\[
\liminf_{m\to \infty} \frac{\scl_{G,N}(\gamma_m)}{m}\geq \frac{1}{2 k\cdot  D_N( f_P) }|\mathfrak{b}_I(v,w)|>0.
\]
Here $\mathfrak{b}_I(v,w) = \langle v \smile w, [\Sigma_l] \rangle_{\Sigma_l} \in \RR$ is the intersection number of $v$ and $w$, where $[\Sigma_l]$ is the fundamental class of $\Sigma_l$ and $\langle - , - \rangle_{\Sigma_l} \colon \HHH^2(\Sigma_{\genus}) \times \HHH_2(\Sigma_{\genus}) \to \RR$ denotes the Kronecker pairing of $\Sigma_l$.  The map $f_P\colon N\to \RR$ is Py's Calabi quasimorphism (recall Section~\ref{flux section}; see also \cite[Subsections~2.4 and 2.5]{KKMM2}). We also note that $v,w$ and $f_P$ here correspond to $\bar{v},\bar{w}$ and $\mu_P$ in \cite{KKMM2}, respectively.
\end{example}

\section{$\Aut(F_n)$ and $\IA_n$}\label{section=proofAut}
\subsection{Proof of Theorem \ref{thm 3.2}}

An {\it IA-automorphism} of a group $G$ is an automorphism $f$ on $G$ which acts as identity on the abelianization $\HHH_1(G; \ZZ)$ of $G$. We write $\IA_n$ to indicate the group of IA-automorphisms on $F_n$. Then we have exact sequences
\[1 \to \IA_n \to \Aut(F_n) \to \GL(n, \ZZ) \to 1,\]
\[1 \to \IA_n \to \Aut_+(F_n) \to \SL(n, \ZZ) \to 1.\]


Theorem \ref{thm 3.2} (1) claims that the equalities $\QQQ(\IA_n)^{\Aut(F_n)} = i^* \QQQ(\Aut(F_n))$ and $\QQQ(\IA_n)^{\Aut_+(F_n)} = i^* \QQQ(\Aut_+(F_n))$ hold.
To show it, we use the following facts, which can be derived from the computation of the second integral homology $\HHH_2(\SL(n,\ZZ),\ZZ)$.
\begin{thm}[See \cite{Milnor}] \label{thm 7.10 new}
For $n \ge 3$, $\HHH^2(\SL(n,\ZZ)) = 0$ and $\HHH^2(\GL(n, \ZZ)) = 0$.
\end{thm}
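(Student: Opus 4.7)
The plan is to derive both vanishings from the known computation of the second integral homology of $\SL(n,\ZZ)$, transporting it to real cohomology via universal coefficients, and then bootstrapping from $\SL(n,\ZZ)$ to $\GL(n,\ZZ)$ via the determinant quotient.

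First, for $n \geq 3$ the group $\SL(n,\ZZ)$ is perfect, so $\HHH_1(\SL(n,\ZZ);\ZZ) = 0$. By Milnor's computation of $K_2(\ZZ)$ together with the stability/surjection $K_2(\ZZ) \twoheadrightarrow \HHH_2(\SL(n,\ZZ);\ZZ)$ (indeed this is an isomorphism for $n\geq 3$ by the results recorded in Milnor's book on algebraic $K$-theory), one has
\[
\HHH_2(\SL(n,\ZZ);\ZZ) \cong \ZZ/2\ZZ.
\]
The universal coefficient theorem gives
\[
\HHH^2(\SL(n,\ZZ);\RR) \cong \Hom_{\ZZ}(\HHH_2(\SL(n,\ZZ);\ZZ),\RR) \oplus \mathrm{Ext}^1_{\ZZ}(\HHH_1(\SL(n,\ZZ);\ZZ),\RR).
\]
Since $\RR$ is torsion-free and divisible, both summands vanish; the first because $\Hom_{\ZZ}(\ZZ/2\ZZ,\RR) = 0$, and the second because $\HHH_1(\SL(n,\ZZ);\ZZ) = 0$. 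This establishes $\HHH^2(\SL(n,\ZZ);\RR) = 0$.

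For the $\GL(n,\ZZ)$ statement, I would apply the Lyndon--Hochschild--Serre spectral sequence to the extension
\[
1 \to \SL(n,\ZZ) \to \GL(n,\ZZ) \xrightarrow{\det} \ZZ/2\ZZ \to 1,
\]
with real coefficients. Because $\ZZ/2\ZZ$ is finite and $\RR$ is a $\QQ$-vector space, we have $\HHH^p(\ZZ/2\ZZ;W) = 0$ for every $p\geq 1$ and every real $\ZZ/2\ZZ$-module $W$ (the transfer argument, or averaging). Hence the $E_2^{p,q}$-terms with $p+q = 2$ reduce to $E_2^{0,2} = \HHH^2(\SL(n,\ZZ);\RR)^{\ZZ/2\ZZ} = 0$ by the previous paragraph, while $E_2^{2,0}$ and $E_2^{1,1}$ vanish by the observation just made (note also $\HHH^1(\SL(n,\ZZ);\RR) = 0$ by perfectness). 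Thus $\HHH^2(\GL(n,\ZZ);\RR) = 0$, completing the proof.

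The only genuinely nontrivial input is the identification $\HHH_2(\SL(n,\ZZ);\ZZ) \cong \ZZ/2\ZZ$ for $n\geq 3$, which is a classical result and is what the citation to Milnor is meant to supply; everything after that is a formal application of universal coefficients and the collapse of the Hochschild--Serre spectral sequence along the finite-quotient direction.
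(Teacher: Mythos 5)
Your argument is essentially the route the paper intends: Theorem \ref{thm 7.10 new} is given only with a citation to Milnor and the remark that it can be derived from the computation of the second integral homology of $\SL(n,\ZZ)$, which is exactly what you do (universal coefficients, then passage from $\SL(n,\ZZ)$ to $\GL(n,\ZZ)$ along the index-two determinant quotient), so the proof is fine in substance. One correction to the input you quote: it is not true that $\HHH_2(\SL(n,\ZZ);\ZZ)\cong\ZZ/2\ZZ$ for \emph{every} $n\ge 3$ --- by van der Kallen one has $\HHH_2(\SL(4,\ZZ);\ZZ)\cong(\ZZ/2\ZZ)^2$, and the stabilization map runs $\HHH_2(\SL(n,\ZZ);\ZZ)\to K_2(\ZZ)$ rather than the other way --- but your argument only uses that $\HHH_2(\SL(n,\ZZ);\ZZ)$ is finite and that $\SL(n,\ZZ)$ is perfect, both of which hold for all $n\ge 3$, so the conclusion $\HHH^2(\SL(n,\ZZ))=0$ is unaffected. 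For the $\GL(n,\ZZ)$ step, the Lyndon--Hochschild--Serre argument works, though one can shortcut it: since $\SL(n,\ZZ)$ has index two and the coefficients are real, the transfer makes the restriction $\HHH^2(\GL(n,\ZZ))\to\HHH^2(\SL(n,\ZZ))$ injective, which is the same device the paper uses in the proof of (1) of Theorem \ref{thm 7.9 new}.
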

It is known that the following holds, which is obtained from \cite[Corollary 1.4]{Monod2} and \cite[Theorem 1.2]{Monod1}.
\begin{thm}\label{thm:3bdd_coh_vanish_SL}
Let $n$ be an integer at least $3$ and $\ppi_0$ a subgroup of finite index of $\SL(n, \ZZ)$.
Then $\HHH^3_b(\ppi_0) = 0$.
\end{thm}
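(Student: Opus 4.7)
The plan is to reduce the vanishing of $\HHH^3_b(\ppi_0)$ to two Monod-type results on the (continuous) bounded cohomology of higher-rank simple Lie groups and their lattices. First I would note that since $\ppi_0$ has finite index in $\SL(n,\ZZ)$, it is itself a lattice in the simple Lie group $\SL(n,\RR)$, which has real rank $n-1 \geq 2$; in particular, $\ppi_0$ is an irreducible lattice in a higher-rank simple Lie group (and $\SL(n,\ZZ)$ is an $L^2$-integrable lattice, a property inherited by finite-index subgroups).

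Next, I would apply Monod's comparison result, cited as \cite[Corollary 1.4]{Monod2}, which under appropriate integrability hypotheses establishes an injection (in fact, an isomorphism in the relevant range)
\[
\HHH^{\bullet}_b(\ppi_0;\RR) \hookrightarrow \HHH^{\bullet}_{cb}(\SL(n,\RR);\RR).
\]
Applying this in degree three reduces the claim to the vanishing of the continuous bounded cohomology $\HHH^3_{cb}(\SL(n,\RR);\RR)$ of the ambient group.

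Finally, I would invoke \cite[Theorem 1.2]{Monod1}, which yields the vanishing
\[
\HHH^3_{cb}(\SL(n,\RR);\RR) = 0
\]
for all $n \geq 3$ (this is obtained by studying representations on boundary-type coefficient modules, via the Furstenberg boundary of $\SL(n,\RR)$ and stability arguments for $\SL_n$). Combining the comparison isomorphism with this vanishing gives $\HHH^3_b(\ppi_0) = 0$.

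Since both citations are taken as black boxes, the argument is essentially a bookkeeping assembly. The only substantive point to verify is that the integrability/irreducibility hypotheses required by \cite[Corollary 1.4]{Monod2} apply to $\ppi_0$; this is the main obstacle, but it is standard since the relevant integrability is preserved under passage to finite-index subgroups and $\SL(n,\ZZ)$ is known to satisfy it.
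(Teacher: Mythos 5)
Your proposal is correct and takes essentially the same route as the paper, which proves this statement by exactly the same assembly: the lattice comparison result (Corollary~1.4 of Monod's paper on semi-simple and $S$-arithmetic groups) combined with the degree-three vanishing of the continuous bounded cohomology of $\SL(n,\RR)$ (Theorem~1.2 of Monod's stabilization paper). The paper offers no detail beyond these two citations, so your bookkeeping, including the check that the hypotheses pass to finite-index subgroups of $\SL(n,\ZZ)$, matches its intended argument.
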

\begin{remark} \label{rem 7.7 new}
In \cite{Monod2}, Monod used $\HHH^\bullet_b$ to mean the continuous bounded cohomology $\HHH^\bullet_{cb}$.
\end{remark}

The following theorem is known, which is a special case of \cite[Proposition 8.6.2]{MR1840942}.
\begin{thm}\label{thm:transfer}
  Let $\bG$ be a subgroup of finite index in $\hG$ and $V$ a Banach $\hG$-module, then the restriction $\HHH_b^n(\hG;V) \to \HHH_b^n(\bG;V)$ is injective for every $n \geq 0$.
\end{thm}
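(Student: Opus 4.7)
The plan is to prove injectivity of $i^\ast \colon \HHH_b^n(\hG;V) \to \HHH_b^n(\bG;V)$ by constructing a transfer map $\mathrm{tr} \colon \HHH_b^n(\bG;V) \to \HHH_b^n(\hG;V)$ with the property that $\mathrm{tr} \circ i^\ast = [\hG:\bG] \cdot \mathrm{id}$ on $\HHH_b^n(\hG;V)$. Since $\HHH_b^n(\hG;V)$ is a real vector space (as $V$ is a Banach space over $\RR$) and $[\hG:\bG]$ is a nonzero integer, multiplication by $[\hG:\bG]$ is an isomorphism on $\HHH_b^n(\hG;V)$, which immediately forces $i^\ast$ to be injective.

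To construct the transfer, I would pass to the homogeneous bar resolution: $\HHH_b^n(\hG;V)$ is the cohomology of the complex of bounded $\hG$-equivariant maps $f \colon \hG^{n+1} \to V$, while $\HHH_b^n(\bG;V)$ is the cohomology of the same complex with $\hG$-equivariance weakened to $\bG$-equivariance (for the diagonal action restricted from $\hG$ to $\bG$). Fix a set $T \subset \hG$ of right coset representatives for $\bG \backslash \hG$, so $|T| = [\hG:\bG]$ is finite, and for each bounded $\bG$-equivariant $\varphi \colon \hG^{n+1} \to V$ define
\[
(\mathrm{tr}\,\varphi)(g_0, \dots, g_n) = \sum_{t \in T} t^{-1} \cdot \varphi(t g_0, \dots, t g_n).
\]
Finiteness of $T$ yields $\|\mathrm{tr}\,\varphi\|_{\infty} \leq [\hG:\bG] \cdot \|\varphi\|_{\infty}$, so $\mathrm{tr}$ preserves boundedness, and commuting with the cobar differential is a direct check.

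The key computation is $\hG$-equivariance of $\mathrm{tr}\,\varphi$. For $g \in \hG$ and $t \in T$, write uniquely $tg = n_{t,g} \cdot t'$ with $n_{t,g} \in \bG$ and $t' \in T$; the assignment $t \mapsto t'$ is a bijection of $T$ induced by right multiplication by $g$ on $\bG \backslash \hG$. Using $\bG$-equivariance of $\varphi$ and the identity $t^{-1} n_{t,g} = g (t')^{-1}$, one obtains $(\mathrm{tr}\,\varphi)(g g_0, \dots, g g_n) = g \cdot (\mathrm{tr}\,\varphi)(g_0, \dots, g_n)$, as needed. Finally, for $f$ already $\hG$-equivariant, $t^{-1} \cdot f(t g_0, \dots, t g_n) = f(g_0, \dots, g_n)$ for every $t \in T$, so $\mathrm{tr}(i^\ast f) = [\hG:\bG] \cdot f$, closing the argument. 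The main obstacle is essentially bookkeeping around cosets; there is no real analytic content, and the proof is mechanical once the formula for $\mathrm{tr}$ is written down. Alternatively, one may invoke the description of bounded cohomology via relatively injective resolutions, which is the framework used in the cited reference \cite[Proposition~8.6.2]{MR1840942}, where the transfer is built at the level of resolutions.
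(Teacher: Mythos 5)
Your argument is correct, and it is the standard transfer argument; note that the paper itself gives no proof of Theorem~\ref{thm:transfer} but simply quotes it as a special case of \cite[Proposition~8.6.2]{MR1840942}, where the transfer is built at the level of relatively injective resolutions, so your explicit formula on the homogeneous complex is the concrete cochain-level instantiation of that construction for a finite-index subgroup. Two points you pass over quickly are worth a sentence each, though neither is a gap: (i) the identification of $\HHH_b^n(\bG;V)$ with the cohomology of bounded \emph{$\bG$-equivariant} cochains on $\hG^{n+1}$ rests on the standard fact that the homogeneous $\hG$-resolution $\ell^\infty(\hG^{\bullet+1};V)$, restricted to $\bG$, is still a strong resolution by relatively injective $\bG$-modules (since $\hG$ is a free $\bG$-set); citing this makes the reduction airtight; (ii) the estimate $\|\mathrm{tr}\,\varphi\|_\infty \le [\hG:\bG]\cdot\|\varphi\|_\infty$ uses that the norm on $V$ is $\hG$-invariant, which is exactly the paper's convention for Banach $\hG$-modules, and in any case only boundedness, not the precise constant, is needed. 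Granting these, your verification of $\hG$-equivariance via $t^{-1}n_{t,g}=g(t')^{-1}$ together with the bijection $t\mapsto t'$ of $T$, the commutation with the homogeneous differential, and the identity $\mathrm{tr}\circ i^\ast=[\hG:\bG]\cdot\mathrm{id}$ are all correct, and injectivity follows because $[\hG:\bG]$ is invertible on a real vector space.
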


Now we proceed to the proof of (1) of Theorem \ref{thm 3.2}.
First, we show the following lemma.
\begin{lem}\label{lem:bdd_coh_vanish_GL}
  Let $n$ be an integer at least $3$, and $\ppi_0$ a subgroup of finite index of $\GL(n, \ZZ)$.
   Then $\HHH^3_b(\ppi_0) = 0$.
\end{lem}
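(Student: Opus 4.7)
The plan is to combine the two theorems from the excerpt, Theorem \ref{thm:3bdd_coh_vanish_SL} and Theorem \ref{thm:transfer}, via an elementary finite-index intersection argument. Since $\SL(n,\ZZ)$ has index $2$ in $\GL(n,\ZZ)$, for any finite-index subgroup $\ppi_0$ of $\GL(n,\ZZ)$ the intersection $\ppi_1 := \ppi_0 \cap \SL(n,\ZZ)$ is a finite-index subgroup of both $\ppi_0$ and $\SL(n,\ZZ)$.

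First I would apply Theorem \ref{thm:3bdd_coh_vanish_SL} to the subgroup $\ppi_1$ of $\SL(n,\ZZ)$ to conclude $\HHH^3_b(\ppi_1) = 0$. Next, since $\ppi_1$ has finite index in $\ppi_0$, Theorem \ref{thm:transfer} (applied with $V = \RR$ the trivial coefficient module and $n=3$) gives that the restriction map
\[
\HHH^3_b(\ppi_0) \longrightarrow \HHH^3_b(\ppi_1)
\]
is injective. Combining these two facts immediately yields $\HHH^3_b(\ppi_0) = 0$, as desired.

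There is no real obstacle here; the argument is a two-line diagram chase once the intersection $\ppi_0 \cap \SL(n,\ZZ)$ is identified. The only minor point to mention is that $[\GL(n,\ZZ) : \SL(n,\ZZ)] = 2$ ensures that taking the intersection preserves finite index on both sides, which is what lets us invoke the two input theorems simultaneously.
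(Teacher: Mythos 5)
Your argument is correct and is essentially the paper's own proof: intersect $\ppi_0$ with $\SL(n,\ZZ)$, apply Theorem \ref{thm:3bdd_coh_vanish_SL} to the intersection, and then use the injectivity of restriction in bounded cohomology for finite-index subgroups (Theorem \ref{thm:transfer}) to conclude $\HHH^3_b(\ppi_0)=0$. Nothing further is needed.
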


\begin{proof}
Since the intersection $\ppi_0 \cap \SL(n,\ZZ)$ is a subgroup of finite index of $\SL(n, \ZZ)$, we have $\HHH_{b}^3(\ppi_0 \cap \SL(n, \ZZ)) = 0$ by Theorem \ref{thm:3bdd_coh_vanish_SL}.
Since $\ppi_0 \cap \SL(n,\ZZ)$ is a subgroup of finite index of $\ppi_0$, we obtain $\HHH^3_b(\ppi_0) = 0$ by Theorem \ref{thm:transfer}.
\end{proof}

\begin{proof}[Proof of $(1)$ of Theorem $\ref{thm 3.2}$]
  Suppose that $n = 2$.
  Then $\GL(n,\ZZ)$ and $\SL(n,\ZZ)$ have a subgroup of finite index which is isomorphic to a free group.
  Therefore this case is proved by Proposition \ref{virtual split KKMM1}.
In what follows, we treat the case where $n$ is greater than $2$.
  Let $\ppi$ be either $\GL(n, \ZZ)$ or $\SL(n,\ZZ)$.
  By Theorem \ref{thm 7.10 new}, Lemma \ref{lem:bdd_coh_vanish_GL}, and the cohomology long exact sequence, we have $\HHH_{/b}^2(\ppi) = 0$.
  Hence Theorem \ref{main thm} implies that $\QQQ(\IA_n)^{\Aut(F_n)}/ i^* \QQQ(\Aut(F_n)) = 0$ and $\QQQ(\IA_n)^{\Aut_+(F_n)}/ i^* \QQQ(\Aut_+(F_n)) = 0$.
\end{proof}

Next, we prove (2) of Theorem \ref{thm 3.2}.
In the proof, we use the following theorem, due to Borel \cite{Borel74}, \cite{Borel80}, \cite{Borel81} and Hain \cite{Hain} (and Tshishiku \cite{Tshishiku}).


\begin{thm} \label{thm 7.9 new}
The following hold:
\begin{enumerate}[$(1)$]
\item  For every $n \ge 6$ and for every subgroup $\ppi_0$ of finite index of $\GL(n,\ZZ)$, $\HHH^2(\ppi_0) = 0$.


\item For every $l \ge 3$ and for every subgroup $\ppi_0$ of finite index of $\Sp(2l,\ZZ)$, the inclusion map $\ppi_0 \hookrightarrow \Sp(2l,\ZZ)$ induces an isomorphism of cohomology $\HHH^2(\Sp(2l, \ZZ)) \cong \HHH^2(\ppi_0)$.
In particular, the cohomology $\HHH^2(\ppi_0)$ is isomorphic to $\RR$.
\end{enumerate}
\end{thm}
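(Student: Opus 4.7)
The plan is to derive both statements from Borel's theorem on the stable real cohomology of arithmetic subgroups of semisimple Lie groups, together with an explicit description of $K$-invariant alternating forms on $\mathfrak{g}/\mathfrak{k}$ in the two relevant cases. Recall that Borel produced, for any finite-index subgroup $\ppi_0$ of an arithmetic group $G_{\ZZ}$ sitting inside a semisimple Lie group $G_{\RR}$ with maximal compact subgroup $K$ and Lie algebra $\mathfrak{g}$, a natural map
\[
j_{\ppi_0}^{\ast} \colon \bigl( \wedge^{\ast} (\mathfrak{g}/\mathfrak{k})^{\ast} \bigr)^K \longrightarrow \HHH^{\ast}(\ppi_0; \RR),
\]
obtained by integration of $G_{\RR}$-invariant forms over the locally symmetric space $\ppi_0 \backslash G_{\RR}/K$, which is compatible with restriction between nested finite-index subgroups and is an isomorphism in the Borel stable range.

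For (1), I would first reduce from $\GL(n, \ZZ)$ to $\SL(n,\ZZ)$. Given a finite-index subgroup $\ppi_0$ of $\GL(n, \ZZ)$, the subgroup $\Lambda = \ppi_0 \cap \SL(n, \ZZ)$ is of finite index in both $\ppi_0$ and $\SL(n, \ZZ)$, with $|\ppi_0/\Lambda| \leq 2$. Since the order of the quotient is invertible in $\RR$, the transfer argument shows that the restriction $\HHH^2(\ppi_0; \RR) \to \HHH^2(\Lambda; \RR)$ is injective, so it suffices to prove the vanishing of $\HHH^2(\Lambda; \RR)$ for every finite-index subgroup $\Lambda$ of $\SL(n, \ZZ)$ when $n \geq 6$. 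By Borel's theorem for $G_{\RR} = \SL_n(\RR)$ with maximal compact $K = SO(n)$, this cohomology group is isomorphic to $(\wedge^2 (\mathfrak{sl}_n(\RR)/\mathfrak{so}(n))^{\ast})^{SO(n)}$ in the stable range, and a straightforward Lie-algebraic computation shows that this space is zero.

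For (2), the same strategy applied to $G_{\RR} = \Sp_{2l}(\RR)$ with maximal compact $K = U(l)$ identifies $\HHH^2(\ppi_0; \RR)$ with the space of $U(l)$-invariant alternating $2$-forms on $\mathfrak{sp}_{2l}(\RR)/\mathfrak{u}(l)$ for every finite-index subgroup $\ppi_0$ of $\Sp(2l, \ZZ)$ when $l \geq 3$; this space is one-dimensional, spanned by the symplectic K\"ahler form on the Siegel upper half-space. The naturality of $j_{\ppi_0}^{\ast}$ in the inclusion $\ppi_0 \hookrightarrow \Sp(2l, \ZZ)$ then forces the induced restriction to be an isomorphism, which proves the second statement.

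The main obstacle is establishing the effective stability range $n \geq 6$ in (1) and $l \geq 3$ in (2) in degree two. The original stable range in \cite{Borel74} is non-sharp and gives a weaker bound; the precise bounds needed here are obtained via the refinements in \cite{Borel80} and \cite{Borel81} and in the subsequent work of Hain \cite{Hain} and Tshishiku \cite{Tshishiku}. Since this theorem plays only the role of an effective stability input in the main applications (notably Theorem~\ref{thm 3.2}), we shall invoke these bounds as a black box rather than reproduce the underlying spectral-sequence arguments.
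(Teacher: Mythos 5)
Your proposal is correct and follows essentially the same route as the paper: reduce (1) from $\GL(n,\ZZ)$ to $\SL(n,\ZZ)$ by a transfer argument, then invoke Borel's stability theorem identifying $\HHH^2$ of a finite-index subgroup with the (vanishing) relative Lie algebra cohomology $\HHH^2(\mathfrak{g},\mathfrak{k})$ in the stable range, and for (2) appeal to the Borel--Hain--Tshishiku stability result for $\Sp(2l,\ZZ)$. The only difference is one of detail: where you take the effective ranges $n\ge 6$ and $l\ge 3$ as a black box, the paper explicitly computes Borel's constant $c(\SL_n)=\lfloor\frac{n-2}{2}\rfloor$ and the real rank $n-1$ to justify $n\ge 6$, and cites Hain (Theorem~3.2) and Tshishiku directly for the symplectic case.
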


 For the convenience of the reader, we describe the deduction of Theorem \ref{thm 7.9 new} from the work of Borel, Hain and Tshishiku.
\begin{proof}
 First, we discuss (2). It is stated in Theorem 3.2 of \cite{Hain}; see \cite{Tshishiku} for the complete proof.

 Next, we treat (1). Let $\Lambda=\ppi_0\cap \SL(n,\ZZ)$. Then $\Lambda$ is a subgroup of finite index of $\SL(n,\ZZ).$ An argument using the transfer (similar to one in the proof of Theorem~\ref{thm:transfer}) shows that the restriction $\HHH^2(\ppi_0)\to \HHH^2(\Lambda)$ is injective.  Hence, to prove (1), it suffices to show that $\HHH^2(\Lambda)=0$. In what follows, we sketch the deduction of $\HHH^2(\Lambda)=0$ from the work of Borel; see also the discussion in the introduction of \cite{Tshishiku}.

 We appeal to Borel's theorem, Theorem~$1$ in \cite{Borel80}, with $G=\SL_n$, $\Gamma=\Lambda$, and $r$ being the trivial complex representation. (See also Theorem~11.1 in \cite{Borel74}.)
 Then we have the following conclusion: there exists a natural homomorphism $\HHH^q(\mathfrak{g},\mathfrak{k};\mathbb{C})^{\Lambda}\to \HHH^q(\Lambda;\mathbb{C})$ and if $q\leq \min \{c(\SL_n),\mathrm{rank}_{\RR}(\SL(n,\RR))-1\}$, then this map is an isomorphism.
 Here $\HHH^q(\mathfrak{g},\mathfrak{k};\mathbb{C})^{\Lambda}$ is a Lie algebraic cohomology ($\mathfrak{g}$ and  $\mathfrak{k}$ stand for the Lie algebras of $\SL(n,\RR)$ and $\mathrm{SO}(n)$, respectively); it is known that $\HHH^2(\mathfrak{g},\mathfrak{k};\mathbb{C})^{\Lambda}=0$; see 11.4 of \cite{Borel74}. For the definition of the constant $c(G)=c(G,0)$ for $G$ being a connected semisimple group defined over $\mathbb{Q}$, see 7.1 of \cite{Borel74}. We remark that for the trivial complex representation $r$, $c(G,r)=(c(G,0)=)c(G)$. Since $\SL_n$ is of type $\mathrm{A}_{n-1}$, the constant $c(\SL_n)$ equals $\lfloor \frac{n-2}{2}\rfloor$; see \cite{Borel80} (and 9.1 of \cite{Borel74}). Here, $\lfloor\cdot\rfloor$ denotes the floor function. The number $\mathrm{rank}_{\RR}(\SL(n,\RR))$ means the real rank of $\SL(n,\RR)$; it equals $n-1$. Since $n\geq 6$, we hence have $\min\{c(\SL_n),\mathrm{rank}_{\RR}(\SL(n,\RR))-1\}\geq 2$.
Therefore, we conclude that $\HHH^2(\Lambda;\mathbb{C})=0$. This immediately implies that $\HHH^2(\Lambda)=0$, as desired. (In \cite{Borel81}, Borel considered a better constant $C(G)$ than $c(G)$ in general, but $C(\SL_n)=\lfloor \frac{n-2}{2}\rfloor$; see \cite{Borel81} and also \cite{Tshishiku}.)
\end{proof}

\begin{remark}
  In the proof of (2) of Theorem \ref{thm 3.2}, we only use (1) of Theorem \ref{thm 7.9 new}.
  We will use (2) of Theorem \ref{thm 7.9 new} in the proofs of claims in the next subsection.
\end{remark}

\begin{proof}[Proof of $(2)$ of Theorem $\ref{thm 3.2}$]
 Let $n$ be an integer at least $6$. Let $\hG$ be a group of finite index of $\Aut(F_n)$. Set $\bG = \hG \cap \IA_n$ and $\ppi = \hG / \bG$. Then we have an exact sequence
  \[1 \to \bG \to \hG \to \ppi \to 1\]
  and $\ppi$ is a subgroup of finite index of $\GL(n, \ZZ)$.
  By Lemma \ref{lem:bdd_coh_vanish_GL} and (1) of Theorem \ref{thm 7.9 new}, the second relative cohomology group $\HHH_{/b}^2(\ppi)$ is trivial.
  Therefore, by Theorem \ref{main thm}, we have $\QQQ(\bG)^{\hG} / i^* \QQQ(\hG) = 0$.
\end{proof}

\begin{remark}\label{rem:bdd_3}
By Theorem~\ref{thm:3bdd_coh_vanish_SL} and \cite[Theorem~1.4]{MS04}, the following holds true: for every $n\geq 3$, every subgroup of finite index of $\SL(n,\ZZ)$ is boundedly $3$-acyclic.
\end{remark}

\subsection{Quasi-cocycle analogues of Theorem \ref{thm 3.2}}\label{subsec:quasicocycle_extension}
To state our next result, we need some notation.
In Subsection \ref{subsec:proof_of_scl}, we introduced the notion of $\hG$-quasi-equivariant quasimorphism.
Let $V$ be an $\RR[\hG]$-module whose $\hG$-action on $V$ is trivial at $\bG$.
The $\hG$-quasi-invariance can be extended to the $V$-valued quasimorphisms as the $\hG$-quasi-equivariance.
Recall from Remark \ref{rem:V-valued_quasimorphisms} that a $V$-valued quasimorphism $f \colon \bG \to V$ is $\hG$-equivariant if the condition $\qm(gxg^{-1}) - g \cdot \qm(x) = 0$ holds.
A $V$-valued quasimorphism $\qm \colon \bG \to V$ is said to be \textit{$\hG$-quasi-equivariant} if the number
\[D'(\qm) = \sup_{g \in \hG, x \in \bG} \| \qm(gxg^{-1}) - g \cdot \qm(x) \|\]
is finite.
Let $\rQQQ(\bG;V)^{\QQQ \hG}$ denote the $\RR$-vector space of all $\hG$-quasi-equivariant $V$-valued quasimorphisms.
Let $F \colon \hG \to V$ be a quasi-cocycle, then the restriction $F|_{\bG}$ belongs to $\rQQQ(\bG;V)^{\QQQ \hG}$ by definition.
It is straightforward to show that the quotient $\rQQQ(\bG;V)^{\QQQ \hG}/i^* \rQQQ Z(\hG;V)$ is isomorphic to $\QQQ(\bG;V)^{\hG}/i^* \HHH_{/b}^1(\hG;V) = \HHH_{/b}^1(\bG;V)^{\hG}/i^* \HHH_{/b}^1(\hG;V)$.
 We summarize the concepts and symbols on quasi-cocycles in Table \ref{table:symbol_qc}.

\begin{table}[hbtp]
   \caption{the concepts and symbols on quasi-cocycles}
  \label{table:symbol_qc}
  \centering
  \begin{tabular}{c|c|c|c}
    \hline
    concept & defect  &  definition  & vector space \\
    \hline     \hline
    quasi-cocycle on $G$  & $D$  & $F(g_1 g_2) \approx_D F(g_1)+ g_1 \cdot F(g_2) $ & $\rQQQ Z(\hG;V)$  \\
\hline
  \begin{tabular}{c}
$G$-quasi-equivalent \\ quasimorphism on $N$
\end{tabular}  & $D,D'$ & 
\begin{tabular}{c}
$f(x_1x_2)\approx_D f(x_1)+f(x_2)$, \\ $f(gxg^{-1}) \approx_{D'} g \cdot f(x)$ 
\end{tabular} 
& $\rQQQ(\bG;V)^{\QQQ \hG}$  \\
 \hline
   $N$-quasi-cocycle on $G$ & $D''$  &
  \begin{tabular}{c}
$F(gx) \approx_{D''} F(g)+g \cdot F(x) $, \\ $F(xg) \approx_{D''} F(x)+F(g) $
\end{tabular}  & $\rQQQ Z_N(G;V)$  \\
    \hline
  \end{tabular} 
\end{table}

Let $\hG$ be a subgroup of $\Aut(F_n)$. Then we set $\bG = \hG \cap \IA_n$ and set $\ppi = \hG / \bG$.
Our main results in this section are the following two theorems:

\begin{thm} \label{thm 7.1.2}


 Let $n$ be an integer at least $6$, and $\hG$ a subgroup  of finite index of $\Aut(F_n)$. Then for every finite dimensional unitary representation $\pi$ of $\ppi$, the equality
\[\rQQQ(\bG ; \HH)^{\QQQ \hG} = i^* \rQQQ Z(\hG ; \overline{\pi}, \HH)\]
holds. Here $(\overline{\pi}, \HH)$ is the pull-back representation of $\hG$ of the representation $(\pi, \HH)$ of $\ppi$.
\end{thm}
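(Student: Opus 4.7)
My plan is to replicate the argument of Theorem~\ref{thm 3.2}(2), now with the finite-dimensional unitary coefficient module $(\pi,\HH)$ in place of the trivial module $\RR$. By Remark~\ref{remark:isomQZ}, the quotient space $\rQQQ(\bG;\HH)^{\QQQ \hG}/i^* \rQQQ Z(\hG;\overline{\pi},\HH)$ is canonically isomorphic to $\HHH_{/b}^1(\bG;\HH)^{\hG}/i^* \HHH_{/b}^1(\hG;\overline{\pi},\HH)$. Applying the five-term exact sequence of Theorem~\ref{main thm} to $1 \to \bG \to \hG \to \ppi \to 1$ and to the Banach $\ppi$-module $\HH$, this quotient injects into $\HHH_{/b}^2(\ppi;\pi,\HH)$. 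So it suffices to show $\HHH_{/b}^2(\ppi;\pi,\HH)=0$, and by the long exact sequence \eqref{seq:long_seq} this reduces to establishing
\[
\HHH^2(\ppi;\pi,\HH) = 0 \quad \text{and} \quad \HHH_b^3(\ppi;\pi,\HH) = 0.
\]

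For the ordinary cohomology part, the key observation is that $\SL(n,\ZZ)$ has Kazhdan's property (T) for $n\geq 3$, and Margulis' super-rigidity for homomorphisms into compact Lie groups then forces every finite-dimensional unitary representation of the finite-index subgroup $\ppi \subset \GL(n,\ZZ)$ to have finite image. Set $\ppi_1 = \Ker(\pi)$; it is a subgroup of finite index of $\ppi$ acting trivially on $\HH$, so $\HHH^2(\ppi_1;\HH) \cong \HHH^2(\ppi_1) \otimes_{\RR} \HH$, which vanishes by Theorem~\ref{thm 7.9 new}(1) since $n\geq 6$. The standard finite-index transfer argument (averaging over $\ppi/\ppi_1$ with real coefficients) then yields $\HHH^2(\ppi;\pi,\HH)=0$.

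For the bounded cohomology part, I would invoke the unitary-coefficient analogue of Theorem~\ref{thm:3bdd_coh_vanish_SL}: for every subgroup $\fis$ of finite index of $\SL(n,\ZZ)$ with $n\geq 3$ and every finite-dimensional unitary representation $\sigma$ of $\fis$, $\HHH_b^3(\fis;\sigma,\HH)=0$. This is supplied by Monod's work on bounded cohomology of higher-rank lattices with unitary coefficients (the framework alluded to via \cite{BBF} in Remark~\ref{rem=Aut}(4)). Applying it to $\fis = \ppi \cap \SL(n,\ZZ)$, which is of finite index in both $\SL(n,\ZZ)$ and $\ppi$, yields $\HHH_b^3(\fis;\pi|_\fis,\HH)=0$; then Theorem~\ref{thm:transfer}, whose restriction map is injective for subgroups of finite index, transports this vanishing upward to $\HHH_b^3(\ppi;\pi,\HH)=0$.

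The principal obstacle will be to pin down the precise unitary-coefficient version of Monod's $\HHH_b^3$-vanishing for subgroups of finite index, and to cite the correct form of Margulis super-rigidity for compact-target representations applicable to all finite-index subgroups of $\GL(n,\ZZ)$ (rather than $\SL(n,\ZZ)$ itself). Once these higher-rank rigidity inputs are in place, the remainder of the argument is structurally identical to the trivial-coefficient proof of Theorem~\ref{thm 3.2}(2), being driven by the five-term exact sequence of Theorem~\ref{main thm}.
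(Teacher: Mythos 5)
Your reduction is the same as the paper's: identify $\rQQQ(\bG;\HH)^{\QQQ\hG}/i^*\rQQQ Z(\hG;\overline{\pi},\HH)$ with $\HHH_{/b}^1(\bG;\HH)^{\hG}/i^*\HHH_{/b}^1(\hG;\overline{\pi},\HH)$ via Remark~\ref{remark:isomQZ}, feed it into the five-term sequence of Theorem~\ref{main thm}, and reduce to $\HHH^2(\ppi;\pi,\HH)=0$ and $\HHH^3_b(\ppi;\pi,\HH)=0$. Where you diverge is in proving the two vanishings. For $\HHH^2$ you use the finite image of $\pi$ (the paper's Theorem~\ref{thm 7.5}, from Bekka--de la Harpe) plus the ordinary transfer to reduce to $\HHH^2(\Ker\pi;\HH)\cong\HHH^2(\Ker\pi)\otimes\HH=0$ via Theorem~\ref{thm 7.9 new}(1); this is correct and in fact a bit more direct than the paper, which routes through the higher inflation--restriction lemma (Lemma~\ref{lem HIR}) and Corollary~\ref{cor 7.11}(1) -- the transfer suffices here precisely because the target group $\HHH^2(\Ker\pi)$ is zero. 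For $\HHH^3_b$, however, you invoke a unitary-coefficient analogue of Monod's vanishing for finite-index subgroups of $\SL(n,\ZZ)$ and flag it as the main thing left to pin down; that input is available in Monod's work (and the paper itself assembles it in the proof of Proposition~\ref{prop:2-Kazhdan} by splitting $\HH$ into the invariant part and its complement), but it is not supplied by \cite{BBF}, which concerns quasi-cocycles on acylindrically hyperbolic groups and is irrelevant here. More to the point, the heavier input is unnecessary: the paper applies the same finite-image trick you used for ordinary cohomology -- restrict to $\Ker(\pi)$, where the coefficients are trivial, so $\HHH^3_b(\Ker\pi;\HH)=0$ by Lemma~\ref{lem:bdd_coh_vanish_GL}, and then pull the vanishing back up to $\ppi$ by the bounded-cohomology transfer (Theorem~\ref{thm:transfer}). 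Replacing your Monod-with-coefficients step by this two-line argument closes the only gap you identified, using only tools already quoted in the paper.
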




\begin{thm} \label{thm 7.4}
Let $l$ be an integer at least $3$, and $\hG$ a subgroup of finite index of $\Mod(\Sigma_l)$. Set $\bG = \hG \cap \II(\Sigma_{\genus})$ and $\ppi = \hG / \bG$. Let $(\pi, \HH)$ be a finite dimensional $\ppi$-unitary representation such that $1_\hG \not\subseteq \pi$, {\it i.e.}, $\HH^{\pi(\ppi)} = 0$.
Then we have the equality
\[\rQQQ(\bG ; \HH)^{\QQQ \hG} = i^* \rQQQ Z(\hG ; \overline{\pi}, \HH).\]
Here $\overline{\pi}$ is the pull-back of $\pi$ by the quotient homomorphism $\hG \to \ppi$.
\end{thm}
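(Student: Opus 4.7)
The plan is to parallel the blueprint of Theorem~\ref{thm 7.1.2}, with $\Sp(2\genus,\ZZ)$ playing the role of $\GL(n,\ZZ)$. By Remark~\ref{remark:isomQZ},
\[
\rQQQ(\bG;\HH)^{\QQQ\hG}\big/i^{\ast}\rQQQ Z(\hG;\overline{\pi},\HH)
\;\cong\;\HHH_{/b}^1(\bG;\HH)^{\hG}\big/i^{\ast}\HHH_{/b}^1(\hG;\overline{\pi},\HH),
\]
and by applying the five-term exact sequence~\eqref{seq:5-term} of Theorem~\ref{main thm} to $1\to \bG\to \hG\to \ppi\to 1$, this quotient injects into $\HHH_{/b}^2(\ppi;\pi,\HH)$. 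Hence the statement is implied by $\HHH_{/b}^2(\ppi;\pi,\HH)=0$; and via the long exact sequence~\eqref{seq:long_seq} it is enough to establish the two vanishings $\HHH^2(\ppi;\pi,\HH)=0$ and $\HHH_b^3(\ppi;\pi,\HH)=0$.

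For the ordinary vanishing I would exploit that $\ppi$ has finite index in the higher-rank arithmetic lattice $\Sp(2\genus,\ZZ)$ with $\genus\geq 3$, together with the hypothesis $\HH^{\pi(\ppi)}=0$. Concretely, after decomposing $\HH$ into irreducible $\ppi$-subrepresentations---all nontrivial by hypothesis---I would use the Margulis normal subgroup theorem to separate two cases: either the summand factors through a finite quotient of $\ppi$, in which case a Hochschild--Serre/transfer argument reduces to trivial-coefficient cohomology of a further finite-index subgroup of $\Sp(2\genus,\ZZ)$ where Theorem~\ref{thm 7.9}~(2) combined with the absence of invariants forces vanishing, or the summand extends (up to finite-index data) to an algebraic representation of $\Sp(2\genus,\RR)$, in which case Borel's stability range, which reaches degree~$2$ once the real rank is at least~$3$, supplies the vanishing. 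For the bounded vanishing I would appeal to the $\Sp$-analog of Theorem~\ref{thm:3bdd_coh_vanish_SL}, drawn from the Burger--Monod/Monod treatment of continuous bounded cohomology of higher-rank lattices with separable coefficient modules, then use the transfer argument of Theorem~\ref{thm:transfer} to descend from $\Sp(2\genus,\ZZ)$ to $\ppi$.

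The main obstacle lies in the ordinary-cohomology step. Unlike the $\Aut(F_n)$ setting, where $\HHH^2(\ppi)=0$ outright by Theorem~\ref{thm 7.9}~(1), here $\HHH^2(\Sp(2\genus,\ZZ))\cong \RR$ is nontrivial by Theorem~\ref{thm 7.9}~(2), so the nontriviality hypothesis $\HH^{\pi(\ppi)}=0$ is genuinely indispensable. The delicate point is that Borel--Wallach-type vanishing is classically phrased for algebraic coefficient systems, whereas $\pi$ is a priori only a finite-dimensional unitary representation of the discrete group $\ppi$; the case in which $\pi$ factors through a finite group must be handled separately, crucially using the absence of invariants rather than the bare vanishing of $\HHH^2(\ppi)$. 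Checking that the resulting vanishing is uniform across all finite-dimensional unitary representations with no fixed vectors, and confirming that the bounded-cohomology tools extend to $\Sp(2\genus,\ZZ)$ with such coefficients, is where the bulk of the technical work will sit.
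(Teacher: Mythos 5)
Your overall skeleton is exactly the paper's: identify $\rQQQ(\bG;\HH)^{\QQQ\hG}/i^*\rQQQ Z(\hG;\overline{\pi},\HH)$ with $\HHH_{/b}^1(\bG;\HH)^{\hG}/i^*\HHH_{/b}^1(\hG;\overline{\pi},\HH)$, inject it into $\HHH_{/b}^2(\ppi;\pi,\HH)$ via Theorem~\ref{main thm}, and kill the latter by the two vanishings $\HHH^2(\ppi;\pi,\HH)=0$ and $\HHH_b^3(\ppi;\pi,\HH)=0$ together with the long exact sequence \eqref{seq:long_seq}; this is precisely how the paper deduces Theorem~\ref{thm 7.4} from Theorem~\ref{thm:3bdd_vanish_Sp} and Corollary~\ref{cor 7.11}~(2). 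However, two of the steps you describe would not run as written. For the bounded part, Monod's theorem (Theorem~\ref{thm:3bdd_vanish_Sp}) is already stated for an arbitrary finite-index subgroup of $\Sp(2l,\ZZ)$ with unitary coefficients not containing the trivial representation, so you simply apply it to $\ppi$; your plan to ``prove it for $\Sp(2\genus,\ZZ)$ and descend by transfer'' is backwards: $\pi$ is not a representation of $\Sp(2\genus,\ZZ)$, and Theorem~\ref{thm:transfer} gives injectivity of the restriction map from the ambient group to the finite-index subgroup, i.e.\ it propagates vanishing \emph{from} the subgroup \emph{up} to the ambient group, never downward.

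For the ordinary part, the dichotomy you set up via the Margulis normal subgroup theorem is the wrong tool, and its second branch is vacuous: by Theorem~\ref{thm 7.5} (Bekka--de la Harpe, in effect superrigidity), every finite-dimensional unitary representation of a finite-index subgroup of $\Sp(2l,\ZZ)$ with $l\ge 3$ has finite image, so only your ``factors through a finite quotient'' case ever occurs, and no appeal to algebraic representations of $\Sp(2\genus,\RR)$ or to Borel--Wallach vanishing with nontrivial algebraic coefficients is needed. Executing that case is the content of Corollary~\ref{cor 7.11}~(2), and it needs two inputs your sketch omits: first, $\HHH^1(\Ker\pi;\HH)=0$ (Matsushima vanishing, or property (T)), which is what allows the higher inflation--restriction (Lemma~\ref{lem HIR}) to identify $\HHH^2(\ppi;\pi,\HH)$ with $\HHH^2(\Ker\pi;\HH)^{\ppi/\Ker\pi}$ in degree two; second, Theorem~\ref{thm 7.9 new}~(2), which gives $\HHH^2(\Ker\pi)\cong\RR$ with \emph{trivial} conjugation action of $\ppi$ (checked in the paper by pulling cocycles back from $\ppi$), so that the invariants become $\HH^{\pi(\ppi)}$, which vanish by hypothesis. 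You correctly identified that the hypothesis $\HH^{\pi(\ppi)}=0$ is indispensable because $\HHH^2(\Sp(2l,\ZZ))\cong\RR$; the missing pieces are the finite-image theorem in place of the normal subgroup theorem, the $\HHH^1$-vanishing input, and the triviality of the conjugation action, all of which the paper supplies.
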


Before proceeding to the proofs of Theorems \ref{thm 7.1.2} and \ref{thm 7.4}, we mention some known results we need in the proofs. The following theorem is well known (see Corollary 4.C.16 and Corollary 4.B.6 of \cite{BH}).

\begin{thm} \label{thm 7.5}
Let $\ppi_0$ be a subgroup of finite index of $\GL(n,\ZZ)$ for $n \ge 3$ or $\Sp(2l, \ZZ)$ for $l \ge 3$, and $(\pi, \HH)$ a finite dimensional unitary $\ppi_0$-representation.
Then $\ppi_0(\pi) := \Ker (\pi \colon \ppi_0 \to \mathcal{U}(\HH))$ is a subgroup of finite index of $\ppi_0$,
where $\mathcal{U}(\HH)$ denotes the group of unitary operators on $\HH$.
\end{thm}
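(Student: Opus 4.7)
The plan is to deduce the finiteness of $\ppi_0/\ppi_0(\pi)$ from two classical facts: Kazhdan's property (T) for higher-rank arithmetic lattices, and the fact that a countable discrete group that is simultaneously amenable and has property (T) must be finite. Throughout, the key point is to play the rigidity of $\ppi_0$ (property (T)) against the softness of the target (the compact group $\mathcal{U}(\HH)$).

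First, I would verify that $\ppi_0$ has Kazhdan's property (T). In both cases, the ambient group is an irreducible lattice in a connected semisimple Lie group of real rank at least $2$: $\SL(n,\ZZ) \subset \SL(n,\RR)$ for $n\geq 3$, and $\Sp(2l,\ZZ)\subset \Sp(2l,\RR)$ for $l\geq 3$ (in fact $l\geq 2$ suffices). Both ambient Lie groups have property (T), and property (T) is inherited by lattices; it is also inherited by finite-index subgroups. Hence $\ppi_0$ has property (T). Because property (T) passes to quotients, the image $\pi(\ppi_0) \cong \ppi_0/\ppi_0(\pi)$ also has property (T).

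Second, I would observe that $\pi(\ppi_0)$ is amenable. Since $\HH$ is finite-dimensional, the unitary group $\mathcal{U}(\HH)$ is a compact Lie group and therefore amenable. Since every subgroup of an amenable group is amenable, $\pi(\ppi_0)$ is amenable as an abstract group.

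Finally, I would invoke the classical fact that a countable discrete group that is both amenable and has property (T) must be finite. (This is immediate from the definition: property (T) together with amenability forces the trivial representation to be isolated in the dual of the group, which for a countable group amounts to finiteness.) Applying this to $\pi(\ppi_0)$ yields $|\pi(\ppi_0)| < \infty$, i.e.\ $[\ppi_0 : \ppi_0(\pi)] < \infty$, which is exactly what we need. The main obstacle, and the only non-trivial input, is citing property (T) for the two families of arithmetic lattices in question; the rest of the argument is essentially formal. This is why the authors simply refer to Bekka--de la Harpe \cite{BH}, where both property (T) for these lattices (Corollary 4.B.6) and the amenable-plus-(T)-implies-finite principle (Corollary 4.C.16) are recorded.
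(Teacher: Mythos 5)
Your argument breaks at the amenability step, and the break is fatal to the whole strategy. The group $\mathcal{U}(\HH)$ is amenable as a \emph{compact topological} group, but the permanence property you invoke (``every subgroup of an amenable group is amenable'') holds for closed subgroups of amenable locally compact groups, or for subgroups of amenable \emph{discrete} groups; it is false for abstract subgroups of compact groups viewed with the discrete topology. For example $\mathrm{SO}(3)\subset\mathcal{U}(3)$ contains non-abelian free subgroups, so a subgroup of a compact Lie group need not be amenable as an abstract group, and there is no reason for $\pi(\ppi_0)$ to be amenable. Hence the (correct) principle ``property (T) and amenable implies finite'' cannot be applied to $\pi(\ppi_0)$, and your proof does not go through.

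Moreover, the gap cannot be repaired while keeping property (T) as the only input: property (T) does \emph{not} force homomorphisms into compact Lie groups to have finite image. There are property (T) arithmetic lattices obtained by restriction of scalars (e.g.\ integral orthogonal groups over $\ZZ[\sqrt{2}]$ of a form whose Galois conjugate is definite) that embed with infinite, even dense, image in a compact orthogonal or unitary group. What makes the statement true for finite-index subgroups of $\GL(n,\ZZ)$, $n\geq 3$, and $\Sp(2l,\ZZ)$, $l\geq 3$, is rigidity specific to these non-uniform arithmetic groups: Margulis superrigidity, or bounded generation by elementary/unipotent elements combined with the Mennicke and Bass--Milnor--Serre congruence subgroup results, which force any finite-dimensional unitary representation to factor through a finite (congruence) quotient. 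That is the content of the results in \cite{BH} that the paper cites in place of a proof; they are not simply ``property (T) for the lattice'' plus ``amenable and (T) implies finite,'' and a correct self-contained argument would have to bring in one of these stronger inputs.
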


\begin{thm}[{\cite[Corollary 1.6]{Monod2}}] \label{thm:3bdd_vanish_Sp}
Let $l$ be an integer at least $2$ and $\ppi_0$ a subgroup of finite index of $\Sp(2l, \ZZ)$. Let $(\pi, \HH)$ be a unitary $\ppi_0$-representation such that $\pi \not\supset 1$. Then 
$\HHH^3_b(\ppi_0 ; \pi, \HH) = 0$.
\end{thm}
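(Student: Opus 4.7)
The plan is to reduce the statement to a vanishing result in continuous bounded cohomology of the ambient semisimple Lie group $G=\Sp(2l,\RR)$ and then invoke the boundary-theoretic machinery of Burger--Monod. First, I would pass from the lattice $\ppi_0 \subset \Sp(2l,\ZZ) \subset G$ to $G$ itself by induction of unitary representations: setting $\pi' = \mathrm{Ind}_{\ppi_0}^{G}\pi$, the Eckmann--Shapiro style isomorphism for bounded cohomology of lattices (see \cite[Chap.~10]{MR1840942} or the Monod--Shalom induction) gives a canonical isomorphism
\[
  \HHH_b^{\bullet}(\ppi_0;\pi,\HH) \;\cong\; \HHH_{cb}^{\bullet}(G;\pi',\HH'),
\]
where $\HH'=L^2(G/\ppi_0,\HH)$ carries the induced $G$-action. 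Crucially, since $\HH^{\pi(\ppi_0)}=0$, the induced representation $\pi'$ has no nonzero $G$-invariant vectors, because $G$-invariant vectors in $\HH'$ would correspond to $\ppi_0$-invariant vectors in $\HH$ via Frobenius reciprocity.

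Next I would compute $\HHH_{cb}^3(G;\pi',\HH')$ using the standard amenable boundary resolution. Fixing a minimal parabolic $P\subset G$ so that $B=G/P$ is a doubly ergodic $G$-boundary, Burger--Monod's resolution theorem identifies $\HHH_{cb}^{\bullet}(G;\pi',\HH')$ with the cohomology of the complex
\[
  0 \to L_{w^{\ast}}^{\infty}(B;\HH')^G \to L_{w^{\ast}}^{\infty}(B^2;\HH')^G \to L_{w^{\ast}}^{\infty}(B^3;\HH')^G \to L_{w^{\ast}}^{\infty}(B^4;\HH')^G \to \cdots.
\]
For $G=\Sp(2l,\RR)$ with $l \ge 2$, the real rank is at least $2$; in this higher-rank regime one has a rich supply of commuting parabolic contractions, and the $P$-action on $B^k$ for small $k$ has enough transitivity to force equivariant $\HH'$-valued cocycles to land in the $\pi'(G)$-invariant part of $\HH'$ after applying suitable stabilizer computations. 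Since $(\HH')^{\pi'(G)}=0$ by the previous paragraph, the degree-$3$ term of this complex collapses and one concludes $\HHH_{cb}^{3}(G;\pi',\HH')=0$.

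The main obstacle is the last step: making the boundary/stabilizer computation precise in degree $3$ for $\Sp(2l,\RR)$. The degree $2$ case is classical (double ergodicity of $B$) and the degree $1$ case is trivial, but degree $3$ requires analyzing $P$-orbits on $B^3$ or, equivalently, applying Mautner-type phenomena to show that any $G$-equivariant cocycle is forced to be constant along certain unipotent directions. This is precisely the technical heart of \cite[Corollary~1.6]{Monod2}, and one would either cite the general vanishing theorem there or reprove it for $\Sp$ by identifying the relevant stabilizers inside $\Sp(2l,\RR)$ with groups whose Mautner subgroup acts ergodically on $\HH$ in the absence of invariant vectors.

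Finally, combining the induction isomorphism with $\HHH_{cb}^{3}(G;\pi',\HH')=0$ yields $\HHH_b^3(\ppi_0;\pi,\HH)=0$, as required. A subtle point worth double-checking is that the induction step genuinely preserves the hypothesis $1 \not\subset \pi$; this needs the fact that $\ppi_0$ has finite covolume in $G$ so that Frobenius reciprocity applies to $G$-invariant vectors in the $L^2$-induced module, rather than only to bounded vectors.
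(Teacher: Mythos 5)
You are trying to prove a statement that the paper itself does not prove at all: it is quoted verbatim as \cite[Corollary~1.6]{Monod2}, so the paper's ``proof'' is just that citation. Your sketch instead attempts to reprove Monod's theorem, and as written it has a genuine gap at exactly the decisive point. After inducing to $G=\Sp(2l,\RR)$ and setting up the boundary resolution, you assert that the degree-$3$ term ``collapses'' by transitivity/stabilizer considerations on $B^3$, and then concede that this step is ``precisely the technical heart of \cite[Corollary~1.6]{Monod2}'', to be cited or reproved. Citing it makes the argument circular; reproving it \emph{is} the theorem. Moreover the transitivity heuristic is false for $\Sp(2l,\RR)$: $G$ is not ergodic on triples of points of its (Shilov) boundary --- the Maslov-type invariant on triples of Lagrangians is exactly what makes $\HHH^2_{cb}(G;\RR)$ nonzero --- so degree $3$ cannot be handled by the same soft mechanism that double ergodicity provides in degree $2$; Monod's actual degree-$3$ argument is substantially more delicate.

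The reduction step is also not correct as stated. The Eckmann--Shapiro isomorphism in bounded cohomology (see \cite{MR1840942}) uses the $L^\infty_{w^*}$-induced module, not $L^2(G/\ppi_0;\HH)$: with $L^2$-induction one only has an induction \emph{map} (as in Monod--Shalom's degree-$2$ arguments), not a canonical isomorphism in every degree, and $\ppi_0$ is a non-uniform lattice, so integrability is a real issue. If one uses the correct $L^\infty$-induction, the induced coefficient module is no longer a unitary representation, so a vanishing theorem for unitary coefficients of $G$ cannot be applied to it directly; this is precisely why Monod proves his vanishing results for a broader class of coefficient modules before deducing the lattice statement, and why the lattice corollary is a theorem in its own right rather than a formal consequence. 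Your Frobenius-reciprocity observation about invariant vectors is fine, but it repairs neither of these problems. For the purposes of this paper the correct move is simply to cite \cite{Monod2}, as the authors do; as a standalone proof your outline is incomplete at its central step and its induction step needs the $L^\infty$ (or generalized-coefficient) formulation.
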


By the higher inflation-restriction exact sequence (\cite[Theorem 2 of Chapter III]{MR52438}), we obtain the following:
\begin{lem} \label{lem HIR}
Let $\bG$ be a normal subgroup of finite index of $\hG$, $V$ a real $G$-module, and $q_0$ a positive integer. Assume that $\HHH^q(\bG ; V) = 0$ for every $q$ with $1 \le q < q_0$.
Then the restriction induces an isomorphism $\HHH^{q_0}(\hG ; V) \xrightarrow{\cong} \HHH^{q_0}(\bG ; V)^{\ppi}$.
\end{lem}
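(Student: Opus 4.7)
The plan is to apply the higher inflation-restriction exact sequence \cite[Theorem 2 of Chapter III]{MR52438} to the short exact sequence $1 \to \bG \to \hG \to \ppi \to 1$, where $\ppi = \hG/\bG$ is a finite group by hypothesis. Under the assumption $\HHH^q(\bG;V) = 0$ for every $q$ with $1 \le q < q_0$, this yields an exact sequence of the form
\[
0 \to \HHH^{q_0}(\ppi; V^{\bG}) \to \HHH^{q_0}(\hG; V) \xrightarrow{\mathrm{res}} \HHH^{q_0}(\bG; V)^{\ppi} \to \HHH^{q_0+1}(\ppi; V^{\bG}),
\]
in which the middle map is the restriction homomorphism.

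It then suffices to show that the two outer terms vanish. Since $\ppi$ is finite and $V^{\bG}$ is a real vector space, the $\ppi$-module $V^{\bG}$ is a $\QQ[\ppi]$-module obtained from a real representation of a finite group. The standard averaging (transfer) argument implies that $\HHH^p(\ppi; W) = 0$ for every $p \geq 1$ whenever $W$ is a $\QQ$-vector space equipped with a $\ppi$-action; in particular this applies to $W = V^{\bG}$ and $p = q_0, q_0+1$. Plugging this into the higher inflation-restriction sequence, both endpoints collapse, and the restriction map provides the desired isomorphism.

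The proof is essentially formal once the higher inflation-restriction sequence is invoked, so there is no serious obstacle. The only subtle point — and a genuine one — is to ensure that the cited cohomology vanishing for finite groups is applied to modules over a field of characteristic zero; this is why the assumption that $V$ is a \emph{real} $\hG$-module (and hence $V^{\bG}$ is an $\RR[\ppi]$-module) is crucial. With these ingredients in place, the conclusion follows immediately.
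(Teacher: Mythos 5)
Your proof is correct and takes essentially the same route as the paper, which obtains the lemma directly from the higher inflation-restriction exact sequence of \cite[Theorem 2 of Chapter III]{MR52438}. You additionally spell out the step the paper leaves implicit, namely that $\HHH^{p}(\ppi;V^{\bG})=0$ for $p\geq 1$ because $\ppi$ is finite and $V^{\bG}$ is a vector space over a field of characteristic zero, which is exactly what makes the restriction map an isomorphism.
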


\begin{cor} \label{cor 7.11}
The following hold:
\begin{enumerate}[$(1)$]
\item

 Let $n$ be an integer at least $6$, and $\ppi_0$ a subgroup of finite index of $\GL(n, \ZZ)$. Let $(\pi, \HH)$ be a finite dimensional unitary $\ppi_0$-representation.
 Then $\HHH^2(\ppi_0 ; \pi, \HH) = 0$.

\item Let $l$ be an integer at least $3$, $\ppi_0$ a subgroup of finite index of $\Sp(2l, \ZZ)$, and $(\pi, \HH)$ a finite dimensional unitary $\ppi_0$-representation such that $\pi \not\supset 1$. Then $\HHH^2(\ppi_0 ; \pi, \HH) = 0$.
\end{enumerate}
\end{cor}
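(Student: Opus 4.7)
The plan is to reduce both assertions to Theorem~\ref{thm 7.9 new} by passing to a normal subgroup of finite index on which $\pi$ becomes trivial, and then applying the higher inflation--restriction isomorphism of Lemma~\ref{lem HIR} in degree two. First, by Theorem~\ref{thm 7.5}, the kernel $\ppi_0(\pi)$ has finite index in $\ppi_0$; replacing it with its normal core, I would take a normal subgroup $\bG_0 \trianglelefteq \ppi_0$ of finite index on which $\pi$ acts trivially. Since $\ppi_0$ has finite index in $\GL(n,\ZZ)$ (respectively $\Sp(2l,\ZZ)$), so does $\bG_0$. Because $\pi|_{\bG_0}$ is trivial, there is a $\ppi_0$-equivariant isomorphism
\[
\HHH^q(\bG_0;\pi|_{\bG_0},\HH) \;\cong\; \HHH^q(\bG_0;\RR) \otimes_{\RR} \HH
\]
for all $q\geq 0$, where the right-hand side carries the tensor product of the conjugation action of $\ppi_0/\bG_0$ on cohomology and the $\pi$-action on $\HH$ factoring through $\ppi_0/\bG_0$.

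Next, I would verify that $\HHH^1(\bG_0;\pi|_{\bG_0},\HH)=0$, which by the above decomposition reduces to $\HHH^1(\bG_0;\RR)=0$. Since $\SL(n,\ZZ)$ for $n\geq 3$ and $\Sp(2l,\ZZ)$ for $l\geq 2$ satisfy Kazhdan's property (T), the subgroup $\bG_0\cap \SL(n,\ZZ)$ (respectively $\bG_0\cap \Sp(2l,\ZZ)$) inherits property (T); combined with a standard transfer argument in characteristic zero, this yields the desired vanishing. Lemma~\ref{lem HIR} with $q_0=2$ then gives
\[
\HHH^2(\ppi_0;\pi,\HH) \;\cong\; \bigl(\HHH^2(\bG_0;\RR)\otimes \HH\bigr)^{\ppi_0/\bG_0}.
\]

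For assertion~$(1)$, $(1)$ of Theorem~\ref{thm 7.9 new} gives $\HHH^2(\bG_0;\RR)=0$ at once since $n\geq 6$, and the claim follows. For assertion~$(2)$, $(2)$ of Theorem~\ref{thm 7.9 new} gives $\HHH^2(\bG_0;\RR)\cong \RR$, and the main obstacle is then to show that the action of the finite group $\ppi_0/\bG_0$ on this one-dimensional space is trivial. I plan to argue this by applying Lemma~\ref{lem HIR} once more with the trivial coefficient $\RR$ to obtain $\HHH^2(\ppi_0;\RR)\cong \HHH^2(\bG_0;\RR)^{\ppi_0/\bG_0}$; together with $\HHH^2(\ppi_0;\RR)\cong\RR$ from $(2)$ of Theorem~\ref{thm 7.9 new}, this forces the invariants to exhaust $\HHH^2(\bG_0;\RR)\cong\RR$, and an action of a finite group on $\RR$ whose fixed line is the whole space must be trivial. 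The previous isomorphism then reduces to $\RR\otimes \HH^{\pi(\ppi_0)}=0$ by the hypothesis $\pi\not\supset 1$, finishing the proof.
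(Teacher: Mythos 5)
Your proposal is correct and follows essentially the same route as the paper: pass to the finite-index normal kernel of $\pi$ (Theorem~\ref{thm 7.5}), kill $\HHH^1$ there via property (T), apply the higher inflation--restriction isomorphism (Lemma~\ref{lem HIR}) in degree two, compute $\HHH^2$ of the kernel by Theorem~\ref{thm 7.9 new}, show the finite quotient acts trivially on it, and conclude that the invariants reduce to $\HH^{\pi(\ppi_0)}=0$. The only (harmless) variation is in how you establish triviality of the quotient action on $\HHH^2(\bG_0;\RR)\cong\RR$: you use a second application of Lemma~\ref{lem HIR} with trivial coefficients and a dimension count, whereas the paper argues via naturality of the restriction isomorphism $\HHH^2(\ppi_0)\cong\HHH^2(\ppi_0(\pi))$ and triviality of conjugation on a group's own cohomology --- both hinge on (2) of Theorem~\ref{thm 7.9 new}.
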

\begin{proof}
We first prove (2). Set $\ppi_0(\pi) = \Ker (\pi)$. Then, Theorem~\ref{thm 7.5} implies that $\ppi_0(\pi)$ is of finite index in $\ppi_0$.
We claim that $\HHH^1(\ppi_0(\pi) ; \HH) =0$. Indeed, it follows from the Matsushima vanishing theorem \cite{Matsushima}.
Or alternatively, we may appeal to the fact that $\ppi_0(\pi)$ has property (T); see \cite{BHV}.
By Lemma \ref{lem HIR}, we have an isomorphism $\HHH^2(\ppi_0 ; \pi, \HH) \cong \HHH^2(\ppi_0(\pi) ; \HH)^{\ppi_0 / \ppi_0(\pi)}$.

We now show the following claims:

\vspace{2mm} \noindent
{\bf Claim.} The conjugation action by $\ppi_0$ on the cohomology $\HHH^2(\ppi_0(\pi))$ is trivial.

\vspace{2mm} \noindent
{\it Proof of Claim.}
By (2) of Theorem \ref{thm 7.9 new} and Theorem \ref{thm 7.5}, the inclusion $i \colon \ppi_0(\pi) \hookrightarrow \ppi_0$ induces an isomorphism $i^* \colon \HHH^2(\ppi_0) \cong \HHH^2(\ppi_0(\pi))$.
Hence, for every $a \in \HHH^2(\ppi_0(\pi))$, there exists a cocycle $c \in C^2(\ppi_0)$ such that $[i^* c] = a$.
By definition, the equalities
\[
  {}^{\gamma}a = [{}^{\gamma} (i^*c)] = [i^* ({}^{\gamma} c)] = i^* ({}^{\gamma}[c])
\]
hold for every $\gamma \in \ppi_0$.
Since the conjugation $\ppi_0$-action on $\HHH^2(\ppi_0)$ is trivial, the class ${}^{\gamma}[c] \in \HHH^2(\ppi_0)$ is equal to $[c]$.
Therefore we have
\[
  {}^{\gamma}a = i^* {}^{\gamma}[c] = i^* [c] = a,
\]
and the claim follows.

\vspace{2mm} \noindent
{\bf Claim.}
There exists a canonical isomorphism $\HHH^2(\ppi_0(\pi);\HH) \cong \HH$, and this isomorphism induces an isomorphism $\HHH^2(\ppi_0(\pi);\HH)^{\ppi_0/\ppi_0(\pi)} \cong \HH^{\ppi_0/\ppi_0(\pi)}$.

\vspace{2mm} \noindent
{\it Proof of Claim.}
By (2) of Theorem \ref{thm 7.9 new}, the cohomology $\HHH^2(\ppi_0(\pi))$ is isomorphic to $\RR$, and hence the cohomology $\HHH^2(\ppi_0(\pi); \HH)$ is isomorphic to $\HH$.
In what follows, we exhibit a concrete isomorphism.
For $\alpha \in \HH$, we define a cochain $c_{\alpha} \in C^2(\ppi_0(\pi);\HH)$ by
\begin{align*}
  c_{\alpha}(\gamma_1, \gamma_2) = c(\gamma_1, \gamma_2)\cdot \alpha \in \HH,
\end{align*}
where $c \in C^2(\ppi_0(\pi))$ is a cocycle whose cohomology class corresponds to $1 \in \RR$ under the isomorphism $\HHH^2(\ppi_0(\pi)) \cong \RR$.
This cochain $c_{\alpha}$ is a cocycle since the $\ppi_0(\pi)$-action on $\HH$ is trivial.
Then the map sending $\alpha$ to $[c_\alpha]$ gives rise to an isomorphism $\HH \xrightarrow{\cong} \HHH^2(\ppi_0(\pi); \HH)$.
For $\gamma \in \ppi_0$ and $\gamma_1, \gamma_2 \in \ppi_0(\pi)$, the equalities
\begin{align*}
  ({}^{\gamma}c_{\alpha})(\gamma_1, \gamma_2) &= \pi(\gamma) \cdot c_{\alpha}(\gamma^{-1}\gamma_1 \gamma, \gamma^{-1}\gamma_2 \gamma)\\
  &= \pi(\gamma) \cdot (({}^{\gamma}c)(\gamma_1, \gamma_2)\cdot \alpha)\\
  &= ({}^{\gamma}c)(\gamma_1, \gamma_2)\cdot (\pi(\gamma) \cdot \alpha)
\end{align*}
hold.
Moreover, by the claim above, there exists a cochain $b \in C^1(\ppi_0(\pi))$ satisfying ${}^{\gamma}c = c + \delta b$.
Hence we have
\begin{align*}
  ({}^{\gamma}c_{\alpha})(\gamma_1, \gamma_2) &= ({}^{\gamma}c)(\gamma_1, \gamma_2)\cdot (\pi(\gamma) \cdot \alpha) = (c + \delta b)(\gamma_1, \gamma_2) \cdot (\pi(\gamma) \cdot \alpha)\\
  &= (c + \delta b)_{\pi(\gamma) \cdot \alpha}(\gamma_1, \gamma_2).
\end{align*}
Therefore the cohomology class ${}^{\gamma}[c_{\alpha}]$ corresponds to the element $\pi(\gamma) \cdot \alpha$ under the isomorphism, and this implies the claim.

By claims above and the assumption that $\pi$ does not contain trivial representation, we have $\HHH^2(\ppi_0 ; \pi, \HH) = 0$. This completes the proof of (2).

We can deduce (1) by the same arguments as above with Theorem \ref{thm 7.9 new}, Theorem \ref{thm 7.5}, and Lemma \ref{lem HIR}.
\end{proof}

\begin{proof}[Proof of Theorem $\ref{thm 7.1.2}$]

 Let $n$ be an integer at least $6$. Let $\hG$ be a group of finite index of $\Aut(F_n)$.
Set $\bG = \hG \cap \IA_n$ and $\ppi = \hG / \bG$. Then we have an exact sequence
  \[1 \to \bG \to \hG \to \ppi \to 1\]
  and $\ppi$ is a subgroup of finite index of $\GL(n, \ZZ)$.
  Let $(\pi, \HH)$ be a finite dimensional unitary $\ppi$-representation. Set $\ppi(\pi) = \Ker (\pi)$. By Theorem \ref{thm 7.5}, $\ppi(\pi)$ is a normal subgroup of finite index of $\ppi$.
  By using Lemma \ref{lem:bdd_coh_vanish_GL}, we have 
  $\HHH^3_b(\ppi(\pi) ; \HH) = 0$.
  Together with Theorem \ref{thm:transfer}, we obtain $\HHH_b^3(\ppi;\pi, \HH) = 0$.
  Hence, by Corollary \ref{cor 7.11} (1), we have $\HHH_{/b}^2(\ppi;\pi, \HH) = 0$.
  Therefore, the quotient $\HHH_{/b}^1(\bG;\HH)/i^* \HHH_{/b}^1(\hG; \pi, \HH)$ is trivial by Theorem \ref{main thm}.
  Since the quotient $\HHH_{/b}^1(\bG;\HH)/i^* \HHH_{/b}^1(\hG; \pi, \HH)$ is isomorphic to $\rQQQ(\bG;\HH)^{\QQQ \hG}/i^* \rQQQ Z(\hG;\pi, \HH)$, this completes the proof.
\end{proof}

\begin{proof}[Proof of Theorem $\ref{thm 7.4}$]
  Let $l$ be an integer at least $3$. Let $\hG$ be a subgroup of finite index of $\Mod(\Sigma_l)$. Set $\bG = \hG \cap \II(\Sigma_{\genus})$ and $\ppi = \hG / \bG$.
  Let $(\pi, \HH)$ be a finite dimensional unitary $\ppi$-representation not containing trivial representation.
  Then, Theorem \ref{thm:3bdd_vanish_Sp} and (2) of Corollary \ref{cor 7.11} imply that the second relative cohomology group $\HHH_{/b}^2(\ppi;\pi,\HH)$ is trivial.
  Hence, by the arguments similar to ones in the proof of Theorem \ref{thm 7.1.2}, we obtain the theorem.
\end{proof}

We conclude this subsection by an extension theorem of quasi-cocycles.
Recall that every $\hG$-quasi-invariant quasimorphism on $\bG$ is extendable to $\hG$ if the projection $\hG \to \hG/\bG$ virtually splits (Proposition \ref{virtual split KKMM1}).
This can be generalized as follows:

\begin{thm}\label{thm:extension_thm_genral_coeff}
  Let $1 \to \bG \to \hG \xrightarrow{p} \ppi \to 1$ be an exact sequence and $V$ an $\RR[\ppi]$-module with a $\ppi$-invariant norm $\| \cdot \|$.
  Assume that the exact sequence virtually splits.
  Then for every $V$-valued $\hG$-quasi-equivariant quasimorphism $\qm \in \rQQQ(\bG;V)^{\QQQ \hG}$, there exists a quasi-cocycle $F \in \rQQQ Z(\hG;V)$ such that the equality $F|_{\bG} = \qm$ and the inequality $D(F) \le D(\qm) + 3D'(\qm)$ hold.
\end{thm}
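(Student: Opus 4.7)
The plan is a two-step extension, reducing first to the case of a normal finite-index subgroup of $\hG$ and then extending from there via a transversal. First I replace $\Lambda$ with its normal core $\Lambda_0 := \bigcap_{\gamma \in \Gamma} \gamma \Lambda \gamma^{-1}$ in $\Gamma$; the given section $s$ restricts to a section on $\Lambda_0$, so $H := p^{-1}(\Lambda_0)$ is a normal finite-index subgroup of $\hG$ with semidirect product decomposition $H = \bG \rtimes s(\Lambda_0)$. Every element $h \in H$ admits a unique expression $h = n'_h \cdot s(\lambda_h)$ with $n'_h \in \bG$ and $\lambda_h = p(h) \in \Lambda_0$, and the $\bG$-action on $V$ is trivial since $V$ is an $\RR[\Gamma]$-module.

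In the first step I extend $\qm$ to $H$ by setting $F_0(h) := \qm(n'_h)$; this gives $F_0|_{\bG} = \qm$ exactly. For $h_a = n'_a \, s(\lambda_a)$ $(a=1,2)$, expanding
\[
h_1 h_2 \;=\; n'_1 \cdot (s(\lambda_1) n'_2 s(\lambda_1)^{-1}) \cdot s(\lambda_1 \lambda_2)
\]
and applying the quasimorphism identity for $\qm$ once (contributing $D(\qm)$) together with the $\hG$-quasi-equivariance of $\qm$ for conjugation by $s(\lambda_1)$ (contributing $D'(\qm)$) yields
\[
F_0(h_1 h_2) \underset{D(\qm)+D'(\qm)}{\approx} F_0(h_1) + h_1 \cdot F_0(h_2),
\]
so $F_0 \in \rQQQ Z(H; V)$ with $D(F_0) \leq D(\qm) + D'(\qm)$.

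In the second step I extend $F_0$ to $\hG$ via a right transversal $\{t_1 = 1_{\hG}, t_2, \ldots, t_k\}$ for $H \backslash \hG$, defining $F(g) := F_0(g \, t_{i(g)}^{-1})$ whenever $g \in H t_{i(g)}$. Then $F|_{\bG} = \qm$ by construction. To verify the quasi-cocycle identity for $g_a = h'_a t_{i_a}$, I rewrite $g_1 g_2 = h'_1 \cdot (t_{i_1} h'_2 t_{i_1}^{-1}) \cdot (t_{i_1} t_{i_2})$ using normality of $H$, decompose $t_{i_1} t_{i_2} = h^\dagger t_m$ for unique $m$ and $h^\dagger \in H$, and apply the quasi-cocycle identity of $F_0$ twice. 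The resulting discrepancy $F(g_1 g_2) - F(g_1) - g_1 \cdot F(g_2)$ reduces to a sum of $\qm$-values of two kinds of elements of $\bG$: one kind depending only on the pair of indices $(i_1, i_2)$, hence taking only finitely many values, and another, call it $v$, measuring the failure of $s$ to be equivariant under conjugation by $t_{i_1}$. The latter is controlled by a further application of the $\hG$-quasi-equivariance of $\qm$ for conjugation by $t_{i_1}$, contributing an additional $2 D'(\qm)$ and yielding the bound $D(F) \leq D(\qm) + 3 D'(\qm)$.

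The main obstacle is the uniform control of $\|\qm(v)\|$ in the second step: although $v \in \bG$ depends on the unbounded parameter $\lambda_2 \in \Lambda_0$, the full $\hG$-quasi-equivariance of $\qm$ (as opposed to mere $H$-quasi-equivariance) can be invoked at the transversal elements $t_{i_1} \in \hG \setminus H$ to force $\|\qm(v)\|$ to depend only on $D'(\qm)$ and not on the unbounded data of $\lambda_2$. It is precisely this step that uses the hypothesis $\qm \in \rQQQ(\bG; V)^{\QQQ \hG}$ in full strength, and which is responsible for the constant $3$ appearing in the final defect bound.
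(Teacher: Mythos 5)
Your first step is sound: on $H=\bG\rtimes s(\Lambda_0)$ the formula $F_0(n\cdot s(\lambda))=\qm(n)$ does define a quasi-cocycle with $D(F_0)\le D(\qm)+D'(\qm)$, by one application of quasi-additivity and one of quasi-equivariance. The gap is in the second step, at exactly the point you flag. The troublesome element is $v=t_{i_1}\,s(\lambda_2)\,t_{i_1}^{-1}\,s(\mu)^{-1}$ with $\mu=p(t_{i_1})\lambda_2 p(t_{i_1})^{-1}$, and your claim that the ``full $\hG$-quasi-equivariance of $\qm$ at $t_{i_1}$'' bounds $\|\qm(v)\|$ by a multiple of $D'(\qm)$ is not a proof: quasi-equivariance only compares $\qm(gxg^{-1})$ with $g\cdot\qm(x)$ for $x\in\bG$, whereas $v$ is a product of two elements of $\hG\setminus\bG$ and is not a conjugate of any element on which you already control $\qm$; writing $v=t_{i_1}wt_{i_1}^{-1}$ with $w=s(\lambda_2)\cdot\bigl(t_{i_1}^{-1}s(\mu)^{-1}t_{i_1}\bigr)\in\bG$ merely reproduces the same problem for $w$, and no finite combination of quasi-additivity on $\bG$ and quasi-equivariance evaluates $\qm(v)$. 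This is not a removable technicality: taking $g_1=t_{i_1}$ and $g_2=s(\lambda_2)$ in your construction shows that $\sup_{\lambda_2\in\Lambda_0}\|\qm(v)\|<\infty$ is \emph{equivalent} to your $F$ being a quasi-cocycle, and boundedness of an invariant quasimorphism on such commutator-type mixed elements is in this paper a \emph{consequence} of extendability (compare Lemma \ref{lem 5.2} and Corollary \ref{cor 5.3}), not a hypothesis. So your argument is circular at its crucial step, and as written your $F$ need not be a quasi-cocycle at all (also, the bookkeeping via two applications of $D(F_0)$ would already exceed the target bound $D(\qm)+3D'(\qm)$).

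This is precisely why the paper does not extend through a single coset representative but by averaging: with $B$ a set of representatives for $\fis\backslash\ppi$ and $t\colon\ppi\to\hG$ the associated set-theoretic lift, it sets
\[
F(g)=\frac{1}{\# B}\sum_{b\in B}\qm\bigl(g\cdot t(b\cdot p(g))^{-1}\cdot t(b)\bigr),
\]
and in the defect computation the potentially dangerous terms, summed over $b$, reassemble \emph{exactly} into $F(g_2)$: the $s(\fis)$-factors cancel identically because $s$ is a homomorphism on $\fis$, and right multiplication by $p(g_1)$ only permutes the summands. Hence no analogue of $\qm(v)$ ever appears, and the errors are $D'(\qm)+D(\qm)+2D'(\qm)=D(\qm)+3D'(\qm)$. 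If you replace your single-transversal formula by the corresponding average over $H\backslash\hG$ (or over $B$), you essentially recover the paper's proof; without that averaging, the key uniform bound on $\|\qm(v)\|$ is unproved and cannot be extracted from $D'(\qm)<\infty$ alone.
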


The proof is parallel to that of \cite[Proposition 6.4]{KKMM1} (Proposition \ref{virtual split KKMM1}).
For the sake of completeness, we include the proof; see \cite[the proof of Proposition 6.4]{KKMM1} for more details.

\begin{proof}[Proof of Theorem $\ref{thm:extension_thm_genral_coeff}$]
  Let $(s, \fis)$ be a virtual section of $p\colon \hG \to \ppi$ (see Section 2).
  Let $B$ be a finite subset of $\ppi$ such that the map $\fis \times B \to \ppi, (\lambda, b) \mapsto \lambda b$ is bijective.
  Let $s' \colon B \to  \hG $ be a map satisfying $p \circ s'(b) = b$ for every $b \in B$.
  Define a map $t \colon \ppi \to \hG$ by setting $t(\lambda b) = s(\lambda)s'(b)$.
   Note that $t$ is a (set-theoretic) section of $p$. 
  Given $\qm \in \rQQQ(N ; V)^{\QQQ G}$, define a function $F \colon \hG \to V$ by
  \begin{align*}
    F(g) = \frac{1}{\# B} \sum_{b \in B}\qm(g \cdot t(b \cdot p(g))^{-1}\cdot t(b)).
  \end{align*}
  Then we have $F|_{\bG} = \qm$.
  Moreover, for $g_1, g_2 \in \hG$,
   by using that $f(h_1h_2) \approx_{D'(f)} p(h_1)\cdot  f(h_2h_1)$ and  $f(h_1h_2) \approx_{D'(f)} p(h_2)^{-1} \cdot  f(h_2h_1)$ for every $h_1,h_2 \in G$ with $h_1h_2\in N$,
  we have
  \begin{align*}
    F(g_1 g_2) &= \frac{1}{\# B} \sum_{b \in B}\qm(g_1g_2 \cdot t(b \cdot p(g_1g_2))^{-1}t(b))\\
    &\underset{D'(\qm)}{\approx} \frac{1}{\# B} \sum_{b \in B}b^{-1} \cdot \qm(t(b) \cdot g_1g_2 \cdot t(b \cdot p(g_1g_2))^{-1})\\
    &= \frac{1}{\# B} \sum_{b \in B}b^{-1} \cdot \qm(t(b) \cdot g_1 \cdot t(b \cdot p(g_1))^{-1} \cdot t(b \cdot p(g_1)) \cdot g_2 \cdot t(b \cdot p(g_1g_2))^{-1})\\
    &\underset{D(\qm)}{\approx}\frac{1}{\# B} \sum_{b \in B}b^{-1} \cdot \Big( \qm(t(b) \cdot g_1 \cdot t(b \cdot p(g_1))^{-1}) + \qm(t(b \cdot p(g_1)) \cdot g_2 \cdot t(b \cdot p(g_1g_2))^{-1}) \Big)\\
    &\underset{2D'(\qm)}{\approx} \frac{1}{\# B} \sum_{b \in B}\qm(g_1 \cdot t(b \cdot p(g_1))^{-1}\cdot t(b)) \\
    &\hspace{2cm} + \frac{1}{\# B} \sum_{b \in B}
    p(g_1)\cdot \qm(g_2 \cdot t(b \cdot p(g_1g_2))^{-1} \cdot t(b \cdot p(g_1)))\\
    &= F(g_1) + g_1 \cdot \left(\frac{1}{\# B} \sum_{b \in B}
    \qm\big( g_2 \cdot t\big( (b \cdot p(g_1)) \cdot p(g_2)\big)^{-1} \cdot t(b \cdot p(g_1))\big)\right).
  \end{align*}
  By the arguments in the proof of \cite[Proposition 6.4]{KKMM1}, we have
  \[
    \frac{1}{\# B} \sum_{b \in B} \qm\big( g_2 \cdot t\big( (b \cdot p(g_1)) \cdot p(g_2)\big)^{-1} \cdot t(b \cdot p(g_1))\big) = F(g_2).
  \]
  Therefore we have $F(g_1g_2) \underset{D(\qm) + 3D'(\qm)}{\approx} F(g_1) + g_1 \cdot F(g_2)$.
  This completes the proof.
\end{proof}

The counterpart of Theorem \ref{thm 7.4} in the case of the trivial real coefficient is an open problem.

\begin{problem}\label{prob:mcg}
  Let $\hG$ be a subgroup of finite index of $\Mod(\Sigma_l)$. Set $\bG = \hG \cap \II(\Sigma_{\genus})$ and $\ppi = \hG / \bG$. Then does $\QQQ(N)^G = i^{\ast}\QQQ(G)$ hold?
\end{problem}

In \cite{CHH}, Cochran, Harvey, and Horn constructed $\Mod(\Sigma)$-invariant quasimorphisms on $\II(\Sigma)$ for a surface $\Sigma$ with at least one boundary component. 
The problem asking whether their quasimophisms are extendable may be of special interest.

\section{Open problems}\label{open problem section}

\subsection{Mystery of the Py class}\label{Py class subsec}



Let $ \Sigma_\genus $ be a closed connected orientable surface whose genus $\genus$ is at least $2$ and $\Omega$ a volume form on $ \Sigma_\genus $.
Recall that Py \cite{Py06} constructed a Calabi quasimorphism $f_P$ on
 $\Ker(\flux_\Omega)$ 
which is $\diff_0( \Sigma_\genus ,\Omega)$-invariant, and the first and second authors showed that $f_P$ is not extendable to $\diff_0( \Sigma_\genus ,\Omega)$  (recall Section~\ref{flux section} and Example~\ref{ex:KKMM}).
We define $\bar{c}_P\in\HHH^2(\HHH^1( \Sigma_\genus ))$ and $c_P\in\HHH^2\left(\diff_0( \Sigma_\genus ,\Omega)\right)$ by $\bar{c}_P=\tae_4^{-1} \circ \tau_{/b}(\qm_P)$ and $c_P=\flux_\Omega^\ast(\bar{c}_P)$, respectively.
We call $c_P$ the \textit{Py class}.
Note that we essentially proved the non-triviality of the Py class in the proof of (1) of Theorem \ref{thm diff}.

When we construct the class $\bar{c}_P\in\HHH^2(\HHH^1( \Sigma_\genus ))$, we used the morphism $\tae_4\colon \HHH^2( \HHH^1(\Sigma_\genus )) \to \HHH_{/b}^2( \HHH^1(\Sigma_\genus ))$.
 Here we apply the exact sequence
\[ 1 \to \Ker(\flux_\Omega) \to \diff_0( \Sigma_\genus ,\Omega) \xrightarrow{\flux_\Omega} \HHH^1(\Sigma_\genus) \to 1 \]
to diagram \eqref{diagram_coh_qm_rel}.  
Since the bounded cohomology groups of an  amenable group are zero, the map $\tae_4$ is an isomorphism and we see that there exists the inverse $\tae_4^{-1}\colon \HHH_{/b}^2( \HHH^1(\Sigma_\genus )) \to \HHH^2( \HHH^1(\Sigma_\genus ))$ of $\tae_4$.
Because the vanishing of the bounded cohomology of  amenable groups is shown by a transcendental method, we do not have a precise description of the map $\tae_4^{-1}$.

\begin{remark}
If we fix a (right-)invariant mean $m$ on the  amenable group $\ppi$, then we have the following description of the map $\tae_4^{-1}$. For a cocycle $[c] \in C_{/b}^2(\ppi)$, a  cocycle $f \in C^2(\ppi)$ representing the class $\tae_4^{-1}( \bigl[ [c] \bigr])$ can be given by the following formula:
  \[
    f(\gamma_1, \gamma_2) = c(\gamma_1, \gamma_2) - m(\delta c(\, \cdot \, , \gamma_1, \gamma_2)).
  \]
\end{remark}


However, we have the following observations on the Py class. Here we consider $\HHH^1( \Sigma_\genus )$ as a symplectic vector space by the intersection form.

\begin{thm}\label{Property of Py class}
Let $ \Sigma_\genus $ be a closed connected orientable surface whose genus $\genus$ is at least $2$ and $\Omega$ a volume form on $ \Sigma_\genus $.  For a subgroup $\Lambda$ of $\HHH^1( \Sigma_\genus )$, let $\iota_\Lambda\colon\Lambda\to \HHH^1( \Sigma_\genus )$ be the inclusion map.
\begin{itemize}
 \item[$(1)$]  Let $v$ and $w$ be elements in $\HHH^1( \Sigma_\genus )$ with $v\smile w\ne 0$. Then there exists a positive integer $k_0$ such that for every integer $k$ at least $k_0$, the following holds: for the subgroup $\Lambda =\langle v,\frac{w}{k}\rangle$ of $\HHH^1( \Sigma_\genus )$, we have $\iota_\Lambda^\ast\bar{c}_P\neq0$. Here, $\smile$ denotes the cup product.
\item[$(2)$] If  a subgroup $\Lambda$ of $\HHH^1( \Sigma_\genus )$  is contained in linear subspaces $\langle[\alpha_1]^\ast,\ldots,[\alpha_\genus]^\ast\rangle$ or $\langle[\beta_1]^\ast,\ldots,[\beta_\genus]^\ast\rangle$, then $\iota_\Lambda^\ast\bar{c}_P=0$, where $\alpha_1,\ldots,\alpha_\genus,\beta_1,\ldots,\beta_\genus$ are curves described in Figure $\ref{curves}$.
\end{itemize}
\end{thm}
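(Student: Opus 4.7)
The plan is to reduce both parts to the extendability of Py's quasimorphism $f_P$ to the intermediate group $G_\Lambda := \flux_\Omega^{-1}(\Lambda)$. For $\Lambda \subset \HHH^1(M)$, the inclusion $G_\Lambda \hookrightarrow \diff_0(M, \Omega)$ gives a morphism of short exact sequences with identity on the kernel $\bG = \Ker(\flux_\Omega)$ and inclusion $\iota_\Lambda$ on the quotient. Inspecting the construction of the transgression $\tau_{/b}$ in Section~\ref{sec:pf_of_five-term} shows that this morphism intertwines $\tau_{/b}^{G_\Lambda}$ with $\iota_\Lambda^{\ast} \circ \tau_{/b}^{\diff_0(M,\Omega)}$; since $f_P \in \QQQ(\bG)^{\diff_0(M,\Omega)} \subset \QQQ(\bG)^{G_\Lambda}$, this yields $\tau_{/b}^{G_\Lambda}(f_P) = \iota_\Lambda^{\ast}\tau_{/b}^{\diff_0(M,\Omega)}(f_P)$ in $\HHH_{/b}^2(\Lambda)$. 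Because $\Lambda$ is abelian and hence boundedly acyclic (Theorem~\ref{amenable base}), the map $\tae_4^\Lambda$ is an isomorphism, whence $\iota_\Lambda^{\ast}\bar{c}_P = (\tae_4^\Lambda)^{-1}\tau_{/b}^{G_\Lambda}(f_P)$. By exactness of the sequence in Theorem~\ref{main thm}, this class vanishes if and only if $f_P \in i^{\ast}\QQQ(G_\Lambda)$, i.e., if and only if $f_P$ extends to $G_\Lambda$ as a homogeneous quasimorphism.

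For (1), I invoke Example~\ref{ex:KKMM}, which for $k \ge k_0$ and $\Lambda = \langle v, w/k \rangle$ produces elements $\gamma_m \in [G_\Lambda, \bG]$ satisfying $\scl_{G_\Lambda}(\gamma_m) \le 3/2$ for every $m$, while $|f_P(\gamma_m)|/m$ stays bounded below by a positive multiple of the intersection number $|\mathfrak{b}_I(v,w)|$. If $f_P$ were to extend to some $\tilde f_P \in \QQQ(G_\Lambda)$, then since $\gamma_m$ lies in the commutator subgroup of $G_\Lambda$ we have $\tilde f_P(\gamma_m) = f_P(\gamma_m)$, and Bavard's duality applied on $G_\Lambda$ would yield $\scl_{G_\Lambda}(\gamma_m) \ge |f_P(\gamma_m)|/(2D(\tilde f_P))$, which grows linearly in $m$ and hence contradicts the uniform upper bound. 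Therefore $f_P$ does not extend to $G_\Lambda$, and $\iota_\Lambda^{\ast}\bar{c}_P \ne 0$.

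For (2), I construct an explicit splitting $s \colon \Lambda \to G_\Lambda$ when $\Lambda \subset \langle [\alpha_1]^{\ast}, \ldots, [\alpha_\genus]^{\ast}\rangle$. Choose pairwise disjoint annular neighborhoods $A_1, \ldots, A_\genus$ of the curves $\alpha_1, \ldots, \alpha_\genus$, and on each $A_i$ take a one-parameter subgroup $\{\phi_i^t\}_{t \in \RR} \subset \diff_0(M, \Omega)$ supported in $A_i$ (a rotation-type flow along the annulus); after linear reparametrization we may assume $\flux_\Omega(\phi_i^t) = t \cdot [\alpha_i]^{\ast}$. Since the supports are pairwise disjoint, the $\phi_i^{t}$ commute across different indices, so $(t_1, \ldots, t_\genus) \mapsto \phi_1^{t_1}\circ \cdots \circ \phi_\genus^{t_\genus}$ defines a group homomorphism $\RR^\genus \to \diff_0(M, \Omega)$ sectioning $\flux_\Omega$ over $\langle [\alpha_i]^{\ast}\rangle$; restricting to $\Lambda$ supplies the desired splitting, and the case $\Lambda \subset \langle [\beta_i]^{\ast}\rangle$ is identical. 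Proposition~\ref{virtual split KKMM1} then yields $\QQQ(\bG)^{G_\Lambda} = i^{\ast}\QQQ(G_\Lambda)$, so $f_P$ extends and the first paragraph forces $\iota_\Lambda^{\ast}\bar{c}_P = 0$. The main obstacle lies in establishing the naturality used in the first paragraph: since $\tau_{/b}$ is defined through auxiliary choices of sections and $\bG$-quasimorphism lifts rather than as a derived functor, one must check that different choices of section for $G_\Lambda$ and for $\diff_0(M, \Omega)$ yield cohomologous cocycles in $C_{/b}^2(\Lambda)$, which follows from Lemma~\ref{lemma:dF_lift_indep_bdd_error}.
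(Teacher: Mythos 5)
Your proposal is correct and follows essentially the same route as the paper: the naturality you verify via Lemma \ref{lemma:dF_lift_indep_bdd_error} is exactly the paper's Lemma \ref{functoriality of embedding}, and your reduction of $\iota_\Lambda^\ast\bar{c}_P$ to the (non-)extendability of $f_P$ over $\flux_\Omega^{-1}(\Lambda)$ is precisely how the paper combines Theorem \ref{main thm} (resp.\ Theorem \ref{easy cor}) with the perfectness of $\Ker(\flux_\Omega)$. The only deviation is that where the paper cites Theorem \ref{KKMM flux thm} from \cite{KKMM2} wholesale, you inline its proof — the Bavard-duality contradiction built from the data of Example \ref{ex:KKMM} for (1), and the explicit annulus-flow section together with Proposition \ref{virtual split KKMM1} for (2) — which matches the arguments of \cite{KKMM2} and is sound.
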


\begin{figure}[h]
\centering
\includegraphics[width=10truecm]{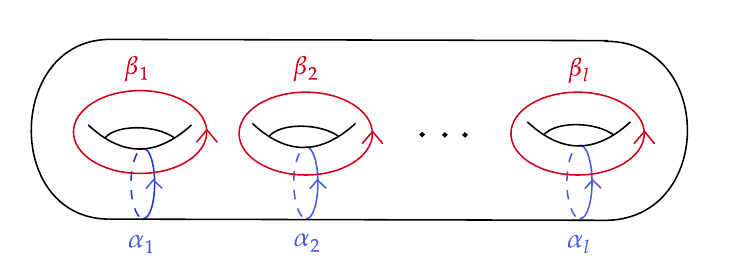}
\caption{$\alpha_1,\ldots,\alpha_\genus,\beta_1,\ldots,\beta_\genus\colon[0,1]\to  \Sigma_\genus $}
\label{curves}
\end{figure}

To prove Theorem \ref{Property of Py class}, we use the following observation.

Let $1 \to \bG \xrightarrow{i} \hG \xrightarrow{p} \ppi \to 1$ be an exact sequence of groups such that $\ppi$ is  amenable.
For a subgroup $\ppi^{0}$ of $\ppi$, $1 \to \bG \xrightarrow{i} p^{-1}(\ppi^{0}) \xrightarrow{p} \ppi^{0} \to 1$ is also an exact sequence and it is known that $\ppi^{0}$ is also  amenable ((3) of Theorem \ref{amenable base}).

Then, by Theorem \ref{main thm}, we have the following commuting diagrams.

\begin{align} \label{diag:gamma}
  \xymatrix{
  0 \ar[r] & \HHH^1(\ppi) \ar[r]^-{p^*} \ar[d]^-{\tae_1} & \HHH^1(\hG) \ar[r]^-{i^*} \ar[d]^-{\tae_2} & \HHH^1(\bG)^{ \hG } \ar[r]^-{\tau} \ar[d]^-{\tae_3} & \HHH^2(\ppi) \ar[r]^-{p^*} \ar[d]^-{\tae_4} & \HHH^2(\hG) \ar[d]^-{\tae_5} \\
  0 \ar[r] & \QQQ(\ppi) \ar[r]^-{p^*} &  \QQQ(\hG) \ar[r]^-{i^*} &  \QQQ(\bG)^{ \hG } \ar[r]^-{\tau_{/b}} & \HHH_{/b}^2(\ppi) \ar[r]^-{p^*} & \HHH_{/b}^2(\hG),
  }
  \end{align}

\begin{align} \label{diag:gamma0}
  \xymatrix{
  0 \ar[r] & \HHH^1(\ppi^{0}) \ar[r]^-{p^*} \ar[d]^-{\tae_1^{0}} & \HHH^1(p^{-1}(\ppi^{0})) \ar[r]^-{i^*} \ar[d]^-{\tae_2^{0}} & \HHH^1(\bG)^{ p^{-1}(\ppi^0) } \ar[r]^-{\tau^{0}} \ar[d]^-{\tae_3^{0}} & \HHH^2(\ppi^{0}) \ar[r]^-{p^*} \ar[d]^-{\tae_4^{0}} & \HHH^2(p^{-1}(\ppi^{0})) \ar[d]^-{\tae_5^{0}} \\
  0 \ar[r] & \QQQ(\ppi^{0}) \ar[r]^-{p^*} &  \QQQ(p^{-1}(\ppi^{0})) \ar[r]^-{i^*} &  \QQQ(\bG)^{ p^{-1}(\ppi^0) } \ar[r]^-{\tau_{/b}^{0}} & \HHH_{/b}^2(\ppi^{0}) \ar[r]^-{p^*} & \HHH_{/b}^2(p^{-1}(\ppi^{0})).
  }
  \end{align}
Since $\ppi$ and $\ppi^{0}$ are  boundedly $3$-acyclic ((5) of Theorem \ref{amenable base}), $\tae_4\colon \HHH^2(\ppi)\to \HHH_{/b}^2(\ppi)$ and $\tae_4^{0}\colon \HHH^2(\ppi^{0})\to \HHH_{/b}^2(\ppi^{0})$ are isomorphisms .
The following lemma is deduced from the definitions of $\tau_{/b}$ and $\tau_{/b}^0$.

\begin{lem} \label{functoriality of embedding}
\[(\tae_4^{0})^{-1} \circ \tau_{/b}^{0} \circ I_1^\ast = I_2^\ast  \circ (\tae_4)^{-1} \circ \tau_{/b}  ,\]
where $I_1^\ast \colon \QQQ(\bG)^{ \hG } \to \QQQ(\bG)^{ p^{-1}(\ppi^{0})}$, $I_2^\ast \colon \HHH^2(\ppi) \to \HHH^2(\ppi^{0})$ are the homomorphisms induced from the inclusion $I\colon\ppi^{0}\to\ppi$.
\end{lem}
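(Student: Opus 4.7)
The plan is to unpack the definitions of $\tae_4$, $\tae_4^0$, $\tau_{/b}$, and $\tau_{/b}^0$ at the cochain level and to verify naturality of each with respect to the restriction induced by $I\colon \ppi^{0}\hookrightarrow \ppi$. The identity in the lemma will then follow by composing the two naturality statements.

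First I would observe that $I$ induces a pullback $I^{*}\colon C^{\bullet}(\ppi)\to C^{\bullet}(\ppi^{0})$ which preserves boundedness and hence descends to a map, also denoted $I^{*}$, between the relative complexes $C^{\bullet}_{/b}(\ppi)\to C^{\bullet}_{/b}(\ppi^{0})$. Since $\tae_4$ and $\tae_4^{0}$ are by definition induced by the natural quotient $C^{\bullet}\to C^{\bullet}_{/b}$, they fit into a commutative square
\[
\xymatrix{
\HHH^{2}(\ppi) \ar[r]^-{I_2^{*}} \ar[d]^-{\tae_4} & \HHH^{2}(\ppi^{0}) \ar[d]^-{\tae_4^{0}} \\
\HHH_{/b}^{2}(\ppi) \ar[r]^-{I^{*}} & \HHH_{/b}^{2}(\ppi^{0}).
}
\]
Because $\tae_4$ and $\tae_4^{0}$ are isomorphisms (by bounded $3$-acyclicity of $\ppi$ and $\ppi^{0}$, which follows from amenability), inverting yields $I_2^{*}\circ \tae_4^{-1}=(\tae_4^{0})^{-1}\circ I^{*}$.

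Next I would check that $\tau_{/b}$ satisfies $I^{*}\circ \tau_{/b}=\tau_{/b}^{0}\circ I_1^{*}$. Pick $\mu\in \QQQ(\bG)^{\hG}$ and, by Lemma \ref{lemma:surjectivity}, choose an $\bG$-quasimorphism $F\in \hQQQ_{\bG}(\hG)$ with $F|_{\bG}=\mu$. Fix a set-theoretic section $s\colon \ppi\to \hG$ with $s(1_{\ppi})=1_{\hG}$; since $p\circ s=\mathrm{id}_{\ppi}$, its restriction $s_0:=s|_{\ppi^{0}}$ takes values in $p^{-1}(\ppi^{0})$ and is a section of the pulled-back extension $1\to \bG\to p^{-1}(\ppi^{0})\to \ppi^{0}\to 1$. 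Setting $F^{0}:=F|_{p^{-1}(\ppi^{0})}$, the function $F^{0}$ is an $\bG$-quasimorphism with $F^{0}|_{\bG}=\mu$, so it represents $I_1^{*}\mu\in \QQQ(\bG)^{p^{-1}(\ppi^{0})}$. By the definitions of $\tau_{/b}$ and $\tau_{/b}^{0}$, the cocycles $\alpha_{F,s}=s^{*}\delta F$ and $\alpha_{F^{0},s_0}=s_0^{*}\delta F^{0}$ represent $\tau_{/b}(\mu)$ and $\tau_{/b}^{0}(I_1^{*}\mu)$, respectively; the tautological cochain-level identity
\[
s_0^{*}\delta F^{0}=(s^{*}\delta F)|_{\ppi^{0}\times \ppi^{0}}=I^{*}\alpha_{F,s}
\]
then yields $\tau_{/b}^{0}(I_1^{*}\mu)=I^{*}\tau_{/b}(\mu)$.

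Combining the two naturality identities,
\[
(\tae_4^{0})^{-1}\circ \tau_{/b}^{0}\circ I_1^{*}=(\tae_4^{0})^{-1}\circ I^{*}\circ \tau_{/b}=I_2^{*}\circ \tae_4^{-1}\circ \tau_{/b},
\]
which is exactly the identity asserted in the lemma. The only subtle point is that the constructions of $\tau_{/b}$ and $\tau_{/b}^{0}$ depend a priori on choices of $F$ and $s$, but the well-definedness statements of Lemmas \ref{lemma:dF_lift_indep_bdd_error} and \ref{lemma:tau_well-def} make the above cochain-level argument legitimate; I do not anticipate any serious obstacle.
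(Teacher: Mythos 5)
Your proof is correct and follows essentially the same route as the paper, which simply states that the lemma "is deduced from the definitions of $\tau_{/b}$ and $\tau_{/b}^{0}$": your cochain-level verification (restricting $F$ and the section $s$ to $p^{-1}(\ppi^{0})$ and $\ppi^{0}$, plus naturality of the quotient maps $\tae_4$, $\tae_4^{0}$ under restriction) is exactly that deduction, spelled out with the well-definedness supplied by Lemmas \ref{lemma:dF_lift_indep_bdd_error} and \ref{lemma:tau_well-def}.
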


 We employ the following theorem, which is related to Example~\ref{ex:KKMM}, in order to prove Theorem \ref{Property of Py class}.

\begin{thm}[{\cite[Theorem~1.6 and Theorem~1.10]{KKMM2}}] \label{KKMM flux thm}
Let $ \Sigma_\genus $ be a closed connected orientable surface whose genus $\genus$ is at least $2$ and $\Omega$ a volume form on $ \Sigma_\genus $.
 Let $\Lambda$ be a subgroup of $\HHH^1( \Sigma_\genus )$ and set $G=\flux^{-1}(\Lambda)$ and $N=\Ker(\flux_\Omega)$.
Then,  the following hold true.
\begin{itemize}
\item[$(1)$]  Let $v$ and $w$ be elements in $\HHH^1( \Sigma_\genus )$ with $v\smile w\ne 0$. Then there exists a positive integer $k_0$ such that for every integer $k$ at least $k_0$, the following holds for $\Lambda=\langle v,\frac{w}{k}\rangle$: $[\qm_P]$ is a non-trivial element of $\QQQ(\bG)^\hG/i^\ast\QQQ(\hG)$.

\item[$(2)$] If $\Lambda$ is contained in linear subspaces $\langle[\alpha_1]^\ast,\ldots,[\alpha_\genus]^\ast\rangle$ or $\langle[\beta_1]^\ast,\ldots,[\beta_\genus]^\ast\rangle$ ,  then $[\qm_P]$ is the trivial element of $\QQQ(\bG)^\hG/i^\ast\QQQ(\hG)$, where $\alpha_1,\ldots,\alpha_\genus,\beta_1,\ldots,\beta_\genus$ are curves described in Figure $\ref{curves}$.
\end{itemize}
\end{thm}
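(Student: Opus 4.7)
The plan is to deduce each part from the structure of $\flux_\Omega$ on Lagrangian versus non-Lagrangian subspaces of $\HHH^1(M)$, combined with the Bavard duality theorem for stable mixed commutator lengths (Theorem \ref{thm Bavard}) for part (1), and the virtual splitting criterion (Proposition \ref{virtual split KKMM1}) for part (2).

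For part (2), assume first that $\Lambda \subset L_\alpha := \langle [\alpha_1]^\ast, \dots, [\alpha_\genus]^\ast \rangle$. Because the curves $\alpha_1, \dots, \alpha_\genus$ in Figure \ref{curves} are pairwise disjoint, I will realize each dual class $[\alpha_j]^\ast$ by a smooth one-parameter family $\{\phi_j^t\}_{t \in \RR}$ of area-preserving diffeomorphisms supported in a tubular neighborhood $U_j$ of $\alpha_j$, where the $U_j$ are chosen pairwise disjoint. Disjointness of supports forces the $\phi_j^t$'s to pairwise commute, so the assignment $(t_1,\dots,t_\genus)\mapsto \phi_1^{t_1}\cdots\phi_\genus^{t_\genus}$ is a homomorphism $\RR^\genus \to \diff_0(M,\Omega)$ lifting the inclusion $L_\alpha \hookrightarrow \HHH^1(M)$. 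Restricting to $\Lambda$ produces a genuine section of $\flux_\Omega|_{\hG} \colon \hG \to \Lambda$. By Proposition \ref{virtual split KKMM1}, $i^\ast\colon \QQQ(\hG)\to \QQQ(\bG)^\hG$ is surjective, so $[\qm_P] = 0$ in $\QQQ(\bG)^\hG/i^\ast \QQQ(\hG)$. The case $\Lambda \subset L_\beta$ is symmetric.

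For part (1), the hypothesis $v \smile w \neq 0$ is equivalent to $\mathfrak{b}_I(v,w)\neq 0$. I will pick $\phi_v,\phi_w \in \diff_0(M,\Omega)$ with $\flux_\Omega(\phi_v)=v$ and $\flux_\Omega(\phi_w)=w/k$, and set $\gamma_m := [\phi_v^m,\phi_w]$. Since $\Lambda$ is abelian, $[\hG,\hG]\subset \bG$, so $\gamma_m \in \bG$; moreover each $\gamma_m$ is a single $\hG$-commutator, hence $\scl_\hG(\gamma_m)\leq 1$ uniformly in $m$. The geometric heart of the argument is the asymptotic estimate
\[
  \qm_P(\gamma_m) \;=\; \frac{m}{k}\, \mathfrak{b}_I(v,w) \;+\; O(1) \quad\text{as } m \to \infty,
\]
which follows from Py's explicit construction of $\qm_P$ via intersection numbers against a measured geodesic lamination on $M$: iterating the commutator concentrates the symplectic pairing of $v$ and $w/k$ linearly in $m$, up to an additive error controlled by $D(\qm_P)$. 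Assuming this estimate, suppose for contradiction that $[\qm_P]=0$, i.e.\ some $\Psi \in \QQQ(\hG)$ satisfies $\Psi|_\bG = \qm_P$. Since $\gamma_m \in [\hG,\hG]$, Bavard's classical duality yields $|\Psi(\gamma_m)| \leq 2D(\Psi)\,\scl_\hG(\gamma_m)\leq 2D(\Psi)$, uniformly in $m$, contradicting $\Psi(\gamma_m)=\qm_P(\gamma_m)\to \infty$. The role of the threshold $k_0$ is to absorb normalization constants needed to actually realize the cohomology class $w/k$ inside $\flux_\Omega(\hG)$ by a diffeomorphism $\phi_w$ whose support is disjoint from that of $\phi_v$, so that the commutator $[\phi_v^m,\phi_w]$ can be analyzed cleanly against the lamination.

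The main obstacle is the asymptotic computation $\qm_P(\gamma_m) = \frac{m}{k}\mathfrak{b}_I(v,w) + O(1)$. Establishing it requires translating the topological content of the cup product $v \smile w$ into the dynamical definition of Py's Calabi quasimorphism as an integral against a transverse measure on a geodesic lamination. The delicate point is verifying that the $m$ factors in the commutator expansion $[\phi_v^m,\phi_w]=\prod_{i=0}^{m-1}\phi_v^i [\phi_v,\phi_w]\phi_v^{-i}$ contribute additively and without cancellation to the lamination integral defining $\qm_P$, with leading coefficient exactly $\mathfrak{b}_I(v,w)/k$; the hypothesis $v \smile w \neq 0$ is essential for the non-degeneracy of this leading term, and the choice $k\geq k_0$ ensures the supports can be arranged so that the error term remains genuinely $O(1)$ rather than growing with $m$.
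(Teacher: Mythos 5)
First, a point of comparison: the paper does not prove Theorem \ref{KKMM flux thm} at all; it is imported verbatim from \cite{KKMM2} (Theorems 1.6 and 1.10), and the closest the present paper comes to an argument is the summary in Example \ref{ex:KKMM}. Measured against that, your part (2) is essentially the expected (and correct) route: the curves $\alpha_1,\dots,\alpha_\genus$ (resp.\ $\beta_1,\dots,\beta_\genus$) are pairwise disjoint, so the corresponding classes are fluxes of pairwise commuting flows supported in disjoint annuli, which assemble into a genuine homomorphic section $\Lambda\to\hG$ of $\flux_\Omega|_{\hG}\colon \hG\to\Lambda$; Proposition \ref{virtual split KKMM1} then gives $\QQQ(\bG)^{\hG}=i^*\QQQ(\hG)$, hence $[\qm_P]=0$. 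Modulo the standard fact that Poincar\'e duals of disjoint simple closed curves are realized by shears in disjoint annuli, this part is fine.

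Part (1), however, has a genuine gap. The entire argument rests on the asserted growth estimate $\qm_P([\phi_v^m,\phi_w])=\frac{m}{k}\,\mathfrak{b}_I(v,w)+O(1)$, which you do not prove, and which is exactly the technical heart of the cited result (it corresponds to Proposition 4.7 of \cite{KKMM2}). It is not a formal consequence of ``Py's construction via intersection with a measured geodesic lamination'' --- Py's quasimorphism \cite{Py06} is not defined that way --- and it cannot be expected for an arbitrary choice of $\phi_v,\phi_w$ with the prescribed fluxes: in \cite{KKMM2} the estimate is established for carefully chosen model diffeomorphisms (products of twists supported in annuli around curves dual to $v$ and $w$), using the Calabi-type property of $\qm_P$ on suitable displaceable annuli. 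Moreover, your explanation of the threshold $k_0$ is self-defeating: if the supports of $\phi_v$ and $\phi_w$ were disjoint, then $\gamma_m=[\phi_v^m,\phi_w]=\mathrm{id}$ and $\qm_P(\gamma_m)=0$; since $v\smile w\neq 0$ the relevant supports must intersect, and in \cite{KKMM2} the constant $k_0$ arises instead from an area constraint (depending on $w$ and the total area of $\Sigma_\genus$) ensuring $w/k$ is realizable by twists supported in annuli small enough for the estimates on $\qm_P$ to apply; relatedly, the elements used there satisfy $\scl_{\hG}(\gamma_m)\le 3/2$ rather than being single commutators. Your surrounding skeleton is correct (such $\gamma_m$ lie in $[\hG,\hG]\subset\bG$, and an extension $\Psi\in\QQQ(\hG)$ would force $|\qm_P(\gamma_m)|\le 2D(\Psi)\,\scl_{\hG}(\gamma_m)$ to remain bounded), but without a proof of the linear growth of $\qm_P(\gamma_m)$ the argument does not establish (1).
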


 On (2), see also Remark~4.8 of \cite{KKMM2}.

\begin{proof}[Proof of Theorem $\ref{Property of Py class}$]
Set $\Gamma=\HHH^1( \Sigma_\genus )$, $\Gamma^{0}=\Lambda$ and $G=\flux_\Omega^{-1}(\Lambda)$.
We use the notation in diagrams \eqref{diag:gamma} and \eqref{diag:gamma0}.

First, to prove (1), suppose that the dimension of $\Lambda$ is larger than $\genus$.
Then, since $[\qm_P]$ is a non-trivial element of $\QQQ(\bG)^\hG/i^\ast\QQQ(\hG)$,
by Theorem \ref{easy cor},
 $(\tae_4^{0})^{-1} \circ \tau_{/b}^{0} \circ I_1^\ast(\qm_P)$ is also a non-trivial element of $\HHH^2(\Gamma^{0})=\HHH^2(\Lambda)$.
Hence, by Lemma \ref{functoriality of embedding}, $\iota_\Lambda^\ast\bar{c}_P = I_2^\ast  \circ (\tae_4)^{-1} \circ \tau_{/b}(\qm_P)=(\tae_4^{0})^{-1} \circ \tau_{/b}^{0} \circ I_1^\ast (\qm_P)$ is also a non-trivial element of $\HHH^2(\Gamma^{0})=\HHH^2(\Lambda)$.

Next, to prove (2), suppose that  $\Lambda$ is contained in linear subspaces $\langle[\alpha_1]^\ast,\ldots,[\alpha_\genus]^\ast\rangle$ or $\langle[\beta_1]^\ast,\ldots,[\beta_\genus]^\ast\rangle$.
Then, since $[\qm_P]$ is the trivial element of $\QQQ(\bG)^\hG/i^\ast\QQQ(\hG)$,
by Theorem \ref{easy cor} and Proposition \ref{prop diff},
 $(\tae_4^{0})^{-1} \circ \tau_{/b}^{0} \circ I_1^\ast(\qm_P)$ is also the trivial element of $\HHH^2(\Gamma^{0})=\HHH^2(\Lambda)$.
Hence, by Lemma \ref{functoriality of embedding}, $\iota_\Lambda^\ast\bar{c}_P = I_2^\ast  \circ (\tae_4)^{-1} \circ \tau_{/b}(\qm_P)=(\tae_4^{0})^{-1} \circ \tau_{/b}^{0} \circ I_1^\ast (\qm_P)$ is also the trivial element of $\HHH^2(\Gamma^{0})=\HHH^2(\Lambda)$.
\end{proof}

Finally, we pose the following problems on the Py class.

\begin{problem}
Give precise descriptions of a cochain representing $\bar{c}_P\in\HHH^2\left(\HHH^1( \Sigma_\genus )\right)$ and a bounded cochain representing $c_P\in\HHH^2\left(\diff_0(M,\Omega)\right)$.
\end{problem}

\begin{problem}\label{uniueness cp}
Let $ \Sigma_\genus $ be a closed connected orientable surface whose genus $\genus$ is at least $2$ and $\Omega$ a volume form on $ \Sigma_\genus $.
Is the vector space $\Im(\flux_\Omega^\ast) \cap \Im(c_{\diff_0( \Sigma_\genus ,\Omega)})$ spanned by $c_P$?
\end{problem}

By Theorem \ref{easy cor}, Problem \ref{uniueness cp} is rephrased as follows.

\begin{problem}\label{uniueness cp2}
Let $ \Sigma_\genus $ be a closed connected orientable surface whose genus $\genus$ is at least $2$ and $\Omega$ a volume form on $ \Sigma_\genus $.
Is the vector space $\QQQ\left(\Ker(\flux_\Omega)\right)^{\diff_0( \Sigma_\genus ,\Omega)}/i^{\ast}\QQQ\left(\diff_0( \Sigma_\genus ,\Omega)\right)$ spanned by $[\qm_P]$?
\end{problem}

\subsection{Problems on equivalences and coincidences of $\scl_\hG$ and $\scl_{\hG,\bG}$}\label{subsec:equivalence}

By Theorem \ref{main thm 3.1}, $\QQQ(N)^G = \HHH^1(N)^G + i^* \QQQ(G)$ implies that $\scl_\hG$ and $\scl_{\hG,\bG}$ are equivalent on $[G,N]$.
Moreover, if $N$ is the commutator subgroup of $G$ and $\QQQ(\bG)^\hG = \HHH^1(\bG)^\hG + i^* \QQQ(\hG)$, then $\scl_\hG$ and $\scl_{G,N}$ coincide on $[G,N]$.
Since $\HHH^2(G) = 0$ implies $\QQQ(N)^G = \HHH^1(N)^G + i^* \QQQ(G)$ (Theorem \ref{easy cor}), 
there are several examples of pairs $(G,N)$ such that $\scl_{G,N}$ and $\scl_G$ are equivalent (see Subsection \ref{equiv subsection}).
In Section 3, we provided several examples of groups $G$ with $\QQQ(N)^G \ne \HHH^1(N)^G + i^*\QQQ(G)$ (see Theorems \ref{thm surface group}, \ref{mapping torus thm}, and \ref{thm:one-relator}), but  in this paper, we are unable to determine whether
$\scl_G$ and $\scl_{G,N}$ are equivalent on $[G,N]$
 in these examples.
Hence, the example that $G = \diff(\Sigma_l, \omega)$ with $l \ge 2$ and $N = [G,G]$ raised by \cite{KK}  (see also \cite{KKMM2}) has remained the  essentially only one known example that $\scl_G$ and $\scl_{G,N}$ are not equivalent on $[G,N]$. In fact, this is the only one example that $\scl_G$ and $\scl_{G,N}$ do not coincide on $[G,N]$. Here, we provide several problems on equivalences and coincidences of $\scl_{\hG}$ and $\scl_{\hG,\bG}$.


\begin{problem}\label{scl onaji}
Is it true that $\QQQ(\bG)^{\hG} = \HHH^1(\bG)^{\hG} + i^* \QQQ(\hG)$ implies that $\scl_\hG = \scl_{\hG,\bG}$ on $[\hG,\bG]$?
\end{problem}

\begin{problem}\label{scl not equiv}
Find a pair $(\hG,\bG)$ such that $\hG$ is finitely generated and $\scl_\hG$ and $\scl_{\hG,\bG}$ are not equivalent.
In particular, for $\genus \ge 2$, are $\scl_{\pi_1(\Sigma_\genus)}$ and $\scl_{\pi_1(\Sigma_\genus), [\pi_1(\Sigma_\genus),\pi_1(\Sigma_\genus)]}$ equivalent  on $[\pi_1(\Sigma_\genus), [\pi_1(\Sigma_\genus),\pi_1(\Sigma_\genus)]]$?
\end{problem}

 After the current work, Problem \ref{scl not equiv} was solved by some of the authors \cite{MMM};
they proved that $\scl_{\pi_1(\Sigma_\genus)}$ and $\scl_{\pi_1(\Sigma_\genus), [\pi_1(\Sigma_\genus),\pi_1(\Sigma_\genus)]}$ are not equivalent for $\genus \ge 2$.
Moreover, the authors
\cite{coarse_group} proved that $\scl_\hG$ and $\scl_{\hG,[\hG,\hG]}$ are not equivalent if $\QQQ([\hG,\hG])^{\hG} \neq \HHH^1([\hG,\hG])^{\hG} + i^* \QQQ(\hG)$.

We also pose the following problem.
Let $B_n$ be the $n$-th braid group and $P_n$ the $n$-th pure braid group.

\begin{problem}\label{pnpn}
For $n \ge 3$, does $\scl_{B_n}=\scl_{B_n, [P_n,P_n]}$ hold on $\left[B_n, [P_n,P_n]\right]$?
\end{problem}

From the aspect of the following proposition, we can regard Problem \ref{pnpn} as a special case of Problem \ref{scl onaji}.

\begin{prop}\label{pnpn prop}
For $n \ge 2$, let $\hG=B_n$ and $\bG=[P_n,P_n]$. Then $\QQQ(\bG)^{\hG} = \HHH^1(\bG)^{\hG} + i^* \QQQ(\hG)$.
 In particular,  $\scl_\hG(x) \le \scl_{\hG, \bG}(x) \le 2 \cdot \scl_{\hG}(x)$ holds for all $x \in [\hG, \bG]$.
\end{prop}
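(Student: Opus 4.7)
The plan is to apply Theorem \ref{easy cor} with $\hG = B_n$ and $\bG = [P_n, P_n]$, and then invoke (2) of Theorem \ref{main thm 3.1}. First I would verify that $\bG$ is indeed normal in $\hG$: since $P_n$ is normal in $B_n$ with quotient the symmetric group $S_n$, and the commutator subgroup $[P_n,P_n]$ is characteristic in $P_n$, it is normal in $B_n$. Thus $\ppi = B_n/[P_n,P_n]$ is a well-defined group, fitting into a short exact sequence
\[
1 \longrightarrow P_n/[P_n,P_n] \longrightarrow \ppi \longrightarrow S_n \longrightarrow 1.
\]

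Next I would observe that $\ppi$ is amenable. The kernel $P_n/[P_n,P_n]$ is abelian (in fact free abelian of rank $\binom{n}{2}$), hence amenable by (2) of Theorem \ref{amenable base}, and the quotient $S_n$ is finite, hence amenable by (1) of Theorem \ref{amenable base}. By (4) of Theorem \ref{amenable base}, $\ppi$ is amenable, and therefore boundedly $k$-acyclic for all $k \ge 1$ by (5) of Theorem \ref{amenable base}. In particular, $\ppi$ is boundedly $3$-acyclic, so the hypothesis of Theorem \ref{easy cor} is satisfied.

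The next ingredient is the vanishing $\HHH^2(B_n;\RR) = 0$ for $n \ge 2$. This is part of the list of examples given in Subsection \ref{equiv subsection} (originally going back to Arnold's computation of the cohomology of braid groups, whose integral $\HHH^2$ is torsion, or to Akita--Liu \cite{AL} in the more general Artin group setting). Applying Theorem \ref{easy cor} then gives
\[
\dim\bigl(\QQQ(\bG)^{\hG}/(\HHH^1(\bG)^{\hG} + i^{\ast}\QQQ(\hG))\bigr) \le \dim \HHH^2(\hG) = 0,
\]
which yields the asserted equality $\QQQ(\bG)^{\hG} = \HHH^1(\bG)^{\hG} + i^{\ast}\QQQ(\hG)$.

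Finally, since we have verified both that $\ppi = \hG/\bG$ is amenable and that $\QQQ(\bG)^{\hG} = \HHH^1(\bG)^{\hG} + i^{\ast}\QQQ(\hG)$, statement (2) of Theorem \ref{main thm 3.1} immediately gives $\scl_{\hG}(x) \le \scl_{\hG,\bG}(x) \le 2\scl_{\hG}(x)$ for every $x \in [\hG,\bG]$. The proof is essentially a combination of the previously established inputs, so there is no real obstacle beyond assembling them; the only point worth being careful about is the normality of $[P_n,P_n]$ in $B_n$ (handled by the characteristic-subgroup argument above) and the amenability of the non-split extension defining $\ppi$.
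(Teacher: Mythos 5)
Your proposal is correct and follows essentially the same route as the paper: the exact sequence $1 \to P_n/[P_n,P_n] \to B_n/[P_n,P_n] \to \mathfrak{S}_n \to 1$ together with Theorem \ref{amenable base} to get amenability (hence bounded $3$-acyclicity) of the quotient, the vanishing of $\HHH^2(B_n)$ from Subsection \ref{equiv subsection}, Theorem \ref{easy cor} for the equality, and (2) of Theorem \ref{main thm 3.1} for the inequality of stable commutator lengths. Your extra remarks (normality of $[P_n,P_n]$ in $B_n$ via its being characteristic in $P_n$, and the explicit passage from amenability to bounded $3$-acyclicity) are details the paper leaves implicit, and they are fine.
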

\begin{proof}
Consider the exact sequence
\[1 \to P_n/[P_n,P_n] \to B_n/[P_n,P_n] \to \mathfrak{S}_n \to 1,\]
where $\mathfrak{S}_n$ is the symmetric group.
By (1) and (2) of Theorem \ref{amenable base}, $\mathfrak{S}_n$ and $P_n/[P_n,P_n]$ are  amenable.
Hence (4) of Theorem \ref{amenable base} implies that $B_n/[P_n,P_n]$ is also  amenable.
As pointed out in Subsection \ref{equiv subsection}, the second cohomology of the braid group $B_n$ vanishes.
Hence Theorem \ref{easy cor} implies that $\QQQ(\bG)^{\hG} = \HHH^1(\bG)^{\hG} + i^* \QQQ(\hG)$. The equivalence between $\scl_{B_n}$ and $\scl_{B_n, [P_n, P_n]}$ follows from (2) of Theorem \ref{main thm 3.1}.
\end{proof}

As another special case of Problem \ref{scl onaji}, we provide the following problem.

\begin{problem}\label{aut scl onaji}
For $n \ge 2$, does $\scl_{\Aut(F_n)}=\scl_{\Aut(F_n),\IA_n}$ hold on $\left[\Aut(F_n),\IA_n\right]$?
\end{problem}

 Even the following weaker variant of Problem~\ref{aut scl onaji} seems  open . We note that (2) of Theorem~\ref{main thm 3.1} does \emph{not} apply to the setting of Theorem~\ref{thm 3.2}.

\begin{problem}\label{problem:scl_C}
Let $n\geq 3$. Find an explicit real constant $C\geq 1$ such that $\scl_{\Aut(F_n),\IA_n}\leq C \cdot \scl_{\Aut(F_n)}$ holds on $\left[\Aut(F_n),\IA_n\right]$.
\end{problem}



In \cite{KKMM1}, the first, second, fourth, and fifth authors considered the equivalence problem between  $\cl_G$ and $\cl_{G,N}$.
We provide the following problem.

\begin{problem}\label{cl prob}
Is it true that $\QQQ(N)^G = \HHH^1(N)^G + i^* \QQQ(G)$ implies the bi-Lipschitz equivalence of  $\cl_G$ and $\cl_{G,N}$ on $[G,N]$?
\end{problem}

We note that (1) of Theorem \ref{main thm 3.1} states that $\QQQ(N)^G = \HHH^1(N)^G + i^* \QQQ(G)$ implies the bi-Lipschitz equivalence of  $\scl_G$ and $\scl_{G,N}$.
To the best knowledge of the authors, Problem \ref{cl prob}, even for the case where $1\to \bG \to \hG \to \ppi \to 1$ virtually splits, might be open in general.

From the aspect of Proposition \ref{pnpn prop} and Theorem \ref{thm 3.2}, we can regard the following problem as special cases of Problem \ref{cl prob}.
\begin{problem}
For $(\hG,\bG)=\left(B_n, [P_n,P_n]\right) (n \ge 3),\left(\Aut(F_n),\IA_n\right)  (n \ge 2)$, are $\cl_{\hG}$ and $\cl_{\hG,\bG}$ equivalent  on $[\hG,\bG]$?
\end{problem}

We note that $\cl_{\hG}$ and $\cl_{\hG,\bG}$ are known to be bi-Lipschitzly equivalent when $(\hG,\bG)=(B_n, P_n\cap[B_n,B_n]=[P_n,B_n])$ (\cite{KKMM1}).

\subsection{A question by De Chiffre, Glebsky, Lubotzky and Thom}\label{subsec:2-Kazhdan}

In Definition~4.1 of \cite{CGLT}, De Chiffre, Glebsky, Lubotzky and Thom introduced the following property.

\begin{definition}[{\cite{CGLT}}]
Let $n$ be a positive integer. A discrete group $\Gamma$ is said to be \emph{$n$-Kazhdan} if for every unitary $\Gamma$-representation $(\varpi,\mathcal{K})$, $\HHH^n(\Gamma;\varpi,\mathcal{K})=0$ holds.
\end{definition}

The celebrated Delorme--Guichardet theorem states that for a finitely generated group, the $1$-Kazhdan property is equivalent to Kazhdan's property (T); see \cite{BHV} for details.

In \cite[Question~4.4]{CGLT}, they asked the following question.

\begin{problem}[{\cite{CGLT}}]\label{problem:2-Kazhdan}
Is $\SL(n,\ZZ)$ $2$-Kazhdan for $n\geq 4$? Or weakly, does there exist $n_1\geq 4$ such that for all $n\geq n_1$, $\SL(n,\ZZ)$ is $2$-Kazhdan?
\end{problem}


The motivation of De Chiffre, Glebsky, Lubotzky and Thom to study the $2$-Kazhdan property is the stability on group approximations by finite dimensional unitary groups with respect to the Frobenius norm; see \cite{CGLT} and also \cite{Thom}. The present work shows that the $2$-Kazhdan property furthermore relates to the space of non-extendable quasimorphisms with non-trivial coefficients. For example, the positive solution to Problem~\ref{problem:2-Kazhdan} will provide a generalization of Theorem~\ref{thm 7.1.2} for \emph{all} unitary representations, including infinite dimensional ones. The following proposition gives the precise statement.

\begin{prop}\label{prop:2-Kazhdan}
Fix  an integer $n$ with $n\geq 4$.
Assume that $\SL(n,\ZZ)$ is $2$-Kazhdan.
Then, for every subgroup $\hG$ of finite index of $\Aut(F_n)$, and for every unitary representation $\pi$ of $\ppi$, the equality
\[\rQQQ(\bG ; \HH)^{\QQQ \hG} = i^* \rQQQ Z(\hG ; \overline{\pi}, \HH)\]
holds. Here we set $\bG = \hG \cap \IA_n$ and $\ppi = \hG / \bG$; the representation $(\overline{\pi}, \HH)$ of $\hG$ is the pull-back of the representation $(\pi, \HH)$ of $\ppi$.
\end{prop}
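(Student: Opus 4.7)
The plan is to generalize the proof of Theorem~\ref{thm 7.1.2} by replacing the finite-dimensionality arguments (which rely on Theorem~\ref{thm 7.5} and the Borel vanishing theorem, Theorem~\ref{thm 7.9 new}~(1)) with the $2$-Kazhdan hypothesis. Set $\ppi = \hG/\bG$, so that $\ppi$ is a subgroup of finite index of $\GL(n,\ZZ)$. By Theorem~\ref{main thm} together with Remark~\ref{remark:isomQZ}, it suffices to show that $\HHH_{/b}^2(\ppi;\pi,\HH) = 0$; the desired equality of quasimorphism spaces then follows by the same diagram-chase used in the proof of Theorem~\ref{thm 7.1.2}.

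First I would verify that the $2$-Kazhdan property passes from $\SL(n,\ZZ)$ to $\ppi$. Shapiro's lemma for finite-index inclusions (which preserves unitarity of the induced representation) shows that the $2$-Kazhdan property descends to subgroups of finite index. Conversely, the standard transfer identity $\mathrm{cor}\circ\mathrm{res} = [\Gamma':\Gamma]\cdot\mathrm{id}$ on $\HHH^{\bullet}(\Gamma';V)$, combined with the fact that we work over characteristic zero, shows that the property ascends through finite-index extensions. Applying this to the chain $\ppi \cap \SL(n,\ZZ) \subset \SL(n,\ZZ)$ and $\ppi \cap \SL(n,\ZZ) \subset \ppi$ yields that $\ppi$ itself is $2$-Kazhdan; hence $\HHH^2(\ppi;\pi,\HH) = 0$.

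Next, I would establish $\HHH_b^3(\ppi;\pi,\HH) = 0$. Decompose $\HH = \HH^{\pi(\ppi)} \oplus \HH'$ into the invariant part and its orthogonal complement, both of which are closed $\ppi$-invariant subspaces, so that $\HHH_b^3(\ppi;\pi,\HH)$ splits accordingly. For the trivial-action summand, Theorem~\ref{thm:3bdd_coh_vanish_SL}, together with a functoriality argument upgrading the scalar vanishing to Hilbert-valued trivial coefficients, gives $\HHH_b^3(\ppi;\HH^{\pi(\ppi)}) = 0$. For the invariant-vector-free summand $\HH'$, the analog of Theorem~\ref{thm:3bdd_vanish_Sp} for higher-rank lattices in $\SL(n,\RR)$ (available in Monod's framework \cite{Monod1, Monod2}, since $\SL(n,\RR)$ has real rank $n-1 \geq 3$ when $n \geq 4$) yields $\HHH_b^3(\ppi;\HH') = 0$. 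The long exact sequence~\eqref{seq:long_seq} then squeezes $\HHH_{/b}^2(\ppi;\pi,\HH)$ between these two vanishing groups and forces it to vanish, completing the argument.

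The main obstacle is the trivial-action Hilbert-valued case in the preceding paragraph: while $\HHH_b^3(\ppi;\RR) = 0$ is known from Theorem~\ref{thm:3bdd_coh_vanish_SL}, promoting this to $\HHH_b^3(\ppi;W) = 0$ for an infinite-dimensional Hilbert space $W$ with trivial $\ppi$-action requires care, because the sup-norm cochain complex $C_b^{\bullet}(\ppi;W)$ does not decompose simply as $C_b^{\bullet}(\ppi)\otimes W$. A natural route is to observe that any bounded cocycle $c\colon \ppi^3 \to W$ produces, via post-composition with elements of $W^{\ast}$, a family of scalar bounded cocycles with uniformly bounded norm, and to trivialize them simultaneously to obtain a $W$-valued primitive; the required uniformity seems to need either an explicit bounded contracting homotopy for $C_b^{\bullet}(\ppi;\RR)$ in degree three, or a direct argument using the $\ell^{\infty}$-resolutions of Monod in \cite{Monod1, Monod2}.
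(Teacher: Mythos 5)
Your overall skeleton is the same as the paper's: reduce, via Theorem~\ref{main thm} and Remark~\ref{remark:isomQZ}, to proving $\HHH^2(\ppi;\pi,\HH)=0$ and $\HHH_b^3(\ppi;\pi,\HH)=0$, and your treatment of the ordinary $\HHH^2$ part (Shapiro's lemma for the finite-index inclusion to descend the $2$-Kazhdan property, transfer to ascend it, applied through $\ppi\cap\SL(n,\ZZ)$) is exactly the argument in the paper.

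The degree-three bounded cohomology part, however, has a genuine unresolved step, and also a structural imprecision. You decompose $\HH=\HH^{\pi(\ppi)}\oplus\HH'$ with respect to $\ppi$ and then apply lattice results directly to $\ppi$; but $\ppi$ is only a finite-index subgroup of $\GL(n,\ZZ)$, so neither Theorem~\ref{thm:3bdd_coh_vanish_SL} nor the $\SL_n$-analogue of Theorem~\ref{thm:3bdd_vanish_Sp} applies to it verbatim ($\ppi$ is a lattice in the non-connected group $\SL^{\pm}(n,\RR)$, not in $\SL(n,\RR)$). The paper avoids this by setting $\ppi_0=\ppi\cap\SL(n,\ZZ)$, decomposing $\HH$ as $\HH^{\ppi_0}\oplus(\HH^{\ppi_0})^{\perp}$ \emph{with respect to $\ppi_0$} (note that restricting your $\ppi$-decomposition to $\ppi_0$ would not guarantee that $\HH'$ has no nonzero $\ppi_0$-invariant vectors), proving $\HHH_b^3(\ppi_0;\pi_0,\HH)=0$, and then passing back to $\ppi$ by the injectivity of the restriction map (Theorem~\ref{thm:transfer}) — the same transfer device you already use for $\HHH^2$. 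More seriously, the step you yourself flag as the main obstacle — vanishing of $\HHH_b^3$ with \emph{trivial} action on an infinite-dimensional Hilbert space — is left open: post-composing a bounded cocycle with functionals only yields scalar coboundaries with no uniformly bounded, linearly coherent choice of primitives, so that route does not assemble into a $W$-valued primitive as written. No such ad hoc upgrading is needed: Monod's vanishing theorems are available with these coefficients, and the paper simply cites them — \cite[Theorem~2]{Monod07} (see also \cite[Corollary~1.6]{Monod2}) for the summand without invariant vectors, and the coefficient version of the result behind Theorem~\ref{thm:3bdd_coh_vanish_SL} for the trivial summand. So the fix is a correct citation applied to $\ppi_0$ followed by the transfer, rather than a new analytic argument; as the proposal stands, this portion is a gap.
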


\begin{proof}
By Theorem~\ref{main thm} and exact sequence \eqref{seq:long_seq}, it suffices to prove that $\HHH^2(\ppi;\pi,\HH)=0$ and that $\HHH^3_b(\ppi;\pi,\HH)=0$.
Here, recall Remark~\ref{remark:isomQZ}. Note that $\ppi$ is a subgroup of finite index of $\GL(n,\ZZ)$.

First, we will verify that $\HHH^3_b(\ppi;\pi,\HH)=0$.
Set $\ppi_0:=\ppi \cap \SL(n,\ZZ)$ and $\pi_0:=\pi\mid_{\ppi_0}$.
Then $\ppi_0$ is a subgroup of finite index of $\SL(n,\ZZ)$, and $(\pi_0,\HH)$ is a unitary $\ppi_0$-representation. Decompose the $\ppi_0$-representation space $\HH$ as $\HH=\HH^{\ppi_0}\oplus (\HH^{\ppi_0})^{\perp}$, where $(\HH^{\ppi_0})^{\perp}$ is the orthogonal complement of $\HH^{\ppi_0}$ in $\HH$.
Then, the restriction $\pi_{0}^{\mathrm{inv}}$ of $\pi_0$ on $\HH^{\ppi_0}$ is trivial, and the restriction $\pi_{0}^{\mathrm{orth}}$ of $\pi_0$ on $(\HH^{\ppi_0})^{\perp}$ does not admit a non-zero $\ppi_0$-invariant vector.
Theorem~\ref{thm:3bdd_coh_vanish_SL} (Monod's theorem) implies that $\HHH^3_b(\ppi_0;\pi_0^{\mathrm{inv}},\HH^{\ppi_0})=0$.
By another theorem of Monod \cite[Theorem~2]{Monod07}, we also have $\HHH^3_b(\ppi_0;\pi_0^{\mathrm{orth}},(\HH^{\ppi_0})^{\perp})=0$. (See Corollary~1.6 of \cite{Monod2} for a more general statement.)
Hence, we have $\HHH^3_b(\ppi_0;\pi_0,\HH)=0$. Now, Theorem~\ref{thm:transfer} implies that $\HHH^3_b(\ppi;\pi,\HH)=0$, as desired.

Finally, we will prove $\HHH^2(\ppi;\pi,\HH)=0$ under the assumption of the theorem. The Shapiro lemma (for group cohomology) implies that the $2$-Kazhdan property passes to a group of finite index. In what follows, we sketch the deduction above. Let $H_0$ be a subgroup of a group $H$ of finite index. Take an arbitrary unitary $H_0$-representation $(\sigma,\mathfrak{H})$. Then since $H_0$ is of finite index in $H$, the coinduced module $\mathrm{Coind}_{H_0}^H(\mathfrak{H})$ is canonically isomorphic to the induced module $\mathrm{Ind}_{H_0}^H(\mathfrak{H})$.  Therefore, the Shapiro lemma shows that $\HHH^2(H_0;\sigma,\mathfrak{H})\cong \HHH^2(H;\varsigma,\mathfrak{K})$.
Note that the induced representation $(\varsigma,\mathfrak{K})=(\mathrm{Ind}_{H_0}^H(\sigma),\mathrm{Ind}_{H_0}^H(\mathfrak{H}))$ is a unitary $H$-representation. Hence, if $H$ is $2$-Kazhdan, then $\HHH^2(H;\varsigma,\mathfrak{K})=0$ holds; it then follows that $H_0$ is $2$-Kazhdan.
Also, a standard argument using the transfer shows the following: if $H_0$ is a subgroup of $\tilde{H}$ of finite index and if $H_0$ is $2$-Kazhdan, then $\tilde{H}$ is $2$-Kazhdan. (See Proposition~4.4 of \cite{CGLT} for a more general statement.) Recall that $\ppi_0$ is a subgroup of finite index of $\SL(n,\ZZ)$, and that $\ppi_0$ is a subgroup of $\ppi$ of index at most $2$. Therefore, since we assume that $\SL(n,\ZZ)$ is $2$-Kazhdan,  we conclude that $\ppi$ is $2$-Kazhdan. Hence we have $\HHH^2(\ppi;\pi,\HH)=0$, and this completes the proof.
\end{proof}

A counterpart of Proposition~\ref{prop:2-Kazhdan} in the setting of mapping class groups can be stated in the following manner.
Proposition~\ref{prop:Mod2-Kazhdan} asserts that under a certain assumption, Theorem~\ref{thm 7.4} may be extended to infinite dimensional cases.

\begin{prop}\label{prop:Mod2-Kazhdan}
Fix  an integer $l$ with $l\geq 3$.
Assume that for every unitary $\Sp(2l,\ZZ)$-representation $(\varpi,\mathcal{K})$ with $\varpi \not \supset 1$, $\HHH^2(\Sp(2l,\ZZ);\varpi,\mathcal{K})=0$ holds. Then, for every subgroup $\hG$ of finite index of $\Mod(\Sigma_l)$, and for every unitary representation $\pi$ of $\ppi$ \emph{with $\pi \not \supset 1$}, the equality
\[\rQQQ(\bG ; \HH)^{\QQQ \hG} = i^* \rQQQ Z(\hG ; \overline{\pi}, \HH)\]
holds. Here we set $\bG = \hG \cap \mathcal{I}(\Sigma_l)$ and $\ppi = \hG / \bG$; the representation $(\overline{\pi}, \HH)$ of $\hG$ is the pull-back of the representation $(\pi, \HH)$ of $\ppi$.
\end{prop}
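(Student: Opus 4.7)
The plan is to mirror the strategy of Proposition \ref{prop:2-Kazhdan}. First I would observe that since $\hG$ has finite index in $\Mod(\Sigma_l)$ and $\bG=\hG\cap \mathcal{I}(\Sigma_l)$, the symplectic representation $s_l\colon \Mod(\Sigma_l)\to \Sp(2l,\ZZ)$ identifies $\ppi=\hG/\bG$ with a subgroup of finite index of $\Sp(2l,\ZZ)$. By Remark \ref{remark:isomQZ}, the quotient $\rQQQ(\bG;\HH)^{\QQQ \hG}/i^{\ast}\rQQQ Z(\hG;\overline{\pi},\HH)$ is canonically isomorphic to $\HHH_{/b}^{1}(\bG;\HH)^{\hG}/i^{\ast}\HHH_{/b}^{1}(\hG;\overline{\pi},\HH)$, and by the five-term exact sequence (Theorem \ref{main thm}) the latter injects into $\HHH_{/b}^{2}(\ppi;\pi,\HH)$. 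Consequently, the conclusion will follow once I verify both $\HHH^{2}(\ppi;\pi,\HH)=0$ and $\HHH_{b}^{3}(\ppi;\pi,\HH)=0$, because the long exact sequence \eqref{seq:long_seq} then forces $\HHH_{/b}^{2}(\ppi;\pi,\HH)=0$.

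For the bounded cohomology vanishing, I would apply Theorem \ref{thm:3bdd_vanish_Sp} directly to the finite-index subgroup $\ppi\leq \Sp(2l,\ZZ)$ and to the unitary $\ppi$-representation $(\pi,\HH)$, which contains no non-zero $\ppi$-invariant vector by hypothesis; this immediately yields $\HHH_{b}^{3}(\ppi;\pi,\HH)=0$. For the ordinary cohomology vanishing, I would invoke the Shapiro lemma: since $[\Sp(2l,\ZZ):\ppi]<\infty$, the coinduced and induced modules agree, so
\[
\HHH^{2}(\ppi;\pi,\HH)\cong \HHH^{2}\bigl(\Sp(2l,\ZZ);\,\mathrm{Ind}_{\ppi}^{\Sp(2l,\ZZ)}\pi,\,\mathrm{Ind}_{\ppi}^{\Sp(2l,\ZZ)}\HH\bigr),
\]
and the induced representation is again unitary. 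By Frobenius reciprocity, the space of $\Sp(2l,\ZZ)$-invariant vectors in $\mathrm{Ind}_{\ppi}^{\Sp(2l,\ZZ)}\HH$ is isomorphic to $\HH^{\ppi}$, which vanishes by the assumption $\pi\not\supset 1$. Thus the induced representation also contains no non-zero invariant vector, and the standing hypothesis of the proposition applied to $\Sp(2l,\ZZ)$ furnishes the desired vanishing $\HHH^{2}(\ppi;\pi,\HH)=0$.

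The steps above are all routine consequences of results already established in the paper together with standard homological algebra. The only genuine subtlety, which plays the role of the main obstacle, is the verification that inducing a representation from $\ppi$ to $\Sp(2l,\ZZ)$ preserves the `no non-zero invariant vector' condition; this is precisely where Frobenius reciprocity (and the finite index of $\ppi$, which equates induction and coinduction) is crucial and replaces the transfer-based argument that was available in the proof of Proposition \ref{prop:2-Kazhdan}.
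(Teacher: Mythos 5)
Your proposal is correct and follows essentially the same route as the paper: apply Monod's theorem (Theorem \ref{thm:3bdd_vanish_Sp}) to the finite-index subgroup $\ppi \leq \Sp(2l,\ZZ)$ to get $\HHH^3_b(\ppi;\pi,\HH)=0$, use the Shapiro lemma (with induction equal to coinduction by finite index, and the induced representation containing no non-zero invariant vector since $\pi \not\supset 1$) together with the hypothesis on $\Sp(2l,\ZZ)$ to get $\HHH^2(\ppi;\pi,\HH)=0$, and conclude via Theorem \ref{main thm}, the long exact sequence \eqref{seq:long_seq}, and Remark \ref{remark:isomQZ}. Your explicit justification via Frobenius reciprocity of the "no invariant vector" step is a welcome elaboration of what the paper only asserts, but it is not a different argument.
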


\begin{proof}
Since $\pi \not \supset 1$, Theorem~\ref{thm:3bdd_vanish_Sp} (Monod's theorem)
 shows that $\HHH^3_b(\ppi;\pi,\HH)=0$.
 In addition, since $\pi \not \supset 1$, the induced unitary $\Sp(2l,\ZZ)$-representation $\mathrm{Ind}_{\ppi}^{\Sp(2l,\ZZ)}(\pi)$ does not admit a non-zero invariant vector. Therefore, by the assumption on $\Sp(2l,\ZZ)$, the Shapiro lemma implies that $\HHH^2(\ppi;\pi,\HH)=0$. Now Theorem~\ref{main thm}, together with exact sequence \eqref{seq:long_seq} and Remark~\ref{remark:isomQZ}, ends the proof.
\end{proof}

In relation to Proposition~\ref{prop:Mod2-Kazhdan}, the following problem may be of interest.

\begin{problem}\label{problem:Mod2-Kazhdan}
Does there exist $l_1\geq 3$  satisfying 
 the following:  for all $l\geq l_1$, for every unitary $\Sp(2l,\ZZ)$-representation $(\varpi,\mathcal{K})$ with $\varpi \not \supset 1$, $\HHH^2(\Sp(2l,\ZZ);\varpi,\mathcal{K})=0$ holds? 
\end{problem}

We note that $\HHH^2(\Sp(2l,\ZZ))=\RR$ for every $l\geq 2$; see \cite{Borel74}. In particular, $\Sp(2l,\ZZ)$ is \emph{not} $2$-Kazhdan for any $l\geq 2$.

\begin{remark}
 From Corollary~\ref{cor 7.11}, we have the following: if we impose an additional condition on $\varpi$ that it is \emph{finite dimensional} in the settings of Problem~\ref{problem:2-Kazhdan} and Problem~\ref{problem:Mod2-Kazhdan}, then we may take $n_1=6$ and $l_1=3$, respectively.

 It is also a question of interest asking what the best bound of $n$ in (1) of Theorem \ref{thm 7.9 new} is. The bound we have is `$n\geq 6$.'
Is it possible to improve this bound to `$n\geq 4$?'
\end{remark}

\section*{Acknowledgment}
The authors thank Toshiyuki Akita, Tomohiro Asano, Masayuki Asaoka, Nicolas Monod, Yuta Nozaki, Takuya Sakasai, Masatoshi Sato, and Takao Satoh for fruitful discussions on applications of results of the present paper and for providing several references.
The fifth author is grateful to  Andrew Putman and Bena Tshishiku for the bound of  $n$ in (1) of Theorem \ref{thm 7.9 new}, and Masatoshi Sato for the discussion on (2) of Corollary \ref{cor 7.11}.

The first author is supported in part by JSPS KAKENHI Grant Number JP18J00765 and JP21K13790.
The second author and the third author are supported by JSPS KAKENHI Grant Number JP20H00114 and JP21J11199, respectively.
The fourth author is partially supported by JSPS KAKENHI Grant Number JP19K14536.
The fifth author is partially supported by JSPS KAKENHI Grant Number  JP17H04822 and JP21K03241.

\appendix

\section{Other exact sequences related to $\QQQ(N)^G / (\HHH^1(N)^G + i^* \QQQ(G))$}
In this appendix, we show some exact sequences which are related to the quotient space $\QQQ(N)^G / (\HHH^1(N)^G + i^* \QQQ(G))$ and the seven-term exact sequence, and show that these sequences give alternative proofs of some results (Theorem \ref{thm surface group}, \ref{main thm 2}, \ref{easy cor} and \ref{mapping torus thm}) in this paper.

The seven-term exact sequence (Theorem \ref{thm seven-term}) is one of the main tools in this appendix.
In particular, we will use the following cocycle description of the map $\rho$ in the seven-term exact sequence.

\begin{thm}[Section 10.3 of \cite{DHW}] \label{thm DHW}
Let $c \in \Ker (i^* \colon \HHH^2(G) \to \HHH^2(N))$, and let $f$ be a 2-cocycle on $G$ satisfying $f|_{N \times N} = 0$ and $[f] = c$. Then
\[\big( \rho (c) (p(g))\big) (n) =  f(g, g^{-1} ng) - f(n,g),\]
where $g \in \hG$ and $n \in \bG$.
\end{thm}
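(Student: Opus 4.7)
The plan is to verify this explicit formula for $\rho$ by a direct cochain-level calculation, rather than invoking the machinery of the Lyndon--Hochschild--Serre spectral sequence. First I would note that the stated normalization is always achievable: since $i^*c = 0$, there exists $\mu \colon N \to \RR$ with $f|_{N \times N} = \delta \mu$, and choosing any extension $\tilde{\mu} \colon G \to \RR$ of $\mu$ and replacing $f$ by $f - \delta\tilde{\mu}$ produces a representative of $c$ with $f|_{N \times N} = 0$. Define $\phi \colon G \to C^1(N;\RR)$ by
\[
\phi(g)(n) = f(g, g^{-1}ng) - f(n,g).
\]

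The first substantial step is to show that, for each $g \in G$, the map $\phi(g) \colon N \to \RR$ is a genuine homomorphism, so that it represents a class in $\HHH^1(N;\RR) = \Hom(N,\RR)$. This is a direct computation: applying $\delta f = 0$ successively to the triples $(g, g^{-1}n_1 g, g^{-1}n_2 g)$, $(n_1, g, g^{-1}n_2 g)$, and $(n_1, n_2, g)$, and using $f|_{N \times N} = 0$ at each stage, the terms telescope to yield $\phi(g)(n_1 n_2) = \phi(g)(n_1) + \phi(g)(n_2)$.

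Second, I would verify that $\phi(g)$ depends only on $p(g) \in \ppi$: if $g' = g n_0$ with $n_0 \in N$, then using $\delta f = 0$ on appropriate triples together with $f|_{N \times N} = 0$ shows that $\phi(g') = \phi(g)$ as elements of $\Hom(N,\RR)$, so $\phi$ descends to a map $\bar{\phi} \colon \ppi \to \HHH^1(N;\RR)$. Then I would verify that $\bar{\phi}$ is a crossed homomorphism with respect to the $\ppi$-action on $\HHH^1(N;\RR)$ induced by the conjugation $G$-action on $N$, that is, $\bar{\phi}(p(g_1 g_2)) = \bar{\phi}(p(g_1)) + p(g_1) \cdot \bar{\phi}(p(g_2))$. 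I expect this to be the main computational obstacle: it requires combining four instances of the cocycle identity for $f$ (on triples built from $g_1$, $g_2$, and conjugates of $n$) with repeated use of the vanishing $f|_{N \times N} = 0$, and the identity holds exactly as homomorphisms on $N$, not merely up to coboundary.

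Finally, I would check that the cohomology class $[\bar{\phi}] \in \HHH^1(\ppi; \HHH^1(N;\RR))$ is independent of the two choices made: replacing $f$ by $f + \delta\psi$ (a different cocycle representative of $c$) with $(\delta\psi)|_{N\times N} = 0$ changes $\bar\phi$ by an explicit coboundary, and changing the extension $\tilde{\mu}$ likewise alters $\bar{\phi}$ within its cohomology class. To conclude, I would identify this construction with the abstract definition of $\rho$ in the seven-term exact sequence by tracing through the standard construction via the filtration $F^1\HHH^2(G) = \Ker(i^*)$ of the Lyndon--Hochschild--Serre spectral sequence, where $\rho$ is the induced map $F^1\HHH^2(G) \twoheadrightarrow E_\infty^{1,1} \hookrightarrow E_2^{1,1} = \HHH^1(\ppi;\HHH^1(N))$; this identification is a bookkeeping exercise at the level of the double complex computing $\HHH^\bullet(G)$, and matches the cochain formula above on the nose.
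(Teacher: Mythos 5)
The paper itself gives no argument for this statement---it is quoted from \cite{DHW} (Section 10.3) and used as a black box---so there is no internal proof to compare with; your proposal is a self-contained cochain-level verification, which is a genuinely different route. Its computational core is correct: with $f|_{N\times N}=0$, the three instances of the cocycle identity you name give additivity of $n\mapsto f(g,g^{-1}ng)-f(n,g)$; an analogous computation shows this homomorphism is literally unchanged when $g$ is replaced by $gn_0$ with $n_0\in N$; the crossed-homomorphism identity $\phi(g_1g_2)(n)=\phi(g_1)(n)+\phi(g_2)(g_1^{-1}ng_1)$ does hold on the nose (in fact it needs only the cocycle identity, not the vanishing on $N\times N$); and replacing $f$ by $f+\delta\psi$ with $(\delta\psi)|_{N\times N}=0$ changes the crossed homomorphism exactly by the coboundary of $\psi|_N\in\Hom(N,\RR)$.

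The genuine gap is your last step. What the computations establish is that the formula defines \emph{some} well-defined map $\Ker(i^*)\to\HHH^1(\ppi;\HHH^1(N))$; the content of the theorem is that this map coincides with the map $\rho$ occurring in the seven-term exact sequence (Theorem \ref{thm seven-term}), and you dismiss that identification as bookkeeping. It is not: the bicomplex you implicitly appeal to, with entries of the shape $C^p(\ppi;C^q(N))$, is not available, because the conjugation action of $G$ on cochains of $N$ does not factor through $\ppi$ at the cochain level (it does so only on cohomology). The classical route instead filters $C^\bullet(G)$ itself (Hochschild--Serre), and identifying $E_2^{1,1}$ with $\HHH^1(\ppi;\HHH^1(N))$ and then tracing the edge map $F^1\HHH^2(G)\twoheadrightarrow E_\infty^{1,1}\hookrightarrow E_2^{1,1}$ through that identification in explicit cocycle terms is precisely where the stated formula comes from---that is the bulk of the work, not an afterthought; moreover one would still have to check that the map so obtained is the same $\rho$ for which the seven-term sequence of \cite{DHW} is exact, since that construction is not the spectral-sequence one. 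A cleaner way to close the argument, sufficient for everything this paper uses, is to take your formula as the \emph{definition} of $\rho$ and then prove directly that the resulting seven-term sequence is exact at $\Ker(i^*)$ and at $\HHH^1(\ppi;\HHH^1(N))$ and restricts to the five-term sequence; your well-definedness computations are a substantial part of that, but those exactness statements still have to be proved and are absent from the proposal.
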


Let $\EH_b^2$ denote the kernel of the comparison map $\HHH_b^2 \to \HHH^2$.
We are now ready to state our main results in this appendix:

\begin{thm} \label{main thm 1}
Let $G$ be a group, $N$ a normal subgroup of $G$, and $\ppi$ the quotient $G/N$. Then the following hold:
\begin{itemize}
\item[$(1)$] There exists the following exact sequence
\[0 \to \QQQ(\bG)^{\hG} / (\HHH^1(\bG)^{\hG} + i^* \QQQ(\hG)) \to \EH^2_b(N)^G / i^* \EH^2_b(G) \xrightarrow{\alpha}  \HHH^1(G ; \HHH^1(N)).\]

\item[$(2)$] There exists the following exact sequence
\[\HHH^2_b(\ppi) \to \Ker(i^*) \cap \Im(c_G) \xrightarrow{\beta} \EH^2_b(N)^G / i^* \EH^2_b(G) \to \HHH^3_b(\ppi).\]
Here $i^*$ is the map $\HHH^2(G) \to \HHH^2(N)$ induced by the inclusion $N \hookrightarrow G$, and $c_G \colon \HHH^2_b(G) \to \HHH^2(G)$ is the comparison map.

\item[$(3)$] The following diagram is commutative:
\[\xymatrix{
\Ker(i^*) \cap \Im(c_G) \ar[r]^-{j} \ar[d]_{\beta} & \Ker(i^*) \ar[r]^-{\rho} & \HHH^1(\ppi ; \HHH^1(N)) \ar[d] \\
\EH^2_b(N)^G / i^* \EH^2_b(G) \ar[rr]^-{\alpha} & {} & \HHH^1(G ; \HHH^1(N)).
}\]
Here $j$ is an inclusion, and $\alpha$, $\beta$, and $\rho$ are the maps appearing in $(1)$, $(2)$, and the seven-term exact sequence, respectively.
\end{itemize}
\end{thm}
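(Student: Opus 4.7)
\medskip

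\textbf{Proof plan for Theorem \ref{main thm 1}.}

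The plan is to deduce all three parts from repeated applications of the snake lemma to the fundamental short exact sequence of Lemma \ref{lem 3.1},
\[
0 \to \HHH^1(G) \to \QQQ(G) \xrightarrow{\delta} \EH^2_b(G) \to 0,
\]
combined with Bouarich's exact sequence (Theorem \ref{thm Bouarich}) and the cocycle description of the transgression given in Theorem \ref{thm DHW}.

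For part (1), I would first apply the left exact functor $(-)^G$ to the analogous sequence on $N$, noting that its right derived functor is $V \mapsto \HHH^\bullet(G; V)$, to obtain
\[
0 \to \HHH^1(N)^G \to \QQQ(N)^G \xrightarrow{\delta} \EH^2_b(N)^G \xrightarrow{\varphi} \HHH^1(G; \HHH^1(N)).
\]
Together with the unrestricted sequence for $G$, the restriction maps $i^*$ yield a commutative ladder of exact rows. Taking cokernels vertically and applying the snake lemma produces an exact four-term sequence
\[
\HHH^1(N)^G/i^*\HHH^1(G) \to \QQQ(N)^G/i^*\QQQ(G) \to \EH^2_b(N)^G/i^*\EH^2_b(G) \xrightarrow{\alpha} \HHH^1(G;\HHH^1(N));
\]
the kernel of its first nontrivial arrow is precisely $\QQQ(N)^G/(\HHH^1(N)^G + i^*\QQQ(G))$, giving (1).

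For part (2), I would apply Lemma \ref{lem 3.1} in degree $2$ to form the commutative diagram with exact rows
\[
\xymatrix{
0 \ar[r] & \EH^2_b(G) \ar[r] \ar[d] & \HHH^2_b(G) \ar[r] \ar[d]^{i^*} & \Im(c_G) \ar[r] \ar[d] & 0 \\
0 \ar[r] & \EH^2_b(N)^G \ar[r] & \HHH^2_b(N)^G \ar[r] & \HHH^2(N)^G,
}
\]
where exactness of the bottom row uses left exactness of $(-)^G$. The kernel of the rightmost map is $\Ker(i^*) \cap \Im(c_G)$. Applying the snake lemma gives a sequence relating $\Ker(i^* \colon \HHH^2_b(G) \to \HHH^2_b(N)^G)$, $\Ker(i^*) \cap \Im(c_G)$, $\EH^2_b(N)^G/i^*\EH^2_b(G)$, and $\mathrm{Coker}(i^* \colon \HHH^2_b(G) \to \HHH^2_b(N)^G)$. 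Substituting Bouarich's identifications $\Ker(i^*\colon \HHH^2_b(G)\to \HHH^2_b(N)^G) \cong \HHH^2_b(\ppi)$ and $\mathrm{Coker}(i^*) \hookrightarrow \HHH^3_b(\ppi)$ (Theorem \ref{thm Bouarich}) yields (2).

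For part (3), the main obstacle is identifying the two composites at the cochain level; I expect this to be the most delicate step. Given $c \in \Ker(i^*) \cap \Im(c_G)$, I would represent $c$ by a bounded $2$-cocycle $f$ on $G$ that satisfies $f|_{N\times N} = \delta \varphi$ for some homogeneous quasimorphism $\varphi \in \QQQ(N)$ (using surjectivity of $\delta$ and homogenization, together with a bounded correction of $f$ by a $1$-cochain supported on $N$, which is permissible since such a correction preserves the class in $\HHH^2(G)$). By the snake construction, $\beta(c) = [\delta \varphi]$, and by the explicit form of $\varphi$ in part (1), $\alpha(\beta(c))$ is represented by the crossed homomorphism
\[
g \mapsto \varphi - {}^g\varphi - b_g, \qquad \text{where } {}^g(\delta\varphi) = \delta\varphi + \delta b_g,
\]
with $b_g \colon N \to \RR$ the unique bounded $1$-cochain satisfying this equation. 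A direct cocycle computation using $\delta f = 0$ (exactly as in the verification at the end of Theorem \ref{thm DHW}) shows $b_g(n) = f(g, g^{-1}ng) - f(n,g)$. Extending $\varphi$ by zero outside $N$ to a cochain on $G$, the $2$-cocycle $f - \delta \varphi$ vanishes on $N \times N$, represents $c \in \Ker(i^*)$, and Theorem \ref{thm DHW} evaluates $\rho(c)$ on such a representative. Matching the formulas on the nose then shows $\alpha \circ j = p^* \circ \rho \circ j$ under the evident pullback $\HHH^1(\ppi; \HHH^1(N)) \to \HHH^1(G; \HHH^1(N))$, which gives commutativity and completes (3).
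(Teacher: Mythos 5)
Your proposal takes essentially the same route as the paper's own proof: part (1) by the snake lemma applied to the ladder built from Lemma \ref{lem 3.1} and the derived-functor sequence for $(-)^G$, part (2) by the snake lemma on the degree-two diagram combined with Bouarich's sequence (Theorem \ref{thm Bouarich}), and part (3) by the cochain-level description of $\alpha\circ\beta$ matched against Theorem \ref{thm DHW}. Your formula $b_g(n)=f(g,g^{-1}ng)-f(n,g)$ is in fact the correct one (the paper's stated Claim drops the term $-f(n,g)$, an apparent typo, while its final computation implicitly uses your version), so no changes are needed.
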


From (1) and (2) of Theorem \ref{main thm 1}, we obtain the following:
\begin{cor}\label{cor:Appendix}
If $G/N$ is  boundedly $3$-acyclic, there exists the following exact sequence
\[0 \to \QQQ(\bG)^{\hG} / (\HHH^1(\bG)^{\hG} + i^* \QQQ(\hG)) \to \Ker(i^*) \cap \Im(c_G) \to \HHH^1(G ; \HHH^1(N)).\]
\end{cor}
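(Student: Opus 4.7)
The plan is to derive the corollary by combining parts (1) and (2) of Theorem~\ref{main thm 1} under the bounded $3$-acyclicity hypothesis. Recall that $\Gamma = G/N$ being boundedly $3$-acyclic means $\HHH_b^2(\Gamma) = 0$ and $\HHH_b^3(\Gamma) = 0$. Substituting these vanishings into the exact sequence furnished by (2), namely
\[
\HHH^2_b(\ppi) \to \Ker(i^*) \cap \Im(c_G) \xrightarrow{\beta} \EH^2_b(N)^G / i^* \EH^2_b(G) \to \HHH^3_b(\ppi),
\]
collapses the outer terms and forces $\beta$ to be an isomorphism
\[
\beta \colon \Ker(i^*) \cap \Im(c_G) \xrightarrow{\cong} \EH^2_b(N)^G / i^* \EH^2_b(G).
\]

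Next I would transport the exact sequence from (1),
\[
0 \to \QQQ(\bG)^{\hG} / (\HHH^1(\bG)^{\hG} + i^* \QQQ(\hG)) \to \EH^2_b(N)^G / i^* \EH^2_b(G) \xrightarrow{\alpha} \HHH^1(G; \HHH^1(N)),
\]
across this isomorphism. Identifying $\EH^2_b(N)^G / i^* \EH^2_b(G)$ with $\Ker(i^*) \cap \Im(c_G)$ via $\beta^{-1}$ and replacing the middle term yields precisely the claimed exact sequence
\[
0 \to \QQQ(\bG)^{\hG} / (\HHH^1(\bG)^{\hG} + i^* \QQQ(\hG)) \to \Ker(i^*) \cap \Im(c_G) \xrightarrow{\alpha \circ \beta} \HHH^1(G; \HHH^1(N)).
\]
Exactness at the first nontrivial spot is immediate from exactness of (1) together with injectivity of $\beta$, and exactness at $\Ker(i^*) \cap \Im(c_G)$ transfers directly from the exactness of (1) at $\EH^2_b(N)^G / i^* \EH^2_b(G)$.

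There is no serious obstacle here; the argument is a straightforward two-step splicing of sequences and there are no new diagrams to chase, since the compatibility of $\alpha$ and $\beta$ with the seven-term exact sequence (encoded by (3) of Theorem~\ref{main thm 1}) is only invoked implicitly to describe the last map as essentially $\rho$. The only point that deserves mild care is confirming that the middle isomorphism $\beta$ really does fit into the exact sequence from (1) so that the composite remains exact, but this is automatic because replacing a term by an isomorphic one preserves exactness of any sequence containing it. Hence the corollary follows.
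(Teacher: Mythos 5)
Your argument is correct and is exactly the paper's intended deduction: bounded $3$-acyclicity kills $\HHH^2_b(\ppi)$ and $\HHH^3_b(\ppi)$, so the four-term sequence in (2) of Theorem~\ref{main thm 1} forces $\beta$ to be an isomorphism, and splicing the exact sequence of (1) across $\beta^{-1}$ gives the corollary with middle map $\beta^{-1}\circ(\text{inclusion})$ and last map $\alpha\circ\beta$. This matches the paper, which states the corollary as an immediate consequence of (1) and (2), and part (3) is indeed not needed here.
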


\begin{proof}[Proof of $(1)$ of Theorem $\ref{main thm 1}$]
Recall that $\EH^2_b(G)$ is the kernel of the comparison map $c_G \colon \HHH^2_b(G) \to \HHH^2(G)$. By Lemma \ref{lem 3.1}, $\EH^2_b(G)$ coincides with the image of $\delta \colon \QQQ(G) \to \HHH^2_b(G)$. Therefore we have a short exact sequence
\begin{eqnarray} \label{eqn 3.1}
0 \to \HHH^1(G) \to \QQQ(G) \to \EH^2_b(G) \to 0.
\end{eqnarray}
For a $G$-module $V$, we write $V^G$ the subspace consisting of the elements of $V$ which are fixed by every element of $G$. Since the functor $(-)^G$ is a left exact and its right derived functor is $V \mapsto \HHH^\bullet(G ; V)$, we have an exact sequence
\begin{eqnarray} \label{eqn 3.2}
0 \to \HHH^1(N)^G \to \QQQ(N)^G \to \EH^2_b(N)^G \to \HHH^1(G ; \HHH^1(N)).
\end{eqnarray}
Thus we have the following commutative diagram
\begin{eqnarray} \label{eqn 3.3}
\xymatrix{
0 \ar[r] & \HHH^1(G) \ar[r] \ar[d]_{i^*} & \QQQ(G) \ar[r] \ar[d]^{i^*} & \EH^2_b(G) \ar[r] \ar[d]^{i^*} & 0 \ar[d] \\
0 \ar[r] & \HHH^1(N)^G \ar[r] & \QQQ(N)^G \ar[r] & \EH^2_b(N)^G \ar[r] & \HHH^1(G ; \HHH^1(N)).
}
\end{eqnarray}
Taking cokernels of the vertical maps, we have a sequence
\begin{align} \label{eqn 3.4}
\HHH^1(N)^G /i^* \HHH^1(G) \to \QQQ(N)^G / i^* \QQQ(G) \to \EH^2_b(N)^G / i^* \EH^2_b(G) \to \HHH^1(G ; \HHH^1(N)).
\end{align}
The exactness of the first three terms of this sequence follows from the snake lemma. The exactness of the last three terms
 can be checked by the diagram chasing. Since the cokernel of $\HHH^1(N)^G / i^* \HHH^1(G) \to \QQQ(N)^G / i^* \QQQ(G)$ is $\QQQ(N)^G / (i^* \QQQ(G) + \HHH^1(N)^G)$, we have an exact sequence
\begin{align} \label{eqn 3.5}
0 \to \QQQ(N)^G / (i^* \QQQ(G) + \HHH^1(N)) \to \EH^2_b(N)^G / i^* \EH^2_b(G) \to \HHH^1(G ; \HHH^1(N)).
\end{align}
This completes the proof of (1) of Theorem \ref{main thm 1}.
\end{proof}

To prove (2) of Theorem \ref{main thm 1}, we recall the following result by Bouarich.

\begin{thm}[\cite{MR1338286}] \label{thm Bouarich}
There exists an exact sequence
\[0 \to \HHH^2_b(\ppi) \to \HHH^2_b(G) \to \HHH^2_b(N)^G \to \HHH^3_b(\ppi).\]
\end{thm}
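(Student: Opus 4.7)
The plan is to mimic the classical derivation of the Lyndon--Hochschild--Serre five-term exact sequence, working throughout with the bounded cochain complex $C_b^\bullet$ in place of $C^\bullet$. The crucial observation is that every cochain-level construction we shall need---pullback along $p$, restriction along $i$, and composition with a set-theoretic section---preserves boundedness automatically, since boundedness of a real-valued function is preserved under precomposition with any map whatsoever. Fix once and for all a set-theoretic section $s\colon\ppi\to\hG$ of $p$ with $s(1_\ppi)=1_{\hG}$, and let $r\colon\hG\to\bG$ be the retraction $r(g)=g\cdot s(p(g))^{-1}$. The maps $p^\ast$ and $i^\ast$ of the sequence are inflation and restriction, and the image of $i^\ast$ lands in $\HHH_b^2(\bG)^{\hG}$ because conjugation acts trivially on $\HHH_b^\bullet(\hG)$.

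First I would define the connecting map $d\colon\HHH_b^2(\bG)^{\hG}\to\HHH_b^3(\ppi)$ as a \emph{bounded transgression}. Given a bounded cocycle $f\in Z_b^2(\bG)$ whose class is $\hG$-invariant, produce a bounded extension $\hat f\in C_b^2(\hG)$ with $\hat f|_{\bG\times\bG}=f$ via the retraction, e.g., by setting $\hat f(g_1,g_2)=f(r(g_1),\,g_1 r(g_2) g_1^{-1})$, modified by a bounded cochain if necessary for compatibility. A direct cocycle computation using $\delta f=0$ and the $\hG$-invariance of $[f]$ shows that $\delta\hat f$ is $\bG$-invariant in each of its arguments up to a bounded coboundary, and hence descends along $p$ to a bounded $3$-cocycle $\omega_f\in Z_b^3(\ppi)$. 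Define $d[f]:=[\omega_f]$; a standard argument shows independence of the choices of representative $f$ and of extension $\hat f$.

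Exactness is verified by three diagram chases. At $\HHH_b^2(\ppi)$: the componentwise pullback $s^\ast\colon C_b^\bullet(\hG)\to C_b^\bullet(\ppi)$ satisfies $s^\ast\circ p^\ast=\mathrm{id}$ on cochains, and although $s^\ast$ is not a chain map, its defect $s^\ast\delta-\delta s^\ast$ is a bounded, null-homotopic operator on cocycles, which gives injectivity of $p^\ast$. At $\HHH_b^2(\hG)$: $i^\ast p^\ast=0$ is immediate, and conversely, if $c\in Z_b^2(\hG)$ satisfies $i^\ast c=\delta\beta$ with $\beta\in C_b^1(\bG)^{\hG}$ (the $\hG$-invariance of $\beta$ being arranged via the $\hG$-invariance of $[i^\ast c]$), then extending $\beta$ to a bounded $1$-cochain on $\hG$ and subtracting its coboundary produces a bounded $2$-cocycle on $\hG$ vanishing on $\bG\times\bG$, which descends via $s$ to a bounded $2$-cocycle on $\ppi$. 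At $\HHH_b^2(\bG)^{\hG}$: $d\circ i^\ast=0$ because for $f=c|_{\bG\times\bG}$ one may take $\hat f=c$, forcing $\delta\hat f=0$; conversely, $d[f]=0$ means $\omega_f=\delta\eta$ for some $\eta\in C_b^2(\ppi)$, and then $\hat f-p^\ast\eta$ is a bounded $2$-cocycle on $\hG$ whose restriction to $\bG\times\bG$ equals $f$.

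The main obstacle lies in the construction and well-definedness of the transgression $d$: the cocycle identity needed to show that $\delta\hat f$ descends to a bounded $3$-cocycle on $\ppi$ requires a careful tracking of how the $\hG$-invariance of $[f]$ interacts with the non-multiplicativity of the section $s$, and modifications by $\hG$-invariant bounded coboundaries must be inserted at several intermediate stages. All required boundedness estimates, however, come for free, since each intermediate cochain is obtained by precomposing a bounded function with set-theoretic maps. This is precisely the blueprint carried out in Bouarich's original paper \cite{MR1338286}, and our plan is to reproduce that argument within the conventions of the present article.
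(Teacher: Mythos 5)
The paper offers no proof of this statement at all: it is imported verbatim from Bouarich \cite{MR1338286}, so your proposal has to stand on its own, and as written it has a genuine gap. The sequence in question lives in degrees $2$--$3$; it is \emph{not} the bounded analogue of the classical five-term sequence (which lives in degrees $1$--$2$), and the corresponding unbounded sequence $0 \to \HHH^2(\ppi) \to \HHH^2(\hG) \to \HHH^2(\bG)^{\hG} \to \HHH^3(\ppi)$ is simply false: take $\hG=\ZZ^2$, $\bG=\ZZ$, $\ppi=\ZZ$, where the outer terms vanish but $\HHH^2(\ZZ^2)\cong\RR$. Hence the leitmotif of your sketch --- ``mimic the classical derivation; all boundedness estimates come for free'' --- cannot be what makes the theorem true, since every step you describe could be carried out verbatim with unbounded cochains and would then ``prove'' a false statement. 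The ingredient that never appears in your proposal, and on which every nontrivial step actually rests, is the vanishing of $\HHH^1_b$ with trivial real coefficients: a bounded homomorphism from $\bG$ (or $\hG$) to $\RR$ is identically zero.

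Concretely, two of your three exactness arguments fail as stated. For injectivity of $p^*$ you invoke the defect $s^*\delta-\delta s^*$ being ``bounded and null-homotopic''; but if $p^*\alpha=\delta\beta$ with $\beta\in C^1_b(\hG)$, that reasoning only shows $\alpha$ differs from $\delta(s^*\beta)$ by a bounded $2$-cochain, which is no information at all in bounded cohomology (a bounded cocycle need not bound a bounded cochain --- otherwise $\HHH^2_b$ would vanish identically). The correct argument normalizes $\alpha$ by a bounded coboundary, observes that $\beta|_{\bG}$ is then a bounded homomorphism and hence zero, deduces $\beta(ng)=\beta(g)$ for all $n\in\bG$, so that $\beta$ literally descends to a bounded cochain on $\ppi$ bounding $\alpha$. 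Similarly, at $\HHH^2_b(\hG)$ you assert that a bounded cocycle $f$ vanishing on $\bG\times\bG$ ``descends via $s$'': it does not; the obstruction is precisely the cochain $g\mapsto\bigl(n\mapsto f(g,g^{-1}ng)-f(n,g)\bigr)$ of Theorem \ref{thm DHW}, which in the unbounded setting represents a generally nonzero class in $\HHH^1(\ppi;\HHH^1(\bG))$. In the bounded setting one must prove these functions are bounded homomorphisms on $\bG$, hence zero, and even then $f$ must still be corrected by bounded coboundaries before it becomes a pullback along $p$. The same vanishing is needed to make your transgression well defined and to justify the step where you use $\delta\hat f=p^*\omega_f$ on the nose (your construction only yields it up to bounded error, and, incidentally, the explicit formula $\hat f(g_1,g_2)=f(r(g_1),g_1r(g_2)g_1^{-1})$ does not even restrict to $f$ on $\bG\times\bG$). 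The blueprint can be salvaged, but only once the role of $\HHH^1_b=0$ is made explicit at each of these points; as it stands, the proposal identifies boundedness bookkeeping, rather than this vanishing, as the engine of the proof, and that is the wrong mechanism.
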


\begin{proof}[Proof of $(2)$ of Theorem $\ref{main thm 1}$]
  By Lemma \ref{lem 3.1}, we have the following commutative diagram
  \begin{eqnarray} \label{eqn 3.6}
  \xymatrix{
  0 \ar[r] & \EH^2_b(G) \ar[r] \ar[d] & \HHH^2_b(G) \ar[r] \ar[d]^{i^*} & \Im(c_G) \ar[r] \ar[d] & 0 \\
  0 \ar[r] & \EH^2_b(N)^G \ar[r] & \HHH^2_b(N)^G \ar[r] & \HHH^2(N)^G &
  \;\;\;\;\; ,}
  \end{eqnarray}
  where each row is exact. The exactness of the second row follows from Lemma \ref{lem 3.1} and the left exactness of the functor $(-)^G$. Let $K$ and $W$ denote the kernel and cokernel of the map $i^* \colon \HHH^2_b(G) \to \HHH^2_b(N)^G$. Note that the kernel of $\Im(c_G) \to \HHH^2(N)^G$ is $\Im(c_G) \cap \Ker(i^* \colon \HHH^2(G) \to \HHH^2(N)^G)$. Applying the snake lemma, we have the following exact sequence
  \begin{eqnarray} \label{eqn 3.7}
  K \to \Ker(i^*) \cap \Im(c_G) \to \EH^2_b(N)^G / i^* \EH^2_b(G) \to W.
  \end{eqnarray}

  By Theorem \ref{thm Bouarich}, $K$ is isomorphic to $\HHH^2_b(G/N)$, and there exists a monomorphism from $W$ to $\HHH^3_b(G)$.
  Hence we have an exact sequence
  \begin{eqnarray} \label{eqn 3.8}
  \HHH^2(G/N) \to \Ker(i^*) \cap \Im(c_G) \to \EH^2_b(N) / i^* \EH^2_b(G) \to \HHH_b^3(G).
  \end{eqnarray}
  Here the last map $\EH^2_b(N)^G / i^* \EH^2_b(G) \to \HHH^3_b(G)$ is the composite of the map $\EH^2_b(N)^G / i^* \EH^2_b(G) \to W$ and the monomorphism $W \to \HHH^3_b(G)$.
  This completes the proof of (2) of Theorem \ref{main thm 1}.
\end{proof}

\begin{proof}[Proof of $(3)$ of Theorem $\ref{main thm 1}$]
  Recall that $\alpha \colon \EH^2_b(N)^G / i^* \EH^2_b(G) \to \HHH^1(G ; \HHH^1(N))$ in (1) of Theorem \ref{main thm 1} is induced by the last map $\varphi$ of the exact sequence
  \[0 \to \HHH^1(N)^G \to \QQQ(N)^G \xrightarrow{\delta} \EH^2_b(N)^G \xrightarrow{\varphi} \HHH^1(G; \HHH^1(N)).\]
  We first describe $\varphi$. Let $c \in \EH^2_b(N)^G$. Since $\delta \colon \QQQ(N) \to \EH^2_b(N)$ is surjective, there exists a homogeneous quasimorphism $\qm$ on $N$ such that $c = [\delta \qm]$. Since $c$ is $G$-invariant, we have $^g c = c$ for every $g \in G$. Namely, for each $g \in G$, there exists a bounded $1$-cochain $b_G \in C^1_b(N)$ such that
  \begin{eqnarray} \label{eqn 3.12}
  ^g (\delta \qm) = \delta \qm + \delta b_g.
  \end{eqnarray}
  Note that this $b_g$ is unique. Indeed, if $\delta b_g = \delta b_g'$, then $b_g - b'_g$ is a homomorphism $G \to \RR$ which is bounded, and is $0$.

  Define the cochain $\varphi_\qm \in C^1(G ; \HHH^1(N))$ by
  \[\varphi_\qm(g) = \qm - {}^g \qm - b_g.\]
  It follows from \eqref{eqn 3.12} that $\varphi_\qm \in \HHH^1(N)$. Now we show that this correspondence induces a map from $\EH^2_b(N) / i^* \EH^2_b(G)$ to $\HHH^1(G ; \HHH^1(N))$.
  Suppose that $c = [\delta \qm] = [\delta \qm']$ for $\qm, \qm' \in \QQQ(N)$. Then $h = \qm - \qm' \in \HHH^1(N)$. Therefore we have $\delta \qm = \delta \qm'$, and hence we have
  \[^g(\delta \qm') = \delta \qm' + \delta b_g.\]
  Hence we have
  \[(\varphi_{\qm'} - \varphi_\qm)(g) = (\qm' - {}^g\qm' + b_g) - (\qm - {}^g \qm + b_g) = {}^g h - h = \delta h (g).\]
  Therefore $\varphi_{\qm'}$ and $\varphi_\qm$ represent the same cohomology class of $\HHH^1(G ; \HHH^1(N))$. This correspondence is the precise description of $\alpha \colon \EH^2_b(N)^G \to \HHH^1(G ; \HHH^1(N))$.

  Next, we see the precise description of the composite of
  \[\Ker (i^*) \cap \Im (c_G) \xrightarrow{\beta} \EH^2_b(G) / i^* \EH^2_b(G) \xrightarrow{\alpha} \HHH^1(G ; \HHH^1(N)).\]
  Let $c \in \Ker (i^*) \cap \Im (c_G)$. Since $c \in \Im (c_G)$, there exists a bounded cocycle $f \colon G \times G \to \RR$ with $c = [f]$ in $\HHH^2(G)$. Since $i^* c = 0$, there exists $\qm' \in C^1(N)$ such that $f|_{N \times N} = \delta \qm'$ in $C^2(N)$. Since $f$ is bounded, $\qm'$ is a quasimorphism on $N$. Define $\qm$ to be the homogenization of $\qm'$. Then $b_N = \qm - \qm' \colon N \to \RR$ is a bounded 1-cochain on $N$. Next define the function $b \colon G \to \RR$ by
  \[b(x) = \begin{cases}
  b_N(x) & x \in N \\
  0 & {\rm otherwise.}
  \end{cases}\]
  Since $b \in C^1_b(G)$, $f + \delta b$ is a bounded cocycle which represents $c$ in $\HHH^2(G)$. Replacing $f + \delta b$  by  $f$, we can assume that $f|_{N \times N} = \delta \qm$. Then by the definition of the connecting homomorphism in snake lemma, we have $\beta(c) = [\delta \qm]$.

  Recall that there exists a unique bounded function $b_g \colon N \to \RR$ such that
  \[\varphi([\delta \qm])(g) = \qm - {}^g \qm + b_g.\]

  \vspace{2mm}
  \noindent
  {\bf Claim.} $b_g(n) = f(g, g^{-1} n g)$.

  \vspace{2mm}
  Define $a_g$ by $a_g(n) = f(g, g^{-1}ng)$. Let $n$ and $m$ be elements of $N$. Since $\delta f = 0$, we have
  \begin{eqnarray*}
  \delta a_g(n,m) & = & \delta a_g + \delta f (g, g^{-1} ng, g^{-1} m g) + \delta f(n,m,g) - \delta f(n,g, g^{-1} mg) \\
  & = & f(g^{-1} ng, g^{-1} m g) - f(n,m) \\
  & = & ({}^g \delta \qm - \delta \qm)(n,m).\\
  & = & \delta b_g.
  \end{eqnarray*}
  By the uniqueness of $b_g$, we have $a_g = b_g$.
  This completes the proof of Claim.
  Hence we have $\varphi_\qm(g) = \qm - {}^g \qm + a_g$, and thus we obtain a precise description of $\alpha \circ \beta$.

  Now we complete the proof of (3) of Theorem \ref{main thm 1}. For $c \in \Ker (i^*) \cap \Im (c_G)$, there exists a bounded $2$-cocycle $f$ of $G$ such that $f |_{N \times N} = \delta \qm'$ for some $\qm' \in \QQQ(N)$. Define $\qm \colon G \to \RR$ by
  \[\qm(x) = \begin{cases}
  \qm' (x) & x \in N \\
  0 & {\rm otherwise.}
  \end{cases}\]
  Then $f - \delta \qm$ is a (possibly unbounded) cocycle such that $(f - \delta \qm) |_{N \times N} = 0$. Hence Theorem \ref{thm DHW} implies
  \begin{eqnarray*}
  ((p^* \rho (c))(g))(u) & = & (\rho(c) (p(g))) (u) \\
  & = & (f - \delta \qm)(g , g^{-1} n g) - (f - \delta \qm)(u, g) \nonumber \\
  & = & f(g, g^{-1} n g) - f(n,g) + \qm(ng) - \qm (g) - \qm(g^{-1} n g) + \qm(g) - \qm (ug) + \qm(u) \\
  & = & \qm(n) - {}^g \qm(u) + b_g(n) \\
  & = & (\qm - {}^g \qm + b_g)(n) \\
  & = & \varphi_\qm(g)(n).
  \end{eqnarray*}
  Here the second equality follows from Theorem \ref{thm DHW} and the fourth equality follows from Claim. Hence we have
  \begin{eqnarray} \label{eqn B}
  ((p^* \rho (c))(g))(u) = \varphi_\qm(g) (n),
  \end{eqnarray}
  and $\alpha \circ \beta (c) = p^* \circ \rho (c)$ follows from the description of $\alpha \circ \beta$ and \eqref{eqn B}. This completes the proof.
\end{proof}

Finally, we show that Theorem \ref{main thm 1} implies some results in this paper.

\begin{proof}[Proof of Theorem $\ref{easy cor}$ by using Theorem $\ref{main thm 1}$]
It follows from (1) of Theorem \ref{main thm 1} that $\Ker (\alpha)$ and $\QQQ(\bG)^\hG / (\HHH^1(\bG)^\hG + i^* \QQQ(\hG))$ are isomorphic. Since 
 $\ppi$ is boundedly $3$-acyclic, (2) of  Theorem \ref{main thm 1} implies that $\beta$ is an isomorphism. Since the homomorphism $\HHH^1(\Gamma ; \HHH^1(\bG)) \to \HHH^1(\hG ; \HHH^1(\bG))$ is injective, (3) of Theorem \ref{main thm 1} implies
\begin{eqnarray*}
\Ker (\alpha) & \cong & \Ker \big( \rho \circ j \colon \Ker(i^*) \cap \Im (c_G) \to \HHH^1(\Gamma ; \HHH^1(\bG)) \big) \\
& = & \Ker (\rho) \cap \Im (c_G).
\end{eqnarray*}
By the seven term exact sequence (Theorem \ref{thm seven-term}), we have $\Ker (\rho) = \Im (p^*)$. This completes the proof.
\end{proof}

\begin{proof}[Proof of Theorem $\ref{main thm 2}$ by using Theorem $\ref{main thm 1}$]
We first show that the map $\HHH^1(\bG)^\hG / i^* \HHH^1(\hG) \to \QQQ(\bG)^\hG / i^* \QQQ(\hG)$ is injective if $\Gamma$ is  boundedly $3$-acyclic. Indeed, applying the snake lemma to diagram \eqref{eqn 3.6}, we have $\Ker (\EH^2_b(\hG) \to \EH^2_b(\bG)^\hG) = 0$ since $\HHH^2_b(\Gamma) = 0$. Next, applying the snake lemma to diagram \eqref{eqn 3.3}, we  see that the map $\HHH^1(\bG)^\hG / i^* \HHH^1(\hG) \to \QQQ(\bG)^\hG / i^* \QQQ(\hG)$ is injective.

Thus we have two exact sequences
\[0 \to \HHH^1(\bG)^\hG / i^* \HHH^1(\hG) \to \QQQ(\bG)^\hG / i^* \QQQ(\hG) \to \Ker (\alpha) \to 0\]
and
\[0 \to \HHH^1(\bG)^\hG / i^* \HHH^1(\hG) \to \HHH^2(\Gamma) \to \Ker (\rho) \to 0.\]
Here the second exact sequence is deduced from the seven term exact sequence (Theorem \ref{thm seven-term}). It suffices to see that $\Ker (\rho) \cong \Ker(\alpha)$ by (3) of Theorem \ref{main thm 1}.
\begin{itemize}
\item Since $G$ is hyperbolic, we have $\Im (c_\hG) = \HHH^2(\hG)$. Therefore $j$ is an isomorphism.

\item Since $\Gamma$ is  boundedly $3$-acyclic, it follows from (2) of Theorem \ref{main thm 1} that $\beta$ is an isomorphism.

\item The map $\HHH^1(\Gamma ; \HHH^1(\bG)^\hG) \to \HHH^1(\hG ; \HHH^1(N)^G)$ is injective.
\end{itemize}
From the above facts, we conclude that $\Ker(\rho) \cong \Ker(\alpha)$.
\end{proof}


\begin{proof}[Proof of Theorem $\ref{thm surface group}$ by using Theorem $\ref{main thm 1}$ and Corollary $\ref{cor:Appendix}$]
  Let $\hG$ be the surface group $\pi_1(\Sigma_\genus)$ and $\bG$ the commutator subgroup $[\pi_1(\Sigma_\genus),\pi_1(\Sigma_\genus)]$.
  Then the quotient $\hG/\bG$ is isomorphic to $\ZZ^{2\genus}$.
  By (3) of Theorem \ref{main thm 1} and Corollary \ref{cor:Appendix}, we have the following commutative diagram whose rows are exact:
  \[
  \xymatrix{
  0 \ar[r] & \QQQ(\bG)^{\hG} / (\HHH^1(\bG) + i^* \QQQ(\hG)) \ar[r] & \Ker(i^*) \cap \mathrm{Im}(c_{\hG}) \ar[r]^-{\alpha \circ \beta} \ar@{^{(}-_>}[d] & \HHH^1(\hG;\HHH^1(\bG))\\
  \HHH^1(\bG)^{\hG} \ar[r] & \HHH^2(\ZZ^{2\genus}) \ar[r] & \Ker(i^*) \ar[r]^-{\rho} & \HHH^1(\ZZ^{2\genus};\HHH^1(\bG)) \ar[u],
  }
  \]
  where $i^* \colon \HHH^2(\hG) \to \HHH^2(\bG)$ and the second row is a part of the seven-term exact sequence.
  Since $\dim(\HHH^1(\bG)^{\hG}) = \genus(2\genus - 1) - 1$ (Proposition \ref{prop:inv_hom_surface_group}) and $\dim(\HHH^1(\ZZ^{2\genus})) = \genus(2\genus - 1)$,
  we have $\dim \Ker (i^*) = 1$ and $\rho = 0$.
  Since the comparison map $c_{\hG} \colon \HHH_b^2(\hG) \to \HHH^2(\hG)$ is surjective, we have $\Ker(i^*) \cap \mathrm{Im}(c_{\hG}) = \Ker(i^*) \cong \RR$.
  Since $\rho = 0$, the map $\alpha \circ \beta$ is also the zero map, and this implies
  \[
    \QQQ(\bG)^{\hG} / (\HHH^1(\bG) + i^* \QQQ(\hG)) \cong \Ker(i^*) \cap \mathrm{Im}(c_{\hG}) \cong \RR.
  \]
\end{proof}

\bibliography{reference}
\bibliographystyle{amsalpha}

\end{document}